 \let\oldref\ref
\renewcommand{\ref}[1]{(\oldref{#1})}
\newcommand{\N}{\mathbb{N}}
\newcommand{\Z}{\mathbb{Z}}
\newcommand{\C}{\mathbb{C}}
\newcommand{\R}{\mathbb{R}}
\newcommand{\Q}{\mathbb{Q}}
\newcommand{\F}{\mathbb{F}}
\newcommand{\eps}{\varepsilon}
\newcommand{\supp}{\mathrm{supp}\,}
\theoremstyle{definition}
\newtheorem{theorem}{Theorem}[section]
\newtheorem{lemma}[theorem]{Lemma}
\newtheorem{corollary}[theorem]{Corollary}
\newtheorem{definition}[theorem]{Definition}
\newtheorem{proposition}[theorem]{Proposition}
\theoremstyle{remark}
\newtheorem{remark}[theorem]{Remark}
\numberwithin{equation}{section}
\begin{document}
 

\title{Restricted projections and Fourier decoupling in $\Q_p^n$}
\author{Ben Johnsrude, Zuo Lin}
 \date{}
\maketitle
\vspace{-0.9in}

\begin{abstract}
    We prove a restricted projection theorem for Borel subsets of $\Q_p^n$ in the regime $p>n$. This generalizes results of Gan-Guo-Wang in the real setting.
\end{abstract}

\vspace{1em}

\section{Introduction}\hfill

Let $1\leq m<n$, and $V=(v_1,\ldots,v_k)$ be a tuple of vectors in $\Q_p^n$. Write $P_V:\Q_p^n\to\Q_p^m$ for the function
\begin{equation*}
    P_V(x)=\begin{bmatrix}
        \,\,\,\rule[1mm]{0.6cm}{0.01mm} & v_1^\perp & \rule[1mm]{0.6cm}{0.01mm}\,\,\,\\
        &\vdots&\\
        \,\,\,\rule[1mm]{0.6cm}{0.01mm} & v_m^\perp & \rule[1mm]{0.6cm}{0.01mm}\,\,\,
    \end{bmatrix}\begin{bmatrix}
        x_1\\\vdots\\x_n
    \end{bmatrix}
\end{equation*}
We will be interested in the problem of determining the relation between the sizes of a Borel set $A\subseteq\Q_p^n$ and its projection $P_V[A]$, for various choices of $V$ and $A$. In real Euclidean space $\R^n$, much work has been done: Marstrand's projection theorem \cite{Mar54} states that
\begin{equation*}
    \dim(P_V[A])=\min(\dim(A),m)\quad\text{for a.e. $V$ such that $|v_1\wedge\cdots\wedge v_k|\sim 1$.}
\end{equation*}
Recent developments in Fourier analysis have permitted analogous results to be proved when the tuple of vectors $V$ is set to range over a much more sparse set, e.g. a curve. Again in the real case, \cite{gan2022restricted} demonstrated that, for $\gamma$ any smooth nondegenerate curve in $\R^n$ and $A\subseteq\R^n$ a Borel set of dimension $\dim(A)$, it holds that for almost every $t$ and each $1\leq m<n$, the orthogonal projection of $A$ onto the span of $\gamma^{(1)}(t),\ldots,\gamma^{(m)}(t)$ has dimension $\min(\dim(A),m)$. Theorems of this form are termed \textit{restricted projection theorems}.

We now state our main result.
\begin{theorem}\label{mainthm}
    Let $A$ be a Borel subset of $\Q_p^n$. For each $t\in\Z_p$, let $V=(\gamma^{(1)}(t),\ldots,\gamma^{(m)}(t))$, where $\gamma(t)=(\frac{t}{1!},\ldots,\frac{t^n}{n!})$ is the moment curve. Then, for almost every $t\in\Z_p$, it holds that
    \begin{equation*}
        \mathrm{dim}_H(P_V[A])=\min\left(m,\mathrm{dim}_H(A)\right).
    \end{equation*}
    Here and throughout $\mathrm{dim}_H$ denotes the Hausdorff dimension of a metric space.
\end{theorem}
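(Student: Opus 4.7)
The plan is to reduce the lower bound to a Fourier energy estimate and invoke $\ell^2$ decoupling for the $p$-adic moment curve. The upper bound $\dim_H(P_V[A]) \le \min(m, \dim_H(A))$ is immediate since $P_V$ is $1$-Lipschitz with range in a space of dimension $m$. For the lower bound, fix $s < \min(m, \dim_H(A))$; by a Frostman lemma adapted to the ultrametric space $\Q_p^n$, select a Borel probability measure $\mu$ on $A$ with $\mu(B(x, r)) \lesssim r^s$. It then suffices, via the potential-theoretic characterization of Hausdorff dimension, to prove that
\[ \int_{\Z_p} I_s\bigl((P_{V(t)})_*\mu\bigr)\, dt < \infty, \]
where $I_s$ denotes the $s$-energy of a measure.

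Passing to Fourier variables via Plancherel and the $p$-adic analogue of the Riesz potential identity, the inner energy becomes
\[ I_s\bigl((P_{V(t)})_*\mu\bigr) \asymp \int_{\Q_p^m} \bigl|\widehat{\mu}\bigl(P_{V(t)}^T \eta\bigr)\bigr|^2 |\eta|^{s-m}\, d\eta. \]
As $t$ varies over $\Z_p$ and $\eta$ varies over $\Q_p^m$, the vector $P_{V(t)}^T \eta$ sweeps out the cone over $\gamma$, so after a change of variables the task reduces to bounding the weighted $L^2$ mass of $\widehat{\mu}$ on neighborhoods of this cone. Decomposing $\widehat{\mu}$ dyadically into pieces $\widehat{\mu_k}$ at frequency scale $p^k$, the problem becomes a cap estimate on a $p^{-k/n}$-tubular neighborhood of the moment-curve surface at scale $p^k$.

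This is exactly the setting where $\ell^2$ decoupling for the moment curve applies. Such a decoupling estimate in $\Q_p^n$, analogous to the Bourgain-Demeter-Guth theorem in $\R^n$, reduces the norm $\|\widehat{\mu_k}\|_{L^2(\mathrm{cone\ nbhd})}$ to an $\ell^2$-sum $\bigl(\sum_\theta \|\widehat{\mu_k}|_\theta\|_{L^2}^2\bigr)^{1/2}$ over canonical caps $\theta$. Each cap contribution is controlled by the Frostman growth $\mu(B(x,r)) \lesssim r^s$ via a standard localized $L^2$ estimate for $\widehat{\mu}$, and summing first over caps and then over frequency scales yields convergence whenever $s < m$, completing the proof.

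The principal obstacle is establishing the $\ell^2$ decoupling theorem for the $p$-adic moment curve $\gamma(t) = (t/1!, \ldots, t^n/n!)$ under the hypothesis $p > n$. This hypothesis is essential: it ensures that the factorials are $p$-adic units, so that $\gamma$ is well-defined and non-degenerate on $\Z_p$. The decoupling itself should follow by a $p$-adic adaptation of Bourgain-Demeter-Guth induction on scales (or alternatively by efficient congruencing \emph{à la} Wooley), with the ultrametric geometry of $\Q_p^n$ actually simplifying the wave-packet bookkeeping, since balls partition rather than overlap and there are no Schwartz-tail issues. Once decoupling is in hand, the translation back to the restricted projection statement proceeds via the high-low frequency scheme of Gan-Guo-Wang, which transfers essentially verbatim to the $p$-adic setting.
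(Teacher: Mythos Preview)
Your sketch conflates two different strategies and, as written, has a genuine gap. The paper does \emph{not} proceed via the $s$-energy integral $\int_{\Z_p} I_s((P_{V(t)})_*\mu)\,dt$ and $\ell^2$ moment-curve decoupling. Instead, following Gan--Guo--Wang, it deduces Theorem~\ref{mainthm} from a discretized projection statement (Theorem~\ref{proj}), which is in turn reduced to a Kakeya-type tube-counting estimate (Theorem~\ref{kakeya}); the Kakeya estimate is where the high--low decomposition and decoupling enter, and the relevant decoupling is an $\ell^{q_n}L^{q_n}$ inequality (Proposition~\ref{decprop}, $q_n=n(n+1)$) for functions Fourier-supported on the union of plates $\delta^{-1}\tau_\theta$, not an $\ell^2$ inequality for a moment-curve neighborhood.

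The specific gap is your sentence ``This is exactly the setting where $\ell^2$ decoupling for the moment curve applies.'' For general $1\le m<n$, the set $\{P_{V(t)}^T\eta:t\in\Z_p,\ \eta\in\Q_p^m\}$ swept out on the Fourier side is the union of the $m$-dimensional subspaces $\mathrm{span}(\gamma^{(1)}(t),\ldots,\gamma^{(m)}(t))$; this is not a tubular neighborhood of a moment curve, and moment-curve decoupling does not apply directly. The paper devotes all of Section~\ref{decsection} to an iterative algorithm that decomposes this region into pieces that can be treated as cylinders over lower-dimensional moment curves, yielding Proposition~\ref{decprop}. Even in the $m=1$ case, where the set \emph{is} a cone over the moment curve, the energy approach you describe (in the style of K\"aenm\"aki--Orponen--Venieri) requires cone decoupling rather than curve decoupling and uses $L^{q_n}$ rather than $L^2$; summing the cap contributions in $\ell^2L^2$ does not close for the full range $s<m$. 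Your final line invoking the Gan--Guo--Wang high--low scheme is correct in spirit, but it is that scheme---not the energy integral in the preceding paragraphs---that actually carries the argument.
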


The restricted projection theorem has applications in homogeneous dynamics, see \cite{Lindenstrauss2021PolynomialED}, \cite{lindenstrauss2022effective} and \cite{lindenstrauss2023effective}. Using the $(m, n) = (1, 3)$ case, Lindenstrauss--Mohammadi and Lindenstrauss--Mohammadi--Wang proved effective density and equidistribution for certain 1-parameterized unipotent flow in quotient of $\sf{SL}_2(\mathbb{C})$ and $\sf{SL}_2(\mathbb{R}) \times \sf{SL}_2(\mathbb{R})$ with finite volume. Using the $(m, n) = (2, 5)$ case, Lindenstrauss--Mohammadi--Wang--Yang proved effective equidistribution for certain unipotent flow in $\sf{SL}_3(\mathbb{R})/\sf{SL}_3(\mathbb{Z})$. 

The equidistribution results on certain unipotent flow in compact quotient of $\sf{SL}_2(\mathbb{Q}_p) \times \sf{SL}_2(\mathbb{Q}_p)$ has some important applications in number theory. It plays a crucial role in the proofs of uniform distribution of Heegner points by Vatsal,
and Mazur conjecture on Heegner points by C. Cornut; and their generalizations in
their joint work on CM-points and quaternion algebras\cite{Vat02, Cor02, CV05}. Motivated by these applications, we seek to prove an effective density and equidistribution result on certain unipotent flow in compact quotient of $\sf{SL}_2(\mathbb{Q}_p) \times \sf{SL}_2(\mathbb{Q}_p)$, which lead us to prove a restricted projection in the $p$-adic setting. 

The purpose of this paper is to generalize the results of \cite{gan2022restricted} to the $p$-adic setting. One of the motivations is an application to homogeneous dynamics; see Theorem \ref{homog}.

We set out the following notation and convention:
\begin{itemize}
    \item $\gamma(t) = (t, \frac{t^2}{2!}, ..., \frac{t^n}{n!})$ is a curve $\Z_p\to\Q_p^n$;
    \item for each $t\in\Z_p$ and $1\leq m      <n$, we set $\Pi_t^{(m)}$ be the following projection from $\Q_p^n$ to $\Q_p^m$:
    \begin{equation*}
        \Pi_t^{(m)}(x_1, ..., x_n) = (x_1 + tx_2 + ... + \frac{t^{n-1}}{(n - 1)!}x_n, ..., x_m + tx_{m + 1} + ... +\frac{t^{n-m}}{(n - m)!}x_n),
    \end{equation*}
    i.e.
    \begin{equation*}
        \Pi_t^{(m)}(x_1,\ldots,x_n)=\begin{bmatrix}
            \, & \gamma^{(1)}(t)^\top & \,\\
            \, & \vdots & \,\\
            \, & \gamma^{(m)}(t)^\top & \,
        \end{bmatrix}.\begin{bmatrix}
            x_1\\\vdots\\x_n
        \end{bmatrix};
    \end{equation*}
    \item $\mu$ is the Haar measure on $\Q_p^n$ with normalized measure $\mu(\Z_p^n)=1$;
    \item $|\cdot|_b$ covering number or packing number (which agree in $\Q_p$);
    \item $\#$ cardinality of a finite set;
    \item $\nu$ will always be the uniform probability measure on a finite set $F$, unless otherwise specified;
    \item $\mathds{1}_A$ will always be the characteristic function on the set $A$. 
\end{itemize}

The following projection theorem is the one needed in the proof of effective equidistribution of unipotent flow on quotient of $\sf{SL}_2(\mathbb{Q}_p) \times \sf{SL}_2(\mathbb{Q}_p)$. The statement of the following theorem is a generalized verision of the same as Theorem 5.1 in \cite{Lindenstrauss2021PolynomialED} in $p$-adic setting. By an application of Frostman's lemma, it implies Theorem \ref{mainthm}.

\begin{theorem}\label{proj}
    For $t\in\Z_p$, Let $\alpha \in (0,m)$, $b_0 < b_1 \in (0, 1)$ be three parameters. Suppose $F\subseteq\Z_p^n$ is a finite subset satisfying the following $\alpha$-dimensional condition at scales $\geq b_0$:
    \begin{equation}\label{dimcond}
        \frac{\#(F\cap B(x,b))}{\#F}\leq C (b/b_1)^\alpha\quad\forall x\in\Z_p^n\quad\forall b \geq b_0.
    \end{equation}
    Let $\nu$ be the uniform probability measure on $F$ and $\nu_t = \bigl(\Pi_t^{(m)}\bigr)_{*} \nu$ be the pushforward measure.
    
    Then, for all $\varepsilon \in (0, \frac{\alpha}{100})$, there exists $C_\epsilon > 0$ such that $\forall b \geq b_0$, there exists $J_b$ s.t. $\mu(\Z_p \backslash J_b) \leq C_{\varepsilon}b^\varepsilon$ s.t. $\forall t \in J_{b}$, there exists $F_{b,t} \subseteq F$ with $\nu(F \backslash F_{b,t}) \leq C_{\varepsilon}b^{\varepsilon}$ s.t., $\forall w \in F_{b,t}$,
    \begin{equation*}
        \nu_{t}\bigl(B(\Pi_t^{(m)}(w),b)\bigr)\leq C_\varepsilon \Bigl(\frac{b}{b_1}\Bigr)^{\alpha - O(\sqrt{\varepsilon})}.
    \end{equation*} 
    The term $O(\sqrt{\eps})$ can be taken to be $4 \cdot 10^{10n} \sqrt{\eps}$. The constant $C_{n,p,\eps}(c)$ can be chosen as 
    \begin{equation*}
        C_{\eps} = 4\max(1,C)\exp\left(10^4(\log p)\eps^{-5n\log n}n^{20 n^2}\right).
    \end{equation*}
\end{theorem}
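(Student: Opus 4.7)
The plan is to follow the Fourier-analytic strategy of Gan--Guo--Wang \cite{gan2022restricted} in the real case, replacing the tools from Euclidean harmonic analysis by their $p$-adic counterparts. The central input is a small-cap (or $\ell^2$) decoupling inequality for the moment curve in $\Q_p^n$ valid in the regime $p>n$, which I would treat as a black box (presumably developed earlier in the paper).

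First I would reduce the pointwise claim to an $L^q$ energy estimate. By two successive Chebyshev inequalities -- first extracting the good set $J_b\subseteq\Z_p$ of parameters $t$, then for each such $t$ extracting the good set $F_{b,t}\subseteq F$ of basepoints -- it suffices to control the $q$-th moment
\begin{equation*}
M_q(b)\;=\;\int_{\Z_p}\int_F\nu_t\bigl(B(\Pi_t^{(m)}(w),b)\bigr)^{q}\,d\nu(w)\,dt
\end{equation*}
by $C_\eps(b/b_1)^{(\alpha-O(\sqrt{\eps}))q}$ for an appropriate $q=q(\eps)$; the factors $b^\eps$ removed by Chebyshev become the measures of the two exceptional sets. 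Unpacking the definition, $M_q(b)$ is (up to normalization) the integral over $t$ of the number of $(q{+}1)$-tuples $(w_0,w_1,\ldots,w_q)\in F^{q+1}$ with $\Pi_t^{(m)}(w_i-w_0)\in B(0,b)$ for all $i$.

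Second, I would expand the indicator of $B(0,b)\subseteq\Q_p^m$ via $p$-adic Fourier inversion, writing $\mathds{1}_{B(0,b)}(y)=b^m\int_{|\xi|_p\leq b^{-1}}\chi(\xi\cdot y)\,d\xi$ for the standard additive character $\chi$ of $\Q_p$. After substitution and integration in $t$, the moment $M_q(b)$ is reorganized into an integral of $|\widehat{\nu_t}|^{2q}$ against a weight concentrated in an $O(b)$-neighborhood of the image of the moment curve $\gamma$; this is precisely the Fourier geometry for which the moment-curve decoupling is designed. The ultrametric structure makes the frequency cap $\{|\xi|_p\leq b^{-1}\}$ and its $p$-adic partition into decoupling caps exactly disjoint, eliminating the Schwartz-tail manipulations that burden the real argument.

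Third, I would execute a high-low analysis iterated across the $p$-adic scales between $b_0$ and $b_1$. At each scale the Fourier transform $\widehat{\nu_t}$ is split into a low-frequency part, controlled by the Frostman hypothesis \ref{dimcond} via the trivial $L^\infty$ bound, and a high-frequency part, to which $\ell^2$ decoupling along the moment curve and an $L^2$--$L^{2q}$ interpolation are applied. Balancing the per-scale decoupling loss (of size $b^{-\eps}$) against the $\sim\eps^{-1/2}$ scales of the iteration via standard geometric-mean bookkeeping produces the exponent $\alpha-O(\sqrt{\eps})$.

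The hardest step is quantitative: decoupling contributes a constant of order $\eps^{-C/\eps}$, and propagating this across all scales without exceeding the claimed $\exp(10^4(\log p)\eps^{-5n\log n}n^{20n^2})$ requires careful bookkeeping, probably via a schematic reformulation that amortizes the decoupling constants through the iteration rather than compounding them. A secondary but essential point is to verify uniformity of the decoupling in $t\in\Z_p$: this needs $\gamma^{(1)}(t),\ldots,\gamma^{(n)}(t)$ to be a basis of $\Q_p^n$ for every $t$, which holds because the determinant of the associated Vandermonde-type matrix has denominator dividing $1!\,2!\cdots n!$ and is thus a $p$-adic unit precisely because $p>n$.
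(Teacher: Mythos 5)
Your proposal takes a genuinely different route from the paper. The paper's actual proof of Theorem~\ref{proj} does not work with the moments $M_q(b)$ at all: instead, Section~\ref{discr} discretizes the problem to a tube-counting (Kakeya) estimate, Theorem~\ref{kakeya}, asserting that if a union of $\delta$-tubes dual to the projections $\Pi_\theta^{(m)}$ covers $\mathrm{supp}\,\nu$ with multiplicity $\gtrsim\delta^{\eps-1}$, then the number of tubes is at least $\nu(\Q_p^n)c_\alpha^{\delta_0}(\nu)^{-1}\delta^{-1-\alpha+O(\sqrt\eps)}$. The passage from Theorem~\ref{proj} to Theorem~\ref{kakeya} is a finitary contradiction argument: cover the projected images of the ``bad'' sets $F_{b,t}^{\mathrm{bad}}$ by few $b$-balls, pull back to tubes, count. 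The high--low split and decoupling (Proposition~\ref{decprop}, for the cone over a lower-dimensional moment curve, not for the moment curve itself) then appear only inside the proof of Theorem~\ref{kakeya}, in Section~\ref{fourier}, via a wave-packet decomposition of $\sum_T\mathds{1}_T$ and an induction on $\eps$ that feeds the low-frequency part back into the Kakeya statement at a coarser scale. So both approaches do, in the end, use a high--low decomposition and a decoupling theorem, but they sit in structurally different places: you apply them to a measure-theoretic energy, while the paper applies them to a tube-counting quantity. The Kakeya intermediate is also what makes the exceptional-set bookkeeping work out cleanly in the paper, since the contradiction is quantitative against the a priori bound on the number of tubes.

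Two specific issues with your sketch. First, the reduction of $M_q(b)$ to ``an integral of $|\widehat{\nu_t}|^{2q}$'' is not correct as written: $M_q(b)$ is a \emph{pinned} $q$-energy, counting $(q{+}1)$-tuples $(w_0,\ldots,w_q)$ with all differences $\Pi_t(w_i-w_0)$ small, whereas $\int|\widehat{\nu_t}|^{2q}$ counts $2q$-tuples $(w_1,\ldots,w_q,w_1',\ldots,w_q')$ with the single constraint $\sum_i\Pi_t(w_i-w_i')$ small. Passing from the pinned to the unpinned form loses information (or costs Cauchy--Schwarz factors you would need to track), and without that step the link to decoupling is not actually established. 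Second, your description of Gan--Guo--Wang's method is off: their argument, like this paper's, goes through the Kakeya/tube-counting intermediate rather than directly via moments of $\nu_t$. The observation about the Vandermonde matrix being a $p$-adic unit when $p>n$ is correct and does appear in the paper (the $p$-adic Vandermonde lemma in Appendix B), and it is indeed why the restriction $p>n$ is imposed throughout.
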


We now present the following application of Theorem \ref{proj} to the setting of homogeneous dynamics on quotient of $\sf{SL}_2(\mathbb{Q}_p) \times \sf{SL}_2(\mathbb{Q}_p)$. We first set out some notation. Let $\mathfrak{r}=\mathfrak{sl}_2(\Q_p)$ be the trace-zero $2\times 2$ matrices over $\Q_p$, and equip $\mathfrak{r}$ with the maximum-entry norm, with respect to $|\,\cdot\,|_p$. For each $r\in\Z_p$, we write $\xi_r:\mathfrak{r}\to\Q_p$ for the map
\begin{equation*}
    \xi_r(w)=w_{12}-2rw_{11}-w_{21}r^2,
\end{equation*}
where $w_{ij}$ denotes the corresponding matrix entry of $w$.

\begin{theorem}\label{homog} Let $0<\alpha<1,0<b_0=p^{-l_0}<b_1=p^{-l_1}<1$ be three parameters. Let $F\subseteq B_{\mathfrak{r}}(0,b_1)$ (the closed ball in $\mathfrak{r}$ centered at $0$ of radius $b_1$) be such that 
\begin{equation*}
  \frac{\#(F\cap B_{\mathfrak{r}}(w,b))}{\#F}\leq D'(b/b_1)^\alpha,
\end{equation*}
for all $w\in\mathfrak{r}$ and all $b\geq b_0$, and some $D'\geq 1$. Let $0<\eps<0.01$ and let $J$ be a metric ball in $\Z_p$.

Then there exists $J'\subseteq J$ such that $\mu(J') \geq(1-\frac{1}{p})\mu(J)$ satisfying the following. For each $r\in J'$, there exists a subset $F_r\subseteq F$ with
\begin{equation*}
    \#F_r\geq\left(1-\frac{1}{p}\right)\#F
\end{equation*}
such that for all $w\in F_r$ and $b\geq b_0$ we have
\begin{equation}\label{dimproj}
    \frac{\#\{w'\in F:|\xi_r(w')-\xi_r(w)|_p\leq b\}}{\#F}\leq C_\eps(b/b_1)^{\alpha-\eps},
\end{equation}
where $C_\eps$ depends on $\eps,\#J$, and $D'$.
\end{theorem}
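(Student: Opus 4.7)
The plan is to reduce Theorem~\ref{homog} to the $n=3$, $m=1$ case of Theorem~\ref{proj} by a linear change of coordinates that identifies the family $\{\xi_r\}_{r \in J}$ with the standard moment-curve projections $\{\Pi_s^{(1)}\}_{s \in \Z_p}$, and then to union-bound over the finitely many scales $b = p^{-l} \in [b_0, b_1]$.

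Concretely, I would write $J = B(r_0, b_J)$ and reparametrize $r = r_0 + b_J s$ with $s \in \Z_p$. Expanding
\begin{equation*}
    \xi_{r_0+b_Js}(w) = w_{12} - 2(r_0+b_Js)w_{11} - (r_0+b_Js)^2 w_{21}
\end{equation*}
in powers of $s$ yields $\xi_{r_0 + b_J s}(w) = \Pi_s^{(1)}(\Psi(w))$, where $\Psi : \mathfrak{r} \to \Q_p^3$ is the linear map $w \mapsto (u_1, b_J u_2, b_J^2 u_3)$ with $u_1, u_2, u_3$ certain $\Q_p$-linear combinations of the entries of $w$ (with coefficients involving $r_0$ and $-2$). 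Since we may assume $p > n = 3$, $|2|_p = 1$, so $\Psi$ is a linear bijection and $\tilde F := \Psi(F)$ lies in $B_{\Q_p^3}(0, b_1) \subseteq \Z_p^3$. A short ultrametric computation shows $|\Psi(w) - \Psi(w')|_{\Q_p^3} \le b$ forces $|w - w'|_{\mathfrak{r}} \le b/b_J^2$, so the Frostman hypothesis on $F$ transfers to a Frostman condition on $\tilde F$ at scales $\ge b_0$ with modified constant $\tilde D' := D'/b_J^{2\alpha}$.

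I would then apply Theorem~\ref{proj} to $\tilde F$ with $(n,m)=(3,1)$, parameters $(\alpha, b_0, b_1, \tilde D')$, and $\eps' \sim \eps^2$ chosen so that its exponent loss $O(\sqrt{\eps'})$ is at most $\eps$. For each scale $b = p^{-l} \in [b_0, b_1]$ this yields a set $\hat J_b \subseteq \Z_p$ and, for each $s \in \hat J_b$, a subset $F_{b,s} \subseteq F$; the complements have size at most $C_{\eps'} b^{\eps'}$, and $\nu_r(B(\xi_r(w), b)) \le C_{\eps'}(b/b_1)^{\alpha - \eps}$ holds for $w \in F_{b,s}$, where $r = r_0 + b_J s$. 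Taking intersections over the $l_0 - l_1 + 1$ scales, setting $J' := r_0 + b_J \bigcap_l \hat J_{p^{-l}}$ and, for each $r \in J'$, $F_r := \bigcap_l F_{p^{-l},(r-r_0)/b_J}$, and using that $s \mapsto r_0 + b_J s$ scales Haar measure by $b_J = \mu(J)$, gives
\begin{equation*}
    \mu(J \setminus J') \le \mu(J)\cdot \frac{C_{\eps'} b_1^{\eps'}}{1 - p^{-\eps'}}, \qquad \nu(F \setminus F_r) \le \frac{C_{\eps'} b_1^{\eps'}}{1 - p^{-\eps'}};
\end{equation*}
absorbing $C_{\eps'}$, $b_J^{-2\alpha}$, $p$, $(1-p^{-\eps'})^{-1}$, and $b_1^{-\eps'}$ into the final $C_\eps$ (allowed to depend on $\eps$, $\#J = b_J^{-1}$, and $D'$) delivers both $\mu(J') \ge (1 - 1/p)\mu(J)$ and $\#F_r \ge (1 - 1/p)\#F$.

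The main technical hurdle will be the careful bookkeeping in $\Psi$ and the union bound --- in particular, tracking how the Frostman constant degrades by $b_J^{-2\alpha}$ under the anisotropic rescaling, and ensuring the summation over scales fits the $\mu(J)/p$ exceptional-set budget after absorbing every factor into $C_\eps$. In degenerate parameter regimes where the quantitative bounds above still exceed $\mu(J)/p$ or $\#F/p$, one enlarges $C_\eps$ past $(b_1/b_0)^{\alpha - \eps}$, making \ref{dimproj} exceed $1$ throughout $[b_0, b_1]$ and thus vacuously true with $J' = J$ and $F_r = F$.
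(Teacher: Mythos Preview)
Your reduction matches the paper's: identify $\mathfrak{r}\cong\Q_p^3$ so that $\xi_r$ becomes a moment-curve projection, apply the $(m,n)=(1,3)$ case of Theorem~\ref{proj} with $\eps'\asymp\eps^2$, and union-bound over scales. Your explicit reparametrisation $r=r_0+b_Js$ and the tracked Frostman degradation $D'\mapsto D'/b_J^{2\alpha}$ are correct and in fact more careful than the paper's one-line identification.

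The gap is in your endgame. Intersecting over \emph{all} scales $l\in[l_1,l_0]$ gives the exceptional-measure bound $C_{\eps'}b_1^{\eps'}/(1-p^{-\eps'})$, and you then claim that ``absorbing $b_1^{-\eps'}$ into $C_\eps$'' forces this below $1/p$. But $C_\eps$ lives only in the dimension inequality~\ref{dimproj}, not in the measure constraints on $J'$ and $F_r$, so enlarging it cannot rescue the union bound when $b_1$ is not small. Your fallback of taking $C_\eps\ge(b_1/b_0)^{\alpha-\eps}$ to make~\ref{dimproj} vacuous then introduces dependence on $b_0$, which the statement forbids.

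The paper repairs this by first choosing a threshold $l_2$ depending only on $\eps,D',\mu(J)$ so that $\sum_{l\ge l_2}C_{\eps'}p^{-\eps'l}<\tfrac{1}{p}\mu(J)$. One intersects $J_b$ and $F_{b,r}$ only over $l\in[\max(l_1,l_2),\,l_0]$, so the union bound succeeds by construction; for the remaining coarse scales $l<l_2$ one simply takes $C_\eps\ge p^{l_2(\alpha-\eps)}$, which makes the right side of~\ref{dimproj} at least $1$ since $b/b_1>b\ge p^{-l_2}$ there. This $C_\eps$ depends only on $l_2$, hence only on $\eps,\mu(J),D'$ as required. Splicing this threshold step into your argument (in place of the all-scales intersection) closes the gap.
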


\begin{remark}
    The maps $\xi_r$ may alternately be written as $\xi_r(w)=\bigl(\mathrm{Ad}_{u_r}(w)\bigr)_{12}$, where $u_r=\begin{pmatrix}1 & r \\ 0 & 1\end{pmatrix}$ and $\mathrm{Ad}$ is the adjoint action of $\sf{SL}_2(\Q_p)$ on its Lie algebra $\mathfrak{sl}_2(\Q_p)$.
\end{remark}

\begin{proof}[Proof of Theorem \ref{homog} from Theorem \ref{proj}]

    Let $\eps_1 = (10^{-30}\eps)^2$. Identifying $\mathfrak{r}=\mathfrak{sl}_2(\Q_p)$ with $\Q_p^3$ (with the latter equipped with the usual $\ell^\infty$ norm), we may appeal to the $(m,n)=(1,3)$ case of Theorem \ref{proj} with this $\eps_1$. Choose $l_2$ such that
    \begin{equation*}
        \sum_{l=l_2}^\infty C_{\eps_1} p^{-\eps_1 l}<\frac{1}{p}\mu(J).
    \end{equation*}
    If $l_2>l_0$, let
    \begin{equation*}
        C_{\eps_1}'=(p^{l_2})^{\alpha-\eps},
    \end{equation*}
    which depends only on $\eps$ and $\mu(J)$. We then have
    \begin{equation*}
        \begin{split}
            1&\leq C_{\eps_1}'(b_0)(p^{-l_2})^{\alpha-O(\sqrt{\eps})}\\
            &\leq C_{\eps_1}'(b_0)^{\alpha-\eps}.
        \end{split}
    \end{equation*}
    Thus we may take $J'=J$ and $F_r=F$, and \ref{dimproj} holds trivially.
    
    Now we assume $l_2\leq l_0$. Let $J'=\bigcap_{l=l_2}^{l_0}J_{p^{-l}}\cap J$, where $J_{p^{-l}}$ is the set obtained from Theorem \ref{proj}. We compute:
    \begin{equation*}
        \begin{split}
            \mu(J')&\geq \mu(J)-\sum_{l=l_2}^\infty\mu(\Z_p\setminus J_{p^{-l}})\\
            &\geq \mu(J)-\sum_{l=l_2}^\infty C_{\eps_1} p^{-\eps_1 l}\\
            &\geq\left(1-\frac{1}{p}\right)\mu(J).
        \end{split}
    \end{equation*}
    For all $r\in J'$, let $F_r=\bigcap_{l=l_2}^{l_0}F_{p^{-l},r}$, where the sets $F_{p^{-l},r}$ are as obtained from Theorem \ref{proj}. From the choice of $l_2$ and the union bound, we conclude that $\#(F\setminus F_r)\leq\frac{1}{p}\#F$, so $\#F_r\geq(1-\frac{1}{p})\#F$.

    Now, for all $w\in F_r$ and $l_2\leq l\leq l_0$, by Theorem \ref{proj}, we have
    \begin{equation*}
        \#\{w'\in F:|\xi_r(w')-\xi_r(w)|_p\leq p^{-l}\}\leq C_{\eps_1} (b/b_1)^{(\alpha-\eps)(l_1-l)}\#F.
    \end{equation*}
    so that \ref{dimproj} holds.

    Finally, we consider scales $p^{-l}>p^{-l_2}$, i.e. $l<l_2$. In this scale, we let $C_\eps'\geq p^{l_2(\alpha-O(\sqrt{\eps}))}$. We have
    \begin{equation*}
        1\leq C_{\eps_1}'p^{-l_2(\alpha-\eps)}\leq C_{\eps_1}' (b/b_1)^{(\alpha-\eps)(l_1-l)},
    \end{equation*}
    so that \ref{dimproj} holds trivially.

\end{proof}

\begin{proof}[Proof of Theorem \ref{mainthm} from Theorem \ref{proj}]

Identical to the ``Proof of Theorem 1.2 assuming Theorem 2.1,'' from \cite{gan2022restricted}. Note that the Frostman lemma holds for Borel sets in compact metric spaces, and that $\mathrm{dim}_H(\Z_p^m)=m$. Note also that the relevant covering lemma is valid in separable metric spaces, and that $\mu$ is doubling.
    
\end{proof}

We mention one final result of this paper. In the interest of obtaining explicit bounds for the projection theorems, motivated by the problem of producing effective estimates in the homogeneous dynamics application, we have in particular needed a fully explicit bound on $p$-adic decoupling for the moment curve; this is proved in Theorem \ref{momentcurve} below. To our knowledge, this gives the first fully explicit bound for the main conjecture of Vinogoradov's mean value theorem in the range $n\geq 3$, which we state here.
\begin{theorem}[Explicit Vinogradov bound]\label{vinogradov}
    For $n\geq 2,s\geq 2,$ and $N\geq 2$, we write
    \begin{equation*}
        J_{s,n}(N)=\#\left\{\mathbf{a},\mathbf{b}\in[N]^n:\sum_{j=1}^s(a_j^d-b_j^d)=0\,\,\forall 1\leq d\leq n\right\},
    \end{equation*}
    which is the number of solutions to the Vinogradov system of Diophantine equations. For each such $s,n$, and each $N\geq \exp(\exp(3n(4n\log n+1)))$, we have
    \begin{equation*}
        J_{s,n}(N)\leq\exp\left(10^5 s e^{3n}(\log N)^{1-\frac{1}{4n\log n+1}}\right)(N^s+N^{2s-\frac{n(n+1)}{2}}).
    \end{equation*}
\end{theorem}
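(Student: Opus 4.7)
The plan is to deduce Theorem \ref{vinogradov} from the explicit $p$-adic decoupling bound for the moment curve promised in Theorem \ref{momentcurve}, by carrying out the standard Bourgain--Demeter--Guth reduction of Vinogradov's mean value theorem to $\ell^2$ decoupling, but adapted to $\Q_p$ and with careful tracking of constants.

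First, I would fix a prime $p>n$ (say the smallest such) and, given $N$, choose an integer $k$ so that $p^{k/n}$ lies just above $N$; the map $a\mapsto\gamma(a)=(a,\ldots,a^n/n!)$ embeds $\{1,\ldots,N\}$ into $\Z_p^n$. With $\psi$ a level-zero additive character on $\Q_p$, introduce the exponential sum
\begin{equation*}
    f_N(\mathbf{x})=\sum_{a=1}^N\psi\bigl(\langle\gamma(a),\mathbf{x}\rangle\bigr),\qquad\mathbf{x}\in\Z_p^n.
\end{equation*}
By orthogonality of characters on $\Z_p^n$, $\int_{\Z_p^n}|f_N|^{2s}\,d\mu$ counts tuples $(\mathbf{a},\mathbf{b})\in\{1,\ldots,N\}^{2s}$ satisfying the Vinogradov system modulo appropriate powers of $p$; the choice of $k$ makes these congruences equivalent to equality over $\Z$, so this integral equals $J_{s,n}(N)$.

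Second, I would apply $\ell^2$ decoupling at scale $R=p^k$. Since the integer points $\gamma(1),\ldots,\gamma(N)$ are separated by at least $R^{-1/n}$ along the moment curve, they lie in distinct $R^{-1/n}$-caps $\theta$. Decoupling at the critical exponent $2s=n(n+1)$ (together with the standard $\ell^2\to\ell^{2s}$ rearrangement for $2s>n(n+1)$) yields
\begin{equation*}
    \|f_N\|_{L^{2s}(\Z_p^n)}\leq D_{n,p,\eps}(R)\Bigl(\sum_\theta\|f_{N,\theta}\|_{L^{2s}}^2\Bigr)^{1/2},
\end{equation*}
with $D_{n,p,\eps}(R)\leq C_{n,p,\eps}R^\eps$ the explicit decoupling constant supplied by Theorem \ref{momentcurve}. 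Each cap $\theta$ sees a single integer $a$, so $\|f_{N,\theta}\|_{L^{2s}}\leq 1$, and summing produces the standard $N^s+N^{2s-n(n+1)/2}$ factor via interpolation between the diagonal and critical regimes.

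Third, I would optimize $\eps$. If the constant $C_{n,p,\eps}$ from Theorem \ref{momentcurve} takes the shape $\exp(c(\log p)\eps^{-\beta(n)})$ with $\beta(n)$ of order $n\log n$, the choice $\eps\asymp(\log R)^{-1/(\beta(n)+1)}\asymp(\log N)^{-1/(4n\log n+1)}$ balances the two contributions and yields the exponent $1-\frac{1}{4n\log n+1}$ of $\log N$ in the final bound. The hypothesis $N\geq\exp(\exp(3n(4n\log n+1)))$ is precisely what forces this $\eps$ to lie in the range where Theorem \ref{momentcurve} is valid.

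The main obstacle is the bookkeeping of constants. The clean coefficient $10^5\,s\,e^{3n}$ is sharper than what a naive estimation produces: it requires that Theorem \ref{momentcurve} deliver its constant in essentially the form $\exp\bigl(c\,e^{3n}\,\eps^{-4n\log n}(\log p)\bigr)$, and that the reduction from decoupling to Vinogradov introduce only polynomial overhead in $s$ and nothing worse than $e^{O(n)}$ in $n$. Verifying this requires revisiting each step of the proof of Theorem \ref{momentcurve}, auditing every appearance of $n$ in the bootstrap iteration, and taking some care with the $\ell^2\to\ell^{2s}$ upgrade for supercritical $s$, so that the factor of $s$ in the final exponent is genuinely linear rather than worse.
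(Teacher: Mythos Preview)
Your overall route is the paper's: choose a prime $p$ just above $n$, realize $J_{s,n}(N)$ as the $L^{2s}$-norm of a $p$-adic exponential sum along the moment curve, apply Theorem \ref{momentcurve} (interpolated up to $L^{2s}$ via Lemma \ref{interpolate}), then optimize in $\eps$. Two points deserve correction.

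First, the exponential-sum setup is broken as written. For $a\in\{1,\ldots,N\}$ and $p>n$ one has $\gamma(a)\in\Z_p^n$, so for every $\mathbf{x}\in\Z_p^n$ the inner product $\langle\gamma(a),\mathbf{x}\rangle$ lies in $\Z_p$ and hence $\psi(\langle\gamma(a),\mathbf{x}\rangle)=1$. Your $f_N$ is therefore identically $N$ on $\Z_p^n$, and $\int_{\Z_p^n}|f_N|^{2s}=N^{2s}$ carries no arithmetic content; no choice of $k$ repairs this. The paper instead takes $g_a(x)=\chi(x\cdot\gamma(a))\mathds{1}_{p^{-\ell n}\Z_p^n}(x)$ and integrates over $\Q_p^n$: the enlarged spatial support is exactly what makes the characters resolve the Vinogradov system, giving $\|\sum_a g_a\|_{L^{2s}(\Q_p^n)}^{2s}=p^{\ell n}J_{s,n}(N)$. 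The fix is routine, but it is a genuine error in the reduction, not a bookkeeping matter.

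Second, you misread where the $e^{3n}$ comes from. Theorem \ref{momentcurve} supplies $C_{n,\eps}=\exp\bigl(10^4(\log p)\eps^{-4n\log n}n^{10n^2}\bigr)$, and after the reduction one arrives at
\[
J_{s,n}(N)\leq\exp\bigl(6\cdot 10^4\,s\,\eps^{-4n\log n}n^{12n^2}\bigr)\,N^{2s\eps}\bigl(N^s+N^{2s-\frac{n(n+1)}{2}}\bigr).
\]
The $e^{3n}$ is not present in the decoupling constant and does not need to be extracted from an audit of Theorem \ref{momentcurve}. It enters only through the deliberate choice $\eps=e^{3n}(\log N)^{-1/(4n\log n+1)}$, which is an overshoot of the balanced value precisely so that $\eps^{-4n\log n}=n^{-12n^2}(\log N)^{4n\log n/(4n\log n+1)}$ cancels the $n^{12n^2}$; the factor $N^{2s\eps}$ then produces $\exp\bigl(2se^{3n}(\log N)^{1-1/(4n\log n+1)}\bigr)$. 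The hypothesis $N\geq\exp(\exp(3n(4n\log n+1)))$ is exactly the condition $\eps\leq 1$.
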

\begin{proof}[Proof of Theorem \ref{vinogradov}, assuming Theorem \ref{momentcurve}]
    We will first show the inequality
    \begin{equation}\label{partialvinoest}
        J_{s,n}(N)\leq\exp\left(6\cdot 10^4 s\eps^{-4n\log n}n^{12n^2}\right)N^{2s\eps}(N^s+N^{2s-\frac{n(n+1)}{2}})
    \end{equation}
    for each $\eps\in(0,1)$. Subsequently, we will optimize this estimate over $\eps$.

    Let $p\in[n,2n]$ be a prime. Assume temporarily that $N=p^\ell$ for some $\ell\in\N$. For each $1\leq a\leq N$ integral, we write $I_a=a+p^\ell\Z_p$; these form a partition of $\Z_p$. Let $\{\mathcal{U}_{I_a,a}\}_{a\in[N]}$ be the associated family of anisotropic boxes adapted to the $n$-dimensional moment curve, as defined in Section 6.1 below. By Theorem \ref{momentcurve},
    \begin{equation*}
        \mathrm{Dec}_{\ell^2L^{n(n+1)}}(\{\mathcal{U}_{I_a,a}\}_{a\in[N]})\leq\exp\left(10^4(\log p)\eps^{-4n\log n}n^{10n^2}\right)p^{\eps\ell},
    \end{equation*}
    for each $\eps\in\frac{1}{\N}$. By Lemma \ref{interpolate}, we have
    \begin{equation*}
        \mathrm{Dec}_{\ell^2L^{2s}}(\{\mathcal{U}_{I_a,a}\}_{a\in[N]})\leq\exp\left(10^4(\log p)\eps^{-4n\log n}n^{10n^2}\right)p^{\eps\ell}(1+p^{\frac{\ell}{2}(1-\frac{n(n+1)}{2s})})
    \end{equation*}

    For each $1\leq a\leq N$ integral, write $g_a:\Q_p^k\to\C$ for the function
    \begin{equation*}
        g_a(x)=\chi(x\cdot\gamma(a)) 1_{p^{-\ell n}\Z_p^n}(x).
    \end{equation*}
    Then the Fourier support of $g_a$ is $\gamma(a)+p^{\ell n}\Z_p^n\subseteq\mathcal{U}_{I_a,a}$. Thus, by decoupling,
    \begin{equation*}
        \left\|\sum_{a=1}^Ng_a\right\|_{L^{2s}(\Q_p^n)}^{2s}\leq 2^{2s}\exp\left(2\cdot 10^4 s(\log p)\eps^{-4n\log n}n^{10n^2}\right)p^{2s\ell\eps}(1+p^{\ell(s-\frac{n(n+1)}{2})})p^{(n+s)\ell}.
    \end{equation*}
    By a standard manipulation, the left-hand side is $p^{\ell n}J_{s,n}(N)$. Thus, in this case, we obtain
    \begin{equation*}
        J_{s,n}(N)\leq 2^{2s}\exp\left(2\cdot 10^4 s(\log p)\eps^{-4n\log n}n^{10n^2}\right)N^{2s\eps}(N^s+N^{2s-\frac{n(n+1)}{2}})
    \end{equation*}
    If instead $p^\ell<N<p^{\ell'}$, then the preceding implies
    \begin{equation*}
        J_{s,n}(N)\leq 2^{2s}p^{2s(1+\eps)}\exp\left(2\cdot 10^4 s(\log p)\eps^{-4n\log n}n^{10n^2}\right)N^{2s\eps}(N^s+N^{2s-\frac{n(n+1)}{2}}).
    \end{equation*}
    Finally, appealing to $n\leq p\leq 2n$, and various elementary estimates, we conclude that
    \begin{equation*}
        J_{s,n}(N)\leq \exp\left(6\cdot 10^4 s\eps^{-4n\log n}n^{11n^2}\right)N^{2s\eps}(N^s+N^{2s-\frac{n(n+1)}{2}}).
    \end{equation*}
    Finally, interpolating between the cases $\frac{1}{\ell+1}<\eps<\frac{1}{\ell}$, we obtain \ref{partialvinoest}.

    Finally, we select $\eps=e^{3n}(\log N)^{-\frac{1}{4n\log n+1}}$ in \ref{partialvinoest}, using the lower bound on $N$. It transpires that
    \begin{equation*}
        \log\left(\frac{J_{s,n}(N)}{N^s+N^{2s-\frac{n(n+1)}{2}}}\right)\leq 6\cdot 10^4s(\log N)^{\frac{4n\log n}{4n\log n+1}}+2e^{3n}s(\log N)^{\frac{4n\log n}{4n\log n+1}}.
    \end{equation*}
    By trivial estimates, we conclude.
\end{proof}

Finally, we outline the remaining sections. In Section \ref{discr}, we reduce the proof of Theorem \ref{proj} to a problem of covering sets with tubes, which we refer to as a Kakeya estimate. In Section \ref{fourier}, we demonstrate that the Kakeya estimate may be proved with a suitable decoupling theorem. In Section \ref{decsection}, we prove the decoupling theorem, assuming that the usual Bourgain-Demeter-Guth decoupling theorem for the moment curve may be extended to the $p$-adic setting. Finally, in the appendices, we discuss the proof of moment curve decoupling in the $p$-adic setting, by modifying an argument of \cite{guo2021short}.

\subsection{Acknowledgements}

We would like to thank Amir Mohammadi and Hong Wang for suggesting this problem. We would also like to thank Terence Tao and Zane Kun Li for helpful suggestions and comments. 

\section{Discretization}\label{discr}\hfill

In this section, we reduce the projection theorem \ref{proj} to a Kakeya estimate, whose proof will be established by Fourier analysis in following sections. 

Let $\delta = p^{-l}$ and let $\mathbb{D}^{(m)} = \{x + p^{l}\Z_p^m: x\in\{0,\ldots,p^l-1\}^m\}$ be the set of $\delta$-balls in $\Z_p^m$. Let $\mathbb{T}_t^{(m)} = \Z_p^n\cap\{(\Pi_t^{(m)})^{-1}(D): D \in \mathbb{D}^{(m)}\}$. Elements in $\mathbb{T}_t$ are tilted $\delta^m \times 1^{n - m}$ boxes. We will use $T_t^{(m)}$ to denote elements in $\mathbb{T}_t$. We will drop the superscript if it is clear that we are dealing with the $(m, n)$ case. 

\begin{theorem}[Kakeya estimate]\label{kakeya}
Let $\delta, \delta_0 \in p^{-\mathbb{N}}$ with $\delta > \delta_0$. Let $\Lambda_\delta$ be a maximal $\delta$-separted set of $\mathbb{Z}_p$. Given $\varepsilon>0$ and $\alpha \in(0, m)$, let $\nu$ be a finite non-zero Borel measure supported in $\mathbb{Z}_p^n$ with $c_\alpha^{\delta_0}(\nu) = \sup_{\mathbf{x} \in \mathbb{Q}_p^n, r > \delta_0} \frac{\nu(B(\mathbf{x}, r))}{r^{\alpha}}<\infty$. Take $\mathbb{W}_\theta \subset \mathbb{T}_\theta$ arbitrary and denote $\mathbb{W}:=\cup_{\theta \in \Lambda_{\delta}} \mathbb{W}_\theta$. Suppose that
\[
\sum_{T \in \mathbb{W}} \mathds{1}_T(\mathbf{x}) \geq c\delta^{\varepsilon-1}, \quad \forall \mathbf{x} \in \operatorname{supp}(\nu).
\]
Then
\[
\#\mathbb{W} \geqslant C_{n,p,\eps}(c)\cdot\nu(\mathbb{Q}_p^n)c_\alpha^{\delta_0}(\nu)^{-1} \delta^{-1-\alpha} \delta^{O(\sqrt{\epsilon})}
\]
Here it is important that the constant $C_{n,p,\eps}(c)$ does not depend on $\delta$. The term $O(\sqrt{\eps})$ can be taken to be $10^{10n} \sqrt{\eps}$. The constant $C_{n,p,\eps}(c)$ can be chosen as 
\begin{equation*}
    C_{n,p,\eps}(c)=\min(1,c^{\eps^{-1}})\exp\left(-10^4(\log p)\eps^{-5n\log n}n^{20 n^2}\right).
\end{equation*}
\end{theorem}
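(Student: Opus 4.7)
The plan is to follow the template of \cite[Theorem 2.1]{gan2022restricted}, using the $p$-adic moment-curve decoupling in place of its Euclidean counterpart. The central Fourier-analytic observation is that for $T\in\mathbb{T}_\theta$, which has the form $T=\{x\in\Z_p^n:\gamma^{(k)}(\theta)\cdot x\in D_k,\ 1\leq k\leq m\}$, the indicator $\mathds{1}_T$ has Fourier support contained in a $\delta^{-1}$-dilate of the osculating $m$-plane $V_\theta=\operatorname{span}(\gamma^{(1)}(\theta),\ldots,\gamma^{(m)}(\theta))$; as $\theta$ varies over $\Lambda_\delta$, these plates are exactly the canonical pieces to which Theorem \ref{momentcurve} applies.

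First I would perform a dyadic pigeonholing, passing to a sub-collection $\mathbb{W}'\subseteq\mathbb{W}$ and a subset $Y\subseteq\operatorname{supp}\nu$ with $\nu(Y)\gtrsim\nu(\Q_p^n)$ on which the multiplicity $\sum_{T\in\mathbb{W}'}\mathds{1}_T(x)$ lies in a single dyadic interval $[\lambda,2\lambda)$ with $\lambda\gtrsim c\delta^{\eps-1}$, and such that $\#(\mathbb{W}'\cap\mathbb{T}_\theta)$ is comparable to a single value $\mu$ for each of the $N\leq\delta^{-1}$ ``active'' directions $\theta\in\Lambda_\delta$. This reduces matters to estimating $\#\mathbb{W}'=N\mu$, and the hypothesis gives the integrated lower bound $\int f\,d\nu\geq\lambda\nu(Y)\gtrsim c\delta^{\eps-1}\nu(\Q_p^n)$ for $f=\sum_{T\in\mathbb{W}'}\mathds{1}_T=\sum_\theta f_\theta$.

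For the matching upper bound, I would apply $\ell^2L^q$ moment-curve decoupling at exponent $q=2(n-m)$, which is in the valid range $[2,n(n+1)]$ and is chosen to balance the scaling $N\leq\delta^{-1}$ against the measure $\mu\delta^m$ of a fiber of tubes. Using disjointness of tubes within a single direction to compute $\|f_\theta\|_{L^q}^q=\mu\delta^m$, Theorem \ref{momentcurve} plus Lemma \ref{interpolate} yield
\[\|f\|_{L^q(\Q_p^n)}\leq D_\eps\,\delta^{-O(\eps)}\,N^{1/2}(\mu\delta^m)^{1/q}.\]
Since $f$ is constant on $\delta$-balls, the Frostman condition applied ball-by-ball gives $\|f\|_{L^q(\nu)}^q\leq c_\alpha^{\delta_0}(\nu)\delta^{\alpha-n}\|f\|_{L^q}^q$, and H\"older ($\int f\,d\nu\leq\|f\|_{L^q(\nu)}\nu(\Q_p^n)^{1/q'}$) coupled with $N^{q/2-1}\leq\delta^{1-q/2}$ collapses the powers of $\delta$ to $\delta^{-1-\alpha}$, yielding $\#\mathbb{W}'\gtrsim c^{O(1)}\delta^{-1-\alpha+O(\eps)}\nu(\Q_p^n)c_\alpha^{\delta_0}(\nu)^{-1}$.

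The main technical obstacle would be improving the exponent loss from the naive $O(\eps)$ to the stated $O(\sqrt\eps)$; this requires a broad--narrow style analysis in which $q$ is chosen adaptively depending on the actual ratio $N\delta$ (so that when $N\ll\delta^{-1}$ one interpolates the critical-exponent decoupling against trivial $L^2$ orthogonality in a way that absorbs the excess), as in \cite{gan2022restricted}. A secondary task, but one that drives much of the arithmetic in the statement, is to propagate the explicit decoupling constant from Theorem \ref{momentcurve} through the H\"older, interpolation, and Frostman steps to recover the claimed form of $C_{n,p,\eps}(c)$; this is essentially bookkeeping but requires care because the $\eps^{-5n\log n}n^{20n^2}$ exponent is dictated by how $\eps$ degrades through the interpolation $q=2(n-m)\pm O(\sqrt\eps)$.
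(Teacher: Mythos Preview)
Your outline has the right Fourier-analytic skeleton, but it misidentifies the mechanism that produces the $O(\sqrt{\eps})$ exponent, and this is the heart of the argument.

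The paper does \emph{not} obtain $\sqrt{\eps}$ by adaptively choosing the decoupling exponent $q$ against the ratio $N\delta$. Instead, the proof runs by \emph{induction on $\eps$} along a specific decreasing sequence $\eps_1=\tfrac12,\eps_2,\ldots$ with $\tilde\eps_{k+1}:=\eps_{k+1}/(1-\sqrt{\eps_{k+1}})=\eps_k$. For a given $\eps$, one performs a high--low frequency split with cutoff at radius $\delta^{-1}p^{-\kappa}$ where $p^\kappa\sim\delta^{-\sqrt{\eps}}$. If the low-frequency part dominates, the convolution thickens each tube $T$ by a factor $p^\kappa$; after discarding ``light'' thickened tubes, the surviving collection satisfies the Kakeya hypothesis at the coarser scale $\delta^{1-\sqrt\eps}$ with exponent $\tilde\eps$, and the \emph{induction hypothesis} closes that branch. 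The high-frequency branch is handled by a single application of decoupling at the fixed exponent $q_n=n(n+1)$ and gives only an $O(\eps)$ loss, which is harmless. The $\sqrt\eps$ is thus a feature of the recursion, not of interpolation in $q$.

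A second point: the decoupling input is not plain $\ell^2L^q$ moment-curve decoupling. The paper formulates and proves a bespoke $\ell^{q_n}L^{q_n}$ estimate (Proposition~\ref{decprop}) for functions with Fourier support in the truncated plates $\delta^{-1}\tau_\theta\cap(\Z_p^n\setminus p\Z_p^n)$, with exponent $\delta^{-1+(n-m+1)/q_n}$. Proving this requires the machinery of Section~\ref{decsection}: one decomposes the union of plates as a thickened cone over a lower-dimensional moment curve, slices into cylinders, and iterates lower-dimensional decoupling. Your proposal to use $q=2(n-m)$ with $\ell^2$ decoupling and then balance via $N^{q/2-1}\leq\delta^{1-q/2}$ does not match this numerology and would not directly yield the needed $\delta^{-1-\alpha}$ after Frostman; in the paper's accounting the Frostman step, the $\ell^{q_n}$ disjointness $\|g_\theta\|_{q_n}^{q_n}=\#\mathbb{W}_\theta\cdot\delta^m$, and the exponent $(n-m+1)/q_n$ combine exactly to produce $\delta^{-\alpha-1}$.
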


\begin{proof}[Proof of Theorem \ref{proj} assuming Theorem \ref{kakeya}]\hfill

The proof is a finitary version of the one in section 2 of \cite{gan2022restricted}. 
Let $\varepsilon_0 = 10^{10n} \sqrt{2\varepsilon}$. Fix $s = \alpha - 2\varepsilon_0 < \alpha$. Note that $s < \alpha - 2\varepsilon - \varepsilon_0$. For each $b\geq b_0$ and each $b$-separated set $\Lambda_b\subseteq\Z_p$, we define the set
\begin{equation*}
    F_{b,t}^{\mathrm{bad}}=\Big\{w\in F:\nu(\{w'\in F:|\Pi_t^{(m)}(w)-\Pi_t^{(m)}(w')|\leq b\}) >  c_{\alpha}^{b_0}(\nu) b^{s}\Big\}
\end{equation*}
for all $t \in \Lambda_b$. 

We will first demonstrate that there exists $C(\alpha, s)$ such that 
\[
\sum_{t \in \Lambda_b} \nu(F_{b, t}^{\mathrm{bad}}) \leq C(\alpha, s) b^{2\varepsilon-1}.
\]

Suppose not, we have that 
\begin{align*}
    \sum_{t\in\Lambda_b}\nu(F_{b,t}^{\mathrm{bad}})>Cb^{2\varepsilon-1}.
\end{align*}

Note that for all $t$, $\bigl|\Pi_{t}^{(m)}(F_{b, t}^{\mathrm{bad}})\bigr|_{b} \leq \frac{1}{c_\alpha^{b_0}(\nu)} b^{-s}$. Hence we could cover it by a collection $\mathbb{D}_t$ of balls $D$ where $\#\mathbb{D}_t\leq \frac{1}{c_\alpha^{b_0}(\nu)} b^{-s}$. Let $\mathbb{W}_t = \{p_t^{-1}(D) \bigcap \mathbb{Z}_p^n: D \in \mathbb{D}_t\}$, $\mathbb{W} = \bigcup_{t} \mathbb{W}_t$. Consider the following set 
\[A = \{(t, w) \in \Lambda_b \times F: w \in F_{b, t}^{\mathrm{bad}}\}. \]
Let $\lambda$ denote the counting measure on $\Lambda_b$. We have 
\[(\lambda \otimes \nu)(A) = \sum_{t \in \Lambda_b} \nu(F_{b, t}^{\mathrm{bad}}) > C b^{2\varepsilon-1}. \]
Therefore
\[\int \#\{t \in \Lambda_b: w \in F_{b,t}^{\mathrm{bad}}\} d\nu(w) > C b^{2\varepsilon-1},\]
so that, dividing the integral into the domains where the integrand is larger/smaller than $\frac{C}{2}b^{2\eps-1}$,
\[b^{-1}\nu\Big(\big\{w \in F: \sum_{T \in \mathbb{W}} \mathds{1}_{T}(x) > \frac{C}{2} b^{2\varepsilon} b^{-1}\big\}\Big) + \frac{C}{2} b^{2\varepsilon-1}  > C b^{2\varepsilon-1},\]
i.e.
\[
\nu\Big(\big\{w \in F: \sum_{T \in \mathbb{W}} \mathds{1}_{T}(x) > \frac{C}{2} b^{2\varepsilon-1}\big\}\Big) > \frac{C}{2}  b^{2\varepsilon}. 
\]

Let $F_{b}^{\mathrm{bad}} = \big\{w \in F: \sum_{T \in \mathbb{W}} \mathds{1}_{T}(x) > \frac{C}{2} b^{2\varepsilon-1}\big\}$, so that $\nu(F_{b}^{\mathrm{bad}}) > \frac{C}{2}  b^{2\varepsilon}$. Note that for all $x \in F_{b}^{\mathrm{bad}}$, $\sum_{T \in \mathbb{W}} \mathds{1}_{T}(x) > \frac{C}{2} b^{2\varepsilon-1}$. 

We apply Theorem~\ref{kakeya} to $\nu|_{F_{b}^{\mathrm{bad}}}$, scale $b$ and $2\varepsilon$. There exists $C_{2\varepsilon, \alpha}$ 

\[
\#\mathbb{W} \geq C_{2\varepsilon, \alpha} \cdot \frac{C}{2}  b^{2\varepsilon} c_\alpha^{b_0}(\nu)^{-1} b^{-1-\alpha} b^{\varepsilon_0} . 
\]
By pigeonholing, this implies that there exists $t \in \Lambda_b$ such that
\[
\#\mathbb{W}_t \geq C_{2\varepsilon, \alpha} \cdot \frac{C}{2}  b^{2\varepsilon} c_\alpha^{b_0}(\nu)^{-1} b^{-\alpha} b^{\epsilon_0} . 
\]
This is a contradiction to the assumption that $\#\mathbb{W}_t < \frac{1}{c_\alpha^{b_0}(\nu)} b^{-s}$ if $C > \frac{2}{C_{2\varepsilon, \alpha}}$. Therefore, 
\[
\sum_{t \in \Lambda_b} \nu(F_{b, t}^{bad}) \leq \frac{4}{C_{2\varepsilon, \alpha}} b^{2\varepsilon} \cdot b^{-1}. 
\]

Now let $E_b$ be the `exceptional' set of parameters $t \in \mathbb{Z}_p$ where $F_{b,t}^{\mathrm{bad}}$ is large, namely, 
\[E_b = \{t \in \mathbb{Z}_p:\nu(F_{b,t}^{bad}) > \frac{4}{C_{2\varepsilon, \alpha}} b^{\varepsilon}\}. \]
Pick a maximal $b$-separated set of $E_b$ and extend it to be a maximal $b$-separated set $\Lambda_b$ in $\Z_p$, we have
\begin{align*}
    b^{-1} \cdot \mu(E_b) \cdot \frac{4}{C_{2\varepsilon, \alpha}} b^{\frac{\varepsilon}{2}} \leq{}& \#(\Lambda_b \cap E_b)\cdot \frac{4}{C_{2\varepsilon, \alpha}} b^{\frac{\varepsilon}{2}}\\
   \leq{}& \sum_{t \in \Lambda_b \bigcap E_b} \nu(F_{b, t}^{bad})\\
   \leq{}& \frac{4}{C_{2\varepsilon, \alpha}} b^{\varepsilon} \cdot b^{-1}. 
\end{align*}

Therefore, $\mu(E_b) < b^{\varepsilon}$. Let $C_{\varepsilon} = \max \{c_{\alpha}^{b_0}(\nu), \frac{4}{C_{2\varepsilon, \alpha}}\}$ and $J_b = \Z_p \backslash E_b$, we complete the proof. 
\end{proof}

\section{Kakeya estimate via decoupling cones over moment curves}\label{fourier}

In this section, we formulate the decoupling estimate Proposition \ref{decprop}, and indicate how it may be used to prove Theorem \ref{kakeya}. We begin by setting out some notation that will be helpful in studying the wave packet expansions of functions with restricted Fourier support.

For each $\theta\in\Lambda_\delta$ and $\alpha,\beta\in p^{-\Z}$, write
\begin{equation}\label{rescale}
    A_{\theta,\alpha,\beta}=[\alpha^{-1}\gamma^{(1)}(\theta),\ldots,\alpha^{-1}\gamma^{(m)}(\theta),\beta^{-1}\gamma^{(m+1)}(\theta),\ldots,\beta^{-1}\gamma^{(n)}(\theta)].
\end{equation}
When the third subscript is supressed, we will understand it to be $1$. Write also
\begin{equation*}
    \tau_\theta=A_{\theta,\delta^{-1}}[\Z_p^n].
\end{equation*}
Notice in particular that $\tau_\theta$ has dimensions $\delta^{-1}\times\cdots\times \delta^{-1}\times 1\times\cdots\times 1$, with $m$ copies of $\delta^{-1}$ and $(n-m)$ copies of $1$. If $f_\theta$ has Fourier support within $\tau_\theta$, then $f_\theta$ may be expanded into wave packets of the form $a_T\chi(x\cdot\gamma(\theta))\mathds{1}_T(x)$ for $a_T\in\C$ and $T$ a translate of $A_{\theta,\delta^{-1}}^{-\top}\Z_p^n$.
Note in particular that each $T$ has $p$-adic volume $\delta^{-(n-m)}$. 

It will be convenient to observe that
\begin{equation}
    A_{\theta,1}^{-1}=A_{-\theta,1}
\end{equation}
Indeed, when $i\geq j$,
\begin{equation*}
    (A_{\theta,1}A_{-\theta,1})_{i,j}=\sum_{k=1}^n(A_{\theta,1})_{i,k}(A_{-\theta,1})_{k,j}=\theta^{i-j}\sum_{k=j}^i\frac{(-1)^{k-j}}{(i-k)!(k-j)!}
\end{equation*}
The sum may be rewritten as
\begin{equation*}
    \sum_{k=j}^k\frac{(-1)^{k-j}}{(i-k)!(k-j)!}=\frac{1}{(2r)!}\sum_{h=0}^{i-j}(-1)^h\binom{i-j}{h}=\frac{1}{(i-j)!}(1-1)^{i-j},
\end{equation*}
and the claim follows.

\begin{proposition}[Decoupling estimate]\label{decprop} For each $\eps\in(0,1)$ and $n\in\N$, we may find $D_{n,p,\eps}\geq 1$ such that the following holds. Suppose $\delta\in p^{-\N}$ and $\Lambda_\delta$ is a $\delta$-separated subset of $\Z_p$. For each $\theta\in\Lambda_\delta$, let $f_\theta$ have Fourier support in the set $\delta^{-1}\tau_\theta\cap(\Z_p^n\setminus p\Z_p^n)$. Write $q_n=n(n+1)$. Then
\begin{equation*}
    \left\|\sum_{\theta\in\Lambda_\delta}f_\theta\right\|_{L^{q_n}(\Q_p^n)}\leq D_{n,p,\eps}\delta^{-1+\frac{n-m+1}{q_n}-\eps}\left(\sum_{\theta\in\Lambda_\delta}\|f_\theta\|_{L^{q_n}(\Q_p^n)}^{q_n}\right)^{1/q_n}.
\end{equation*}
for each $\eps>0$. We may choose $D_{n,p,\eps}$ to be the quantity
\begin{equation*}
    D_{n,p,\eps}=\exp\left(10^4(\log p)\eps^{-5n\log n}n^{10 n^2}\right).
\end{equation*}

\end{proposition}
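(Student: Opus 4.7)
The plan is to deduce this cone-type decoupling from the $p$-adic moment curve decoupling (Theorem~\ref{momentcurve}) via a rescaling followed by a Fubini slicing argument, following the strategy of \cite{gan2022restricted}.

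First I would standardize the Fourier supports. The set $\delta^{-1}\tau_\theta \cap (\Z_p^n \setminus p\Z_p^n)$ is a $p$-adic slab adapted to the moment-curve frame at $\theta$, with $p$-adic scale $\delta^{-1}$ in the first $m$ directions (the projection or ``cone'' directions, spanned by $\gamma^{(1)}(\theta), \ldots, \gamma^{(m)}(\theta)$) and scale $1$ in the last $n-m$ directions (spanned by $\gamma^{(m+1)}(\theta), \ldots, \gamma^{(n)}(\theta)$). Using the identity $A_{\theta,1}^{-1}=A_{-\theta,1}$ derived just above the statement, which shows that the moment-curve frame change is a unipotent $\Z_p^n$-preserving map, I would translate each $f_\theta$ to a single standardized Fourier support at the cost of a $\theta$-dependent modulation $x \mapsto \chi(x \cdot \gamma(\theta))$ on the spatial side.

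Next, I would apply a Fubini slicing along the cone directions. Fixing the first $m-1$ spatial coordinates $(x_1,\ldots,x_{m-1})$ and applying Minkowski in $L^{q_n}$, the residual problem on each slice becomes a decoupling inequality for functions in $\Q_p^{n-m+2}$ whose Fourier supports approximate $\delta$-neighborhoods of the $(n-m+1)$-dimensional moment curve obtained by marginalizing $\gamma(t)$ along the cone directions. Theorem~\ref{momentcurve} then provides decoupling on each slice at the critical exponent $q_n = n(n+1)$, with constant $\exp(10^4(\log p)\eps^{-4n\log n}n^{10 n^2})$. Re-integrating the slice estimates and tracking the Jacobian of the rescaling together with the $L^{q_n}\to L^2$ H\"older loss over the $m-1$ integrated directions produces the prefactor $\delta^{-1+(n-m+1)/q_n}$, with the $\delta^{-\eps}$ factor inherited from Theorem~\ref{momentcurve}.

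The main obstacle is the careful bookkeeping of the slicing losses and verification that the intermediate shapes remain moment-curve-adapted so that Theorem~\ref{momentcurve} is applicable on each slice. Relative to the real-variable argument of \cite{gan2022restricted}, the $p$-adic setting should be cleaner: characteristic functions of $p$-adic balls are exact orthogonal projectors (so Fubini and duality apply without $\eps$-loss) and the change of variables $A_{-\theta,1}$ is exact rather than approximate. The enlargement of the exponent of $\eps^{-1}$ in $D_{n,p,\eps}$ (to $\eps^{-5n\log n}$, compared to $\eps^{-4n\log n}$ in Theorem~\ref{momentcurve}) is consistent with absorbing one iteration of this slicing procedure into the final constant.
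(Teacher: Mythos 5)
The proposal takes a genuinely different route from the paper, and that route has a significant gap. The paper's proof of Proposition~\ref{decprop} (Section~\ref{decsection}) is an iterative algorithm with a tree structure: one first pigeonholes on the index $m_1$ at which the leading unit coefficient $\lambda_{m_1}$ occurs and on a scale parameter $s_1$, decomposing the cone $\Omega_{J,m_1}$ into thin ``collar'' slices $\Omega_{J,m_1,s_1}$; then one applies the cone-decoupling trick of comparing each such slice with a cylinder over an $(n-m_1)$-dimensional moment curve (Lemma~\ref{decstep}), applies Theorem~\ref{momentcurve} in that lower dimension, parabolically rescales, and repeats with new parameters $m_2,n_2,s_2$. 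This loop runs for up to $\mathfrak{J}\leq 2n$ steps (Lemma~\ref{alginduction}), and the final constant is assembled from the product over all steps.

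Your proposal collapses all of this to a single Fubini slicing along $m-1$ spatial coordinates followed by one application of moment-curve decoupling. That does not match the geometry. The Fourier supports $\delta^{-1}\tau_\theta\cap(\Z_p^n\setminus p\Z_p^n)$ are plates of dimensions $1\times\cdots\times 1\times\delta\times\cdots\times\delta$ whose $m$ thick directions rotate with $\theta$; their union over $\theta\in\Lambda_\delta$ is not a $\delta$-tube around a moment curve but a cone whose rim is $(m-1)$-dimensional. After a Fubini slicing along $m-1$ fixed coordinates, the residual Fourier supports are $\theta$-dependent tilted slices that are \emph{not} uniformly $\delta$-neighborhoods of an $(n-m+1)$-dimensional moment curve; one must (i) pigeonhole on which rim coefficient is a unit, (ii) decompose the cone into thin slices $\Omega_{J,m_1,s_1}$ and use the cylinder comparison to make the moment-curve decoupling applicable, and (iii) rescale and iterate since each application only decouples down to the scale $s_1^{1+\frac{1}{n-m_1}}$, not to $\delta$. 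Your remark that the intermediate shapes must ``remain moment-curve-adapted'' correctly identifies the hard point, but the proposal gives no mechanism for ensuring this, and the single-slice claim would fail as stated. The heuristic that $\eps^{-5n\log n}$ ``absorbs one iteration'' also undersells the actual cost, which comes from compounding losses over $O(n)$ iterations together with the $\eps$-normalization needed to remove the divisibility assumptions on $\delta$ and $\eps$.
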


Before proving Prop. \ref{decprop}, we indicate how it implies Theorem \ref{kakeya}. 

\begin{proof}[Proof of Theorem \ref{kakeya} using Proposition \ref{decprop}]\hfill

We claim the particular inequality
\begin{equation*}
    \#\mathbb{W}\geq \widetilde{C}_{n,p,\eps_k}(c_0)\nu(\Q_p^n)c_\alpha^{\delta_0}(\nu)^{-1}\delta^{-\alpha-1+10^{10n}\sqrt{\eps_k}},
\end{equation*}
for the particular sequence $\{\eps_k\}_{k=1}^\infty$, defined by
\begin{equation*}
    \eps_1=\frac{1}{2},\quad \eps_{k+1}=\frac{1}{4}\left(\sqrt{\eps_k^2+4\eps_k}-\eps_k\right),\quad k\in\N.
\end{equation*}
Then $0<\eps_{k+1}<\eps_k$ for all $k$, and $\widetilde{\eps_{k+1}}=\eps_k$, where
\begin{equation*}
    \tilde{\eps}=\frac{\eps}{1-\sqrt{\eps}}.
\end{equation*}
We have also written $c_0=\min(1,c)$ and
\begin{equation*}
    \widetilde{C}_{n,p,\eps}(c_0)=(10^{-2}\eps)^{q_n}(c_0/4p)^{\eps^{-1}}D_{n,p,\eps}^{-q_n}.
\end{equation*}
From Prop. \ref{decprop}, and observing that $\tilde{\eps}^{-1}+1\leq\eps^{-1}$, the original claim holds for each $\eps_k$. By trivial inequalities, the full result holds.

Following \cite{gan2022restricted}, we proceed by induction on $\eps>0$. By a trivial estimate when $\eps=\eps_1=\frac{1}{2}$, we have the base case. It suffices to show that, if Theorem \ref{kakeya} holds for $\tilde{\eps}=\frac{\eps}{1-\sqrt{\eps}}$, then it holds for $\eps$.

Proceeding to the induction, we assume the result for $\tilde{\eps}$. The tiles in $\mathbb{T}_\theta$ of $\Q_p^n$ have dimensions $\delta\times\cdots\times \delta\times 1\times\cdots\times 1$. We further have, for each $\theta$, a subfamily $\mathbb{W}_\theta\subseteq\mathbb{T}_\theta$ such that $\sum_{T\in\mathbb{W}}\mathds{1}_{T}(x)\geq c_0\delta^{\eps-1}$ for all $x$ in the support of a special measure $\nu$. We wish to demonstrate a suitable lower bound on $\#\mathbb{W}$.

To this end, first observe the calculation
\begin{equation*}
    \Pi_\theta^{(m)}A_{\theta,\delta^{-1}}^{-\top}=\left.
\begin{bmatrix}\,
\smash{
  \underbrace{
    \begin{matrix}
        \delta^{-1} & 0 & \cdots & 0 & 0 & \cdots & 0\\
        0 & \delta^{-1} & \cdots & 0 & 0 & \cdots & 0\\
        \vdots & \vdots & \ddots & \vdots & \vdots & \ddots & \vdots\\
        0 & 0 & \cdots & \delta^{-1} & 0 & \cdots & 0
    \end{matrix}
  }_{n}
}
\vphantom{
  \begin{matrix}
  \smash[b]{\vphantom{\Big|}}
  0\\0\\vdots\\0
  \smash[t]{\vphantom{\Big|}}
  \end{matrix}
}
\,\end{bmatrix}
\,\right\rbrace{\scriptstyle k}
\vphantom{\underbrace{\begin{matrix}0\\0\\vdots\\0\end{matrix}}_{k}};
\end{equation*}
consequently, for each $x\in\Z_p^m$,
\begin{equation*}
    \Z_p^n\cap(\Pi_\theta^{(m)})^{-1}[x+\delta^{-1}\Z_p^m]=A_{\theta,\delta^{-1}}^{-\top}[\delta (x,0)+\Z_p^m\times\Z_p^{n-m}].
\end{equation*}
Thus, we will write members of $\mathbb{T}_\theta$ as translates $c_T+A_{\theta,\delta^{-1}}^{-\top}[\Z_p^n]$ for various choices of $c_T\in\Q_p^n$.

For each $T=c_{T}+A_{\theta,\delta^{-1}}^{-\top}\Z_p^n\in\mathbb{T}_\theta$, consider the function $\mathds{1}_{T}$. If we recall that $\tau_\theta=A_{\theta,\delta^{-1}}[\Z_p^n]$, then we may verify that
\begin{equation*}
    \hat{\mathds{1}}_{T}(\xi)=\chi(-c_{T}\cdot\xi)\delta^{m} \mathds{1}_{\tau_\theta}(\xi).
\end{equation*}
Observe that $\tau_\theta$ has dimensions $\delta^{-1}\times\cdots\times \delta^{-1}\times 1\times\cdots\times 1$, with long sides parallel to $\gamma^{(1)}(\theta),\ldots,\gamma^{(m)}(\theta)$; observe also that $\tau_\theta$ is symmetric about the origin.

Let $\kappa$ be a positive integer such that $\delta^{-\sqrt{\eps}}\leq p^\kappa\leq p\delta^{-\sqrt{\eps}}$, and write $\psi_\delta=\mathds{1}_{B(0,\delta^{-1}p^{-\kappa})}$. Then, for some subset $F\subseteq\Q_p^n$ with $\nu(F)\geq\frac{1}{2}\nu(\Q_p^n)$, we either have
\begin{equation}\label{low}
    \frac{c_0}{2}\delta^{-1+\eps}\leq \left|\sum_{T\in\mathbb{W}}\mathds{1}_{T}*\psi_\delta^\vee(x)\right|\quad\forall x\in F,
\end{equation}
or
\begin{equation}\label{high}
    \frac{c_0}{2}\delta^{-1+\eps}\leq \left|\sum_{T\in\mathbb{W}}\mathds{1}_{T}*(\mathds{1}_{\delta\Z_p^n}-\psi_\delta)^\vee(x)\right|\quad\forall x\in F.
\end{equation}

Observe from the outset that 
\begin{equation*}
    \psi_\delta^\vee=\delta^{-n}p^{-n\kappa}\mathds{1}_{B(0,\delta p^\kappa)}.
\end{equation*}
 We consider case \ref{low} first. For each fixed $T$, we may compute
 \begin{equation*}
     \mathds{1}_{T}*\psi_\delta^\vee=p^{-m\kappa}\mathds{1}_{\widetilde{T}},
 \end{equation*}
where $\widetilde{T}=c_{T}+A_{\theta,p^{-\kappa}\delta^{-1}}^{-\top}[\Z_p^n]$, recalling that $T=c_{T}+A_{\theta,\delta^{-1}}^{-\top}(\Z_p^n)$.
 
 Observe that $\widetilde{T}$ is the $(\delta p^\kappa\times\cdots\times \delta p^\kappa\times 1\times\cdots\times 1)$-plate with the same center as $T$ and the same short directions. As such, for each $\theta\in\Lambda_\delta$ we fix the tiling $\mathcal{T}_\theta$ of $\Q_p^n$ by translates of $A_{\theta,p^{-\kappa}\delta^{-1}}^{-\top}[\Z_p^n]$.

 We investigate the relationship between $\mathbb{T}$ and $\mathcal{T}$. Suppose $T\in\mathbb{T}_{\theta'}$ and $\widetilde{T}\in\mathcal{T}_{\theta}$ are such that $T\subseteq\widetilde{T}$. Let $q\in\Q_p^n$ be the unique element such that $\{q_j\}_p=q_j$ for all $1\leq j\leq n$ and such that $\widetilde{T}=A_{\theta,p^{-\kappa}\delta^{-1}}^{-\top}[q+\Z_p^n]$. Then $-q+A_{\theta,p^{-\kappa}\delta^{-1}}^{\top}[T]=:B$ is a subset of $\Z_p^n$. Moreover, writing $T=A_{\theta',\delta^{-1}}^{-\top}[b+\Z_p^n]$ for the unique $b\in\Q_p^n$ satisfying the preceding equality and $\{b_j\}_p=b_j$ for all $1\leq j\leq n$, we see that
 \begin{equation*}
     B=-q+A_{\theta,p^{-\kappa}\delta^{-1}}^{\top}A_{\theta',\delta^{-1}}^{-\top}b+A_{\theta,p^{-\kappa}\delta^{-1}}^{\top}A_{\theta',\delta^{-1}}^{-\top}[\Z_p^n].
 \end{equation*}

 We will bound, for each $\theta\in\Z_p$, the number of $\theta'\in\Z_p$ such that $A_{\theta,p^{-\kappa}\delta^{-1}}^\top A_{\theta',\delta^{-1}}^{-\top}[\Z_p^n]\subseteq\Z_p^n$. We begin by noticing that
 \begin{equation*}
     A_{\theta,1}^\top A_{\theta',1}^{-\top}=A_{\theta-\theta',1}^\top,
 \end{equation*}
 so that for each $j\leq k$
 \begin{equation*}
     \left(A_{\theta,1}^\top A_{-\theta',1}^{\top}\right)_{jk}=\frac{(\theta-\theta')^{k-j}}{(k-j)!}.
 \end{equation*}
 If $\theta,\theta'$ are such that we have the inequality
 \begin{equation*}
     |\theta-\theta'|_p>p^\kappa\delta,
 \end{equation*}
 then it follows that
 \begin{equation*}
     (A_{\theta,1}^\top A_{-\theta',1}^\top)(e_m)\not\in p^{-\kappa}\delta^{-1}\Z_p^m\times\Z_p^{n-m},
 \end{equation*}
 and hence
 \begin{equation*}
     \operatorname{diag}(p^\kappa\delta,\ldots,p^\kappa\delta,1,\ldots,1)A_{\theta,1}^\top A_{\theta',1}^{-\top}\operatorname{diag}(\delta^{-1},\ldots,\delta^{-1},1,\ldots,1)[\Z_p^n]\not\subseteq\Z_p^n,
 \end{equation*}
 whereas the left-hand side is just $A_{\theta,p^{-\kappa}\delta^{-1}}^\top A_{\theta',\delta^{-1}}^{-\top}[\Z_p^n]$.  It follows that, for each $\widetilde{T}$,
 \begin{equation}\label{trivpacketct}
     \#\big\{T\in\mathbb{W}:T\subseteq\widetilde{T}\big\}\leq p^{\kappa(m+1)}.
 \end{equation}
 On the other hand, if $|\theta-\theta'|_p\leq p^{\kappa}\delta$, we note that
 \begin{equation*}
     A_{\theta,p^{-\kappa}\delta^{-1}}^{\top}A_{\theta',p^{-\kappa}\delta^{-1}}^{-\top}[\Z_p^n]\subseteq\Z_p^n,
 \end{equation*}
 so $A_{\theta,p^{-\kappa}\delta^{-1}}^{-\top}$ and $A_{\theta',p^{-\kappa}\delta^{-1}}^{-\top}$ define the same thick wave packets unless $|\theta-\theta'|_p>p^\kappa\delta$.
 
 
 Now, writing $\mathcal{T}=\bigcup_{\theta\in\Lambda_\delta}\mathcal{T}_\theta$, it holds that
 \begin{equation*}
     \sum_{T\in\mathbb{W}}\mathds{1}_T*\psi_\delta^\vee\leq p^{-m\kappa}\sum_{\widetilde{T}\in\mathcal{T}}(\#\{T\in\mathbb{W}:T\subseteq\widetilde{T}\})\mathds{1}_{\widetilde{T}}.
 \end{equation*}
If we set
\begin{equation*}
    \mathcal{T}_{\theta,\mathrm{light}}=\Big\{\widetilde{T}\in\mathcal{T}_\theta:\#\{T\in\mathbb{W}:T\subseteq\widetilde{T}\}\leq \frac{c_0}{4}\delta^{\eps}p^{(m+1)\kappa}\Big\},
\end{equation*}
and $\mathcal{T}_{\theta,\mathrm{heavy}}=\mathcal{T}_\theta\setminus\mathcal{T}_{\theta,\mathrm{light}}$, then
\begin{equation*}
    p^{-m\kappa}\left\|\sum_{\theta\in\Lambda_\delta}\sum_{\widetilde{T}\in\mathcal{T}_{\theta,\mathrm{light}}}\big(\#\{T\in\mathbb{W}:T\subseteq\widetilde{T}\}\big)\mathds{1}_{\widetilde{T}}\right\|_{L^\infty(\Q_p^n)}\leq \frac{c_0}{4}\delta^{-1+\eps},
\end{equation*}
which implies, comparing with \ref{low},
\begin{equation*}
    \frac{c_0}{4}\delta^{-1+\eps}\leq p^{-m\kappa}\sum_{\theta\in\Lambda_\delta}\sum_{\widetilde{T}\in\mathcal{T}_{\theta,\mathrm{heavy}}}(\#\{T\in\mathbb{W}_\theta:T\subseteq\widetilde{T}\})\mathds{1}_{\widetilde{T}}(x),\quad x\in F.
\end{equation*}
From the upper bound \ref{trivpacketct}, we have on $F$
\begin{equation}\label{dilconc}
    \delta^{-1+\eps+\sqrt{\eps}}\leq p\delta^{-1+\eps}p^{-\kappa}\leq p\frac{4}{c_0}\sum_{\theta\in\Lambda_\delta}\sum_{\tilde{T}\in\mathcal{T}_{\theta,\mathrm{heavy}}}\mathds{1}_{\tilde{T}}(x),\quad x\in F.
\end{equation}

Observe that
\begin{equation*}
    \delta^{-1+\eps+\sqrt{\eps}}=(\delta^{1-\sqrt{\eps}})^{-1+\frac{\eps}{1-\sqrt{\eps}}},
    \end{equation*}
so that the dilated arrangement $\{\widetilde{T}\in\mathcal{T}_{\theta,\mathrm{heavy}}:\theta\in\Lambda_\delta\}$ satisfies our Kakeya hypothesis with $\eps$ replaced by $\tilde{\eps}=\frac{\eps}{1-\sqrt{\eps}}$, $\delta$ replaced by $\delta^{1-\sqrt{\eps}}$, and constant $\frac{c_0}{4p}$. By the induction hypothesis, we obtain the estimate
\begin{equation*}
    \#\bigcup_\theta\mathcal{T}_{\theta,\mathrm{heavy}} \geq \widetilde{C}_{n,p,\eps}(c_0)\nu(\Q_p^n)c_\alpha^{\delta_0}(\nu)^{-1}\delta^{(1-\sqrt{\eps})(-1-\alpha)}\delta^{10^{10}(1-\sqrt{\eps})(\sqrt{\frac{\eps}{1-\sqrt{\eps}}})}.
\end{equation*}
Recall that, if $\widetilde{T}_1\in\mathcal{T}_{\theta_1,\mathrm{heavy}}$ and $\widetilde{T}_2\in\mathcal{T}_{\theta_2,\mathrm{heavy}}$ and $T\in\mathbb{W}$ are such that $T\subseteq\widetilde{T}_1\cap\widetilde{T}_2$, then $|\theta_1-\theta_2|_p\leq p^k\delta$. Thus we may bound
\begin{equation*}
    \#\mathbb{W}\geq p^{\kappa m}\delta^\eps\frac{c_0}{4p}\#\bigcup_{\theta\in\Lambda_\delta}\mathcal{T}_{\theta,\mathrm{heavy}}.
\end{equation*}
Combining the previous two displays,
\begin{equation*}
    \begin{split}
    \#\mathbb{W}&\geq\widetilde{C}_{n,p,\eps}(c_0)\nu(\Q_p^n)c_\alpha^{\delta_0}(\nu)^{-1}\delta^{(1-\sqrt{\eps})(-1-\alpha)}\delta^{\eps-m\sqrt{\eps}}\delta^{10^{10n}\sqrt{\eps-\eps\sqrt{\eps}}}\\
    &=\widetilde{C}_{n,p,\eps}(c_0)\nu(\Q_p^n)c_\alpha^{\delta_0}(\nu)^{-1}\delta^{-1-\alpha}\delta^{(\alpha-m)\sqrt{\eps}}\delta^{\eps+10^{10n}\sqrt{\eps-\eps\sqrt{\eps}}}\\
    &\geq\widetilde{C}_{n,p,\eps}(c_0)\nu(\Q_p^n)c_\alpha^{\delta_0}(\nu)^{-1}\delta^{-1-\alpha}\delta^{(\alpha-m)\sqrt{\eps}}\delta^{10^{10n}\sqrt{\eps}}
    \end{split}.
\end{equation*}
Since $\alpha<m$, we obtain
\begin{equation*}
    \#\mathbb{W}\geq\widetilde{C}_{n,p,\eps}(c_0)\nu(\Q_p^n)c_\alpha^{\delta_0}(\nu)^{-1}\delta^{-1-\alpha+10^{10n}\sqrt{\eps}}.
\end{equation*}
Recalling the form of $\widetilde{C}_{n,p,\eps}(c_0)$, we are done.

Next, we assume \ref{high} holds. For each $\theta$ write $g_\theta=\sum_{T\in\mathbb{W}_\theta}(\mathds{1}_{T}*(\mathds{1}_{\delta\Z_p^n}-\psi_\delta)^\vee)(x)$. Then
\begin{equation*}
    \hat{g}_\theta(\xi)=\delta^m\sum_{T\in\mathbb{W}_\theta}\chi(-c_{T}\cdot\xi) \mathds{1}_{\tau_\theta}(\xi)(\mathds{1}_{\delta\Z_p^n}-\psi_\delta)(\xi).
\end{equation*}
Write $f_\theta(x)=g_\theta(\delta^{-1}x)$. Then the preceding display shows that $\hat{f}_\theta$ is supported in $\delta^{-1}\tau_\theta\setminus p^k\Z_p$. If we further decompose
\begin{equation*}
    f_\theta=\sum_{j=0}^{k-1}f_\theta*\mathds{1}_{p^j\Z_p^n\setminus p^{j+1}\Z_p^n}^\vee,
\end{equation*}
and notice that each $x\mapsto (f_\theta*\mathds{1}_{p^j\Z_p^n\setminus p^{j+1}\Z_p^n}^\vee)(p^{-j}x)$ satisfies the hypotheses of Prop. \ref{decprop}, then we conclude
\begin{equation*}
    \left\|\sum_{\theta\in\Lambda_\delta}f_\theta*\mathds{1}_{p^j\Z_p^n\setminus p^{j+1}\Z_p^n}^\vee\right\|_{L^{q_n}(\Q_p^n)}\leq D_{n,p,\eps/2}\delta^{-1+\frac{n-m+1}{q_n}-\eps/2}\left(\sum_{\theta\in\Lambda_\delta}\|f_\theta*\mathds{1}_{p^j\Z_p^n\setminus p^{j+1}\Z_p^n}^\vee\|_{L^{q_n}(\Q_p^n)}^{q_n}\right)^{1/q_n}.
\end{equation*}
By the triangle inequality and Young's convolution inequality, we see that
\begin{equation*}
    \left\|\sum_{\theta\in\Lambda_\delta}f_\theta\right\|_{L^{q_n}(\Q_p^n)}\leq \frac{4}{\eps}D_{n,p,\eps/2} \delta^{-1+\frac{n-m+1}{q_n}-\eps}\left(\sum_{\theta\in\Lambda_\delta}\|f_\theta\|_{L^{q_n}(\Q_p^n)}^{q_n}\right)^{1/q_n},
\end{equation*}
using $k=-\log_p\delta\leq \frac{2}{\eps}\delta^{-\eps/2}$.

Rescaling both sides of the previous display, we reach the estimate
\begin{equation}\label{decapp}
    \left\|\sum_{\theta\in\Lambda_\delta}g_\theta\right\|_{L^{q_n}(\Q_p^n)}\leq \frac{4}{\eps}D_{n,p,\eps/2} \delta^{-1+\frac{n-m+1}{q_n}-\eps}\left(\sum_{\theta\in\Lambda_\delta}\|g_\theta\|_{L^{q_n}(\Q_p^n)}^{q_n}\right)^{1/q_n}.
\end{equation}
For each $\theta\in\Lambda_\delta$,
\begin{equation*}
    \|g_\theta\|_{L^{q_n}(\Q_p^n)}=\left\|(\mathds{1}_{\delta\Z_p^n}-\psi_\delta)^\vee*\sum_{T\in\mathbb{W}_\theta}\mathds{1}_{T}\right\|_{L^{q_n}(\Q_p^n)}\leq\|(\mathds{1}_{\delta\Z_p^n}-\psi_\delta)^\vee\|_{L^1(\Q_p^n)}\left\|\sum_{T\in\mathbb{W}_\theta}\mathds{1}_{T}\right\|_{L^{q_n}(\Q_p^n)}.
\end{equation*}
By the definition of the family $\mathbb{W}_\theta$,
\begin{equation*}
    \left\|\sum_{T\in\mathbb{W}_\theta}\mathds{1}_{T}\right\|_{L^{q_n}(\Q_p^n)}=(\#\mathbb{W}_\theta)^{\frac{1}{q_n}}\delta^{\frac{m}{q_n}}.
\end{equation*}
Since $\psi_\delta(\xi)=\mathds{1}_{B(0,p^{-\kappa})}(\delta^{-1}\xi)$ and $\mathds{1}_{\delta\Z_p^n}(\xi)=\mathds{1}_{\Z_p^n}(\delta^{-1}\xi)$, an application of change-of-variable reveals
\begin{equation*}
    \|(\mathds{1}_{\delta\Z_p^n}-\psi_\delta)^\vee\|_{L^1(\Q_p^n)}\leq 2,
\end{equation*}
and thus
\begin{equation}\label{rhsexp}
    \left(\sum_{\theta\in\Lambda_\delta}\|g_\theta\|_{L^{q_n}(\Q_p^n)}^{q_n}\right)^{1/q_n}\leq 2\delta^{\frac{m}{q_n}}(\#\mathbb{W})^{\frac{1}{q_n}}.
\end{equation}

Since $\frac{c_0}{2}\delta^{-1+\eps}\leq|\sum_{T\in\mathbb{W}}\mathds{1}_{T}*(\mathds{1}_{\delta\Z_p^n}-\psi_\delta)^\vee(x)|$ for all $x\in F$, we have that
\begin{equation*}
    \frac{c_0}{2}\delta^{-1+\eps}\leq\left|\sum_{\theta\in\Lambda_\delta}g_\theta(x)\right|,\quad\forall x\in F,
\end{equation*}
so that
\begin{equation*}
    (c_0/2)^{q_n}\delta^{-q_n+q_n\eps}\nu(F)\leq\int\left|\sum_{\theta\in\Lambda_\delta}g_\theta\right|^{q_n}d\nu.
\end{equation*}
Note that $|\sum_{\theta\in\Lambda_\delta} g_\theta|$ is constant on balls of radius $\delta$; thus, using $0<c_\alpha^{\delta_0}(\nu)<\infty$ and $\delta>\delta_0$,
\begin{equation*}
    \int\left|\sum_{\theta\in\Lambda_\delta}g_\theta\right|^{q_n}d\nu\leq c_\alpha^{\delta_0}(\nu)\delta^{\alpha-n}\int\left|\sum_{\theta\in\Lambda_\delta}f_\theta\right|^{q_n}d\mu,
\end{equation*}
so that, using also $\nu(F)\gtrsim\nu(\Q_p^n)$,
\begin{equation}\label{dimest}
    2^{-q_n-1}c_0^{q_n}c_\alpha^{\delta_0}(\nu)^{-1}\nu(\Q_p^n)\delta^{-q_n+q_n\eps}\leq\delta^{\alpha-n}\int\left|\sum_{\theta\in\Lambda_\delta}g_\theta\right|^{q_n}.
\end{equation}

Collecting estimates \ref{decapp}, \ref{rhsexp}, and \ref{dimest}, we conclude
\begin{equation*}
    \#\mathbb{W}\geq 2^{-4q_n-1}\eps^{q_n}D_{n,p,\eps/2}^{-q_n}c_0^{q_n}\nu(\Q_p^n)c_\alpha^{\delta_0}(\nu)^{-1}\delta^{-\alpha-1+2q_n\eps}.
\end{equation*}
As $2q_n\eps\leq 10^{10n}\sqrt{\eps}$ for every $0<\eps\leq 1$, we are done.

\end{proof}

\section{Decoupling bound for restricted projections}\label{decsection}

In this section we prove Prop. \ref{decprop}. We will do so by adapting the decoupling procedure of \cite{gan2022restricted} to the $p$-adic setting. We will take for granted $p$-adic decoupling for moment curves in dimensions $n<p$; for a proof of the latter, see Corollary \ref{convexdec} in Appendix B. We emphasize that this decoupling theorem (together with elementary rescaling arguments) will be the only Fourier-analytic inputs for this section. Instead, we will be primarily concerned with a decomposition of the Fourier support of $\sum_{\theta\in\Lambda_\delta}f_\theta$ into subsets over which the decoupling theorem may be used.

The decoupling procedure described in this section is virtually identical to the real setting. As a consequence, we will present a very terse accounting of the analysis; the interested reader may compare with \cite{gan2022restricted} for motivation. At the end, we state the output of the algorithm and observe that the estimate obtained suffices to prove Proposition \ref{decprop}.

We may assume that $\delta$ is restricted to sufficiently regular powers of $p$, to facilitate taking various roots; similarly, we assume that $\eps$ is a sufficiently divisible reciprocal of an integer. To this end, write $\kappa=(n!)^{2n}$ and assume that $\eps=\frac{1}{\ell\kappa}$ for some $\ell\in\N_{\geq 2}$. We assume also that $\delta\in p^{-\kappa^2\N}$. After we have established this special case, we will be able to conclude the general statement via trivial estimates.

We begin by defining a decomposition of frequency space which will facilitate the proof of Proposition \ref{decprop}. These are adapted to the support of the Fourier transform of $f$, the function to be estimated. See Figure \ref{fig:decomp} for an illustration of the geometry, when regarded over $\R$.

For each subset $J\subseteq\Z_p$ and $1\leq m_1\leq m$, define
\begin{equation*}
    \Omega_{J}=\Big\{\sum_{j=1}^n\lambda_j\gamma^{(j)}(\theta):\theta\in J\cap\Lambda_\delta,\lambda_j\in\Z_p\,\,\forall j,\max_{1\leq j\leq n}|\lambda_j|_p=1,\,|\lambda_j|_p\leq\delta\,\,\forall j\in(m,n]\Big\}\subseteq\Q_p^n
\end{equation*}
and
\begin{equation*}
    \Omega_{J,m_1}=\Big\{\sum_{j=1}^n\lambda_j\gamma^{(j)}(\theta)\in\Omega_J:|\lambda_{m_1}|_p=1,|\lambda_j|_p<1\,\,\forall j\in (m_1,m]\Big\},
\end{equation*} 
so that $\{\Omega_{J,m_1}\}_{1\leq m_1\leq m}$ partition $\Omega_J$ for each $J$.

For each $s_1\in p^{-\eps^{-1}\N}$ with $\delta^{\frac{1}{n-m_1}}\leq s_1<1$, write
\begin{equation*}
    \begin{split}
        \Omega_{J,m_1,s_1}=\Big\{&\sum_{j=1}^n\lambda_j\gamma^{(j)}(\theta)\in\Omega_{J,m_1}:(s_1=\delta^{\frac{1}{n-m_1}}\text{ or }\exists\iota\in[1,m-m_1]\text{ s.t. }s_1^\iota\leq|\lambda_{m_1+\iota}|_p),\\
        &\forall\iota\in[1,m-m_1]\,p^{\iota\eps^{-1}} s_1^{\iota}>|\lambda_{m_1+\iota}|_p\Big\}
    \end{split}
\end{equation*}
so that
\begin{equation*}
    \Omega_{J,m_1}=\bigcup_{\delta^{\frac{1}{n-m_1}}\leq s_1<p^{-1}}\Omega_{J,m_1,s_1}.
\end{equation*}
We remark that $\Omega_{J,m_1}$ is essentially a segment of the rim of a thick cone over an $(n-m_1)$-dimensional moment curve, and each $\Omega_{J,m_1,s_1}$ is a thin slice of that cone to facilitate the standard cone-decoupling trick of comparing with a cylinder. See Figure \ref{fig:decomp} for an illustration. We further decompose $\Omega_{J,m_1,s_1}$ by: for each tuple $\mathfrak{R}=(R_1,\ldots,R_{m_1-1})\in\mathcal{P}(\Z_p,s_1^\eps)^{m_1-1}$ and each $B\in\mathcal{P}(\Z_p\setminus p\Z_p,s_1^\eps)$,
\begin{equation*}
    \Omega_{J,m_1,s_1}^{B,\mathfrak{R}}=\Big\{\sum_{j=1}^n\lambda_j\gamma^{(j)}(\theta):\lambda_j\in R_j\,\forall j\in[1,m_1),\lambda_{m_1}\in B\Big\}.
\end{equation*}

We will eventually decouple along these regions; to this end, for each $k\in\N$, write
\begin{equation*}
    \mathfrak{D}_k=\frac{k(k+1)+2}{2},
\end{equation*}
so that the $\ell^{q_n}L^{q_n}$ decoupling constant for the $k$-dimensional moment curve at scale $\delta$ has size $\lesssim_\eps\delta^{-(1-\frac{\mathfrak{D}_k}{q_n})-\eps}$ for each $k\leq n$.

With this established, we now proceed to describing the proof of Prop. \ref{decprop}.

\begin{proof}[Proof of Prop. \ref{decprop}]

We first observe that the sets above describe the Fourier support of $f$. Indeed, $\hat{f}_\theta$ is supported in the set
\begin{equation*}
    A_{\theta,1,\delta}^{-\top}(\Z_p^n)\setminus p\Z_p^n
\end{equation*}
where we again are adopting the notation
\begin{equation*}
    A_{\theta,\alpha,\beta}=[\alpha^{-1}\gamma^{(1)}(\theta),\ldots,\alpha^{-1}\gamma^{(m)}(\theta),\beta^{-1}\gamma^{(m+1)}(\theta),\ldots,\beta^{-1}\gamma^{(n)}(\theta)],\quad (\theta\in\Z_p,\alpha,\beta\in\Q_p).
\end{equation*}
 Consequently, $\hat{f}_\theta$ is supported in $\Omega_{\{\theta\}}$, so the preceding decomposition applies.

By H\"older,\footnote{Indeed, consider the weighting of each particular configuration by $s_1^{m_1+1}$.} we have that one of the following holds: either there exists $m_1<m$, $\delta^{\frac{1}{n-m_1}}\leq s_1\in p^{-\eps^{-1}\N}$, $J\in\mathcal{P}(\Z_p,s_1^\eps)$, $B\in\mathcal{P}(\Z_p\setminus p\Z_p,s_1^\eps)$, $\mathfrak{R}\in\mathcal{P}(\Z_p,s_1^\eps)^{m_1-1}$ such that
\begin{equation}\label{regcase}
    \left\|\sum_{\theta\in \Lambda_\delta}f_\theta\right\|_{L^{q_n}(\Q_p^n)}\leq m(\log_p\delta^{-1})s_1^{-(m_1+1)\eps}\left\|\sum_{\theta\in J\cap\Lambda_\delta}\mathcal{P}_{\Omega_{J,m_1,s_1}^{B,\mathfrak{R}}}f_\theta\right\|_{L^{q_n}(\Q_p^n)}
\end{equation}
or else we set $m_1=m,s_1=\delta^{\frac{1}{n-m_1}}$, and there is $J\in\mathcal{P}(\Z_p,s_1^\eps)$ such that
\begin{equation}\label{rimcase} 
    \left\|\sum_{\theta\in \Lambda_\delta}f_\theta\right\|_{L^{q_n}(\Q_p^n)}\leq m(\log_p\delta^{-1})s_1^{-(m_1+1)\eps}\left\|\sum_{\theta\in J\cap\Lambda_\delta}\mathcal{P}_{\Omega_{J,m}}f_\theta\right\|_{L^{q_n}(\Q_p^n)}.
\end{equation}
We will focus on the case that \eqref{regcase} holds, and abbreviate $F=\sum_{\theta\in J\cap\Lambda_\delta}\mathcal{P}_{\Omega_{J,m_1,s_1}^{B,\mathfrak{R}}}f_\theta$.
We will demonstrate the following:

\begin{lemma}\label{decstep} Suppose that $s_1=p^{-\ell\eps^{-1}}$ for some $\ell\geq 2$. For any 
\begin{equation*}
    0\leq k\leq k_*:=(n-m_1)\lfloor\eps^{-1}-2-\frac{\eps^{-1}}{\ell}\rfloor
\end{equation*}
and any $L\in\mathcal{P}(J,s_1^{(1+\frac{k}{n-m_1})\eps})$, we have that
\begin{equation*}
    \left\|\sum_{\theta\in L\cap\Lambda_\delta}F_\theta\right\|_{L^{q_n}(\Q_p^n)}\leq C_{n-m_1,(n-m_1)\eps}s_1^{-\frac{\eps^2}{n-m_1}}s_1^{-\frac{\eps}{n-m_1}(1-\frac{\mathcal{D}_{n-m_1}}{q_n})}\left(\sum_{I\in\mathcal{P}\Big(L,s_1^{(1+\frac{k+1}{n-m_1})\eps}\Big)}\left\|\sum_{\theta\in I\cap\Lambda_\delta}F_\theta\right\|_{L^{q_n}(\Q_p^n)}^{q_n}\right)^{1/q_n},
\end{equation*}
where $C_{n-m_1,(n-m_1)\eps}$ is as in Remark \ref{rectsetfamdec}.

\end{lemma}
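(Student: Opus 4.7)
The plan is to reduce the lemma to a single application of $p$-adic moment curve decoupling in dimension $n - m_1$ (Corollary \ref{convexdec}) at scale ratio $s_1^{\eps/(n-m_1)}$, following the cone-over-moment-curve strategy of \cite{gan2022restricted} adapted to the ultrametric setting.

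First, I would expose the moment curve structure via Taylor expansion. Fix the center $\theta_0$ of $L$ and set $\eta = \theta - \theta_0$, so $|\eta|_p \leq s_1^{(1+k/(n-m_1))\eps}$. Using the identity $\gamma^{(j)}(\theta) = \sum_{k\geq j}\frac{\eta^{k-j}}{(k-j)!}\gamma^{(k)}(\theta_0)$, a point $\sum_j\lambda_j\gamma^{(j)}(\theta)\in\Omega_{L,m_1,s_1}^{B,\mathfrak{R}}$ has coordinates $\mu_k = \lambda_k + \sum_{j<k}\lambda_j\eta^{k-j}/(k-j)!$ in the basis $\{\gamma^{(k)}(\theta_0)\}$. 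In the last $n-m_1$ coordinates the dominant $\theta$-dependent term is $\lambda_{m_1}\eta^{k-m_1}/(k-m_1)!$, the unit $\lambda_{m_1}\in B$ multiplying the $(n-m_1)$-dimensional moment curve in $\eta$; by the constraints $|\lambda_{m_1+\iota}|_p < p^{\iota\eps^{-1}}s_1^\iota$ and $|\lambda_j|_p \leq 1$ for $j<m_1$, combined with the ultrametric inequality, all other contributions are absorbed into the transverse directions of a thickened anisotropic moment-curve neighborhood.

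Next, I would apply the linear change of basis $A_{\theta_0,1}^{-\top}$ on frequency space, which is an $L^{q_n}$-isometry (after the corresponding physical-side change of variables) since $|\det A_{\theta_0,1}|_p = 1$, a consequence of $A_{\theta,1}^{-1} = A_{-\theta,1}$ derived earlier in the paper. After an isotropic dilation normalizing the moment-curve scale to unity and factoring out the ``trivial'' fiber directions (those indexed by the fixed boxes $R_j$ for $j<m_1$, the box $B$ for $\lambda_{m_1}$, and the thin $|\lambda_j|_p\leq \delta$ range for $j>m$), the Fourier support of $\sum_{\theta\in L\cap\Lambda_\delta}F_\theta$ lies in an $s_1^{\eps/(n-m_1)}$-neighborhood of the standard $(n-m_1)$-dimensional moment curve. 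Invoking Corollary \ref{convexdec} at this scale yields a bound of the form $C_{n-m_1,(n-m_1)\eps}\cdot s_1^{-\frac{\eps}{n-m_1}(1-\mathfrak{D}_{n-m_1}/q_n)-\frac{\eps^2}{n-m_1}}$ times the $\ell^{q_n}$-sum over the decoupled pieces, and these pieces pull back along the inverse change of variables to the subintervals $I\in\mathcal{P}(L, s_1^{(1+(k+1)/(n-m_1))\eps})$.

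The main obstacle lies in the first step: verifying that after the change of basis and normalization, the Fourier support of each $F_\theta$ for $\theta\in I$ genuinely lies inside a single anisotropic moment-curve box at the target scale, rather than straddling several such boxes. This requires a careful reconciliation of the various scales --- $s_1^\eps$ for the fixed boxes $R_j$ and $B$, $s_1^\iota$ for the $\iota$-th cone coordinate, and the moment-curve scale $s_1^{\eps/(n-m_1)}$ --- in each term of the Taylor expansion. The $p$-adic setting should simplify this bookkeeping considerably compared to the real case of \cite{gan2022restricted}: the ultrametric inequality $|a+b|_p \leq \max(|a|_p, |b|_p)$ replaces delicate triangle-inequality estimates with exact containments, so the necessary verifications reduce to matching powers of $s_1$ among the summands.
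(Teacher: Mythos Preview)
Your proposal is correct and follows essentially the same route as the paper: reduce via an affine change of basis (the paper phrases this as ``parabolic rescaling'' to $L=B(0,\cdot)$ and $B=B(1,\cdot)$), verify by an ultrametric Taylor-expansion estimate that the Fourier support sits in a thickened $(n-m_1)$-dimensional moment-curve neighborhood, and apply a single step of moment-curve decoupling at scale ratio $s_1^{\eps/(n-m_1)}$. The only refinement worth noting is that the constraint $k\le k_*$ enters precisely in the scale-reconciliation step you flag as the main obstacle, via $(1+\tfrac{k}{n-m_1})\eps\le 1-\eps-\tfrac{1}{\ell}$, and the relevant decoupling constant is the one from Remark~\ref{rectsetfamdec} rather than Corollary~\ref{convexdec}.
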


\begin{proof}[Proof of Lemma \ref{decstep}]

By applying parabolic rescaling, it suffices to assume that $L=B(0,s_1^{(1+\frac{k}{n-m_1})\eps})$ and $B=B(1,s_1^\eps)$. Then, for any $\theta,\lambda_1,\ldots,\lambda_n$ as in the definition of $\Omega_{L,m_1,s_1}^{B,\mathfrak{R}}$, we have the relations
\begin{equation*}
    \sum_{j=1}^n\lambda_j\gamma_{\iota}^{(j)}(\theta)=\sum_{j=1}^\iota\lambda_j\frac{\theta^{\iota-j}}{(\iota-j)!}\quad(1\leq\iota\leq m_1),
\end{equation*}
\begin{equation*}
    \left|\sum_{j=1}^n\lambda_j\gamma_\iota^{(j)}(\theta)-\frac{(\lambda_{m_1}\theta)^{\iota-m_1}}{(\iota-m_1)!}\right|_p\leq s_1^{(1+\frac{k}{n-m_1})\eps(\iota-m_1)+\eps}\quad(m_1<\iota\leq n).
\end{equation*}
The second of these follows from the inequality $|\theta^{\iota-j}|_p\leq s_1^{(1+\frac{k}{n-m_1})\eps(\iota-j)}$ for $j\leq\iota$, the inequality $|\lambda_{m_1}-(\lambda_{m_1})^{\iota-m_1}|_p\leq s_1^\eps$, the inequality $|\lambda_j|_p\leq p^{\eps^{-1}(j-m_1)}s_1^{j-m_1}$ for $m_1<j\leq\iota$, the ultrametric triangle inequality, and the fact that $(1+\frac{k_*}{n-m_1})\eps\leq 1-\eps-\frac{1}{\ell}$.

Consequently, it holds that
\begin{equation*}
    \begin{split}
    \Omega_{L,m_1,s_1}^{i,\mathfrak{r}}\subseteq\Big\{\sum_{\iota=1}^{m_1}\rho_\iota e_\iota +\sum_{\iota=m_1+1}^n\left(\frac{\theta^{\iota-m_1}}{(\iota-m_1)!}+\rho_\iota\right)e_\iota &:\rho_\iota\in B(b_{r_\iota},s_1^{\eps})\,\,(\iota<m_1),\rho_{m_1}\in B(1,s_1^\eps),\\
    &|\rho_\iota|_p\leq s_1^{(1+\frac{k}{n-m_1})\eps(\iota-m_1)+\eps} (\iota>m_1),\theta\in L\cap\Lambda_\delta\Big\}.
    \end{split}
\end{equation*}
Applying time rescaling $\theta\mapsto s_1^{(1+\frac{k}{n-m_1})\eps}\theta$ (n.b. that this is regarded as a product of two elements of $\Q_p$), and decoupling over the $(n-m_1)$-dimensional moment curve, we  conclude
\begin{equation*}
    \left\|\sum_{\theta\in L}F_\theta\right\|_{L^{q_n}(\Q_p^n)}\leq C_{n-m_1,(n-m_1)\eps} s_1^{-\frac{\eps^2}{n-m_1}}s_1^{-\frac{\eps}{n-m_1}(1-\frac{\mathfrak{D}_{n-m_1}}{q_n})}\left(\sum_{I\in\mathcal{P}(L,s_1^{(1+\frac{k+1}{n-m_1})\eps})}\big\|\sum_{\theta\in I}F_\theta\big\|_{L^{q_n}(\Q_p^n)}^{q_n}\right)^{1/q_n}.
\end{equation*}

\end{proof}

Since decoupling constants are sub-multiplicative, we may repeatedly apply Lemma \ref{decstep} to conclude
\begin{equation*}
    \|F\|_{L^{q_n}(\Q_p^n)}\leq C_{n-m_1,(n-m_1)\eps}^{\frac{n-m_1}{\eps}} s_1^{-\eps-(1-\frac{\mathfrak{D}_{n-m_1}}{q_n})}\left(\sum_{I\in\mathcal{P}(\Z_p,s_1^{(1+\frac{k_*+1}{n-m_1})\eps})}\|F_I\|_{L^{q_n}(\Q_p^n)}^{q_n}\right)^{1/q_n}.
\end{equation*}
By the triangle inequality and Cauchy-Schwarz, we arrive at the estimate
\begin{equation*}
    \|F\|_{L^{q_n}(\Q_p^n)}\leq C_{n-m_1,(n-m_1)\eps}^{\frac{n-m_1}{\eps}} p^{\eps^{-1}}s_1^{-2\eps-(1-\frac{\mathfrak{D}_{n-m_1}}{q_n})}\left(\sum_{I\in\mathcal{P}(\Z_p,s_1)}\|F_I\|_{L^{q_n}(\Q_p^n)}^{q_n}\right)^{1/q_n},
\end{equation*}
valid whenever we had $s_1\in p^{-\eps^{-1}\N_{\geq 2}}$. If instead $s_1=p^{-\eps^{-1}}$, then trivial estimates supply
\begin{equation*}
    \|F\|_{L^{q_n}(\Q_p^n)}\leq p^{\eps^{-1}}\left(\sum_{I\in\mathcal{P}(\Z_p,s_1)}\|F_I\|_{L^{q_n}(\Q_p^n)}^{q_n}\right)^{1/q_n}.
\end{equation*}
\begin{figure}
    \centering
    \includegraphics[scale=0.25]{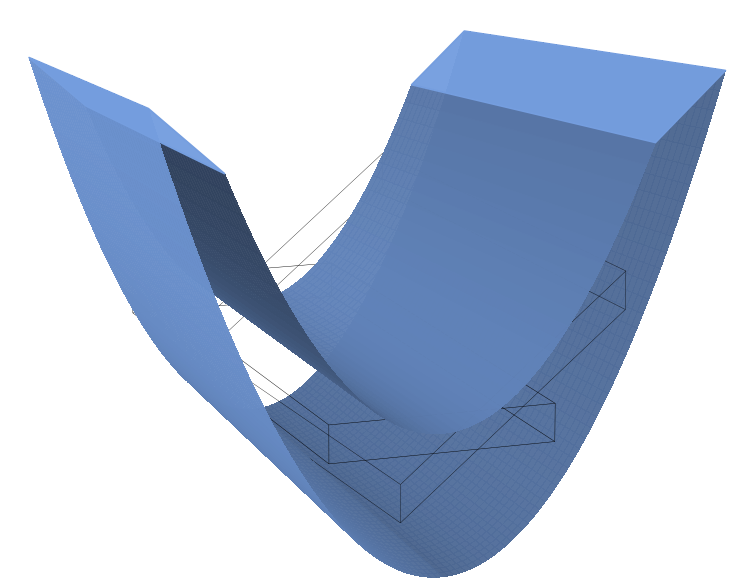}\hspace{2em}
    \includegraphics[scale=0.5]{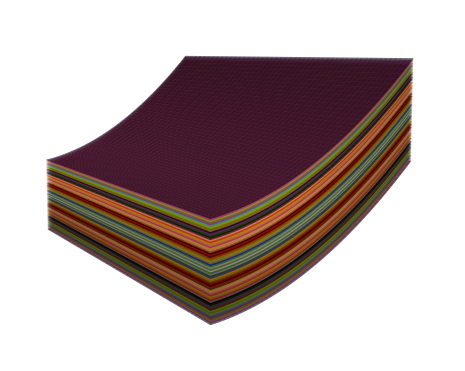}
    \caption{Left: the union of the truncated plates $\delta^{-1}\tau_\theta\cap(\Z_p^n\setminus p\Z_p^n)$ is contained in a thick neighborhood $\Omega_I$ of a cone over a lower-dimensional nondegenerate curve. Right: the decomposition $\Omega_{J,m_1}=\bigcup\Omega_{J,m_1,s_1}$ in the case $(m_1,m,n)=(1,2,3)$ over $\R$.}
    \label{fig:decomp}
\end{figure}
The remainder of the algorithm involves rescaling each $F_I$ and repeating the above procedure, by finding a new parameter $s_2$ to treat the Fourier support of $F_I$ as the cylinder over a moment curve. We summarize the inductive step in Lemma \ref{alginduction} below. Prior, we adopt the following notation, largely identical to that of \cite{gan2022restricted}. For any $m_1\in[1,m]$ and $n_1\in[m,n]$, and any $s_1\in p^{-\eps^{-1}\N}$, we set
\begin{equation*}
    \mathcal{L}_{m_1,s_1}(\xi)=(\xi_1,\ldots,\xi_{m_1}, s_1^{-1}\xi_{m_1+1},\ldots, s_1^{m_1-n}\xi_n),
\end{equation*}
\begin{equation*}
    \mathcal{R}_{m_1,s_1}(\xi)=(s_1^{1-1}\xi_1,s_1^{1-2}\xi_2,\ldots,s_1^{1-m_1}\xi_{m_1},s_1^{1-m_1}\xi_{m_1+1}\ldots,s_1^{1-m_1}\xi_n),
\end{equation*}
and
\begin{equation*}
    \mathcal{D}_{m_1,n_1}^{s_1}(\xi)_j=\begin{cases}
        s_1^{j-1}\xi_j & 1\leq j\leq m_1,\\
        p^{-(j-m_1)\eps^{-1}}s_1^{m_1-1}\xi_j & m_1<j\leq m,\\
        s_1^{m_1-1}\xi_j & m<j\leq n_1,\\
        \xi_j & n_1<j\leq n.
    \end{cases}
\end{equation*}
If $\mathbf{s}_J=(s_1,\ldots,s_J),\mathbf{m}_j=(m_1,\ldots,m_J)$, and $\mathbf{n}_J=(n_1,\ldots,n_J)$ are entrywise as abovve, then we write
\begin{equation*}
    \mathcal{L}_{\mathbf{m}_J,\mathbf{s}_J}=\mathcal{L}_{m_J,s_J}\circ\cdots\circ\mathcal{L}_{m_1,s_1},
\end{equation*}
\begin{equation*}
    \mathcal{R}_{\mathbf{m}_J,\mathbf{s}_J}=\mathcal{R}_{m_J,s_J}\circ\cdots\circ\mathcal{R}_{m_1,s_1},
\end{equation*}
\begin{equation*}
    \mathcal{D}_{\mathbf{m}_J,\mathbf{n}_J}^{\mathbf{s}_J}=\mathcal{D}_{m_J,n_J}^{s_J}\circ\cdots\circ\mathcal{D}_{m_1,n_1}^{s_1}.
\end{equation*}
We will use the abbreviations
\begin{equation*}
    \mathbf{s}_J^\circ=\prod_{j=1}^Js_j,\quad\mathbf{s}_J^{-\circ}=(\mathbf{s}_J^\circ)^{-1}.
\end{equation*}
When a given tuple $\mathbf{s}_J$ is already understood and $0\leq j<J$, we'll write $\mathbf{s}_j$ for the corresponding initial segment of $\mathbf{s}_J$. We'll also write
\begin{equation*}
    \gamma_{m_1,s_1}=\mathcal{R}_{m_1,s_1}\circ\gamma,
\end{equation*}
and
\begin{equation*}
    \gamma_{\mathbf{m}_J,\mathbf{s}_J}=\mathcal{R}_{\mathbf{m}_J,\mathbf{s}_J}\circ\gamma;
\end{equation*}
observe then that
\begin{equation}\label{rescaledg}
    \gamma_{\mathbf{m}_J,\mathbf{s}_J}(\theta)=\mathbf{s}_J^\circ\mathcal{L}_{\mathbf{m}_J,\mathbf{s}_J}^{-1}\gamma\left(\mathbf{s}_J^{-\circ}\theta\right).
\end{equation}
The tuples will need to satisfy the following compatibility relation: we write $s_{J+1}\in\mathrm{Adap}_{\mathbf{m}_{J+1},\mathbf{n}_{J+1}}^{\mathbf{s}_J}$ for quantities $s_{J+1}\in p^{-\eps^{-1}\N}$, and call them \textit{adapted}, if
\begin{equation}\label{adapted}
    \left(\delta\prod_{j=1}^Js_j^{-(n_{J}-1-m_j)}\right)^{\frac{1}{n_{J+1}-m_{J+1}}}\leq s_{J+1}<1.
\end{equation}
We also set out the following regions in frequency space: given tuples $\mathbf{s}_{J},\mathbf{m}_{J},\mathbf{n}_{J}$, and $\theta\in\Z_p$ with $|\theta|_p\leq\mathbf{s}_J^\circ$, we write
\begin{equation*}
    \begin{split}
        \Omega_{\theta,m_1,s_1}^{\mathrm{res}}=\Big\{\,\,&\Big[\gamma_{m_1,s_1}^{(1)}\big(s_1\theta\big),\ldots,\gamma_{m_1,s_1}^{(n)}\big(s_1\theta\big)\Big]\cdot\mathcal{D}_{m_1,n}^{s_1}(\bm{\lambda}):\\
    &\forall\iota\in[1,n]\,\,|\lambda_\iota|_p\leq 1,|\lambda_{m_1}|_p=1,\\
    &\exists\iota\in[1,m-m_{1}]\text{ s.t. } p^{-\iota\eps^{-1}}<|\lambda_{m_{1}+\iota}|_p,\\
    &\forall\iota\in[1,m-m_{1}]\,\,|\lambda_{m_{1}+\iota}|_p\leq 1,\\
    &\forall\iota\in[1,n-m]\,\,|\lambda_{m+\iota}|_p\leq\delta s_1^{-\iota}\hspace{5em}\Big\}.
    \end{split}
\end{equation*}
Here, we emphasize the convention that each $\gamma^{(j)}$ is a column vector, $[\gamma^{(1)},\ldots,\gamma^{(n)}]$ denotes the matrix whose $j^{th}$ column is $\gamma^{(j)}$, ${\bm{\lambda}}$ is the column vector $(\lambda_1,\ldots,\lambda_n)$, and the $\cdot$ denotes matrix multiplication. We similarly write
\begin{equation*}
    \begin{split}
    \Omega_{\theta,\mathbf{m}_{J},\mathbf{s}_{J},\mathbf{n}_{J}}^{\mathrm{res}}=\Big\{\,\,&\Big[\gamma_{\mathbf{m}_J,\mathbf{s}_J}^{(1)}\big(\mathbf{s}_J^\circ\theta\big),\ldots,\gamma_{\mathbf{m}_J,\mathbf{s}_J}^{(n)}\big(\mathbf{s}_J^\circ\theta\big)\Big]\cdot\mathcal{D}_{\mathbf{m}_J,\mathbf{n}_J}^{\mathbf{s}_J}(\bm{\lambda}):\\
    &\forall\iota\in[1,n]\,\,|\lambda_\iota|_p\leq 1,|\lambda_{m_J}|_p=1,\\
    &\exists\iota\in[1,m-m_{J}]\text{ s.t. }p^{-\iota\eps^{-1}}<|\lambda_{m_{J}+\iota}|_p,\\
    &\forall\iota\in[1,m-m_{J}]\,\,|\lambda_{m_{J}+\iota}|_p\leq 1,\\
    &\forall\iota\in[1,n_J-m]\,\,|\lambda_{m+\iota}|_p\leq\delta \prod_{j=1}^Js_j^{-(m+\iota-m_j)}\quad\quad\quad\Big\},
    \end{split}
\end{equation*}
and, for each choice of $s_{J+1},m_{J+1},n_{J+1}$, we write
\begin{equation*}
    \begin{split}
        \Omega_{\theta,\mathbf{m}_{J},\mathbf{s}_{J},\mathbf{n}_{J}}^{\mathrm{res},m_{J+1},s_{J+1},n_{J+1}}=\Big\{\,\,&\Big[\gamma_{\mathbf{m}_J,\mathbf{s}_J}^{(1)}\big(\mathbf{s}_J^\circ\theta\big),\ldots,\gamma_{\mathbf{m}_J,\mathbf{s}_J}^{(n)}\big(\mathbf{s}_J^\circ\theta\big)\Big]\cdot\mathcal{D}_{\mathbf{m}_J,\mathbf{n}_J}^{\mathbf{s}_J}(\bm{\lambda})\in\Omega_{\theta,\mathbf{m}_{J},\mathbf{s}_{J},\mathbf{n}_{J}}^{\mathrm{res}}:\\
        &|\lambda_{m_{J+1}}|_p=1,\\
        &\Big(s_{J+1}=\left(\delta\prod_{j=1}^Js_j^{-(n_{J}-1-m_j)}\right)^{\frac{1}{n_{J+1}-m_{J+1}}}\\
        &\,\,\,\,\,\text{ or }\exists\iota\in[1,m-m_{J+1}]\text{ s.t. }1\leq|\lambda_{m_{J+1}+\iota}|_p\Big),\\
        &\forall\iota\in[1,m-m_{J+1}]\,\,|\lambda_{m_{J+1}+\iota}|_p< s_{J+1}^\iota p^{\iota\eps^{-1}}\hspace{7em}\Big\}.
    \end{split}
\end{equation*}

We immediately motivate the definition of these regions by the following lemma.
\begin{lemma}
    Suppose $h$ has Fourier support in $\Omega_{\theta,m_1,s_1}$, and $|\theta|_p\leq s_1$. Then $h\circ\mathcal{L}_{m_1,s_1}$ has Fourier support in $\Omega_{\theta,m_1,s_1}^{\mathrm{res}}$. More generally, if $h\circ\mathcal{L}_{\mathbf{m}_J,\mathbf{s}_J}$ has Fourier support in $\Omega_{\theta,\mathbf{m}_J,\mathbf{s}_J,\mathbf{n}_J}^{\mathrm{res},m_{J+1},s_{J+1},n_{J+1}}$, then $h\circ\mathcal{L}_{\mathbf{m}_{J+1},\mathbf{s}_{J+1}}$ has Fourier support in $\Omega_{\theta,\mathbf{m}_{J+1},\mathbf{s}_{J+1},\mathbf{n}_{J+1}}^{\mathrm{res}}$.
\end{lemma}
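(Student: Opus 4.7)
The plan is to recognize this lemma as a change-of-variables assertion for the Fourier transform, combined with an explicit linear-algebraic verification that the maps $\mathcal{L}$, $\mathcal{D}$, and the reparametrization $\theta\mapsto\mathbf{s}_J^\circ\theta$ have been arranged precisely so that the relevant frequency supports match up. The standard rule is that if $L$ is an invertible linear map on $\Q_p^n$ and $g=h\circ L$, then $\hat{g}(\xi)=|\det L|_p^{-1}\hat{h}(L^{-\top}\xi)$, so $\operatorname{supp}\hat{g}\subseteq L^\top(\operatorname{supp}\hat{h})$. Since $\mathcal{L}_{m_1,s_1}$ is diagonal, $\mathcal{L}_{m_1,s_1}^\top=\mathcal{L}_{m_1,s_1}$, and both halves of the lemma reduce to verifying the set containments
\[
\mathcal{L}_{m_1,s_1}\bigl[\Omega_{\theta,m_1,s_1}\bigr]\subseteq\Omega^{\mathrm{res}}_{\theta,m_1,s_1}
\]
and
\[
\mathcal{L}_{m_{J+1},s_{J+1}}\bigl[\Omega^{\mathrm{res},m_{J+1},s_{J+1},n_{J+1}}_{\theta,\mathbf{m}_J,\mathbf{s}_J,\mathbf{n}_J}\bigr]\subseteq\Omega^{\mathrm{res}}_{\theta,\mathbf{m}_{J+1},\mathbf{s}_{J+1},\mathbf{n}_{J+1}};
\]
the second uses the definitional identity $\mathcal{L}_{\mathbf{m}_{J+1},\mathbf{s}_{J+1}}=\mathcal{L}_{m_{J+1},s_{J+1}}\circ\mathcal{L}_{\mathbf{m}_J,\mathbf{s}_J}$.

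For the base case, I would take an arbitrary $\xi=\Gamma(\theta)\bm{\lambda}\in\Omega_{\theta,m_1,s_1}$, with $\Gamma(\theta)=[\gamma^{(1)}(\theta),\ldots,\gamma^{(n)}(\theta)]$, apply $\mathcal{L}_{m_1,s_1}$ componentwise, and exhibit coefficients $\bm{\lambda}'$ for which $\mathcal{L}_{m_1,s_1}\xi=\Gamma_{m_1,s_1}(s_1\theta)\cdot\mathcal{D}_{m_1,n}^{s_1}(\bm{\lambda}')$, where $\Gamma_{m_1,s_1}(\tau)=[\gamma_{m_1,s_1}^{(1)}(\tau),\ldots,\gamma_{m_1,s_1}^{(n)}(\tau)]$. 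The crucial algebraic input is the differentiated form of \ref{rescaledg}, namely $\gamma_{m_1,s_1}^{(j)}(s_1\theta)=s_1^{1-j}\mathcal{L}_{m_1,s_1}^{-1}\gamma^{(j)}(\theta)$, which after substitution allows one to match the coefficient of $\theta^{k-j}/(k-j)!$ in each component $k$ on both sides and read off $\bm{\lambda}'$ as an explicit function of $\bm{\lambda}$. The specific powers of $s_1$ (and of $p^{\eps^{-1}}$) required to make the two sides agree are by design exactly the weights encoded in $\mathcal{D}_{m_1,n}^{s_1}$; with those weights absorbed, the relation $\lambda'_j=\lambda_j$ holds for $j\leq m_1$, while for larger $j$ one gets $\lambda'_j$ equal to $\lambda_j$ times a specific power of $s_1$ (or of $s_1 p^{\eps^{-1}}$ for $m_1<j\leq m$).

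With $\bm{\lambda}'$ in hand, one must then check that the constraints on $\bm{\lambda}$ imposed by $\Omega_{\theta,m_1,s_1}$ indeed translate into those imposed by $\Omega^{\mathrm{res}}_{\theta,m_1,s_1}$. This is where the ultrametric inequality and the hypothesis $|\theta|_p\leq s_1$ enter: the bound $|\lambda_{m_1+\iota}|_p<p^{\iota\eps^{-1}}s_1^\iota$ combined with $|s_1|_p=s_1^{-1}$ produces $|\lambda'_{m_1+\iota}|_p\leq 1$, the bound $|\lambda_{m+\iota}|_p\leq\delta$ becomes $|\lambda'_{m+\iota}|_p\leq\delta s_1^{-\iota}$, and the existence clause appearing in $\Omega_{\theta,m_1,s_1}$ transports to the existence clause appearing in $\Omega^{\mathrm{res}}_{\theta,m_1,s_1}$ (with the degenerate case $s_1=\delta^{1/(n-m_1)}$ handled separately). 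The inductive step is structurally identical to the base case, with $\gamma$, $\Gamma$, and $\theta$ replaced by their $J$-fold rescaled analogues $\gamma_{\mathbf{m}_J,\mathbf{s}_J}$, $\Gamma_{\mathbf{m}_J,\mathbf{s}_J}$, and $\mathbf{s}_J^\circ\theta$. The main obstacle is not conceptual but rather the detailed bookkeeping: one must carefully track each $p$-adic valuation and verify that $\mathcal{D}_{\mathbf{m}_{J+1},\mathbf{n}_{J+1}}^{\mathbf{s}_{J+1}}$ indeed absorbs every power of $s_{J+1}$ that surfaces during the reparametrization.
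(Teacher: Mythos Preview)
Your proposal is correct and follows essentially the same route as the paper. Both arguments reduce the lemma to tracking how the diagonal map $\mathcal{L}_{m_{J+1},s_{J+1}}$ transforms the Fourier support, invoke the differentiated form of the rescaling identity \ref{rescaledg} to rewrite the columns $\gamma^{(j)}(\theta)$ (respectively $\gamma_{\mathbf{m}_J,\mathbf{s}_J}^{(j)}(\mathbf{s}_J^\circ\theta)$) in the rescaled frame, absorb the resulting powers of $s_{J+1}$ and $p^{\eps^{-1}}$ by relabeling the coefficient vector $\bm{\lambda}$ so that the diagonal weight $\mathcal{D}$ appears, and then read off the constraints in $\Omega^{\mathrm{res}}$ directly from those in the source region. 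The paper simply carries out these relabelings explicitly (first $\lambda_{m_1+\iota}\mapsto s_1^\iota\lambda_{m_1+\iota}$, then $\lambda_{m_1+\iota}\mapsto p^{\iota\eps^{-1}}\lambda_{m_1+\iota}$), arriving at the expression $\Gamma_{m_1,s_1}(s_1\theta)\cdot\mathcal{R}_{m_1,s_1}^{-1}(\bm{\lambda})$ and then matching with $\mathcal{D}_{m_1,n}^{s_1}$; your sketch describes the same computation at a slightly higher level.
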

\begin{proof}
    By induction on $J$. Consider the base case $J=1$. Relabelling $\lambda_{m_1+\iota}\mapsto s_1^{\iota}\lambda_{m_1+\iota}$, we see that $h\circ\mathcal{L}_{m_1,s_1}$ is Fourier supported in the set
    \begin{equation*}
        \begin{split}
            \Big\{\,\,&\mathcal{L}_{m_1,s_1}^{-1}\cdot\left[\gamma^{(1)}(\theta),\ldots,\gamma^{(n)}(\theta)\right]\cdot\mathcal{L}_{m_1,s_1}(\bm{\lambda}):|\lambda_j|\leq 1\,\forall j<m_1,\,|\lambda_{m_1}|_p=1,\\
            &(s_1=\delta^{\frac{1}{n-m_1}}\text{ or }\exists\iota\in[1,m-m_1]\text{ s.t. }1\leq |\lambda_{m_1+\iota}|_p),\\
            &\forall\iota\in[1,m-m_1]\,|\lambda_{m_1+\iota}|_p<p^{\iota\eps^{-1}},\\
            &\forall\iota\in[1,n-m]\,|\lambda_{m+\iota}|\leq s_1^{-\iota}\delta\hspace{18em}\Big\}
        \end{split}
    \end{equation*}
    For a particular (column) vector $\lambda$, we may manipulate the corresponding sum via \ref{rescaledg} as
    \begin{equation*}
        \mathcal{L}_{m_1,s_1}^{-1}\cdot\left[\gamma^{(1)}(\theta),\ldots,\gamma^{(n)}(\theta)\right]\cdot\mathcal{L}_{m_1,s_1}(\bm{\lambda})=\left[s_1^{1-1}\gamma_{m_1,s_1}^{(1)}(s_1\theta),\ldots,s_1^{n-1}\gamma_{m_1,s_1}^{(n)}(s_1\theta)\right]\cdot\mathcal{L}_{m_1,s_1}(\bm{\lambda}),
    \end{equation*}
    which we may write as
    \begin{equation*}
        \left[\gamma_{m_1,s_1}^{(1)}(s_1\theta),\ldots,\gamma_{m_1,s_1}^{(n)}(s_1\theta)\right]\cdot\mathcal{R}_{m_1,s_1}^{-1}(\bm{\lambda}).
    \end{equation*}
    Finally, for each $\iota\in[1,m-m_1]$, we relabel again $\lambda_{m_1+\iota}\mapsto p^{\iota\eps^{-1}}\lambda_{m_1+\iota}$; from the definition of $\mathcal{D}_{m_1,n}^{s_1}$, the desired result follows.

    In the general case, relabelling $\lambda_{m_{J+1}+\iota}\mapsto s_{J+1}^{\iota}\lambda_{m_{J+1}\iota}$, we see that $h\circ\mathcal{L}_{\mathbf{m}_{J+1},\mathbf{s}_{J+1}}$ is Fourier supported in the set
    \begin{equation*}
        \begin{split}
            \Big\{\,\,&\mathcal{L}_{m_{J+1},s_{J+1}}^{-1}\cdot\left[\gamma_{\mathbf{m}_J,\mathbf{s}_J}^{(1)}(\mathbf{s}_J^\circ\theta),\ldots,\gamma_{\mathbf{m}_J,\mathbf{s}_J}^{(n)}(\mathbf{s}_J^\circ\theta)\right]\cdot\mathcal{L}_{m_{J+1},s_{J+1}}(\mathcal{D}_{\mathbf{m}_J,\mathbf{n}_J}^{\mathbf{s}_J}(\bm{\lambda})):\\
            &|\lambda_j|\leq 1\,\forall j<m_{J+1},\,|\lambda_{m_{J+1}}|_p=1,\\
            &\Big(s_{J+1}=\left(\delta\prod_{j=1}^Js_j^{-(n_{J}-1-m_j)}\right)^{\frac{1}{n_{J+1}-m_{J+1}}}\text{ or }\exists\iota\in[1,m-m_{J+1}]\text{ s.t. }1\leq |\lambda_{m_{J+1}+\iota}|_p\Big),\\
            &\forall\iota\in[1,m-m_{J+1}]\,|\lambda_{m_{J+1}+\iota}|_p<p^{\iota\eps^{-1}},\\
            &\forall\iota\in[1,n_{J+1}-m]\,\,|\lambda_{m+\iota}|_p\leq\delta \prod_{j=1}^{J+1}s_j^{-(m+\iota-m_j)}\hspace{17em}\Big\}.
        \end{split}
    \end{equation*}
    Again, we may manipulate the corresponding sum via \ref{rescaledg} as
    \begin{equation*}
        \begin{split}
            \mathcal{L}_{m_{J+1},s_{J+1}}^{-1}\cdot&\left[\gamma_{\mathbf{m}_J,\mathbf{s}_J}^{(1)}(\mathbf{s}_J^\circ\theta),\ldots,\gamma_{\mathbf{m}_J,\mathbf{s}_J}^{(n)}(\mathbf{s}_J^\circ\theta)\right]\cdot\mathcal{L}_{m_{J+1},s_{J+1}}(\mathcal{D}_{\mathbf{m}_J,\mathbf{n}_J}^{\mathbf{s}_J}(\bm{\lambda}))\\
            &=\left[s_{J+1}^{1-1}\gamma_{\mathbf{m}_{J+1},\mathbf{s}_{J+1}}^{(1)}(\mathbf{s}_{J+1}^\circ\theta),\ldots,s_{J+1}^{n-1}\gamma_{\mathbf{m}_{J+1},\mathbf{s}_{J+1}}^{(n)}(\mathbf{s}_{J+1}^\circ\theta)\right]\cdot\mathcal{L}_{m_{J+1},s_{J+1}}(\mathcal{D}_{\mathbf{m}_J,\mathbf{n}_J}^{\mathbf{s}_J}(\bm{\lambda})),
        \end{split}
    \end{equation*}
    which we may write as
    \begin{equation*}
        \left[\gamma_{m_1,s_1}^{(1)}(s_1\theta),\ldots,\gamma_{m_1,s_1}^{(n)}(s_1\theta)\right]\cdot\mathcal{R}_{m_{J+1},s_{J+1}}^{-1}(\mathcal{D}_{\mathbf{m}_J,\mathbf{n}_J}^{\mathbf{s}_J}(\bm{\lambda})).
    \end{equation*}
    Finally, for each $\iota\in[1,m-m_{J+1}]$, we relabel again $\lambda_{m_{J+1}+\iota}\mapsto p^{\iota\eps^{-1}}\lambda_{m_{J+1}+\iota}$; from the definition of $\mathcal{D}_{\mathbf{m}_{J+1},{\mathbf{n}_{J+1}}}^{\mathbf{s}_{J+1}}$, the desired result follows.
    
\end{proof}

\begin{lemma}[Inductive localization step]\label{alginduction} Assume we have an integer $J\geq 1$, a rooted tree $T$ composed of sequences $(\Theta_0,\ldots,\Theta_j)$, $(j\leq J)$, of metric balls $\Theta_i$ in $\Z_p$, such that the set of children of $\Theta_i$, $(i<J)$, is a set of the form $\mathcal{P}(\Theta_i,s_{\Theta_i})$ with $s_{\Theta_i}\in p^{-\eps^{-1}\N}\cup\{1\}$, together with labels $n_{\Theta_i},m_{\Theta_i}$ of each $\Theta_i$, for which the following axioms are satisfied.
    \begin{itemize}
        \item $n_{\Theta_0}=n,m_{\Theta_0}=m$.
        \item If $(\Theta_0,\ldots,\Theta_J)\in T$, then the associated tuples $\{s_j=s_{\Theta_{j-1}}\}_{j=1}^J$, $\{n_j=n_{\Theta_j}\}_{j=0}^J,\{m_j=m_{\Theta_j}\}_{j=0}^J$ are such that:
        \begin{equation}\label{alginc}
            \forall j\in[0,J):\text{ either }\begin{cases}
                & n_{j+1}=n_j-1 \text{ and }m_{j+1}\geq m_j\\
                &\quad\quad\text{ and }s_{j+1}^{n_j-m_j}=\delta\prod_{\eta=1}^{j}s_\eta^{-(n_j-1-m_\eta)},\\
                &\\
                \text{or} & n_{j+1}=n_j\text{ and }m_{j+1}>m_{j},\\
                &\quad\quad\text{ and }s_{j+1}^{n_j-m_j}\geq\delta\prod_{\eta=1}^js_\eta^{-(n_j-1-m_\eta)},\\
                &\\
                \text{or} & n_{j+1}=n_j=m+1\text{ and }m_{j+1}=m_j=m\\
                &\quad\quad\text{ and }\prod_{\eta=1}^js_\eta^{n_j-m_\eta}=\delta\text{ and }s_{j+1}=1.
            \end{cases}
        \end{equation}
        \item In the setting above, we also have
        \begin{equation}\label{suppcond}
            \supp\widehat{F}_{\Theta_J}\subseteq\Omega_{\Theta_J,\mathbf{m}_J,\mathbf{s}_J,\mathbf{n}_J}.
        \end{equation}

    \end{itemize}
    For $\mathbf{s}_J,\mathbf{m}_J,\mathbf{n}_J$, we will write $T_{\mathbf{s}_J,\mathbf{m}_J,\mathbf{n}_J}$ for the set of tuples $(\Theta_0,\ldots,\Theta_J)\in T$ with that associated tuple, as in the second bullet point above. We assume also that we have the upper bound
    \begin{equation}\label{algdec}
        \begin{split}
        \|F\|_{L^{q_n}(\Q_p^n)}&\leq p^{J^2n\eps^{-1}}\sum_{\mathbf{s}_J,\mathbf{m}_J,\mathbf{n}_J}\prod_{j=1}^J\left[C_{n_j-m_j,(n_j-m_j)\eps}^{\frac{n_j-m_j}{\eps}}s_j^{-(m_j+3)\eps-(1-\frac{\mathfrak{D}_{n_j-m_j}}{q_n})}\right]\\
        &\times\left(\sum_{(\Theta_0,\ldots,\Theta_J)\in T_{\mathbf{s}_J,\mathbf{m}_J,\mathbf{n}_J}}\left\|\sum_{\theta\in\Theta_J\cap\Lambda_\delta}F_\theta\right\|_{L^{q_n}(\Q_p^n)}^{q_n}\right)^{1/q_n}.
        \end{split}
    \end{equation}

    Let $(\Theta_0,\ldots,\Theta_J)\in T$ be such that
    \begin{equation}\label{algnotdone}
        n_J>m+1\quad\text{or}\quad\prod_{j=1}^Js_j^{n_J-m_j}>\delta.
    \end{equation}
    Then we may find $m_{J+1}\in[m_J,m]$, $n_{J+1}\in(m,n_J]$, and $s_{J+1}\in p^{-\eps^{-1}\N}$, such that
    \begin{equation*}
        \begin{split}
        \|F_{\Theta_J}\|_{L^{q_n}(\Q_p^n)}&\leq p^{Jn\eps^{-1}}(\log_p\delta^{-1})C_{n_{J+1}-m_{J+1},(n_{J+1}-m_{J+1})\eps}^{\frac{n_{J+1}-m_{J+1}}{\eps}}s_{J+1}^{-(m_{J+1}+3)\eps-\Big(1-\frac{\mathfrak{D}_{n_{J+1}-m_{J+1}}}{q_n}\Big)}\\
        &\times\left(\sum_{\Theta'\in\mathcal{P}(\Theta,\mathbf{s}_{J+1}^\circ)}\left\|\sum_{\theta\in\Theta\cap\Lambda_\delta}F_\theta\right\|_{L^{q_n}(\Q_p^n)}^{q_n}\right)^{1/q_n}.
        \end{split}
    \end{equation*}
    
\end{lemma}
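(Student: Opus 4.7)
The plan is to mimic at level $J$ the root-stage argument that took us from \ref{regcase}--\ref{rimcase} through Lemma \ref{decstep}, after a change of variables that normalizes the Fourier support of $F_{\Theta_J}$. Applying the rescaling $\mathcal{L}_{\mathbf{m}_J,\mathbf{s}_J}$ to the hypothesis \ref{suppcond} puts the Fourier support of $F_{\Theta_J}\circ\mathcal{L}_{\mathbf{m}_J,\mathbf{s}_J}$ into the normalized region $\Omega^{\mathrm{res}}_{\Theta_J,\mathbf{m}_J,\mathbf{s}_J,\mathbf{n}_J}$. Modulo the affine transformation supplied by the columns $\gamma_{\mathbf{m}_J,\mathbf{s}_J}^{(j)}(\mathbf{s}_J^\circ\theta)$, this set is a thickened slab around the $(n_J-m_J)$-dimensional nondegenerate curve generated by $\gamma_{\mathbf{m}_J,\mathbf{s}_J}$ on $\Theta_J$, with anisotropic thickness $\delta\prod_{j=1}^J s_j^{-(m+\iota-m_j)}$ in the $\iota$th direction beyond index $m$ ($\iota\in[1,n_J-m]$). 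Hypothesis \ref{algnotdone} guarantees that the slab is strictly thicker than the next scale to which we decouple, so a nontrivial gain is available.

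I would then decompose $\Omega^{\mathrm{res}}_{\Theta_J,\mathbf{m}_J,\mathbf{s}_J,\mathbf{n}_J}$ as the disjoint union of the sets $\Omega^{\mathrm{res},m_{J+1},s_{J+1},n_{J+1}}_{\Theta_J,\mathbf{m}_J,\mathbf{s}_J,\mathbf{n}_J}$ introduced in the text, ranging over admissible $m_{J+1}\in[m_J,m]$, $s_{J+1}\in p^{-\eps^{-1}\N}$ adapted in the sense of \ref{adapted}, and $n_{J+1}\in\{n_J-1,n_J\}$ consistent with \ref{alginc}; the dichotomy in \ref{alginc} corresponds precisely to whether $s_{J+1}$ is at its minimum (forcing $n_{J+1}=n_J-1$, $m_{J+1}\ge m_J$) or strictly larger (giving $n_{J+1}=n_J$, $m_{J+1}>m_J$). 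A weighted H\"older/pigeonhole, of cost at most $(\log_p\delta^{-1})\,s_{J+1}^{-O(\eps)}$, selects a dominant triple together with coordinate blocks for the remaining indices $j<m_{J+1}$, reducing matters to a function Fourier-supported in a single such slice.

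On this slice I would iterate the argument of Lemma \ref{decstep}: the time rescaling $\theta\mapsto s_{J+1}^{(1+k/(n_{J+1}-m_{J+1}))\eps}\theta$ compares a translate of $\Theta_J\cap\Lambda_\delta$ to a small perturbation of the $(n_{J+1}-m_{J+1})$-dimensional moment curve at scale $s_{J+1}^\eps$, at which point Corollary \ref{convexdec} applies with constant $C_{n_{J+1}-m_{J+1},(n_{J+1}-m_{J+1})\eps}$. Telescoping over $k$ up to the analogue of $k_*$, using submultiplicativity of decoupling constants, produces a decoupling at the finer scale $\mathbf{s}_{J+1}^\circ$ relative to $\Theta_J$, losing the factor $C^{(n_{J+1}-m_{J+1})/\eps}_{n_{J+1}-m_{J+1},(n_{J+1}-m_{J+1})\eps}\,s_{J+1}^{-(m_{J+1}+3)\eps-(1-\mathfrak{D}_{n_{J+1}-m_{J+1}}/q_n)}$. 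Undoing the change of variables by $\mathcal{L}_{\mathbf{m}_J,\mathbf{s}_J}$ (its Jacobian cancels between the two sides of the $\ell^{q_n}L^{q_n}$ inequality) and declaring the children of $\Theta_J$ in $T$ to be $\mathcal{P}(\Theta_J,\mathbf{s}_{J+1}^\circ)$ yields the claim.

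The main obstacle I anticipate is the bookkeeping across the three branches of \ref{alginc}: in each branch one must verify that the chosen $s_{J+1}$ is adapted in the sense of \ref{adapted}; that the Fourier support propagated to each child $\Theta_{J+1}$ genuinely lies in $\Omega_{\Theta_{J+1},\mathbf{m}_{J+1},\mathbf{s}_{J+1},\mathbf{n}_{J+1}}$, so that \ref{suppcond} is restored and the induction continues; and that the multiplicative losses from the pigeonhole and the repeated Lemma \ref{decstep}-style telescoping accumulate only to the $p^{Jn\eps^{-1}}$-type prefactor claimed in the conclusion. The degenerate third branch of \ref{alginc}, with $n_J=m+1$, $m_J=m$ and $\prod_{j=1}^J s_j^{n_J-m_j}=\delta$, falls outside the hypothesis \ref{algnotdone} and need not be treated by this lemma, since the algorithm terminates at such nodes.
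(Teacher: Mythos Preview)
Your proposal is correct and follows essentially the same approach as the paper: rescale via $\mathcal{L}_{\mathbf{m}_J,\mathbf{s}_J}$ to normalize the Fourier support, pigeonhole/H\"older to select $(m_{J+1},s_{J+1},n_{J+1})$ together with the coordinate blocks $(B,\mathfrak{R})$, then iterate the Lemma~\ref{decstep} step using $(n_{J+1}-m_{J+1})$-dimensional moment-curve decoupling, and finally undo the rescaling. The only point where your sketch is slightly imprecise is the source of the $p^{Jn\eps^{-1}}$ prefactor: in the paper it arises because the range of $\lambda_{m_{J+1}}$ (and hence of $B$) after the accumulated $\mathcal{D}_{\mathbf{m}_J,\mathbf{n}_J}^{\mathbf{s}_J}$ rescaling carries a factor $p^{-\eps^{-1}(Jm_{J+1}-\sum_j m_j)}$, so the pigeonhole over $B$ costs $p^{\eps^{-1}(Jm_{J+1}-\sum_j m_j)}\leq p^{Jn\eps^{-1}}$ rather than just $(\log_p\delta^{-1})s_{J+1}^{-O(\eps)}$.
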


\begin{proof}
    We assume $\Theta=B(0,\mathbf{s}_J^\circ)$. For each $\theta\in\Theta\cap\Lambda_\delta$, we write
    \begin{equation*}
        \widehat{g}_\theta=\widehat{F_\theta\circ\mathcal{L}}_{\mathbf{m}_J,\mathbf{s}_J}.
    \end{equation*}
    Thus, $\widehat{g}_\theta$ is supported in the set $\Omega_{\Theta,\mathbf{m}_J,\mathbf{s}_J,\mathbf{n}_J}$. Suppose the second option of \ref{algnotdone} holds. Fix $n_{J+1}=n_J$. Define, for each $m\geq m_{J+1}>m_J$ and $\theta\in\Theta$,
    \begin{equation*}
        \begin{split}
            \Omega_{\theta,\mathbf{m}_J,\mathbf{s}_J,\mathbf{n}_J}^{\mathrm{res},m_{J+1}}=\Big\{\,\,&\Big[\gamma_{\mathbf{m}_J,\mathbf{s}_J}^{(1)}\big(\mathbf{s}_J^\circ\theta\big),\ldots,\gamma_{\mathbf{m}_J,\mathbf{s}_J}^{(n)}\big(\mathbf{s}_J^\circ\theta\big)\Big]\cdot\mathcal{D}_{\mathbf{m}_J,\mathbf{n}_J}^{\mathbf{s}_J}(\bm{\lambda}):\\
            &\forall j\in[1,n]\,\,|\lambda_j|\leq 1,\\
            &|\lambda_{m_{J+1}}|_p=1,\\
            &\forall\iota\in(m_{J+1},n_{J+1}]\,\,|\lambda_\iota|_p<1\hspace{8em}\Big\}.
        \end{split}
    \end{equation*}
    For each $s_{J+1}\in \mathrm{Adap}_{\mathbf{m}_{J+1},\mathbf{n}_{J+1}}^{\mathbf{s}_J}$, we write
    \begin{equation*}
        \begin{split}
            \Omega_{\theta,\mathbf{m}_J,\mathbf{s}_J,\mathbf{n}_J}^{\mathrm{res},m_{J+1},s_{J+1}}=\Big\{\,\,&\Big[\gamma_{\mathbf{m}_J,\mathbf{s}_J}^{(1)}\big(\mathbf{s}_J^\circ\theta\big),\ldots,\gamma_{\mathbf{m}_J,\mathbf{s}_J}^{(n)}\big(\mathbf{s}_J^\circ\theta\big)\Big]\cdot\mathcal{D}_{\mathbf{m}_J,\mathbf{n}_J}^{\mathbf{s}_J}(\bm{\lambda})\in \Omega_{\theta,\mathbf{m}_J,\mathbf{s}_J,\mathbf{n}_J}^{\mathrm{res},m_{J+1}}:\\
            &\exists\iota\in[1,m-m_{J+1}]\text{ s.t. }s_{J+1}^\iota\leq|\lambda_{m_{J+1}+\iota}|_p,\\
            &\forall\iota\in[1,m-m_{J+1}]\,\,|\lambda_{m_{J+1}+\iota}|_p< s_{J+1}^\iota p^{\iota\eps^{-1}}\hspace{7em}\Big\}.
        \end{split}
    \end{equation*}
    Finally, for each $m_{J+1},s_{J+1}$ and each $\mathfrak{R}=(R_1,\ldots,R_{m_{J+1}-1})\in\mathcal{P}(\Z_p,s_{J+1}^\eps)^{m_{J+1}-1}$ and each $B\in\mathcal{P}(\Z_p\setminus p\Z_p,p^{-\eps^{-1}(Jm_{J+1}-\sum_{j=1}^Jm_j)}s_{J+1}^\eps)$, we collapse notation and write
    \begin{equation*}
        \begin{split}
            \widetilde{\Omega}_{\theta,\mathbf{m}_{J+1},\mathbf{s}_{J+1},\mathbf{n}_{J+1}}^{B,\mathfrak{R}}=\Big\{\,\,&\Big[\gamma_{\mathbf{m}_J,\mathbf{s}_J}^{(1)}\big(\mathbf{s}_J^\circ\theta\big),\ldots,\gamma_{\mathbf{m}_J,\mathbf{s}_J}^{(n)}\big(\mathbf{s}_J^\circ\theta\big)\Big]\cdot\mathcal{D}_{\mathbf{m}_J,\mathbf{n}_J}^{\mathbf{s}_J}(\bm{\lambda})\in \Omega_{\theta,\mathbf{m}_J,\mathbf{s}_J,\mathbf{n}_J}^{\mathrm{res},m_{J+1},s_{J+1}}:\\
            &\lambda_j\in R_j\,(j<m_{J+1}),\lambda_{m_{J+1}}\in B\hspace{13em}\Big\}.
        \end{split}
    \end{equation*}
    If we omit the $B,\mathfrak{R}$, then we assume that the $\lambda_j$ range over all of $\Z_p$ (for $j<m_{J+1}$), and the $\lambda_{m_{J+1}}$ range over $\Z_p\setminus p\Z_p$. For each $I\in\mathcal{P}(\Theta,s_{J+1}^\eps)$, we write
    \begin{equation*}
        \widetilde{\Omega}_{I,\mathbf{m}_{J+1},\mathbf{s}_{J+1},\mathbf{n}_{J+1}}^{B,\mathfrak{R}}=\bigcup_{\theta\in I}\widetilde{\Omega}_{\theta,\mathbf{m}_{J+1},\mathbf{s}_{J+1},\mathbf{n}_{J+1}}^{B,\mathfrak{R}}.
    \end{equation*}
    By H\"older as before, one of the following holds: either there exists $m_J<m_{J+1}<m$, $s_{J+1}\in\mathrm{Adap}_{\mathbf{m}_{J+1},\mathbf{n}_{J+1}}^{\mathbf{s}_J}$, $I\in\mathcal{P}(\Theta,\mathbf{s}_J^\circ s_{J+1}^\eps)$, $B\in\mathcal{P}(\Z_p\setminus p\Z_p, p^{-\eps^{-1}(Jm_{J+1}-\sum_{j=1}^Jm_j)}s_{J+1}^\eps)$, and $\mathfrak{R}\in\mathcal{P}(\Z_p, s_{J+1}^\eps)^{m_{J+1}-1}$ such that
\begin{equation}\label{regcase2}
    \left\|\sum_{\theta\in \Lambda_\delta}g_\theta\right\|_{L^{q_n}(\Q_p^n)}\leq mp^{\eps^{-1}(Jm_{J+1}-\sum_{j=1}^Jm_j)}(\log_p\delta^{-1})s_{J+1}^{-(m_{J+1}+1)\eps}\left\|\sum_{\theta\in I\cap\Lambda_\delta}\mathcal{P}_{\widetilde{\Omega}_{I,\mathbf{m}_{J+1},\mathbf{s}_{J+1},\mathbf{n}_{J+1}}^{B,\mathfrak{R}}}g_\theta\right\|_{L^{q_n}(\Q_p^n)},
\end{equation}
or else we set $m_{J+1}=m,s_{J+1}=\left(\delta\prod_{j=1}^Js_j^{-(n_J-1-m_j)}\right)^{\frac{1}{n_{J+1}-m_{J+1}}}$, and there is $I\in\mathcal{P}(\Theta,\mathbf{s}_J^\circ s_{J+1}^\eps)$ such that
\begin{equation}\label{rimcase2} 
    \left\|\sum_{\theta\in \Lambda_\delta}g_\theta\right\|_{L^{q_n}(\Q_p^n)}\leq m(\log_p\delta^{-1})s_{J+1}^{-(m_{J+1}+1)\eps}\left\|\sum_{\theta\in I\cap\Lambda_\delta}\mathcal{P}_{\widetilde{\Omega}_{I,\mathbf{m}_{J+1},\mathbf{s}_{J+1},\mathbf{n}_{J+1}}}g_\theta\right\|_{L^{q_n}(\Q_p^n)}.
\end{equation}
We will focus on the case that \eqref{regcase2} holds, though the same argument will apply in each scenario. We abbreviate $G=\sum_{\theta\in I\cap\Lambda_\delta}\mathcal{P}_{\widetilde{\Omega}_{I,\mathbf{m}_{J+1},\mathbf{s}_{J+1},\mathbf{n}_{J+1}}^{B,\mathfrak{R}}}g_\theta$.
    For simplicity, we take $I=B(0,\mathbf{s}_{J}^\circ s_{J+1}^\eps)$ and $B=B(1, p^{-\eps^{-1}(Jm_{J+1}-\sum_{j=1}^Jm_j)}s_{J+1}^\eps)$. Then we have 
    \begin{equation*}
        \begin{split}
            \Big|\Big(\Big[\gamma_{\mathbf{m}_J,\mathbf{s}_J}^{(1)}\big(\mathbf{s}_J^\circ\theta\big),\ldots,\gamma_{\mathbf{m}_J,\mathbf{s}_J}^{(n)}\big(\mathbf{s}_J^\circ\theta\big)\Big]\cdot\mathcal{D}_{\mathbf{m}_J,\mathbf{n}_J}^{\mathbf{s}_J}(\bm{\lambda})\Big)_\iota&-\frac{(\lambda_{m_{J+1}}\mathbf{s}_J^\circ\theta)^{\iota-m_{J+1}}}{(\iota-m_{J+1})!}p^{-\eps^{-1}(Jm_{J+1}-\sum_{\ell=1}^Jm_\ell)}\Big|_p\\
            &\leq p^{\eps^{-1}(Jm_{J+1}-\sum_{\ell=1}^Jm_\ell)}s_{J+1}^{\eps(\iota-m_{J+1})+\eps},\quad (m_{J+1}<\iota\leq n_{J+1}).
        \end{split}
    \end{equation*}
    The curve
    \begin{equation*}
        \mathbf{s}_J^\circ\theta\mapsto\Big(p^{-\eps^{-1}(Jm_{J+1}-\sum_{\ell=1}^Jm_\ell)}\frac{(\lambda_{m_{J+1}}\mathbf{s}_J^\circ\theta)}{1!},\ldots,p^{-\eps^{-1}(Jm_{J+1}-\sum_{\ell=1}^Jm_\ell)}\frac{(\lambda_{m_{J+1}}\mathbf{s}_J^\circ\theta)^{n_{J+1}-m_{J+1}}}{(n_{J+1}-m_{J+1})!}\Big)
    \end{equation*}
    is rescaled moment curve over $B(0,1)$ of degree $n_{J+1}-m_{J+1}$; thus, the decoupling inequality provides
    \begin{equation*}
        \begin{split}
            \left\|\sum_{\theta\in I\cap\Lambda_\delta}g_\theta\right\|_{L^{q_n}(\Q_p^n)}\leq &\,C_{n_{J+1}-m_{J+1},(n_{J+1}-m_{J+1})\eps}s_{J+1}^{-\frac{\eps}{n_{J+1}-m_{J+1}}(1-\frac{\mathfrak{D}_{n_{J+1}-m_{J+1}}}{q_n}+\eps)}\\
            &\times\left(\sum_{\Theta'\in\mathcal{P}\Big(\Theta,\mathbf{s}_{J}^\circ s_{J+1}^{(1+\frac{1}{n_{J+1}-m_{J+1}})\eps}\Big)}\left\|\sum_{\theta\in\Theta'\cap\Lambda_\delta}g_\theta\right\|_{L^{q_n}(\Q_p^n)}^{q_n}\right)^{1/q_n}.
        \end{split}
    \end{equation*}
    Iterating as in Lemma \ref{decstep} over $1\leq k\leq\frac{n_{J+1}-m_{J+1}}{\eps}$, together with a terminal triangle inequality and Cauchy-Schwarz, we obtain
    \begin{equation*}
        \begin{split}
        \left\|\sum_{\theta\in\Theta\cap\Lambda_\delta}g_\theta\right\|_{L^{q_n}(\Q_p^n)}\leq&\, p^{\eps^{-1}Jn}(\log_p\delta^{-1})C_{n_{J+1}-m_{J+1},(n_{J+1}-m_{J+1})\eps}^{\frac{n_{J+1}-m_{J+1}}{\eps}}s_{J+1}^{-(m_{J+1}+3)\eps-(1-\frac{\mathfrak{D}_{n_{J+1}-m_{J+1}}}{q_n})}\\
        &\times\left(\sum_{\Theta'\in\mathcal{P}(\Theta,\mathbf{s}_{J+1}^\circ)}\left\|\sum_{\theta\in\Theta'\cap\Lambda_\delta}g_\theta\right\|_{L^{q_n}(\Q_p^n)}^{q_n}\right)^{1/q_n}.
        \end{split}
    \end{equation*}
    Undoing the change-of-variable, we achieve the desired result.
    
    If instead the first option of \ref{algnotdone} holds, we set $n_{J+1}=n_J-1$ and, for each choice $m_J\leq m_{J+1}\leq m$ and $s_{J+1}\in p^{-\eps^{-1}\N}$ with
    \begin{equation*}
        \left(\delta\prod_{j=1}^Js_j^{-(n_J-m_j)}\right)^{\frac{1}{n_{J+1}-m_{J+1}}}\leq s_{J+1}<1,
    \end{equation*}
    we set
    \begin{equation*}
        \begin{split}
            \Omega_{\theta,m_{J+1},s_{J+1}}=\Big\{\,\,&\Big[\gamma_{\mathbf{m}_J,\mathbf{s}_J}^{(1)}\big(\mathbf{s}_J^\circ\theta\big),\ldots,\gamma_{\mathbf{m}_J,\mathbf{s}_J}^{(n)}\big(\mathbf{s}_J^\circ\theta\big)\Big]\cdot\mathcal{D}_{\mathbf{m}_J,\mathbf{n}_J}^{\mathbf{s}_J}(\bm{\lambda})\in \Omega_{\theta,m_{J+1}}:\\
            &\exists\iota\in[1,m-m_{J+1}]\text{ s.t. }s_{J+1}^\iota\leq |\lambda_{m_{J+1}+\iota}|_p,\\
            &\forall\iota\in[1,m-m_{J+1}]\,|\lambda_{m_{J+1}+\iota}|_p< s_{J+1}^\iota p^{\iota\eps^{-1}}\hspace{7em}\Big\}.
        \end{split}
    \end{equation*}
    By an identical argument to the previous case (pigeonholing the ranges of $\lambda$, comparing with a cylinder, decoupling and iteration), we obtain that there are some $m_{J+1}$ and $s_{J+1}$ as above such that
    \begin{equation*}
        \begin{split}
        \left\|\sum_{\theta\in\Theta\cap\Lambda_\delta}g_\theta\right\|_{L^{q_n}(\Q_p^n)}\leq&\, p^{Jn\eps^{-1}}(\log_p\delta^{-1})C_{n_{J+1}-m_{J+1},(n_{J+1}-m_{J+1})\eps}^{\frac{n_{J+1}-m_{J+1}}{\eps}}s_{J+1}^{-(m_{J+1}+3)\eps-\big(1-\frac{\mathfrak{D}_{n_{J+1}-m_{J+1}}}{q_n}\big)}\\
        &\times\left(\sum_{\Theta'\in\mathcal{P}(\Theta,\mathbf{s}_{J+1}^\circ)}\left\|\sum_{\theta\in\Theta'\cap\Lambda_\delta}g_\theta\right\|_{L^{q_n}(\Q_p^n)}^{q_n}\right)^{1/q_n},
        \end{split}
    \end{equation*}
    and once again by changing variables we are done.
    \end{proof}

    Observe that, for each iteration of Lemma \ref{alginduction}, we obtain a new decoupling of $F$ into frequency-localized pieces indexed by parameters $\mathbf{s}_J,\mathbf{m}_J,\mathbf{n}_J$ with the condition that, when $J$ increases by $1$, either $m_J$ increases by at least $1$ or $n_J$ decreases by at least $1$, or already $\mathbf{s}_J$ has localized all the way to $\delta$. Since $n_1=n$ and $0\leq m_1\leq m$, we see that after $\mathfrak{J}\leq 2n$ steps the output of Lemma \ref{alginduction} is an estimate of the form
    \begin{equation*}
        \begin{split}
        \|F\|_{L^{q_n}(\Q_p^n)}&\leq p^{\mathfrak{J}^2n\eps^{-1}}\sum_{\mathbf{s}_\mathfrak{J},\mathbf{m}_{\mathfrak{J}},\mathbf{n}_\mathfrak{J}}\prod_{J=1}^\mathfrak{J}\left[C_{n_J-m_J,(n_J-m_J)\eps}^{\frac{n_J-m_J}{\eps}}s_J^{-\eps-(1-\frac{\mathfrak{D}_{n_J-m_J}}{q_n})}\right]\\
        &\times\left(\sum_{(\Theta_0,\ldots,\Theta_{\mathfrak{J}})\in T_{\mathbf{s}_{\mathfrak{J}},\mathbf{m}_{\mathfrak{J}},\mathbf{n}_{\mathfrak{J}}}}\left\|\sum_{\theta\in\Theta_{\mathfrak{J}}\cap\Lambda_\delta}F_\theta\right\|_{L^{q_n}(\Q_p^n)}^{q_n}\right)^{1/q_n}.
        \end{split}
    \end{equation*}
    Note also that there are $\leq (\log_p(\delta^{-1})mn)^{2n}$ choices of tuples $\mathbf{s}_\mathfrak{J},\mathbf{m}_{\mathfrak{J}},\mathbf{n}_\mathfrak{J}$ in the initial sum. Pigeonholing, we obtain that for some choice $\mathbf{s}_\mathfrak{J},\mathbf{m}_{\mathfrak{J}},\mathbf{n}_\mathfrak{J}$, we have the upper bound
    \begin{equation*}
        \begin{split}
        \|f\|_{L^{q_n}(\Q_p^n)}&\leq p^{\mathfrak{J}^2n\eps^{-1}}(\log(\delta^{-1})n)^{4n}\prod_{J=1}^\mathfrak{J}\left[C_{n_J-m_J,(n_J-m_J)\eps}^{\frac{n_J-m_J}{\eps}}s_J^{-\eps-(1-\frac{\mathfrak{D}_{n_J-m_J}}{q_n})}\right]\\
        &\times\left(\sum_{\Theta\in\mathcal{P}(\Z_p,\mathbf{s}_{\mathfrak{J}}^\circ)}\left\|\sum_{\theta\in\Theta\cap\Lambda_\delta}f_\theta\right\|_{L^{q_n}(\Q_p^n)}^{q_n}\right)^{1/q_n}.
        \end{split}
    \end{equation*}
    By the triangle inequality and H\"older, we reach our terminal decoupling
    \begin{equation*}
        \begin{split}
            \|f\|_{L^{q_n}(\Q_p^n)}&\leq p^{\mathfrak{J}^2n\eps^{-1}}(\log(\delta^{-1})n)^{4n}\prod_{J=1}^\mathfrak{J}\left[C_{n_J-m_J,(n_J-m_J)\eps}^{\frac{n_J-m_J}{\eps}}s_J^{-\eps-(1-\frac{\mathfrak{D}_{n_J-m_J}}{q_n})}\right]\\
            &\times(\delta^{-1}\mathbf{s}_{\mathfrak{J}}^\circ)^{1-\frac{1}{q_n}}\left(\sum_{\theta\in\Lambda_\delta}\left\|f_\theta\right\|_{L^{q_n}(\Q_p^n)}^{q_n}\right)^{1/q_n}.
        \end{split}
    \end{equation*}
    It remains to analyze the losses we have incurred. Rearranging factors, we have
    \begin{equation*}
        \left[\prod_{J=1}^\mathfrak{J}s_J^{-\eps-(1-\frac{\mathfrak{D}_{n_J-m_J}}{q_n})}\right](\mathbf{s}_{\mathfrak{J}}^\circ)^{1-\frac{1}{q_n}}=(\mathbf{s}_{\mathfrak{J}}^\circ)^{-\eps}\left[\prod_{J=1}^{\mathfrak{J}}s_J^{1+\ldots+(n_J-m_J)}\right]^{1/q_n}.
    \end{equation*}
    Note that $n_1=n$ and $n_{\mathfrak{J}}=m+1$. Let $j_1,\ldots,j_{n-m}$ be the indices such that $n_{j_k+1}=n_{j_k}-1$. At each such index, we have the identity
    \begin{equation*}
        s_{j_k+1}^{n_{j_k}-m_{j_k}}\prod_{\eta=1}^{j_k}s_\eta^{(n_{j_k}-1-m_\eta)}=\delta.
    \end{equation*}
    Multiplying these identities together, we obtain
    \begin{equation*}
        \prod_{J=1}^{\mathfrak{J}}s_J^{1+\ldots+(n_J-m_J)}\leq\delta^{n-m}.
    \end{equation*}
    Thus, we have demonstrated
    \begin{equation*}
        \|f\|_{L^{q_n}(\Q_p^n)}\leq p^{\mathfrak{J}^2n\eps^{-1}}(\log(\delta^{-1})n)^{4n}\left[\prod_{J=1}^\mathfrak{J}C_{n_J-m_J,(n_J-m_J)\eps}^{\frac{n_J-m_J}{\eps}}\right](\mathbf{s}_{\mathfrak{J}}^\circ)^{-\eps}\delta^{-1+\frac{n-m+1}{q_n}}\left(\sum_{\theta\in\Lambda_\delta}\left\|f_\theta\right\|_{L^{q_n}(\Q_p^n)}^{q_n}\right)^{1/q_n},
    \end{equation*}
    and by the trivial bound $\mathbf{s}_{\mathfrak{J}}^\circ\geq\delta$ and Theorem \ref{momentcurve}, together with Remark \ref{rectsetfamdec}, we have the upper bound
    \begin{equation*}
        \|f\|_{L^{q_n}(\Q_p^n)}\leq\exp\left(10^4\eps^{-4n\log n-1}n^{4 n^2+4n}(\log p)\right)\delta^{-1+\frac{n-m+1}{q_n}-\eps}\left(\sum_{\theta\in\Lambda_\delta}\left\|f_\theta\right\|_{L^{q_n}(\Q_p^n)}^{q_n}\right)^{1/q_n}.
    \end{equation*}

    It remains only to remove the special assumptions on $\delta$ and $\eps$. Fix $\eps=\frac{1}{\ell\kappa}$ for some $\ell\in\N$, and suppose that $\delta\in p^{-\N}$ is such that
    \begin{equation*}
        p^{-\kappa^2(K+1)}<\delta<p^{-\kappa^2K},\quad K\in\N.
    \end{equation*}
    Then, by what we have proven for $\delta'=p^{-\kappa^2K}$, if $\Lambda_{\delta'}\subseteq\Lambda_\delta$ is any $\delta'$-separated subset,
    \begin{equation*}
        \left\|\sum_{\theta\in\Lambda_{\delta'}}f_\theta\right\|_{L^{q_n}(\Q_p^n)}\leq\exp\left(10^4\eps^{-4n\log n-1}n^{4 n^2+4n}(\log p)\right)(\delta')^{-1+\frac{n-m+1}{q_n}-\eps}\left(\sum_{\theta\in\Lambda_{\delta'}}\left\|f_\theta\right\|_{L^{q_n}(\Q_p^n)}^{q_n}\right)^{1/q_n}.
    \end{equation*}
    Controlling $f$ by a sum over $\delta'/\delta$-many subsets $\Lambda_{\delta'}$, we conclude that
    \begin{equation*}
        \|f\|_{L^{q_n}(\Q_p^n)}\leq \exp\left(10^4\eps^{-4n\log n-1}n^{6 n^2+4n}(\log p)\right)\delta^{-1+\frac{n-m+1}{q_n}-\eps}\left(\sum_{\theta\in\Lambda_{\delta}}\left\|f_\theta\right\|_{L^{q_n}(\Q_p^n)}^{q_n}\right)^{1/q_n}.
    \end{equation*}
    In the case $p^{-\kappa^2}<\delta\leq p^{-1}$, a trivial inequality suffices to get the same result.

    Next, we remove the special assumption on $\eps$. If $\ell\in\N$ is such that
    \begin{equation*}
        \frac{(n!)^{-2n}}{\ell+1}<\eps<\frac{(n!)^{-2n}}{\ell},
    \end{equation*}
    then for any $\delta\in p^{-\N}$, using $(\ell+1)(n!)^{2n}\leq 2\eps^{-1}$, we have shown that
    \begin{equation*}
        \|f\|_{L^{q_n}(\Q_p^n)}\leq \exp\left(10^4\eps^{-4n\log n-1}n^{6 n^2+6n}(\log p)\right)\delta^{-1+\frac{n-m+1}{q_n}-\eps}\left(\sum_{\theta\in\Lambda_{\delta}}\left\|f_\theta\right\|_{L^{q_n}(\Q_p^n)}^{q_n}\right)^{1/q_n},
    \end{equation*}
    and we are done (noting that a trivial inequality suffices for the same result when $\eps>(n!)^{-2n}$).
\end{proof}

\section{Appendix A: Decoupling lemmas over \texorpdfstring{$\Q_p$}{Qp}}

In this section, we record various elementary lemmas regarding Fourier decoupling over $\Q_p$. Each of these is a cousin of a standard result over $\R$, and some of ours will be even stronger. Several of these lemmas have already been demonstrated in \cite{li2022introduction}.

Throughout this section, $\Omega$ will denote a subset of $\Q_p^d$ and $\Theta$ will denote a family of subsets of $\Omega$, such that $\Omega=\bigcup_{\theta\in\Theta}\theta$. We will also emphasize that \emph{all functions $f_\theta$ are locally constant and of compact support}, and indicate the corresponding class via $\mathcal{S}(\Q_p^d)$. We emphasize that ``locally constant'' means that there is some scale $\lambda\in p^{\Z}$ such that $\|x-y\|\leq\lambda$ implies $f_\theta(x)=f_\theta(y)$. The class $\mathcal{S}(\Q_p^d)$ is the appropriate replacement for Schwartz functions in the $p$-adic setting; they are precisely the ``Schwartz-Bruhat functions'' over $\Q_p^d$.

For exponents $2\leq q\leq r\leq\infty$, $q<\infty$, define $\operatorname{Dec}_{\ell^qL^r}(\Theta)$ to be the infimal $C>0$ such that, for any family $\{f_\theta:\theta\in\Theta\}$ such that $\hat{f}_\theta$ is supported in $\theta$, for each $\theta$,
\begin{equation*}
    \left\|\sum_{\theta\in\Theta}f_\theta\right\|_{L^r(\Q_p^d)}\leq C\left(\sum_{\theta\in\Theta}\left\|f_\theta\right\|_{L^r(\Q_p^d)}^q\right)^{1/q}.
\end{equation*}

Observe that we have not insisted that the sets $\theta\in\Theta$ are pairwise disjoint; in applications, this will often be true, but for many technical results it is convenient to allow $O(1)$ overlap between the caps.

The following is demonstrated in \cite{li2022introduction}, Prop. 4.4, in the case $q=2$ and for specific choices of $\theta$. The proof of this version is identical.

\begin{lemma}[Interpolation of decoupling constants]\label{interpolate} If $\frac{1}{r}=\frac{\alpha}{r_0}+\frac{1-\alpha}{r_1}$, $\alpha\in[0,1]$, and $1\leq q\leq\min(r_0,r_1)$, then for any partition $\Omega=\bigcup_{\theta\in\Theta}\theta$ with every $\theta$ a separate affine image of $\Z_p^d$, we have
\begin{equation*}
    \operatorname{Dec}_{\ell^qL^r}(\Theta)\leq\operatorname{Dec}_{\ell^qL^{r_0}}(\Theta)^\alpha\operatorname{Dec}_{\ell^qL^{r_1}}(\Theta)^{1-\alpha}.
\end{equation*}

\end{lemma}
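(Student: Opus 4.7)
My plan is to apply Stein's complex interpolation theorem to a well-chosen analytic family. The feature of the $p$-adic setting that makes this cleaner than its real counterpart is that, since each $\theta = c_\theta + A_\theta\Z_p^d$ is an affine image of $\Z_p^d$, the Fourier projector $P_\theta$ (multiplication by $\mathds{1}_\theta$ on the Fourier side) is convolution with the Schwartz--Bruhat kernel $x\mapsto\chi(c_\theta\cdot x)|\det A_\theta|\mathds{1}_{A_\theta^{-\top}\Z_p^d}(x)$, whose $L^1$ norm is exactly $1$. Hence $\|P_\theta\|_{L^p\to L^p}\leq 1$ for every $p\in[1,\infty]$, so we can freely modify functions on the Fourier side and project back onto $\theta$ with no loss---crucial for preserving the Fourier-support condition at every point of the interpolation strip.

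By density, it suffices to fix a simple tuple $(f_\theta)$ satisfying the Fourier-support condition with $(\sum_\theta\|f_\theta\|^q_{L^r})^{1/q}=1$, and a simple $g\in\mathcal{S}(\Q_p^d)$ of unit $L^{r'}$ norm, and to show $|\int(\sum_\theta f_\theta)g|\leq A^\alpha B^{1-\alpha}$, where $A=\operatorname{Dec}_{\ell^qL^{r_0}}(\Theta)$ and $B=\operatorname{Dec}_{\ell^qL^{r_1}}(\Theta)$. Setting $\frac{1}{r(z)} = \frac{z}{r_0}+\frac{1-z}{r_1}$ and defining $r'(z)$ dually, I introduce the analytic families
\begin{equation*}
  f_\theta(z) = P_\theta\!\left(\|f_\theta\|^{1-r/r(z)}_{L^r}|f_\theta|^{r/r(z)}\operatorname{sgn}(f_\theta)\right),\qquad g(z) = \|g\|^{1-r'/r'(z)}_{L^{r'}}|g|^{r'/r'(z)}\operatorname{sgn}(g),
\end{equation*}
under the conventions $|f|^w = e^{w\log|f|}$ for complex $w$ and $\operatorname{sgn}(f) = f/|f|$ on $\supp f$. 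Each $f_\theta(z)$ is Fourier-supported in $\theta$ by construction, and at $z=\alpha$ we recover $f_\theta(\alpha)=P_\theta f_\theta=f_\theta$ and $g(\alpha)=g$. A direct computation, combining $\|P_\theta\|_{L^{r_0}\to L^{r_0}}\leq 1$ with the identity $\||f|^{r/r(z)}\|_{L^{r_0}} = \|f\|_{L^r}^{r/r_0}$ on $\Re z = 1$, yields $(\sum_\theta\|f_\theta(z)\|^q_{L^{r_0}})^{1/q}\leq 1$ and $\|g(z)\|_{L^{r'_0}}=1$ on $\Re z=1$, with analogous bounds indexed by $r_1,r'_1$ on $\Re z=0$. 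The endpoint hypotheses then yield $|\Phi(z)|\leq A$ on $\Re z=1$ and $|\Phi(z)|\leq B$ on $\Re z=0$, where $\Phi(z) := \int(\sum_\theta f_\theta(z))g(z)$. The three-lines lemma gives $|\Phi(\alpha)|\leq A^\alpha B^{1-\alpha}$, which is precisely the desired bound.

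The main technical point is verifying the three-lines-lemma hypotheses, namely analyticity of $\Phi$ on the open strip together with boundedness and continuity on its closure. For the simple $(f_\theta), g$ chosen here this is routine: each modified factor is a finite sum of terms of the form $(\text{const})\,e^{h(z)}\mathds{1}_E$ for affine-linear $h$ and locally constant compactly supported $E$, and the convolution $P_\theta$ preserves this structure, so $\Phi$ is in fact entire and uniformly bounded on the strip. A standard limiting argument extends the inequality to general tuples and test functions. The edge case $r = \infty$ (if needed) requires only minor adjustments to the analytic family, trading the duality step for a direct estimation using $L^\infty$ norms.
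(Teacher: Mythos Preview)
Your argument is correct and is the standard complex-interpolation proof of this statement. The paper does not actually supply a proof here: it cites \cite{li2022introduction}, Prop.~4.4, for the case $q=2$ and asserts that ``the proof of this version is identical.'' That reference proceeds exactly as you do, via the three-lines lemma applied to an analytic family built from the $|f_\theta|^{r/r(z)}$ deformation, so your route coincides with the intended one. Your explicit observation that the Fourier projector $P_\theta$ onto an affine image of $\Z_p^d$ is convolution with an $L^1$-normalized Schwartz--Bruhat kernel (hence a contraction on every $L^p$) is precisely the $p$-adic simplification that makes the argument clean; over $\R$ one cannot project back so cheaply and must instead organize the interpolation more carefully. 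The remark about ``$r=\infty$'' presumably refers to the endpoint $r_0=\infty$ or $r_1=\infty$, where the duality/test-function side degenerates to $L^1$; as you note, this requires only cosmetic changes.
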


As a consequence, we obtain the following. Observe that we do \textit{not} need to assume that the $\theta$ are all congruent, in contrast to the Euclidean case.

\begin{lemma}[Flat decoupling] For any $\Omega\subseteq\Q_p^d$ and any partition $\Theta$ of $\Omega$ composed of affine images $\theta\in\Theta$ of $\Z_p^d$ and any $q,r\geq 2$,
\begin{equation*}
    \operatorname{Dec}_{\ell^qL^p}(\Theta)\leq(\#\Theta)^{1-\frac{1}{r}-\frac{1}{q}}.
\end{equation*}
    
\end{lemma}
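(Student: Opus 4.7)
The plan is to deduce the flat decoupling bound from two trivial endpoint estimates together with Lemma~\ref{interpolate}, followed by a H\"older manipulation on the sequence side. No new Fourier-analytic input beyond Plancherel's theorem will be required.

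First I would verify the $\ell^2 L^2$ endpoint: because $\Theta$ is a partition of $\Omega$, the Fourier supports of the $f_\theta$ are pairwise disjoint, and Plancherel yields $\operatorname{Dec}_{\ell^2 L^2}(\Theta) \leq 1$. Next, for the $\ell^2 L^\infty$ endpoint, the triangle inequality followed by Cauchy--Schwarz in the $\theta$-sum gives
\begin{equation*}
    \left\|\sum_{\theta\in\Theta} f_\theta\right\|_{L^\infty} \leq \sum_{\theta\in\Theta}\|f_\theta\|_{L^\infty} \leq (\#\Theta)^{1/2}\Bigl(\sum_{\theta\in\Theta}\|f_\theta\|_{L^\infty}^2\Bigr)^{1/2},
\end{equation*}
so $\operatorname{Dec}_{\ell^2 L^\infty}(\Theta) \leq (\#\Theta)^{1/2}$.

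Then I would interpolate between these two endpoints using Lemma~\ref{interpolate} with $q=2$, $r_0=2$, $r_1=\infty$, and $\alpha = 2/r$; the hypothesis $q \leq \min(r_0,r_1)$ is satisfied with equality, and the hypothesis that each $\theta$ be a separate affine image of $\Z_p^d$ is built into the statement of our lemma. The output is $\operatorname{Dec}_{\ell^2 L^r}(\Theta) \leq (\#\Theta)^{1/2-1/r}$ for every $r\geq 2$. Finally, to upgrade $\ell^2$ to $\ell^q$ for $q\geq 2$, I would apply H\"older on the sequence side, which costs a further factor of $(\#\Theta)^{1/2-1/q}$; multiplying the two estimates yields the claim $\operatorname{Dec}_{\ell^q L^r}(\Theta) \leq (\#\Theta)^{1-1/q-1/r}$.

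There is no real obstacle here; every step is soft. The only points worth highlighting are that the partition hypothesis enters only in the Plancherel step, and that in contrast to the Euclidean setting one does not need the caps $\theta$ to be congruent — only that each be an affine image of $\Z_p^d$ — because the $p$-adic version of Lemma~\ref{interpolate} already treats such families uniformly without any geometric normalization.
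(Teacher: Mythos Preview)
Your proof is correct and follows essentially the same approach as the paper: Plancherel for the $\ell^2L^2$ endpoint, triangle inequality plus Cauchy--Schwarz for the $\ell^2L^\infty$ endpoint, interpolation via Lemma~\ref{interpolate} to get $\operatorname{Dec}_{\ell^2L^r}(\Theta)\leq(\#\Theta)^{1/2-1/r}$, and a final H\"older step on the sequence side to pass from $\ell^2$ to $\ell^q$. Your closing remark about not needing congruent caps also mirrors the paper's observation.
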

\begin{proof}
    Fix any family $\{f_\theta:\theta\in\Theta\}$. Then, by Plancherel, these elements are pairwise orthogonal in $L^2(\Q_p^d)$; thus
    \begin{equation}\label{orth}
        \left\|\sum_{\theta\in\Theta}f_\theta\right\|_{L^2(\Q_p^d)}\leq\left(\sum_{\theta\in\Theta}\|f_\theta\|_{L^2(\Q_p^d)}^2\right)^{1/2},
    \end{equation}
    so $\operatorname{Dec}_{\ell^2L^2}(\Theta)\leq 1$. By the triangle inequality and Cauchy-Schwarz,
    \begin{equation}\label{triv}
        \left\|\sum_{\theta\in\Theta}f_\theta\right\|_{L^\infty(\Q_p^d)}\leq(\#\Theta)^{1/2}\left(\sum_{\theta\in\Theta}\|f_\theta\|_{L^\infty(\Q_p^d)}^2\right)^{1/2}.
    \end{equation}
    By Lemma \ref{interpolate}, the statements \ref{orth} and \ref{triv} give
    \begin{equation*}
        \operatorname{Dec}_{\ell^2L^r}(\Theta)\leq\operatorname{Dec}_{\ell^2L^\infty}(\Theta)^{\frac{r-2}{r}}\leq(\#\Theta)^{\frac{1}{2}-\frac{1}{r}}.
    \end{equation*}
    By H\"older,
    \begin{equation*}
        \left(\sum_{\theta\in\Theta}\|f_\theta\|_{L^r(\Q_p^d)}^2\right)^{1/2}\leq(\#\Theta)^{\frac{1}{2}-\frac{1}{q}}\left(\sum_{\theta\in\Theta}\|f_\theta\|_{L^r(\Q_p^d)}^q\right)^{1/q}.
    \end{equation*}
    Thus
    \begin{equation*}
        \left\|\sum_{\theta\in\Theta}f_\theta\right\|_{L^r(\Q_p^d)}\leq(\#\Theta)^{\frac{1}{2}-\frac{1}{r}}\left(\sum_{\theta\in\Theta}\|f_\theta\|_{L^r(\Q_p^d)}^2\right)^{1/2}\leq(\#\Theta)^{1-\frac{1}{r}-\frac{1}{q}}\left(\sum_{\theta\in\Theta}\|f_\theta\|_{L^r(\Q_p^d)}^q\right)^{1/q},
    \end{equation*}
    and so $\operatorname{Dec}_{\ell^qL^r}(\Theta)\leq(\#\Theta)^{1-\frac{1}{r}-\frac{1}{q}}$, as claimed.
    
\end{proof}

\begin{lemma}[Affine invariance of decoupling constants] Suppose $A$ is an invertible affine map $\Q_p^d\to\Q_p^d$. Then $\operatorname{Dec}_{\ell^qL^r}(A\Theta)=\operatorname{Dec}_{\ell^qL^r}(\Theta)$, where $A\Theta=\{A\theta:\theta\in\Theta\}$.

\end{lemma}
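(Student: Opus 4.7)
The plan is to reduce the affine case to the identity by exploiting how modulation and linear change of variables interact with the Fourier transform on $\Q_p^d$. First I would decompose $A$ as $Ax = Lx + c$, where $L:\Q_p^d\to\Q_p^d$ is invertible linear and $c\in\Q_p^d$ is a translation. Given any family $\{f_\theta\}_{\theta\in\Theta}$ with $\widehat{f_\theta}$ supported in $A\theta = L\theta + c$, I would define
\begin{equation*}
    g_\theta(x) := \chi\bigl(-c\cdot L^{-\top}x\bigr)\,f_\theta\bigl(L^{-\top}x\bigr).
\end{equation*}
Modulation by $\chi(-c\cdot\,\cdot\,)$ translates Fourier support by $-c$, so that $\chi(-c\cdot\,\cdot\,)f_\theta$ has Fourier support in $L\theta$; composing with $L^{-\top}$ then pulls the Fourier support back to $\theta$ (up to the harmless Jacobian $|\det L|_p^{-1}$). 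Thus $\{g_\theta\}$ is a legitimate decoupling family for $\Theta$, and the map $f_\theta\mapsto g_\theta$ is invertible (apply $A^{-1}$ to reverse), establishing a bijection between families adapted to $A\Theta$ and families adapted to $\Theta$.

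Next I would check that both sides of the putative decoupling inequality transform by the same Jacobian. Since $|\chi(\,\cdot\,)|=1$, the modulation is invisible in absolute values, and the $p$-adic change of variables gives
\begin{equation*}
    \|g_\theta\|_{L^r(\Q_p^d)} = |\det L|_p^{1/r}\,\|f_\theta\|_{L^r(\Q_p^d)},
\end{equation*}
and similarly
\begin{equation*}
    \Bigl\|\sum_{\theta\in\Theta}g_\theta\Bigr\|_{L^r(\Q_p^d)} = |\det L|_p^{1/r}\,\Bigl\|\sum_{\theta\in\Theta}f_\theta\Bigr\|_{L^r(\Q_p^d)},
\end{equation*}
because the common phase factor $\chi(-c\cdot L^{-\top}x)$ pulls out of the sum and has unit modulus. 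Applying the definition of $\operatorname{Dec}_{\ell^qL^r}(\Theta)$ to the family $\{g_\theta\}$ and cancelling the common factor $|\det L|_p^{1/r}$ from both sides yields $\operatorname{Dec}_{\ell^qL^r}(A\Theta)\leq\operatorname{Dec}_{\ell^qL^r}(\Theta)$. Applying the same reasoning with $A$ replaced by $A^{-1}$ (and $\Theta$ replaced by $A\Theta$) gives the reverse inequality, so equality holds.

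There is no real obstacle: the only thing to be careful about is the $p$-adic Fourier conventions, specifically that the Jacobian $|\det L|_p^{1/r}$ appears identically on both sides and cancels cleanly, and that the modulation character has unit modulus so the $L^r$ norms of the sum and of the individual pieces transform in the same way. Everything else is formal and parallels the Euclidean argument verbatim.
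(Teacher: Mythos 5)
Your proof is correct and takes essentially the same route as the paper: both establish the inequality via a Fourier-side change of variables together with the observation that modulation has unit modulus, then obtain equality by symmetry in $A$. The only cosmetic difference is that you fold the translation into the formula for $g_\theta$ via the phase $\chi(-c\cdot L^{-\top}x)$, whereas the paper first treats the linear part and dispenses with translations in a closing remark.
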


\begin{proof}
    We first take $A$ to be linear for simplicity. Suppose $\{f_\theta:\theta\in\Theta\}$ are such that $\hat{f}_\theta$ is supported in $\theta$. Define $g_\theta=(\hat{f}_\theta\circ A^{-1})^\vee$. Then $\hat{g}_\theta$ is supported in $A\theta$, so
    \begin{equation}\label{aff}
        \left\|\sum_{\theta\in\Theta}g_\theta\right\|_{L^r(\Q_p^d)}\leq\operatorname{Dec}_{\ell^qL^r}(A\Theta)\left(\sum_{\theta\in\Theta}\|g_\theta\|_{L^r(\Q_p^d)}^q\right)^{1/q}.
    \end{equation}
    Observe that the following change-of-variables holds:
    \begin{equation*}
        g_\theta(x)=\int_{\Q_p^d}\hat{f}_\theta(A(\xi))\chi(x\cdot\xi)d\xi=\frac{1}{\mu(A[\Z_p^d])}\int_{\Q_p^d}\hat{f}_\theta(\omega)\chi(A^{-\top}(x)\cdot \omega)d\omega=\frac{1}{\mu(A[\Z_p^d])}f(A^{-\top}(x)),
    \end{equation*}
    so that
    \begin{equation*}
        \|g_\theta\|_{L^r(\Q_p^d)}=\mu(A[\Z_p^d])^{-1+\frac{1}{r}}\|f_\theta\|_{L^r(\Q_p^d)}.
    \end{equation*}
    In particular, \ref{aff} rearranges to
    \begin{equation*}
        \left\|\sum_{\theta\in\Theta}f_\theta\right\|_{L^r(\Q_p^d)}\leq\operatorname{Dec}_{\ell^qL^r}(A\Theta)\left(\sum_{\theta\in\Theta}\|f_\theta\|_{L^r(\Q_p^d)}^q\right)^{1/q}.
    \end{equation*}
    Since the $\{f_\theta:\theta\in\Theta\}$ were arbitrary, we conclude that
    \begin{equation*}
        \operatorname{Dec}_{\ell^qL^r}(\Theta)\leq\operatorname{Dec}_{\ell^qL^r}(A\Theta).
    \end{equation*}
    Since this holds for all invertible linear $A$, we conclude that $\operatorname{Dec}_{\ell^qL^r}(\Theta)=\operatorname{Dec}_{\ell^qL^r}(A\Theta)$ for all invertible linear $A$.

    Finally, we note that decoupling constants are trivially invariant under translation, as Fourier translation is equivalent to spatial modulation, which does not affect absolute values. Thus the claim holds.
\end{proof}

\begin{lemma}[Local decoupling]\label{localdec}
    Suppose every $\theta\in\Theta$ is of the form $A_\theta[\Z_p^d]+v_\theta$ for linear isomorphisms $A_\theta:\Q_p^d\to\Q_p^d$. Set
    \begin{equation*}
        \eta:=\max_{\theta\in\Theta}\|A_\theta^{-1}\|,
    \end{equation*}
    where $\|\,\cdot\,\|$ is the usual $\ell^\infty\to\ell^\infty$ operator norm. Write $\operatorname{Dec}_{\ell^qL^r}^{\mathrm{loc}}(\Theta)$ for the infimal $C>0$ such that, for any family $\{f_\theta:\theta\in\Theta\}$ such that $\hat{f}_\theta$ is supported in $\theta$, and for any $x\in\Q_p^d$, we have
    \begin{equation*}
        \left\|\sum_{\theta\in\Theta}f_\theta\right\|_{L^r(B(x,\eta))}\leq C\left(\sum_{\theta\in\Theta}\left\|f_\theta\right\|_{L^r(B(x,\eta))}^q\right)^{1/q}.
    \end{equation*}
    Then we have
    \begin{equation*}
        \operatorname{Dec}_{\ell^qL^r}(\Theta)=\operatorname{Dec}_{\ell^qL^r}^{\mathrm{loc}}(\Theta).
    \end{equation*}

\end{lemma}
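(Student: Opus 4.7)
The plan is to establish both directions $\operatorname{Dec}_{\ell^qL^r}(\Theta) \leq \operatorname{Dec}_{\ell^qL^r}^{\mathrm{loc}}(\Theta)$ and $\operatorname{Dec}_{\ell^qL^r}^{\mathrm{loc}}(\Theta) \leq \operatorname{Dec}_{\ell^qL^r}(\Theta)$ separately, by elementary arguments that exploit the sharp $p$-adic uncertainty principle.

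For $\operatorname{Dec} \leq \operatorname{Dec}^{\mathrm{loc}}$, I would fix a partition $\mathcal{B}$ of $\Q_p^d$ into pairwise disjoint balls of radius $\eta$, apply the local inequality on each $B \in \mathcal{B}$, raise both sides to the power $r/q$, sum over $B$, and invoke Minkowski's inequality in $\ell^{r/q}(\mathcal{B})\ell^1(\theta)$, which is valid because $r/q \geq 1$. The resulting sum collapses via $\sum_{B \in \mathcal{B}} \|f_\theta\|_{L^r(B)}^r = \|f_\theta\|_{L^r(\Q_p^d)}^r$ to produce the global decoupling inequality with the same constant.

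The reverse direction is the interesting one. I would fix a ball $B(x,\eta)$, consider the truncated family $\{\mathds{1}_{B(x,\eta)}f_\theta\}_{\theta \in \Theta}$, apply \emph{global} decoupling to it, and translate the conclusion back via $\|\mathds{1}_{B(x,\eta)}g\|_{L^r(\Q_p^d)} = \|g\|_{L^r(B(x,\eta))}$. The critical issue is to verify that this sharp truncation does not destroy the Fourier localization. Since $\widehat{\mathds{1}_{B(x,\eta)}}$ is supported exactly in $B(0,\eta^{-1})$, the Fourier support of $\mathds{1}_{B(x,\eta)}f_\theta$ lies in $\theta + B(0,\eta^{-1})$. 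The crucial containment $B(0,\eta^{-1}) \subseteq A_\theta[\Z_p^d]$ then follows from the operator-norm estimate: any $y$ with $\|y\| \leq \eta^{-1}$ satisfies $\|A_\theta^{-1}y\| \leq \|A_\theta^{-1}\|\eta^{-1} \leq \eta \cdot \eta^{-1} = 1$, so $A_\theta^{-1}y \in \Z_p^d$. Since $A_\theta[\Z_p^d]$ is an additive subgroup, this forces $\theta + B(0,\eta^{-1}) = \theta$, so the truncated family indeed has the required Fourier support, and global decoupling applies.

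The main obstacle I foresee is essentially just this containment check. In the Archimedean setting, the analogous truncation by a sharp indicator destroys Fourier support, forcing one to use smooth Schwartz cutoffs whose slowly-decaying tails introduce $\eps$-losses into any local-to-global comparison. Here the ultrametric structure makes $\widehat{\mathds{1}_{B(x,\eta)}}$ supported on an exact ball, which is precisely the feature that allows the two decoupling constants to coincide with no loss.
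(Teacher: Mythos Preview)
Your proposal is correct and essentially identical to the paper's proof: both directions are handled exactly as you describe, with the truncation-preserves-Fourier-support argument for $\operatorname{Dec}^{\mathrm{loc}}\leq\operatorname{Dec}$ and the partition-plus-Minkowski argument for $\operatorname{Dec}\leq\operatorname{Dec}^{\mathrm{loc}}$. Your verification of the containment $B(0,\eta^{-1})\subseteq A_\theta[\Z_p^d]$ via the operator-norm bound is in fact more explicit than what the paper writes.
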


\begin{proof}
    Let $\{f_\theta\}_{\theta\in\Theta}$ be any family as stated. Let $x\in\Q_p^d$ be arbitrary. Write
    \begin{equation*}
        g(y)=\mathds{1}_{B(x,\eta)}(y),\quad\hat{g}(\xi)=\chi(-x\cdot \xi)\mathds{1}_{B(0,\eta^{-1})}(\xi).
    \end{equation*}
    Then we have
    \begin{equation*}
        \left\|\sum_{\theta\in\Theta}f_\theta\right\|_{L^r(B(x,\eta))}=\left\|\sum_{\theta\in\Theta}gf_\theta\right\|_{L^r(\Q_p^d)},
    \end{equation*}
    and
    \begin{equation*}
        \widehat{gf_\theta}(\xi)=[\chi(-x\cdot\,)\mathds{1}_{B(0,\eta^{-1})}]*\hat{f}_\theta(\xi),
    \end{equation*}
    which is still supported in $\theta$. Thus
    \begin{equation*}
        \left\|\sum_{\theta\in\Theta}gf_\theta\right\|_{L^r(\Q_p^d)}\leq\operatorname{Dec}_{\ell^qL^r}(\Theta)\left(\sum_{\theta\in\Theta}\|gf_\theta\|_{L^r(\Q_p^d)}^q\right)^{1/q}.
    \end{equation*}
    Thus we have $\operatorname{Dec}_{\ell^qL^r}^{\mathrm{loc}}(\Theta)\leq\operatorname{Dec}_{\ell^qL^r}(\Theta)$.

    We consider the reverse inequality. We redefine $g=\mathds{1}_{B(0,\eta)}$. Let $X$ be the set of standard representatives of $\Q_p^d/B(0,\eta)$; i.e. we represent $x+B(0,\eta)$ by $y$ when $\eta^{-1}y$ has zero integer part. Then we have:
    \begin{equation*}
        \begin{split}
        \int_{\Q_p^d}\left|\sum_{\theta\in\Theta}f_\theta\right|^r&=\sum_{x\in X}\int_{B(x,\eta)}\left|g(y-x)\sum_{\theta\in\Theta}f_\theta(y)\right|^rd\mu(y)\\
        &\leq\operatorname{Dec}_{\ell^qL^r}^{\mathrm{loc}}(\Theta)^r\sum_{x\in X}\left(\sum_{\theta\in\Theta}\left[\int_{B(x,\eta)}\left|f_\theta(y)\right|^rd\mu(y)\right]^{q/r}\right)^{r/q}.
        \end{split}
    \end{equation*}
    By Minkowski, we have
    \begin{equation*}
        \sum_{x\in X}\left(\sum_{\theta\in\Theta}\left[\int_{B(x,\eta)}\left|f_\theta(y)\right|^rd\mu(y)\right]^{q/r}\right)^{r/q}\leq\left(\sum_{\theta\in\Theta}\left(\sum_{x\in X}\int_{B(x,\eta)}|f_\theta(y)|^rd\mu(y)\right)^{q/r}\right)^r.
    \end{equation*}
    Taking $r$th roots, we obtain the inequality $\operatorname{Dec}_{\ell^qL^r}(\Theta)\leq\operatorname{Dec}_{\ell^qL^r}^{\mathrm{loc}}(\Theta)$.
    
\end{proof}

\begin{lemma}[Decoupling tensorizes] Let $\Omega_1\subseteq\Q_p^d,\Omega_2\subseteq\Q_p^e$ be any sets and let $\Theta_1,\Theta_2$ be any partitions of $\Omega_1,\Omega_2$, respectively. Write $\Theta$ for the partition of $\Omega_1\times\Omega_2$ by sets of the form $\theta\times\tau$ ($\theta\in\Theta_1,\tau\in\Theta_2$). Suppose $q\leq r$. Then
\begin{equation*}
    \operatorname{Dec}_{\ell^qL^r}(\Theta)=\operatorname{Dec}_{\ell^qL^r}(\Theta_1)\operatorname{Dec}_{\ell^qL^r}(\Theta_2)
\end{equation*}
\begin{proof}

    First consider any family $\{f_\theta^1:\theta\in\Theta_1\}$ and $\{f_\tau^2:\tau\in\Theta_2\}$ with $\hat{f}_\theta^1$ supported in $\theta$ and $\hat{f}_\tau^2$ supported in $\tau$. Define $g_{(\theta,\tau)}:\Q_p^{d+e}\to\C$ by
    \begin{equation*}
        g_{(\theta,\tau)}(x,y)=f_\theta^1(x)f_{\tau}^2(y)
    \end{equation*}
    Then $\hat{g}_{(\theta,\tau)}(\xi,\omega)=\hat{f}_\theta^1(\xi)\hat{f}_{\tau}^2(\omega)$ is supported in $\theta\times\tau$. In particular,
    \begin{equation*}
        \left\|\sum_{\theta\times\tau\in\Theta}g_{(\theta,\tau)}\right\|_{L^r(\Q_p^{d+e})}\leq\operatorname{Dec}_{\ell^qL^r}(\Theta)\left(\sum_{\theta\times\tau\in\Theta}\|g_{(\theta,\tau)}\|_{L^r(\Q_p^{d+e})}^q\right)^{1/q}
    \end{equation*}
    Processing both sides of this, observe that
    \begin{equation*}
        \left\|\sum_{\theta\times\tau\in\Theta}g_{(\theta,\tau)}\right\|_{L^r(\Q_p^{d+e})}=\left\|\sum_{\theta\in\Theta_1} f_\theta^1\right\|_{L^r(\Q_p^d)}\left\|\sum_{\tau\in\Theta_2}f_\tau^2\right\|_{L^r(\Q_p^e)}
    \end{equation*}
    and
    \begin{equation*}
        \left(\sum_{\theta\times\tau\in\Theta}\|g_{(\theta,\tau)}\|_{L^r(\Q_p^{d+e})}^q\right)^{1/q}=\left(\sum_{\theta\in\Theta_1}\|f_\theta^1\|_{L^r(\Q_p^{d})}^q\right)^{1/q}\left(\sum_{\tau\in\Theta_2}\|f_\tau^2\|_{L^r(\Q_p^e)}^q\right)^{1/q}
    \end{equation*}
    Picking $\{f_\theta^1\}_{\theta\in\Theta_1}$ and $\{f_\tau^2\}_{\tau\in\Theta_2}$, not all zero, such that
    \begin{equation*}
        \begin{split}
        &\left\|\sum_{\theta\in\Theta_1} f_\theta^1\right\|_{L^r(\Q_p^d)}\geq(1-\eps)\operatorname{Dec}_{\ell^qL^r}(\Theta_1)\left(\sum_{\theta\in\Theta_1}\|f_\theta^1\|_{L^r(\Q_p^d)}^q\right)^{1/q},\\
        &\left\|\sum_{\tau\in\Theta_2} f_\tau^2\right\|_{L^r(\Q_p^d)}\geq(1-\eps)\operatorname{Dec}_{\ell^qL^r}(\Theta_2)\left(\sum_{\tau\in\Theta_2}\|f_\tau^1\|_{L^r(\Q_p^d)}^q\right)^{1/q}
        \end{split}
    \end{equation*}
    we see that
    \begin{equation*}
        (1-\eps)^2\operatorname{Dec}_{\ell^qL^r}(\Theta_1)\operatorname{Dec}_{\ell^qL^r}(\Theta_2)\leq\operatorname{Dec}_{\ell^qL^r}(\Theta).
    \end{equation*}
    Taking $\eps\to 0$ we obtain the inequality
    \begin{equation*}
        \operatorname{Dec}_{\ell^qL^r}(\Theta_1)\operatorname{Dec}_{\ell^qL^r}(\Theta_2)\leq\operatorname{Dec}_{\ell^qL^r}(\Theta)
    \end{equation*}
    
    It remains to establish the reverse inequality.
    
    Let $\{g_{(\theta,\tau)}\}_{(\theta,\tau)\in\Theta}$ be a family with $\hat{g}_{(\theta,\tau)}$ supported in $\theta\times\tau$. Then, for each fixed $y\in\Q_p^e$, observe that $x\mapsto g_{(\theta,\tau)}(x,y)$ has Fourier support contained in the set $\theta$. Thus
    \begin{equation*}
        \int_{\Q_p^e}\int_{\Q_p^d}\left|\sum_{\theta\in\Theta_1}\sum_{\tau\in\Theta_2}g_{(\theta,\tau)}(x,y)\right|^rdxdy\leq\operatorname{Dec}_{\ell^qL^r}(\Theta_1)^r\int_{\Q_p^e}\left[\sum_{\theta\in\Theta_1}\left(\int_{\Q_p^d}\left|\sum_{\tau\in\Theta_2}g_{(\theta,\tau)}(x,y)\right|^rdx\right)^{q/r}\right]^{r/q}dy
    \end{equation*}
    By Minkowski, using $q\leq r$,
    \begin{equation*}
        \begin{split}
        \int_{\Q_p^e}\Big[\sum_{\theta\in\Theta_1}\Big(\int_{\Q_p^d}\Big|&\sum_{\tau\in\Theta_2}g_{(\theta,\tau)}(x,y)\Big|^rdx\Big)^{q/r}\Big]^{r/q}dy\\
        &\leq\Big[\sum_{\theta\in\Theta_1}\Big(\int_{\Q_p^e}\int_{\Q_p^d}\Big|\sum_{\tau\in\Theta_2}g_{(\theta,\tau)}(x,y)\Big|^rdxdy\Big)^{q/r}\Big]^{r/q},
        \end{split}
    \end{equation*}
    and we may apply Fubini and decouple further to obtain for each $\theta$
    \begin{equation*}
        \int_{\Q_p^e}\int_{\Q_p^d}\left|\sum_{\tau\in\Theta_2}g_{(\theta,\tau)}(x,y)\right|^rdxdy\leq\operatorname{Dec}_{\ell^qL^r}(\Theta_2)^r\int_{\Q_p^d}\Big[\sum_{\tau\in\Theta_2}\Big(\int_{\Q_p^e}|g_{(\theta,\tau)}(x,y)|^rdy\Big)^{q/r}\Big]^{r/q}dx.
    \end{equation*}
    Collecting all the preceding,
    \begin{equation*}
        \begin{split}
        \Big\|\sum_{\theta\in\Theta_1}\sum_{\tau\in\Theta_2}g_{(\theta,\tau)}\Big\|_{L^r(\Q_p^{d+e})}&\leq\operatorname{Dec}_{\ell^qL^r}(\Theta_1)\operatorname{Dec}_{\ell^qL^r}(\Theta_2)\\
        &\times\Big\{\sum_{\theta\in\Theta_1}\Big[\int_{\Q_p^d}\Big(\sum_{\tau\in\Theta_2}\big(\int_{\Q_p^e}|g_{(\theta,\tau)}(x,y)|^rdy\big)^{q/r}\Big)^{r/q}dx\Big]^{q/r}\Big\}^{1/q}
        \end{split}
    \end{equation*}
    Applying Minkowski again,
    \begin{equation*}
        \begin{split}
        \Big\{\sum_{\theta\in\Theta_1}&\Big[\int_{\Q_p^d}\Big(\sum_{\tau\in\Theta_2}\big(\int_{\Q_p^e}|g_{(\theta,\tau)}(x,y)|^rdy\big)^{q/r}\Big)^{r/q}dx\Big]^{q/r}\Big\}^{1/q}\\
        &\leq\Big[\sum_{\theta\in\Theta_1}\sum_{\tau\in\Theta_2}\Big(\int_{\Q_p^d}\int_{\Q_p^e}|g_{(\theta,\tau)}(x,y)|^rdydx\Big)^{q/r}\Big]^{1/q},
        \end{split}
    \end{equation*}
    from which we obtain the estimate
    \begin{equation*}
        \Big\|\sum_{\theta\times\tau\in\Theta}g_{(\theta,\tau)}\Big\|_{L^r(\Q_p^{d+e})}\leq\operatorname{Dec}_{\ell^qL^r}(\Theta_1)\operatorname{Dec}_{\ell^qL^r}(\Theta_2)\Big(\sum_{\theta\times\tau\in\Theta}\|g_{(\theta,\tau)}\|_{L^r(\Q_p^{d+e})}^q\Big)^{1/q}.
    \end{equation*}
    Since this holds for all arrangements $\{g_{(\theta,\tau)}\}_{\theta,\tau}$ as in the definition of the decoupling constant for $\Theta$, we conclude that
    \begin{equation*}
        \operatorname{Dec}_{\ell^qL^r}(\Theta)\leq\operatorname{Dec}_{\ell^qL^r}(\Theta_1)\operatorname{Dec}_{\ell^qL^r}(\Theta_2),
    \end{equation*}
    so we have equality, as claimed.
    
\end{proof}

\end{lemma}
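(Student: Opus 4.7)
The plan is to prove the two inequalities separately. For the lower bound $\operatorname{Dec}_{\ell^qL^r}(\Theta_1)\operatorname{Dec}_{\ell^qL^r}(\Theta_2)\leq\operatorname{Dec}_{\ell^qL^r}(\Theta)$, I would start from near-optimal families $\{f_\theta^1\}_{\theta\in\Theta_1}$ and $\{f_\tau^2\}_{\tau\in\Theta_2}$ realizing each single-factor decoupling constant up to a factor $(1-\eps)$, and take the tensor product $h_{(\theta,\tau)}(x,y)=f_\theta^1(x)f_\tau^2(y)$. The Fourier transform factorizes, $\widehat{h}_{(\theta,\tau)}(\xi,\omega)=\widehat{f}^1_\theta(\xi)\widehat{f}^2_\tau(\omega)$, so the support lies in $\theta\times\tau$; Fubini makes both sides of the $\Theta$-decoupling inequality factor cleanly as products of one-dimensional $L^r$ norms, and sending $\eps\to 0$ gives the inequality.

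The harder direction is the upper bound $\operatorname{Dec}_{\ell^qL^r}(\Theta)\leq\operatorname{Dec}_{\ell^qL^r}(\Theta_1)\operatorname{Dec}_{\ell^qL^r}(\Theta_2)$. Here I would take any family $\{h_{(\theta,\tau)}\}$ with $\widehat{h}_{(\theta,\tau)}$ supported in $\theta\times\tau$ and carry out decoupling one variable at a time. For fixed $y$, the section $x\mapsto\sum_{\tau\in\Theta_2}h_{(\theta,\tau)}(x,y)$ has Fourier support contained in $\theta$ (as follows from inverting only the $\omega$-variable in $\widehat{h}_{(\theta,\tau)}$). Thus applying the $\Theta_1$-decoupling pointwise in $y$ and integrating,
\begin{equation*}
    \Bigl\|\sum_{\theta,\tau}h_{(\theta,\tau)}\Bigr\|_{L^r(\Q_p^{d+e})}^r\leq\operatorname{Dec}_{\ell^qL^r}(\Theta_1)^r\int_{\Q_p^e}\Bigl(\sum_{\theta\in\Theta_1}\bigl\|\textstyle\sum_{\tau}h_{(\theta,\tau)}(\cdot,y)\bigr\|_{L^r(\Q_p^d)}^q\Bigr)^{r/q}\,dy.
\end{equation*}

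Because $r\geq q$, Minkowski's integral inequality (applied with exponent $r/q\geq 1$ on the inner function $a_\theta(y)=\|\sum_\tau h_{(\theta,\tau)}(\cdot,y)\|_{L^r_x}^q$) permits swapping the $\ell^q_\theta$ and the $L^{r/q}_y$ norms, yielding
\begin{equation*}
    \Bigl\|\sum_{\theta,\tau}h_{(\theta,\tau)}\Bigr\|_{L^r(\Q_p^{d+e})}\leq\operatorname{Dec}_{\ell^qL^r}(\Theta_1)\Bigl(\sum_{\theta\in\Theta_1}\bigl\|\textstyle\sum_{\tau}h_{(\theta,\tau)}\bigr\|_{L^r(\Q_p^{d+e})}^q\Bigr)^{1/q}.
\end{equation*}
Then I would repeat the argument, decoupling in the second variable: for each fixed $\theta$ and fixed $x$, the section $y\mapsto h_{(\theta,\tau)}(x,y)$ has Fourier support in $\tau$, so applying $\operatorname{Dec}_{\ell^qL^r}(\Theta_2)$ pointwise in $x$, integrating in $x$, and using Minkowski one more time produces
\begin{equation*}
    \bigl\|\textstyle\sum_{\tau}h_{(\theta,\tau)}\bigr\|_{L^r(\Q_p^{d+e})}\leq\operatorname{Dec}_{\ell^qL^r}(\Theta_2)\Bigl(\sum_{\tau\in\Theta_2}\|h_{(\theta,\tau)}\|_{L^r(\Q_p^{d+e})}^q\Bigr)^{1/q}.
\end{equation*}
Substituting and recombining the double sum gives the product bound.

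The main obstacle, and the only nontrivial ingredient, is the correct orientation of Minkowski's integral inequality at exactly the step where one must pull $\ell^q$ outside an $L^{r/q}$ integral; this is where the hypothesis $q\leq r$ is essential. Everything else is bookkeeping: the Fourier-support claims for sections of $h_{(\theta,\tau)}$ are immediate by partial inverse Fourier transform, and the tensor-product construction for the lower bound is standard. No new $p$-adic-specific considerations arise, since Plancherel, Fubini, and Minkowski work identically over $\Q_p^d\times\Q_p^e$ with the product Haar measure.
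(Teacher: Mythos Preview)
Your proposal is correct and follows essentially the same route as the paper: tensor products of near-extremizers for the lower bound, and variable-by-variable decoupling interleaved with two applications of Minkowski (using $q\leq r$) for the upper bound. The only cosmetic difference is that the paper phrases the first decoupling step in terms of the individual sections $x\mapsto g_{(\theta,\tau)}(x,y)$ rather than their sum over $\tau$, but the computation is identical.
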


A special case of the previous lemma is the following:

\begin{lemma}[Cylindrical decoupling] Let $\Omega\subseteq\Q_p^d$ be any set and $\Theta$ be a partition of $\Omega$. Write $\widetilde{\Theta}$ for the partition of $\Omega\times\Q_p^e$ by $\tilde{\Theta}=\{\theta\times\Q_p^e:\theta\in\Theta\}$. Suppose $q\leq r$. Then
\begin{equation*}
    \operatorname{Dec}_{\ell^qL^r}(\widetilde{\Theta})=\operatorname{Dec}_{\ell^qL^r}(\Theta).
\end{equation*}
\end{lemma}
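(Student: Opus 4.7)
The plan is to deduce the cylindrical decoupling identity as an immediate corollary of the preceding tensorization lemma. Specifically, I would set $\Theta_1 = \Theta$ regarded as a partition of $\Omega \subseteq \Q_p^d$, and take $\Theta_2 = \{\Q_p^e\}$, the trivial one-block partition of $\Omega_2 = \Q_p^e$. Then the product partition as defined in the tensorization lemma is literally $\{\theta \times \Q_p^e : \theta \in \Theta\} = \widetilde{\Theta}$, so the conclusion of that lemma reads
\begin{equation*}
    \operatorname{Dec}_{\ell^q L^r}(\widetilde{\Theta}) \;=\; \operatorname{Dec}_{\ell^q L^r}(\Theta) \cdot \operatorname{Dec}_{\ell^q L^r}(\{\Q_p^e\}).
\end{equation*}

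The only real step is to verify the base calculation $\operatorname{Dec}_{\ell^q L^r}(\{\Q_p^e\}) = 1$. In the defining inequality for the one-block partition, there is a single admissible function $f$ (the Fourier-support constraint $\supp \hat{f} \subseteq \Q_p^e$ is vacuous), and the inequality collapses to $\|f\|_{L^r(\Q_p^e)} \leq C(\|f\|_{L^r(\Q_p^e)}^q)^{1/q} = C\|f\|_{L^r(\Q_p^e)}$, which holds with $C = 1$ and no smaller constant. Plugging this into the displayed identity gives $\operatorname{Dec}_{\ell^q L^r}(\widetilde{\Theta}) = \operatorname{Dec}_{\ell^q L^r}(\Theta)$, as desired.

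There is essentially no obstacle here; the lemma is, as the paper says, ``a special case of the previous lemma.'' The only bookkeeping point worth mentioning is that the tensorization lemma is stated with the generality of ``any sets $\Omega_i$'' and ``any partitions $\Theta_i$,'' so the degenerate choice $\Theta_2 = \{\Q_p^e\}$ is explicitly permitted and no additional regularity hypothesis on the caps is needed. The condition $q \leq r$ in the tensorization lemma is inherited unchanged in our statement, so it propagates without issue.
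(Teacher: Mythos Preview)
Your proposal is correct and matches the paper's approach exactly: the paper simply states that this lemma is ``a special case of the previous lemma'' (the tensorization lemma), and your specialization $\Theta_2 = \{\Q_p^e\}$ together with the verification $\operatorname{Dec}_{\ell^q L^r}(\{\Q_p^e\}) = 1$ is precisely how that special case is realized.
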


The following is at least as ubiquitous in decoupling methods as affine invariance.
\begin{lemma}[Multiplicativity of decoupling constants]\label{multiplicativity} Let $2\leq q,r\leq\infty$ and $q<\infty$. Let $\Theta$ be a finite set-family in $\Q_p^d$. Suppose that, for each $\theta\in\Theta$, there is a further set-family $\Theta_\theta$ with the property that $\bigcup_{\psi\in\Theta_\theta}\psi=\theta$. Then it holds that
\begin{equation*}
    \mathrm{Dec}_{\ell^qL^r}\big(\bigsqcup_{\theta\in\Theta}\Theta_\theta\big)\leq\mathrm{Dec}_{\ell^qL^r}(\Theta)\times\max_{\theta\in\Theta}\mathrm{Dec}_{\ell^qL^r}(\Theta_\theta).
\end{equation*}
    
\end{lemma}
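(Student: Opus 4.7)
The plan is to produce the standard two-step decoupling, first at the coarse scale given by $\Theta$ and then at the fine scale given by each $\Theta_\theta$. Fix an arbitrary family $\{f_\psi : \psi \in \bigsqcup_{\theta\in\Theta}\Theta_\theta\}$ with $\widehat{f_\psi}$ supported in $\psi$, and for each $\theta \in \Theta$ define the aggregated function
\begin{equation*}
    F_\theta = \sum_{\psi \in \Theta_\theta} f_\psi.
\end{equation*}
Since each $\psi \in \Theta_\theta$ is a subset of $\theta$, linearity of the Fourier transform gives $\supp \widehat{F_\theta} \subseteq \bigcup_{\psi \in \Theta_\theta} \psi = \theta$, so the family $\{F_\theta\}_{\theta \in \Theta}$ is admissible for the decoupling inequality associated with $\Theta$.

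The first step is then to apply the outer decoupling, obtaining
\begin{equation*}
    \Big\|\sum_\psi f_\psi\Big\|_{L^r(\Q_p^d)} = \Big\|\sum_{\theta \in \Theta} F_\theta\Big\|_{L^r(\Q_p^d)} \leq \mathrm{Dec}_{\ell^qL^r}(\Theta) \Big(\sum_{\theta \in \Theta} \|F_\theta\|_{L^r(\Q_p^d)}^q\Big)^{1/q}.
\end{equation*}
For each fixed $\theta$, the family $\{f_\psi : \psi \in \Theta_\theta\}$ is admissible for the inner decoupling relative to $\Theta_\theta$, so
\begin{equation*}
    \|F_\theta\|_{L^r(\Q_p^d)} \leq \mathrm{Dec}_{\ell^qL^r}(\Theta_\theta) \Big(\sum_{\psi \in \Theta_\theta} \|f_\psi\|_{L^r(\Q_p^d)}^q\Big)^{1/q}.
\end{equation*}

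The second step is to combine these two estimates. Raising the inner inequality to the $q$-th power, bounding $\mathrm{Dec}_{\ell^qL^r}(\Theta_\theta) \leq \max_{\theta' \in \Theta} \mathrm{Dec}_{\ell^qL^r}(\Theta_{\theta'})$, and summing over $\theta$, we obtain
\begin{equation*}
    \sum_{\theta \in \Theta} \|F_\theta\|_{L^r(\Q_p^d)}^q \leq \Big(\max_{\theta \in \Theta} \mathrm{Dec}_{\ell^qL^r}(\Theta_\theta)\Big)^q \sum_{\psi} \|f_\psi\|_{L^r(\Q_p^d)}^q,
\end{equation*}
where we used that $\bigsqcup_\theta \Theta_\theta$ is a disjoint union of indexing sets. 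Substituting back gives the claimed inequality, and taking the infimum over admissible families yields the bound on the decoupling constant.

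There is essentially no obstacle here: the only subtle point is verifying that $\widehat{F_\theta}$ is Fourier supported in $\theta$, which is immediate from the hypothesis $\bigcup_{\psi \in \Theta_\theta}\psi = \theta$, and ensuring that the $\ell^q$ aggregation at the outer step correctly matches the $\ell^q$ aggregation at the inner step, which works precisely because the same exponent $q$ appears on both levels.
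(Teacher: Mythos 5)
Your proof is correct and is precisely the standard two-step argument that the paper has in mind; the paper's own proof is just the one-line remark ``Immediate from the structure of decoupling inequalities,'' and you have written out the details of that remark (aggregate over each $\Theta_\theta$ to form $F_\theta$, check Fourier support, decouple at the coarse scale, then decouple each $F_\theta$ at the fine scale, and sum the $\ell^q$ norms using the disjointness of the index sets).
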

\begin{proof}
    Immediate from the structure of decoupling inequalities.
\end{proof}

The following lemma is sometimes helpful.
\begin{lemma}[$\ell^2L^r$ recoupling]\label{recoupling} Let $2\leq r\leq\infty$, and assume that $\theta\in\Theta$ are pairwise disjoint affine images of $\Z_p^n$. Then, for any family $\{f_\theta:\theta\in\Theta\}$ such that $\hat{f}_\theta$ is supported in $\theta$, we have
\begin{equation*}
    \left(\sum_{\theta\in\Theta}\|f_\theta\|_{L^r(\Q_p^d)}^2\right)^{1/2}\leq (\#\Theta)^{\frac{1}{2}-\frac{1}{r}}\left\|\sum_{\theta\in\Theta}f_\theta\right\|_{L^r(\Q_p^d)}.
\end{equation*}
\end{lemma}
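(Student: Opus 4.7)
The plan is a duality argument that reduces the claim to controlling a sum of Fourier-disjoint blocks in the dual exponent $L^{r'}$, which can then be handled by log-convexity of $L^p$-norms between the endpoints $r'=1$ and $r'=2$. Writing each $\theta\in\Theta$ as $\theta=A_\theta[\Z_p^n]+c_\theta$, let $P_\theta$ denote the Fourier projection onto $\theta$, i.e.\ multiplication on the Fourier side by $\mathds{1}_\theta$. Expressed as a convolution, $P_\theta$ has kernel
\[
    \mathds{1}_\theta^\vee(x)=|\det A_\theta|\,\chi(c_\theta\cdot x)\,\mathds{1}_{A_\theta^{-\top}\Z_p^n}(x),
\]
whose $L^1(\Q_p^d)$ norm equals $|\det A_\theta|\cdot\mu(A_\theta^{-\top}\Z_p^n)=1$, so by Young's convolution inequality $P_\theta$ is a contraction on every $L^r(\Q_p^d)$, $1\leq r\leq\infty$. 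Since the $\theta\in\Theta$ are pairwise disjoint in frequency, $P_\theta F=f_\theta$ for $F:=\sum_{\tau\in\Theta}f_\tau$.

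By duality,
\[
    \Bigl(\sum_\theta\|f_\theta\|_{L^r}^2\Bigr)^{1/2}=\sup\,\Bigl|\sum_\theta\beta_\theta\langle f_\theta,g_\theta\rangle\Bigr|=\sup\,\Bigl|\Bigl\langle F,\sum_\theta\beta_\theta P_\theta g_\theta\Bigr\rangle\Bigr|\leq\|F\|_{L^r}\cdot S,
\]
where $S:=\sup\bigl\|\sum_\theta\beta_\theta P_\theta g_\theta\bigr\|_{L^{r'}}$, the suprema are over $\|\beta\|_{\ell^2}\leq 1$ and $\|g_\theta\|_{L^{r'}}\leq 1$ (with $1/r+1/r'=1$), and I have used that $P_\theta$ is self-adjoint, being a Fourier multiplier by a $\{0,1\}$-valued function. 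Setting $h_\theta:=P_\theta g_\theta$ (still Fourier-supported in $\theta$, still satisfying $\|h_\theta\|_{L^{r'}}\leq 1$ by the contraction property), two endpoint estimates are immediate: at $r'=2$ Plancherel and disjoint Fourier supports give $\|\sum_\theta\beta_\theta h_\theta\|_{L^2}^2=\sum_\theta|\beta_\theta|^2\|h_\theta\|_{L^2}^2\leq 1$, while at $r'=1$ the triangle inequality and Cauchy--Schwarz give $\|\sum_\theta\beta_\theta h_\theta\|_{L^1}\leq\|\beta\|_{\ell^1}\leq(\#\Theta)^{1/2}$.

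Finally, log-convexity of $L^p$-norms (a special case of H\"older's inequality) applied to the single function $\sum_\theta\beta_\theta h_\theta$ yields
\[
    \Bigl\|\sum_\theta\beta_\theta h_\theta\Bigr\|_{L^{r'}}\leq\Bigl\|\sum_\theta\beta_\theta h_\theta\Bigr\|_{L^1}^{\alpha}\Bigl\|\sum_\theta\beta_\theta h_\theta\Bigr\|_{L^2}^{1-\alpha}\leq(\#\Theta)^{\alpha/2},
\]
where $\alpha\in[0,1]$ is determined by $1/r'=\alpha+(1-\alpha)/2$, i.e.\ $\alpha=2/r'-1=1-2/r$. Combining with the duality display above produces the claim with constant $(\#\Theta)^{(1-2/r)/2}=(\#\Theta)^{1/2-1/r}$. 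I do not anticipate any substantive obstacle: every ingredient (Plancherel, Young, log-convexity) is entirely standard over $\Q_p^d$, and the only calculation of substance is the explicit $L^1$ normalization of $\mathds{1}_\theta^\vee$, which is immediate from the formula for the inverse Fourier transform of an affine image of $\mathds{1}_{\Z_p^n}$.
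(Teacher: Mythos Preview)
There is a genuine gap in the interpolation step. Your endpoint bounds
\[
\Bigl\|\sum_\theta\beta_\theta h_\theta\Bigr\|_{L^2}\leq 1
\quad\text{and}\quad
\Bigl\|\sum_\theta\beta_\theta h_\theta\Bigr\|_{L^1}\leq(\#\Theta)^{1/2}
\]
are derived under the constraints $\|h_\theta\|_{L^2}\leq 1$ and $\|h_\theta\|_{L^1}\leq 1$, respectively. But for a \emph{fixed} $r'\in(1,2)$, the duality supremum only guarantees $\|h_\theta\|_{L^{r'}}\leq 1$; there is no control on $\|h_\theta\|_{L^1}$ or $\|h_\theta\|_{L^2}$ for those particular $h_\theta$. (Take e.g.\ $h_\theta=\lambda^{-1/r'}\mathds{1}_{B(0,\lambda)}$: its $L^{r'}$ norm is $1$, but its $L^2$ norm is $\lambda^{1/2-1/r'}\to\infty$ as $\lambda\to 0$.) So the pointwise log-convexity inequality $\|G\|_{r'}\leq\|G\|_1^\alpha\|G\|_2^{1-\alpha}$, which requires both endpoint norms of the \emph{same} function $G$ to be controlled, does not yield your conclusion.

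What is needed is interpolation at the \emph{operator} level rather than for a single function. The paper's route is the cleanest: work directly (not by duality) with the linear map $T:F\mapsto (P_\theta F)_{\theta\in\Theta}$. The endpoint cases $r=2$ and $r=\infty$ give $\|T\|_{L^2\to\ell^2(L^2)}\leq 1$ (Plancherel and disjoint supports) and $\|T\|_{L^\infty\to\ell^2(L^\infty)}\leq(\#\Theta)^{1/2}$ (each $P_\theta$ is an $L^\infty$ contraction, by the same $L^1$-kernel computation you did). Vector-valued Riesz--Thorin then gives $\|T\|_{L^r\to\ell^2(L^r)}\leq(\#\Theta)^{1/2-1/r}$, which is exactly the claim applied to $F=\sum_\theta f_\theta$. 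Your duality reduction can also be repaired by the same device: interpolate the operator $(g_\theta)_\theta\mapsto\sum_\theta\beta_\theta P_\theta g_\theta$ between $\ell^\infty(L^1)\to L^1$ and $\ell^\infty(L^2)\to L^2$; but this is no simpler than the direct argument.
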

\begin{proof}
    The special cases $r=2,r=\infty$ are trivial. For the rest, we interpolate.
\end{proof}

We also recall the main result of \cite{li2022introduction}:
\begin{theorem}
    Fix any $\delta\in p^{-\N}$. Consider the region $\Omega$ defined by
    \begin{equation*}
        \Omega=\{(x,y)\in\Q_p^2:|x|_p\leq 1,|y-x^2|_p\leq\delta^2\}.
    \end{equation*}
    We let $\mathcal{T}=\mathcal{P}(\Z_p,\delta)$ to be the partition of $\Z_p$ into closed balls of radius $\delta$. For each $\tau\in\mathcal{T}$, define
    \begin{equation*}
        \theta_{\tau}=\{(x,y)\in\Q_p^2:x\in \tau,|y-x^2|_p\leq\delta^2\}.
    \end{equation*}
    Clearly $\{\theta_\tau\}_{\tau\in\mathcal{T}}$ form a decomposition $\Theta$ of $\Omega$. Then we have
    \begin{equation*}
        \operatorname{Dec}_{\ell^2L^r}(\Theta)\lesssim_{\eps,p,r}\delta^{-\eps}(1+\delta^{-(\frac{1}{2}-\frac{3}{r})}).
    \end{equation*}
\end{theorem}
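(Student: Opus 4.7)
The theorem is attributed to \cite{li2022introduction}, so my plan mirrors the approach there: isolate the critical exponent $r=6$, prove the clean bound $\operatorname{Dec}_{\ell^2L^6}(\Theta)\lesssim_{\eps,p}\delta^{-\eps}$, and then interpolate. The two trivial endpoint bounds come first: $\operatorname{Dec}_{\ell^2L^2}(\Theta)\leq 1$ by Plancherel applied to the pairwise disjoint caps $\theta_\tau$, and $\operatorname{Dec}_{\ell^2L^\infty}(\Theta)\leq(\#\Theta)^{1/2}\lesssim\delta^{-1/2}$ by the triangle inequality together with Cauchy--Schwarz. Granted the critical $L^6$ bound, Lemma \ref{interpolate} handles the rest: for $r\in[2,6]$, interpolating between $L^2$ and $L^6$ yields the constant $\delta^{-\eps}$; for $r\geq 6$, interpolating between $L^6$ and $L^\infty$ yields $\delta^{-\eps-(\frac{1}{2}-\frac{3}{r})}$. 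Combining these two regimes reproduces the stated bound $\delta^{-\eps}(1+\delta^{-(\frac{1}{2}-\frac{3}{r})})$.

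For the critical estimate I would carry out a Bourgain--Demeter bootstrap over scales, adapted to $\Q_p$. Writing $D(\delta)$ for the best $\ell^2L^6$ constant at scale $\delta$, the argument combines three ingredients. First, \emph{parabolic rescaling}: an explicit affine map $(x',y')\mapsto(a+\nu x',a^2+2a\nu x'+\nu^2 y')$ carries the $\nu$-neighborhood of the cap at position $a$ back to the full parabolic region at scale $\delta/\nu$, so by affine invariance and Lemma \ref{multiplicativity} we obtain self-similar control on the narrow contribution. Second, a \emph{bilinear $L^6$ estimate} for two caps $\theta_1,\theta_2$ whose $x$-coordinates are $\nu$-separated: a Cordoba--Fefferman $L^4$-orthogonality argument bounds $\|f_{\theta_1}f_{\theta_2}\|_{L^3(\Q_p^2)}$ by $\nu^{-O(1)}\bigl(\sum_{\tau\subseteq\theta_1\cup\theta_2}\|f_\tau\|_{L^6}^2\bigr)^{1/2}$. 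Third, a \emph{broad/narrow dichotomy}: at every point either a single cap dominates the sum (narrow; handled by rescaling) or two $\nu$-separated caps contribute comparably (broad; handled by the bilinear estimate). Assembling these yields a bootstrap inequality of the form $D(\delta)\leq C\nu^{-C}\bigl(D(\delta/\nu)+B(\delta,\nu)\bigr)$; choosing $\nu=\delta^{\eps'}$ and iterating drives the exponent down to an arbitrary $\delta^{-\eps}$.

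The main obstacle is the bilinear $L^6$ estimate. In the Euclidean setting this requires wave-packet analysis with Schwartz tails and Kakeya--Brascamp--Lieb inputs. In $\Q_p$ the ultrametric simplifies everything: Fourier supports are exact, wave-packet tiles partition $\Q_p^2$ without overlap, and the arithmetic observation that on the parabola the pair $(a+b,a^2+b^2)$ uniquely determines the unordered set $\{a,b\}$ translates, via the non-archimedean triangle inequality, into genuine disjointness of the sum sets $\tau_1+\tau_2$ for subcaps $\tau_i\subseteq\theta_i$. This reduces the bilinear $L^6$ bound to a single application of Plancherel. The complete argument is carried out in \cite{li2022introduction}, which we cite as our source.
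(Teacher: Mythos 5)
The paper does not prove this statement at all; it records it, noting ``We also recall the main result of \cite{li2022introduction},'' and simply cites that reference. Your plan likewise defers to \cite{li2022introduction}, and your reduction of the $r\neq 6$ cases to the critical $\ell^2L^6$ bound is clean and correct: the $L^2$ and $L^\infty$ endpoints are as you state, and Lemma \ref{interpolate} applied on $[2,6]$ and $[6,\infty]$ reproduces the exponent $\delta^{-\eps}(1+\delta^{-(\frac12-\frac3r)})$. The overall broad/narrow--plus--parabolic--rescaling bootstrap is also the right framework and matches the cited source.

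However, there is a genuine error in how you describe the bilinear ingredient. The C\'ordoba-type disjointness of the Minkowski sums $\tau_1+\tau_2$ (which is indeed exact in $\Q_p$, with no $O(1)$ overlap) together with Plancherel yields a \emph{bilinear $L^4$} estimate: one controls $\|f_{\theta_1}f_{\theta_2}\|_{L^2}$ by the square function of the subcap contributions. It does \emph{not} directly yield $\|f_{\theta_1}f_{\theta_2}\|_{L^3}\lesssim\nu^{-O(1)}\bigl(\sum_\tau\|f_\tau\|_{L^6}^2\bigr)^{1/2}$, and the phrase ``reduces the bilinear $L^6$ bound to a single application of Plancherel'' is wrong. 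Passing from the $L^4$ biorthogonality input to the critical $L^6$ conclusion requires the full multiscale iteration: repeatedly alternating the broad/narrow dichotomy, parabolic rescaling on the narrow parts, and the $L^4$ estimate on the broad parts over roughly $\log_\nu(1/\delta)$ scales (with an interpolation or a trivial $L^\infty$ bound to close the gap between $4$ and $6$). As stated, your bootstrap inequality $D(\delta)\leq C\nu^{-C}\bigl(D(\delta/\nu)+B(\delta,\nu)\bigr)$ has the right form, but the quantity $B(\delta,\nu)$ you can actually furnish from Plancherel lives at exponent $4$, not $6$, and promoting it is the substantive content that the reference carries out in detail.
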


\section{Appendix B: Decoupling for the \texorpdfstring{$p$}{p}-adic moment curve}

	We sketch a proof of $\ell^2L^{n(n+1)}$ decoupling for the moment curve $t\mapsto\gamma(t)$ in $\Q_p^n$ by modifying an existing argument for the same result in $\R^n$. This fact is of interest in its own right; however, the proof is nearly identical to the proof over $\R$ for most approaches, so we have suppressed it to this appendix. One slight novelty is the tracking of constants throughout, for the purpose of achieving an explicit effective bound for our main application.

    The result to be shown is the following:
    \begin{theorem}[$\ell^2L^{n(n+1)}$ decoupling for the moment curve in $\Q_p^n$]\label{momentcurve}
        Let $n\in\N$. For every $\eps>0$ there is a constant $C_{n,\eps}\geq 1$ such that for all $\delta\in p^{-\N}$, one has the estimate
        \begin{equation*}
            \operatorname{Dec}_{\ell^2L^{n(n+1)}}(\{\mathcal{U}_I\}_{I\in\mathcal{P}(\Z_p,\delta)})\leq C_{n,\eps}\delta^{-\eps},
        \end{equation*}
        where $\mathcal{P}(\Z_p,\delta)$ is a partition of $\Z_p$ into balls of radius $\delta$, and $\mathcal{U}_I$ is the standard anisotropic neighborhood of the moment curve over $I$; see the start of the next section. Moreover, the constant $C_{n,\eps}$ may be taken to be
        \begin{equation*}
            C_{n,\eps}=\exp\left(10^4(\log p)\eps^{-4n\log n}n^{10n^2}\right).
        \end{equation*}
    \end{theorem}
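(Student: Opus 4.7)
The plan is to adapt the short inductive proof of moment-curve decoupling of Guo--Li--Yung--Zorin-Kranich \cite{guo2021short} from $\R^n$ to $\Q_p^n$, tracking every constant explicitly. Working over $\Q_p$ simplifies several aspects of the Euclidean argument: parabolic rescaling is exact (no Schwartz-tail losses), the partition $\mathcal{P}(\Z_p,\delta)$ is a genuine disjoint decomposition, and the affine invariance, tensorization, and local-to-global equivalences proved in Appendix A all hold with equality rather than up to overlap constants. Consequently, the proof should transfer essentially line-for-line from $\R$ to $\Q_p$ once one verifies the single genuinely $p$-adic point: moment-curve nondegeneracy must be preserved modulo $p\Z_p$. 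This is where the hypothesis $n<p$ enters---it ensures $1/j!$ is a $p$-adic unit for each $j\leq n$, so that $\gamma^{(1)}(t),\ldots,\gamma^{(n)}(t)$ genuinely span $\Q_p^n$ for every $t\in\Z_p$.

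The structural backbone is simultaneous induction on the dimension $n$ and bootstrap on the scale $\delta$. Write $D_n(\delta)$ for the $\ell^2L^{n(n+1)}$ decoupling constant for $\{\mathcal{U}_I\}_{I\in\mathcal{P}(\Z_p,\delta)}$. The base case $n=1$ is Plancherel. For the inductive step, I would establish a recursive inequality of the shape
\begin{equation*}
    D_n(\delta)\leq C_{n,\eta}\,\delta^{-\eta}\,D_n(\delta^{1/2})^{1-\beta_n}\,D_{n-1}(\delta^{1/2})^{\beta_n}
\end{equation*}
valid for any small $\eta>0$, where $\beta_n$ is the appropriate interpolation exponent. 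This would combine three ingredients: cylindrical decoupling from scale $\delta$ to $\delta^{1/2}$ (using the tensorization lemma from Appendix A); a ball-inflation step at scale $\delta^{-1/2}$ which over $\Q_p$ reduces to an exact Plancherel computation on ultrametric balls; and a bilinear-to-linear Bourgain--Guth reduction inside each intermediate plate, which over $\Q_p$ becomes a pigeonholing over the residues of $\Z_p/p^k\Z_p$. The resulting tangent-hyperplane projection then supports the $(n-1)$-dimensional decoupling hypothesis via an affine-invariance argument.

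Iterating the recursion $K\sim\log_2\log(1/\delta)$ times bootstraps $D_n(\delta)$ down to $\delta^{-\eps}$ times an accumulated constant. Each iteration contributes an additive $\log C_{n,\eta}$ to $\log D_n(\delta)$, while the $n$-fold nested induction multiplies the $\eta^{-1}$ exponent by a factor tied to the dimension. Setting $\eta$ of order $\eps^{4n\log n}$ and accounting for the $n^{O(n^2)}$ compounding across the dimension induction should deliver the claimed explicit form $C_{n,\eps}=\exp\bigl(10^4(\log p)\eps^{-4n\log n}n^{10n^2}\bigr)$, with the $\log p$ factor arising from the residue-class pigeonholing at each step.

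The main obstacle will be the bookkeeping itself. The Euclidean decoupling literature consistently suppresses $n$- and $\eps$-dependence, so essentially every building block---parabolic rescaling, ball inflation, Bourgain--Guth, lower-dimensional base cases, and the bootstrap recursion---must be re-derived in a form making the polynomial and exponential dependencies transparent. A secondary difficulty is that ``transversality'' in the bilinear step must be rephrased over $\Q_p$: instead of angular separation of tangent directions, it becomes a quantitative condition on the $p$-adic distance between the relevant caps in $\Z_p$, which then drives the mod-$p^k$ pigeonholing. Once these are resolved, an application of \textbf{Lemma \ref{interpolate}} will yield the $\ell^qL^{n(n+1)}$ variants used in the body of the paper, and the Vinogradov bound of Theorem \ref{vinogradov} follows by the standard $p$-adic analog of the Fourier transform / Gauss-sum trick.
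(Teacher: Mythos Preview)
Your high-level plan---adapt \cite{guo2021short} to $\Q_p$, exploit the exactness of $p$-adic rescaling, track constants---matches the paper's. But the mechanism you describe is not the one in \cite{guo2021short}, and this mismatch is a real gap rather than a cosmetic one.

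The recursion you write, $D_n(\delta)\leq C_{n,\eta}\delta^{-\eta}D_n(\delta^{1/2})^{1-\beta_n}D_{n-1}(\delta^{1/2})^{\beta_n}$, together with ``cylindrical decoupling to scale $\delta^{1/2}$'' and a ``ball-inflation step,'' is the Bourgain--Demeter(-Guth) scheme. The whole point of \cite{guo2021short} is to \emph{avoid} ball inflation and multilinear Kakeya. In particular, ball inflation does not ``reduce to an exact Plancherel computation on ultrametric balls''; it is an $L^p$ multilinear Kakeya estimate, and if you went that route you would need a $p$-adic multilinear Kakeya theorem and would have a much harder time extracting the explicit constant. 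The paper, following \cite{guo2021short} faithfully, never inflates balls.

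What \cite{guo2021short} and the paper actually do is introduce a \emph{system} of asymmetric bilinear constants $\mathcal{B}_{n,k,s,t}(\delta)$ indexed by $0\leq k\leq n-1$ and scale parameters $s,t$, and then: (i) reduce $\mathcal{D}_n$ to the symmetric $\mathcal{B}_n$ via a Whitney decomposition of $\Z_p\times\Z_p$ (Proposition \ref{biltolinprop}); (ii) control each $\mathcal{B}_{n,k,s,t}$ by a combination of $\mathcal{B}_{n,n-k,\cdot,\cdot}$ and $\mathcal{B}_{n,k-1,\cdot,\cdot}$ via H\"older plus a lower-dimensional decoupling applied on affine slices (Lemmas \ref{lowerdimdecest}, \ref{holder}, \ref{lowerdim})---this is where $\mathcal{D}_{k}$ for $k<n$ enters, through $p$-adic Fourier slicing and a Vandermonde determinant computation, not through a tangent-hyperplane projection in your sense; and (iii) close by analyzing the resulting coupled system with a fixed $(n-1)\times(n-1)$ matrix $M$ acting on the tropicalized quantities $\mathfrak{a}_k^\delta(t)$, rather than iterating a single scalar recursion. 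Your proposal is missing all three of these structural ingredients, and none of them is a routine translation of the outline you gave.
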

    \begin{remark}
        Optimizing over $\eps$, one can show that the decoupling constant is bounded by something of the form $\exp(C_n(\log\delta^{-1})^{1-c_n})$, for suitable explicit $C_n\geq 1, c_n\in(0,1)$. See Theorem \ref{vinogradov} for details, in the application to solution counting.
    \end{remark}
 
    Most of the tools used in the standard approaches to decoupling are identical between $\R^n$ and $\Q_p^n$, with some caveats: for one, over $\Q_p$ many heuristic uncertainty statements from the Euclidean setting become literally true, which allows one to dispense of various technical weights and convolutions; for another, some of the induction-on-dimension steps require some special geometric observations (via projections), which require modification in the $p$-adic setting.

    To be more precise about the latter: it is a classical fact that bilinear forms generally, and the dot product in particular, possess isotropy on $\Q_p^n$ for $n$ sufficiently large. We recall two results in particular:
    \begin{theorem}[Chapter 4, Lemma 2.7 of \cite{cassels1978}] Let $n\geq 5$ and $p$ arbitrary. Then every quadratic form over $\Q_p^n$ has isotropy.
    \end{theorem}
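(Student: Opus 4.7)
The plan is to follow the classical local analysis for quadratic forms: diagonalize, normalize coefficients modulo squares, reduce to a solvability statement over the residue field, and lift by Hensel's lemma. First, since $\Q_p$ has characteristic zero, Gram--Schmidt orthogonalization puts any nondegenerate quadratic form in the shape $Q(x)=\sum_{i=1}^n a_i x_i^2$. Because $\Q_p^\times/(\Q_p^\times)^2$ is finite (order $4$ when $p$ is odd, order $8$ when $p=2$), rescaling each variable places every $a_i$ in a fixed finite system of coset representatives; for odd $p$ these can be taken to be $\{1,u,p,pu\}$ with $u$ a non-square unit.

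For $p$ odd, the next step is a pigeonhole: among $n\geq 5$ coefficients split between the two possible valuations, at least three share a common valuation. Factoring out a power of $p$ from the whole form, we obtain a ternary subform $b_1 x_1^2+b_2 x_2^2+b_3 x_3^2$ with units $b_i\in\Z_p^\times$. Reducing modulo $p$, the two image sets $\{b_1 s^2+b_2 t^2:s,t\in\F_p\}$ and $\{-b_3 r^2:r\in\F_p\}$ each have cardinality at least $(p+1)/2$, so they must meet, yielding a nontrivial zero in $\F_p^3$ at which the gradient is nonzero. A standard Hensel argument lifts this zero to $\Z_p^3$, producing isotropy of $Q$.

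The main obstacle is the prime $p=2$: the quotient $\Q_2^\times/(\Q_2^\times)^2$ is strictly larger, residue-field counting breaks down, and Hensel's lemma now requires congruence data modulo $8$ before lifting. I would route around this using the $2$-adic structure theorem for quadratic $\Z_2$-lattices: every such lattice decomposes orthogonally into unary blocks together with copies of the two binary indecomposables, namely the hyperbolic plane $\mathbb{H}$ (which is manifestly isotropic) and the anisotropic norm form $\mathbb{A}$ of the unramified quadratic extension of $\Q_2$. A finite case analysis on the possible orthogonal decompositions of rank~$5$ shows that every such form must split off a copy of $\mathbb{H}$; the key explicit identity is that $\mathbb{A}\perp\mathbb{A}$ together with any unary block contains $\mathbb{H}$ as a direct summand. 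Because this case analysis is purely combinatorial once the $2$-adic classification is available, the substantive work is the classification theorem itself, which is exactly what is carried out in Cassels Chapter~4 (and this is why the author simply quotes the result rather than reprove it).
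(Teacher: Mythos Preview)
The paper does not prove this statement at all; it merely cites Cassels and moves on. So there is no in-paper argument to compare against, and your sketch is already doing strictly more than the paper does.

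Your odd-$p$ argument is the classical one and is essentially correct. One small tightening: to guarantee that the $\F_p$-zero you produce is nontrivial \emph{and} has nonvanishing gradient, it is cleanest to fix one variable equal to $1$ before running the two-set counting (i.e.\ solve $b_1x^2+b_2y^2=-b_3$), exactly as the paper does in its proof of the neighboring Theorem~\ref{isotropy}.

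Your $p=2$ sketch, however, conflates two different theories. The orthogonal decomposition into unary blocks together with $\mathbb{H}$ and $\mathbb{A}$ is a statement about $\Z_2$-\emph{lattices} (Jordan splittings, \`a la O'Meara), not about quadratic forms over the field $\Q_2$; over $\Q_2$ every nondegenerate form is already diagonal since the characteristic is not $2$. A lattice splitting depends on which lattice you pick inside the $\Q_2$-space, the blocks come with arbitrary $2$-power scalings, and the presence or absence of $\mathbb{H}$ in a particular lattice does not directly decide isotropy of the ambient form, so the ``finite case analysis on rank-$5$ decompositions'' you allude to is substantially messier than you suggest and does not reduce to the single identity you name. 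The route Cassels (and Serre) actually take for $p=2$ is via Hilbert symbols and the Hasse invariant: one shows that any form in $\geq 5$ variables has invariants matching a form that is visibly isotropic. Your instinct that $p=2$ demands a separate and heavier argument is correct, but the mechanism you outlined is not the one that carries it.
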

    \begin{theorem}\label{isotropy} Let $p$ be odd and $n\geq 3$. Then the dot product $(x_1,\ldots,x_n)\cdot(y_1,\ldots,y_n)=x_1y_1+\ldots+x_ny_n$ has isotropy.
    \end{theorem}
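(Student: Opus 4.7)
The plan is to reduce to the case $n = 3$ and then combine an elementary counting argument over $\mathbb{F}_p$ with Hensel's lemma. For the reduction, if $v \in \Q_p^3 \setminus \{0\}$ satisfies $v \cdot v = 0$ with respect to the 3-dimensional dot product, then appending $n-3$ zeros produces a nonzero isotropic vector in $\Q_p^n$. So it suffices to exhibit a nonzero $(x,y,z) \in \Q_p^3$ with $x^2 + y^2 + z^2 = 0$.

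For the $n=3$ case, I first establish a residue-class solution. The set $S_1 = \{a^2 : a \in \mathbb{F}_p\}$ has cardinality $\tfrac{p+1}{2}$, and the set $S_2 = \{-1 - b^2 : b \in \mathbb{F}_p\}$ also has cardinality $\tfrac{p+1}{2}$. Since $|S_1| + |S_2| = p+1 > |\mathbb{F}_p|$, the two sets must intersect, yielding $a, b \in \mathbb{F}_p$ with $a^2 + b^2 + 1 \equiv 0 \pmod{p}$. Note that we cannot have $a \equiv b \equiv 0 \pmod{p}$, since then $-1 \equiv 0 \pmod{p}$, which contradicts $p$ being odd; so after possibly swapping $a$ and $b$ we may assume $a \not\equiv 0 \pmod p$.

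Next I lift to $\Q_p$ via Hensel's lemma. Fix the above $b \in \Z_p$ and consider the one-variable polynomial $f(x) = x^2 + b^2 + 1 \in \Z_p[x]$. By construction $|f(a)|_p \leq p^{-1}$, while $f'(a) = 2a$ lies in $\Z_p^\times$ because $p$ is odd and $a \not\equiv 0 \pmod p$. Thus Hensel's lemma produces $\tilde{a} \in \Z_p$ satisfying $f(\tilde{a}) = 0$, i.e. $\tilde{a}^2 + b^2 + 1 = 0$. The vector $(\tilde{a}, b, 1) \in \Z_p^3$ is nonzero (its third coordinate is $1$) and is isotropic for the dot product, completing the proof.

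There isn't really a main obstacle here — the argument is completely elementary once the right ingredients are assembled. The only point requiring minor care is ensuring that the solution $(a,b)$ in $\mathbb{F}_p$ has one coordinate invertible so that Hensel's lemma applies; this is automatic since $-1 \not\equiv 0 \pmod p$. Note that the hypothesis $p$ odd is used in two essential places: to ensure that the squaring map on $\mathbb{F}_p^\times$ is 2-to-1 (so that $|S_1| = \tfrac{p+1}{2}$), and to ensure $f'(\tilde a) = 2\tilde a \in \Z_p^\times$ when applying Hensel.
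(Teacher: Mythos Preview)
Your proof is correct and follows essentially the same approach as the paper: reduce to $n=3$, use the pigeonhole/counting argument over $\mathbb{F}_p$ to find $a^2+b^2+1\equiv 0\pmod p$ with $a\not\equiv 0$, then lift via Hensel's lemma applied to $x\mapsto x^2+b^2+1$. The only addition you make is the explicit (and trivial) reduction from general $n\geq 3$ to $n=3$ by padding with zeros, which the paper leaves implicit.
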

    We briefly recall a proof of Theorem \ref{isotropy}. We first instead study $q(x,y,z)=x^2+y^2+z^2$ over $\F_p$. The set of values $\{x^2:x\in\F_p\}$ and $\{-y^2-1:y\in\F_p\}$ each have cardinality $\frac{p+1}{2}$, while $\#\F_p=p$, so the two sets must intersect at a value where $x^2+y^2+1^2=0$, which establishes isotropy over $\F_p$. Consequently, the representatives of $x,y$ in $\{0,\ldots,p-1\}$ in $\Z$ solve $q(x,y,1)\equiv 0$ mod $p$. On the other hand, formally differentiating $q$ with respect to $x$ and $y$,
    \begin{equation*}
        \partial_xq(x,y,1)=2x,\quad\partial_yq(x,y,1)=2y.
    \end{equation*}
    Since $x^2+y^2+1\equiv 0$ mod $p$, we may assume without loss of generality that $x\not\equiv 0$ mod $p$. Since $p$ is odd,
    \begin{equation*}
        \partial_xq(x,y,1)\not\equiv 0\quad\text{mod $p$}.
    \end{equation*}
    By Hensel's lemma (see Chapter 3, Lemma 4.1 of \cite{cassels1978}), there is a root of $t\mapsto q(t,y,1)$ in $\Z_p$, which establishes Theorem \ref{isotropy}.

    As a consequence, some of the proofs of induction on dimension-type estimates fail. As it turns out, the differences are entirely superficial; when the arguments are converted into linear-algebraic manipulations, the proofs hold as usual.
	
	As many of the arguments in the proof of decoupling require little modification, we will simply review the short proof of moment-curve decoupling in \cite{guo2021short} and supply the needed modifications. In particular, our argument will not be completely self-contained, and will instead point to the latter paper for the proofs of certain technical steps for which no modification is needed.
	
	We insist at the outset that we will only consider the case $p>n$, to avoid certain technical issues. It happens that the same result holds for general $p$ (indeed, for arbitrary non-Archimedean local fields of characteristic $0$), though the argument requires attending to certain additional technicalities that we are able to avoid.
 
    We will write throughout $|\cdot|_p$ for the $p$-adic norm on $\Q_p$, and $|\cdot|$ for the usual Euclidean norm on $\R$ and $\C$. We will also equip $\Q_p^n$ with the usual choice of norm, $\|x\|=\max_{1\leq i\leq n}|x_i|_p$. This notation will overlap with the Lebesgue norm $\|\cdot\|_{L^{q}(\Q_p^n)}$. In each case, it will be clear from context which is intended.
	
	\subsection{Bilinear-to-linear reduction}

	For $\delta\in p^{-\N}$ and a ball $\mathcal{U}$, write $\mathcal{P}(\mathcal{U},\delta)$ for the partition of $\mathcal{U}$ into (closed) balls of radius $\delta$; more generally, if $\delta\in(0,1)$ is not necessarily a power of $p$, then we understand $\mathcal{P}(\mathcal{U},\delta)$ to be a partition into balls of radius $\rho$, where $\rho$ is the greatest number in $p^{-\N}$ below $\delta$. For a convex $\mathbf{C}^{n}$ curve $\zeta$ with bounded derivatives, as defined in Appendix C, we define the systems of boxes $\mathcal{U}_{I,t}^\zeta$, for $I\subseteq\Z_p$ a metric ball of radius $\rho$ and $t\in I$, as
	\begin{equation*}
		\mathcal{U}_{I,t}^\zeta:=\Big\{x\in\Q_p^n:\exists\{\lambda_k\}_{k=1}^n\in \prod_{k=1}^nB_{\rho^k}(0)\text{ s.t. }x=\zeta(t)+\sum_{k=1}^n\lambda_k\zeta^{(k)}(t)\Big\}.
	\end{equation*}
	Throughout this section, when the $\zeta$ superscript on $\mathcal{U}_{I,t}$ is suppressed, it will be assumed that $\mathcal{U}_{I,t}=\mathcal{U}_{I,t}^{\gamma}$ where $\gamma(t)=(\frac{t}{1!},\ldots,\frac{t^n}{n!})$ is the moment curve. We will write as well
    \begin{equation*}
        \mathcal{U}_I^\zeta=\bigcup_{t\in I}\mathcal{U}_{I,t}^\zeta.
    \end{equation*}
    This choice of caps is useful for technical reasons that appear in the proof below; in practice, they can generally be compared with other natural caps at slightly coarser scales.
 
	Our goal will be to bound the linear decoupling constant $\operatorname{Dec}_{\ell^2L^{q_n}}(\{\mathcal{U}_I\}_{I\in\mathcal{P}(\Z_p,\delta)})$. We record for later reference an abbreviation:
    \begin{definition}\label{lindef}
        For $\delta\in p^{-\N}$, define $\mathcal{D}_n(\delta)=\operatorname{Dec}_{\ell^2L^{q_n}}(\{\mathcal{U}_I\}_{I\in\mathcal{P}(\Z_p,\delta)})$.
    \end{definition}
 
    We will need a bilinear analogue as well:
	
	\begin{definition}[Symmetric bilinear decoupling constant]\label{bildef} Fix $\delta\in p^{-\N}$. We define the symmetric bilinear decoupling constant $\mathcal{B}_n(\delta)$ to be the infimal (real) constant $C$ such that the following holds. Suppose $I,J\in\mathcal{P}(\Z_p,p^{-1})$ are distinct. For each $I_i\in\mathcal{P}(I,\delta)$, let $f_i\in\mathcal{S}(\Q_p^n)$ be such that $\hat{f}_i$ is supported in $\mathcal{U}_{I_i}$; similarly, for each $J_i\in\mathcal{P}(J,\delta)$, let $g_i\in\mathcal{S}(\Q_p^n)$ be such that $\hat{g}_i$ is supported in $\mathcal{U}_{J_i}$. Then
		\begin{equation*}
			\int_{\Q_p^n}|f_I|^{q_n/2}|g_J|^{q_n/2}\leq C^{q_n}\left(\sum_i\|f_i\|_{L^{q_n}(\Q_p^n)}^2\right)^{q_n/4}\left(\sum_i\|g_i\|_{L^{q_n}(\Q_p^n)}^2\right)^{q_n/4}.
		\end{equation*}
		
	\end{definition}
	
	By H\"older we have the trivial $\mathcal{B}_n(\delta)\leq\mathcal{D}_n(\delta)$. Before proceeding, we record the following standard converse, adapted to the $p$-adic setting:
	\begin{proposition}[Bilinear-to-linear reduction]\label{biltolinprop}
		If $\delta=p^{-N}$, then
		\begin{equation}\label{biltolin}
			\mathcal{D}_n(\delta)\leq p^{1/2}\Big(1+\sum_{j=1}^N\mathcal{B}_n(p^{j-1}\delta)^2\Big)^{1/2}.
		\end{equation}
	\end{proposition}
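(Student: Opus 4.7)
The plan is to establish the one-step recursion
\begin{equation*}
    \mathcal{D}_n(\delta)^2 \leq \mathcal{D}_n(p\delta)^2 + (p-1)\mathcal{B}_n(\delta)^2,
\end{equation*}
and iterate down from $\delta = p^{-N}$ to $1$. The starting point is the following setup: writing $f = \sum_\theta f_\theta$ with $\theta \in \mathcal{P}(\Z_p,\delta)$ and $F_I = \sum_{\theta \subseteq I} f_\theta$ for $I \in \mathcal{P}(\Z_p,p^{-1})$, one has the exact pointwise inequality
\begin{equation*}
    |f(x)|^2 = \Big|\sum_I F_I(x)\Big|^2 \leq \sum_I |F_I(x)|^2 + \sum_{I \neq J} |F_I(x)||F_J(x)|,
\end{equation*}
obtained by expanding $|\,\cdot\,|^2$ and applying the triangle inequality. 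Over $\Q_p$ no broad/narrow dichotomy is required: the two sums already cleanly separate the ``linear'' and ``bilinear'' contributions to $|f|^2$.

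I would then take the $L^{q_n/2}$-norm of both sides and apply Minkowski's inequality (valid since $q_n/2 \geq 1$) to obtain
\begin{equation*}
    \|f\|_{L^{q_n}(\Q_p^n)}^2 \leq \sum_I \|F_I\|_{L^{q_n}(\Q_p^n)}^2 + \sum_{I\neq J} \|F_I F_J\|_{L^{q_n/2}(\Q_p^n)}.
\end{equation*}
The off-diagonal sum is bounded term-by-term via Definition \ref{bildef}: writing $a_I = (\sum_{\theta\subseteq I}\|f_\theta\|_{L^{q_n}}^2)^{1/2}$, each term satisfies $\|F_I F_J\|_{L^{q_n/2}} \leq \mathcal{B}_n(\delta)^2 a_I a_J$, and since there are only $p$ balls $I$, Cauchy-Schwarz gives $\sum_{I\neq J} a_I a_J \leq (p-1)\sum_\theta \|f_\theta\|_{L^{q_n}}^2$. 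For the diagonal sum, I would use parabolic rescaling: the restriction of $\gamma$ to a ball $I$ of radius $p^{-1}$ is, after an invertible affine change of coordinates on $\Q_p^n$, identified with $\gamma$ on $\Z_p$, by way of the shift-matrix identity $A_{\theta,1}^{-1} = A_{-\theta,1}$ recorded earlier in the paper. Affine invariance of decoupling constants (Appendix A) then yields $\|F_I\|_{L^{q_n}} \leq \mathcal{D}_n(p\delta)\cdot a_I$, and summing gives $\sum_I \|F_I\|_{L^{q_n}}^2 \leq \mathcal{D}_n(p\delta)^2 \sum_\theta \|f_\theta\|_{L^{q_n}}^2$. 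Combining establishes the recursion.

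Iterating $N$ times with the trivial base case $\mathcal{D}_n(1) = 1$ (the partition $\mathcal{P}(\Z_p,1) = \{\Z_p\}$ is a singleton) yields $\mathcal{D}_n(\delta)^2 \leq 1 + (p-1)\sum_{j=1}^N \mathcal{B}_n(p^{j-1}\delta)^2 \leq p\bigl(1 + \sum_{j=1}^N \mathcal{B}_n(p^{j-1}\delta)^2\bigr)$, from which \ref{biltolin} follows by taking square roots. The only nontrivial step is the parabolic rescaling: one must verify that the decomposition $\{\mathcal{U}_{I_i}\}_{I_i \in \mathcal{P}(I,\delta)}$ of $\mathcal{U}_I$ corresponds, under the explicit affine map, to the standard decomposition at scale $p\delta$ over $\Z_p$. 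This is a routine computation from the polynomial form of $\gamma$, but it is the one place where the geometric structure of the moment curve is actually used.
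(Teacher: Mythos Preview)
Your proof is correct and is essentially the same argument as the paper's, just presented recursively rather than all at once: the paper uses a Whitney decomposition of $\Z_p^2\setminus\Delta$ into pairs $\mathcal{W}_j$ of $p^{-j}$-balls at distance $p^{-j+1}$ and applies bilinear decoupling (after parabolic rescaling) at each scale $j$, which is exactly what your recursion $\mathcal{D}_n(\delta)^2 \leq \mathcal{D}_n(p\delta)^2 + (p-1)\mathcal{B}_n(\delta)^2$ produces when unrolled. The parabolic rescaling step you flag is likewise the same one the paper invokes with the phrase ``by decoupling and affine rescaling.''
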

	\begin{proof}
		We formulate a Whitney cube decomposition for $\Z_p^n$; due to the ultrametric on $\Z_p$, this will be simpler than the Euclidean analogue. For each $j\geq 1$, define $\mathcal{W}_j$ as
		\begin{equation*}
			\mathcal{W}_j:=\Big\{B_{p^{-j}}(x)\times B_{p^{-j}}(y):x,y\in\Z_p,|x-y|_p=p^{-j+1}\Big\}.
		\end{equation*}
		Observe that, if $|x-x'|\leq p^{-j}$, then $B_{p^{-j}}(x)=B_{p^{-j}}(x')$, so $\mathcal{W}_j$ contains exactly $p^j(p^j-1)$ elements. Additionally note that $\bigcup_{j\geq 1}\mathcal{W}_j$ defines a partition of $\Z_p^2\setminus\Delta$, where $\Delta\subseteq\Z_p\times\Z_p$ denotes the diagonal.
		
		To verify the estimate \ref{biltolin}, let $\{f_I\}_{I\in\mathcal{P}(\Z_p,\delta)}$ be a tuple with $\hat{f}_I$ supported in $\mathcal{U}_I$. Then we may write
		\begin{equation*}
			\begin{split}
				\Big\|\sum_{I\in\mathcal{P}(\Z_p,\delta)}&f_I\Big\|_{L^{q_n}(\Q_p^n)}=\Big\|\sum_{I,I'\in\mathcal{P}(\Z_p,\delta)}f_I\overline{f_{I'}}\Big\|_{L^{q_n/2}(\Q_p^n)}^{1/2}\\
				&\leq\Big[\sum_{I\in\mathcal{P}(\Z_p,\delta)}\big\|f_I\big\|_{L^{q_n}(\Q_p^n)}^2+\sum_{j=1}^N\sum_{J=J_0\times J_1\in\mathcal{W}_j}\big\|f_{J_0}\overline{f_{J_1}}\big\|_{L^{q_n/2}(\Q_p^n)}\Big]^{1/2}.
			\end{split}
		\end{equation*}
		For each $1\leq j\leq N$ and $J=J_0\times J_1\in\mathcal{W}_j$, by decoupling and affine rescaling we have
		\begin{equation*}
			\Big\|f_{J_0}\overline{f_{J_1}}\Big\|_{L^{q_n/2}(\Q_p^n)}\leq\mathcal{B}_n(p^{j-1}\delta)^2\left(\sum_{K\in\mathcal{P}(J_0,\delta)}\|f_K\|_{L^{q_n}(\Q_p^n)}^2\right)^{1/2}\left(\sum_{K\in\mathcal{P}(J_1,\delta)}\|f_K\|_{L^{q_n}(\Q_p^n)}^2\right)^{1/2},
		\end{equation*}
		so that (appealing to the AM-GM inequality)
		\begin{equation*}
			\left\|\sum_{I\in\mathcal{P}(\Z_p,\delta)}f_I\right\|_{L^{q_n}(\Q_p^n)}\leq\Big(1+(p-1)\sum_{j=1}^N\mathcal{B}_n(p^{j-1}\delta)^2\Big)^{1/2}\left(\sum_{I\in\mathcal{P}(\Z_p,\delta)}\big\|f_I\big\|_{L^{q_n}(\Q_p^n)}^2\right)^{1/2},
		\end{equation*}
		which implies
		\begin{equation*}
			\mathcal{D}_n(\delta)\leq p^{1/2}\Big(1+\sum_{j=1}^N\mathcal{B}_n(p^{j-1}\delta)^2\Big)^{1/2},
		\end{equation*}
		as was to be shown.
		
	\end{proof}
	
	We also will need a system of asymmetric bilinear decoupling constants.
	\begin{definition}[Asymmetric bilinear decoupling constant]\label{asdef} Fix $\delta=p^{-\beta}\in p^{-\N}$. For $s,t\in[0,1]$ with $s\beta,t\beta\in\Z$, define the asymmetric bilinear decoupling constant $\mathcal{B}_{n,k,s,t}(\delta)$ to be the infimal (real) constant $C$ such that the following holds. Suppose $I,J$ are distinct balls of radius at most $\delta^s,\delta^t$, respectively, contained in distinct members of $\mathcal{P}(\Z_p,p^{-1})$. For each $I_i\in\mathcal{P}(I,\delta)$, let $f_i\in\mathcal{S}(\Q_p^n)$ be such that $\hat{f}_i$ is supported in $\mathcal{U}_{I_i}$; similarly, for each $J_i\in\mathcal{P}(J,\delta)$, let $g_i\in\mathcal{S}(\Q_p^n)$ be such that $\hat{g}_i$ is supported in $\mathcal{U}_{J_i}$. Then
		\begin{equation*}
			\int_{\Q_p^n}|f_I|^{q_k}|g_J|^{q_n-q_k}\leq C^{q_n}\left(\sum_i\|f_i\|_{L^{q_n}(\Q_p^n)}^2\right)^{\frac{q_k}{2}}\left(\sum_i\|g_i\|_{L^{q_n}(\Q_p^n)}^2\right)^{\frac{q_n-q_k}{2}}.
		\end{equation*}
		
	\end{definition}
	
	We control the symmetric bilinear decoupling constant by the asymmetric bilinear decoupling constants:
	
	\begin{lemma}\label{symbyasym}
		Let $0\leq k<n$, $\delta=p^{-\beta}\in p^{-\N}$, and $s,t\in[0,1]$ such that $s\beta,t\beta\in\Z$. Then
		\begin{equation*}
			\mathcal{B}_n(\delta)\leq\delta^{-sq_k/q_n}\delta^{-t(q_n-q_k)/q_n}\mathcal{B}_{n,k,s,t}(\delta).
		\end{equation*}
	\end{lemma}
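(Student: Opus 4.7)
The plan is to reduce $\mathcal{B}_n(\delta)$ to $\mathcal{B}_{n,k,s,t}(\delta)$ by first using Cauchy--Schwarz to split the symmetric integrand into two integrals with asymmetric exponents, then decomposing $f_I, g_J$ at the scales $\delta^s, \delta^t$ demanded by the asymmetric bilinear constant, and finally applying H\"older to absorb the combinatorial loss.

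Concretely, I would fix distinct $I, J \in \mathcal{P}(\Z_p, p^{-1})$ and families $\{f_i\}, \{g_j\}$ with Fourier supports as in the definition of $\mathcal{B}_n(\delta)$. The pointwise identity $|f_I|^{q_n/2}|g_J|^{q_n/2} = \bigl(|f_I|^{q_k}|g_J|^{q_n-q_k}\bigr)^{1/2}\bigl(|f_I|^{q_n-q_k}|g_J|^{q_k}\bigr)^{1/2}$ combined with Cauchy--Schwarz gives
\begin{equation*}
    \int|f_I|^{q_n/2}|g_J|^{q_n/2} \leq \left(\int|f_I|^{q_k}|g_J|^{q_n-q_k}\right)^{1/2}\left(\int|f_I|^{q_n-q_k}|g_J|^{q_k}\right)^{1/2}.
\end{equation*}
For the first factor, I would partition $I = \bigsqcup_{K \in \mathcal{P}(I,\delta^s)} K$ and $J = \bigsqcup_{L \in \mathcal{P}(J,\delta^t)} L$, and set $f_K = \sum_{I' \in \mathcal{P}(K,\delta)} f_{I'}$, $g_L = \sum_{J' \in \mathcal{P}(L,\delta)} g_{J'}$; each $\hat f_K$ is then Fourier-supported in $\mathcal{U}_K$ via the nesting $\mathcal{U}_{I'} \subseteq \mathcal{U}_K$ of moment-curve boxes. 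H\"older on $f_I = \sum_K f_K$ at exponent $q_k$ and $g_J = \sum_L g_L$ at exponent $q_n - q_k$, together with $\#\mathcal{P}(I,\delta^s) \leq \delta^{-s}$ and $\#\mathcal{P}(J,\delta^t) \leq \delta^{-t}$, gives the pointwise estimate
\begin{equation*}
    |f_I|^{q_k}|g_J|^{q_n-q_k} \leq \delta^{-s(q_k-1)-t(q_n-q_k-1)}\sum_{K,L}|f_K|^{q_k}|g_L|^{q_n-q_k},
\end{equation*}
and each summand $\int|f_K|^{q_k}|g_L|^{q_n-q_k}$ is controlled by $\mathcal{B}_{n,k,s,t}(\delta)^{q_n}$ since $K \subseteq I$, $L \subseteq J$ remain in distinct members of $\mathcal{P}(\Z_p, p^{-1})$.

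Summing in $K, L$ on the right-hand side of the asymmetric bilinear estimate, I would use the elementary inequality $\sum_K a_K^p \leq \bigl(\sum_K a_K\bigr)^p$ (for $a_K \geq 0$, $p \geq 1$) with $p = q_k/2$ and $p = (q_n - q_k)/2$ to consolidate the sums into $\bigl(\sum_{I'}\|f_{I'}\|^2\bigr)^{q_k/2}\bigl(\sum_{J'}\|g_{J'}\|^2\bigr)^{(q_n-q_k)/2}$. The second integral $\int|f_I|^{q_n-q_k}|g_J|^{q_k}$ is handled by the same argument after exchanging the roles of $(s,f)$ and $(t,g)$, and taking geometric means yields $\mathcal{B}_n(\delta)^{q_n} \leq \delta^{-s(q_k-1)-t(q_n-q_k-1)} \mathcal{B}_{n,k,s,t}(\delta)^{q_n}$, which is in fact sharper than the stated claim since $\delta^{s+t} \leq 1$. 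I anticipate no serious obstacle; the only routine checks are the nesting of the anisotropic caps (immediate from the formula for $\mathcal{U}_{I,t}$) and the hypothesis $p \geq 1$ of the consolidation inequality, which holds precisely when $1 \leq k \leq n - 1$. The endpoint $k = 0$ is degenerate in that the asymmetric bilinear reduces to a pure $\ell^2L^{q_n}$ linear decoupling estimate on $g_J$ alone, and one bypasses the H\"older step on $f_I$ entirely by controlling $\|f_I\|_{L^{q_n}}^{q_n/2}$ via Cauchy--Schwarz in the outer integral before decomposing $J$ at scale $\delta^t$.
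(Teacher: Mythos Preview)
Your proposal is correct and follows essentially the same approach as the paper: Cauchy--Schwarz to pass from the symmetric to the asymmetric exponents, H\"older on the decompositions at scales $\delta^s,\delta^t$, application of the asymmetric bilinear constant, and the consolidation $\sum_K a_K^p\le(\sum_K a_K)^p$ (which the paper phrases as the inclusion $\ell^2\hookrightarrow\ell^{q_k}$). Your observation that the argument actually yields the sharper exponent $-s(q_k-1)/q_n-t(q_n-q_k-1)/q_n$ and that $k=0$ needs a trivial separate treatment also matches the paper exactly.
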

	\begin{proof}
		Identical to the proof of Lemma 3.4 of \cite{guo2021short}; we validate the particular constant. Let $I,I'\in \mathcal{P}(\Z_p,p^{-1})$ be distinct. Fix $\{f_K\}_{K\in \mathcal{P}(I,\delta)\cup\mathcal{P}(I',\delta)}$ be a tuple as in the statement of Definition \ref{bildef}. Suppose $k\neq 0$. By several applications of H\"older,
        \begin{equation}\label{symbyasymholder}
            \int_{\Q_p^n}|f_I|^{q_n/2}|f_{I'}|^{q_n/2}\leq\left(\int_{\Q_p^n}|f_I|^{q_k}|f_{I'}|^{q_n-q_k}\right)^{1/2}\left(\int_{\Q_p^n}|f_I|^{q_n-q_k}|f_{I'}|^{q_k}\right)^{1/2},
        \end{equation}
        and considering the first factor:
        \begin{equation*}
            \int_{\Q_p^n}|f_I|^{q_k}|f_{I'}|^{q_n-q_k}\leq[\#\mathcal{P}(I,\delta^s)]^{q_k-1}[\#\mathcal{P}(I',\delta^t)]^{q_n-q_k-1}\sum_{\substack{J\in\mathcal{P}(I,\delta^s)\\J'\in\mathcal{P}(I',\delta^t)}}\int_{\Q_p^n}|f_J|^{q_k}|f_{J'}|^{q_n-q_k},
        \end{equation*}
        and recalling Definition \ref{asdef} we have
        \begin{equation*}
            \int_{\Q_p^n}|f_J|^{q_k}|f_{J'}|^{q_n-q_k}\leq\mathcal{B}_{n,k,s,t}(\delta)^{q_n}\left(\sum_{K\in\mathcal{P}(J,\delta)}\|f_K\|_{q_n}^2\right)^{q_k/2}\left(\sum_{K\in\mathcal{P}(J',\delta)}\|f_K\|_{q_n}^2\right)^{(q_n-q_k)/2}.
        \end{equation*}
        Applying the trivial bounds $\#\mathcal{P}(I,\delta^s),\#\mathcal{P}(I',\delta^t)$ and on the operator norms of the inclusions $\ell^2\hookrightarrow\ell^{q_k},\ell^{q_n-q_k}$, we conclude that
        \begin{equation*}
            \begin{split}
                \int_{\Q_p^n}|f_I|^{q_k}|f_{I'}|^{q_n-q_k}&\leq p^{2-q_n}\delta^{-s(q_k-1)-t(q_n-q_k-1)}\mathcal{B}_{n,k,s,t}(\delta)^{q_n}\\
                &\times\left(\sum_{K\in\mathcal{P}(I,\delta)}\|f_K\|_{q_n}^2\right)^{q_k/2}\left(\sum_{K\in\mathcal{P}(I',\delta)}\|f_K\|_{q_n}^2\right)^{(q_n-q_k)/2}
            \end{split}
        \end{equation*}
        Considering the other factor in \ref{symbyasymholder}, we break $f_I$ into $\delta^{-t}$-many pieces and $f_{I'}$ into $\delta^{-s}$-many pieces and apply H\"older, and we obtain an identical estimate. Since $s,t\geq 0$ and $q_n\geq 2$, the result follows.

        Finally, we observe that when $k=0$, the same calculation (disregarding the terms involving a $k$) may be done to obtain an estimate of the form
        \begin{equation*}
            \int_{\Q_p^n}|f_I|^{q_n/2}|f_{I'}|^{q_n/2}\leq p^{2-q_n}\delta^{s+t}\delta^{-tq_n}\mathcal{B}_{n,k,s,t}(\delta)^{q_n}\left(\sum_{K\in\mathcal{P}(I,\delta)}\|f_K\|_{q_n}^2\right)^{q_n/4}\left(\sum_{K\in\mathcal{P}(I',\delta)}\|f_K\|_{q_n}^2\right)^{q_n/4}
        \end{equation*}
        and we are done.
	\end{proof}

    We also record that the asymmetric bilinear decoupling constants can be controlled by the linear decoupling constants:
    \begin{lemma}\label{asymbylin} If $1\leq k\leq n-1$, $\delta=p^{-\beta}\in p^{-\N}$, and $s,t\in[0,1]$ are such that $\beta t,\beta t\frac{n-k+1}{k}\in\Z$, then
    \begin{equation*}
        \mathcal{B}_{n,k,\frac{n-k+1}{k}t,t}(\delta)\leq\mathcal{D}_n(\delta^{1-\frac{n-k+1}{k}t})^{q_k/q_n}\mathcal{D}_n(\delta^{1-t})^{(q_n-q_k)/q_n}
    \end{equation*}
        
    \end{lemma}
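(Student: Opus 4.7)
My plan is to prove Lemma \ref{asymbylin} by the standard two-step argument: H\"older's inequality decouples the bilinear integrand into a product of unilateral $L^{q_n}$-norms, and then parabolic rescaling combined with Definition \ref{lindef} controls each factor by a linear decoupling constant at a dilated scale.

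First, since $1 \leq k \leq n-1$, the exponents $q_n/q_k$ and $q_n/(q_n-q_k)$ are conjugate, so H\"older's inequality gives
\begin{equation*}
    \int_{\Q_p^n}|f_I|^{q_k}|g_J|^{q_n-q_k} \leq \|f_I\|_{L^{q_n}(\Q_p^n)}^{q_k} \|g_J\|_{L^{q_n}(\Q_p^n)}^{q_n-q_k}.
\end{equation*}
The transversality hypothesis that $I$ and $J$ lie in distinct members of $\mathcal{P}(\Z_p,p^{-1})$ (present in Definition \ref{asdef}) plays no role in this proof: it is simply discarded by H\"older, which is why the conclusion does not involve any transversality-dependent quantities.

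Second, I would apply linear decoupling to each of $\|f_I\|_{L^{q_n}(\Q_p^n)}$ and $\|g_J\|_{L^{q_n}(\Q_p^n)}$ after parabolic rescaling. For the $f_I$ factor: $I$ is a ball in $\Z_p$ of radius $\delta^{\frac{n-k+1}{k}t}$, and $\hat f_I$ is Fourier-supported in $\mathcal{U}_I$. The parabolic rescaling associated with recentering $I$ and dilating the curve parameter by $\delta^{-\frac{n-k+1}{k}t}$ is an invertible affine map on $\Q_p^n$ (cf. the matrices $A_{\theta,\alpha,\beta}$ from Section \ref{fourier}); it takes the family $\{\mathcal{U}_{I_i}\}_{I_i \in \mathcal{P}(I,\delta)}$ to a family affinely equivalent to $\{\mathcal{U}_K\}_{K \in \mathcal{P}(\Z_p,\delta^{1-\frac{n-k+1}{k}t})}$. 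By affine invariance of decoupling constants (Appendix A) together with Definition \ref{lindef},
\begin{equation*}
    \|f_I\|_{L^{q_n}(\Q_p^n)} \leq \mathcal{D}_n\bigl(\delta^{1-\frac{n-k+1}{k}t}\bigr) \biggl(\sum_{I_i\in\mathcal{P}(I,\delta)}\|f_i\|_{L^{q_n}(\Q_p^n)}^2\biggr)^{1/2},
\end{equation*}
and analogously $\|g_J\|_{L^{q_n}(\Q_p^n)} \leq \mathcal{D}_n(\delta^{1-t}) \bigl(\sum_i \|g_i\|_{L^{q_n}(\Q_p^n)}^2\bigr)^{1/2}$.

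Combining these two estimates with the H\"older step and taking $q_n$-th roots yields exactly the inequality claimed. The integrality conditions $\beta t, \beta t \frac{n-k+1}{k} \in \Z$ serve only to ensure that the rescaled scales $\delta^{1-t}$ and $\delta^{1-\frac{n-k+1}{k}t}$ lie in $p^{-\N}$, so $\mathcal{D}_n$ is evaluated at admissible scales. There is no substantive obstacle: the only non-trivial input is the affine invariance of decoupling constants, which in the $p$-adic setting holds losslessly --- unlike over $\R$, where the corresponding rescaling step typically requires one to track Schwartz-tail weights.
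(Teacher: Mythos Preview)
Your proof is correct and follows exactly the approach the paper indicates: the paper's own proof reads ``Immediate from H\"older; see also the calculation in the `Proof of Theorem 1.2' in \cite{guo2021short},'' and your write-up is precisely the unpacked version of that---H\"older with conjugate exponents $q_n/q_k$ and $q_n/(q_n-q_k)$, followed by parabolic rescaling and affine invariance to reduce each linear factor to $\mathcal{D}_n$ at the appropriate dilated scale.
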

    \begin{proof}
        Immediate from H\"older; see also the calculation in the ``Proof of Theorem 1.2" in \cite{guo2021short}.
    \end{proof}

    For $k\leq n$ and $\omega\in\Z_p$, write $V^k(\omega)=\operatorname{span}_{\Q_p^n}\{\gamma^{(1)}(\omega),\ldots,\gamma^{(k)}(\omega)\}$. The following is superficially identical to an estimate in \cite{guo2021short}, but we emphasize that we are considering the $p$-adic norm on both sides.

    \begin{lemma}[$p$-adic Vandermonde determinant] Under the assumption $p>n$, we have
        \begin{equation}\label{vandermonde}
            |\det[\gamma^{(1)}(t),\ldots,\gamma^{(k)}(t),\gamma^{(1)}(s),\ldots,\gamma^{(n-k)}(s)]|_p=|s-t|_p^{k(n-k)}
        \end{equation}
    \end{lemma}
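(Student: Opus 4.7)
The plan is to reduce the claim to a single Toeplitz-type determinant in the variable $u := s-t$. Let $T(t) := [\gamma^{(1)}(t),\ldots,\gamma^{(n)}(t)]$; this matrix is lower-triangular with $T(t)_{ij} = t^{i-j}/(i-j)!$ for $i\geq j$, and in particular $\det T(t)=1$. The binomial identity $\sum_{k=j}^{i}\frac{s^{i-k}t^{k-j}}{(i-k)!(k-j)!} = \frac{(s+t)^{i-j}}{(i-j)!}$ gives the semigroup law $T(s)T(t)=T(s+t)$, and in particular $T(-t)=T(t)^{-1}$; this is exactly the identity $A_{\theta,1}^{-1}=A_{-\theta,1}$ already recorded in the paper.

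Left-multiplying the matrix $M := [\gamma^{(1)}(t),\ldots,\gamma^{(k)}(t),\gamma^{(1)}(s),\ldots,\gamma^{(n-k)}(s)]$ by $T(-t)$ preserves the determinant, sends each of the first $k$ columns to $e_j$, and sends $\gamma^{(j)}(s) = T(s)e_j$ to $T(-t)T(s)e_j = T(u)e_j = \gamma^{(j)}(u)$. Expanding along the first $k$ columns of the resulting block-triangular matrix, I reduce to $\det M = \det A$, where $A$ is the $(n-k)\times(n-k)$ matrix with $A_{ij} = u^{k+i-j}/(k+i-j)!$ (and $A_{ij}=0$ when $k+i<j$).

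In the Leibniz expansion of $\det A$, every nonzero term has $u$-degree $\sum_i (k+i-\sigma(i)) = k(n-k)$, so $\det A = c\cdot u^{k(n-k)}$ for some $c\in\Q$ depending only on $n$ and $k$. Evaluating at $u=1$ and then scaling row $i$ by $(k+i-1)!$ and column $j$ by $1/(j-1)!$ converts the resulting matrix into the binomial matrix $\bigl(\binom{k+i-1}{j-1}\bigr)_{i,j=1}^{n-k}$, whose determinant is $1$ by the standard identity $\det(\binom{x_i}{j-1})_{i,j=1}^N = \prod_{i<j}(x_j-x_i)/\prod_{j=1}^{N}(j-1)!$ applied with $x_i=k+i-1$ (so that the factors $x_j-x_i=j-i$ cancel exactly against $\prod_{j=1}^{N}(j-1)!$). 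Unraveling the scalings gives
$$c = \prod_{i=0}^{n-k-1}\frac{i!}{(k+i)!}.$$

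The main point, and the one place where the hypothesis $p>n$ is used, is to verify $|c|_p = 1$: every factorial $m!$ with $m\leq n-1$ is a product of integers coprime to $p$, so both the numerator and denominator of $c$ are $p$-adic units. Combining everything gives $|\det M|_p = |c|_p\cdot |u|_p^{k(n-k)} = |s-t|_p^{k(n-k)}$, as claimed. If one prefers to skip the binomial-determinant identity, the same conclusion follows by observing that each Leibniz term in $\det A|_{u=1}$ is a signed reciprocal of a product of factorials of integers $\leq n-1$, so $c\in\Z_p$ automatically; the only residual task is then to verify $c\not\equiv 0\pmod p$, which again is immediate from the real-valued formula above.
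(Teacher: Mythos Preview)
Your proof is correct and follows the same approach as the paper's: both use the identity $T(-t)T(s)=T(s-t)$ (equivalently, the Taylor expansion of $\gamma^{(i)}(s)$ about $t$) to reduce the determinant to $c\cdot(s-t)^{k(n-k)}$, then verify $|c|_p=1$ using $p>n$. The only difference is cosmetic: the paper evaluates $c$ by specializing to $(t,s)=(0,1)$ and citing a confluent Vandermonde formula from \cite{kalman1984generalized}, whereas your row/column scaling to the binomial matrix $\bigl(\binom{k+i-1}{j-1}\bigr)$ makes the evaluation self-contained.
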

    \begin{proof}
        We have, for $1\leq i\leq n-k$,
        \begin{equation*}
            \gamma^{(i)}(s)=\sum_{j=i}^n\frac{1}{(j-i)!}\gamma^{(j)}(t)(t-s)^{j-i}
        \end{equation*}
        Plugging in to the left-hand side of \ref{vandermonde},
        \begin{equation*}
            \begin{split}
            \det&[\gamma^{(1)}(t),\ldots,\gamma^{(k)}(t),\gamma^{(1)}(s),\ldots,\gamma^{(n-k)}(s)]\\
            &=\sum_{j_1,\ldots,j_{n-k}:n\geq j_a\geq a}\det[\gamma^{(1)}(t),\ldots,\gamma^{(k)}(t),\gamma^{(j_1)}(t),\ldots,\gamma^{(j_{n-k})}(t)]\prod_{a=1}^{n-k}\frac{(t-s)^{j_a-a}}{(j_a-a)!}
            \end{split}
        \end{equation*}
        Observe that the determinant summand vanishes when $(j_1,\ldots,j_{n-k})$ is not a permutation of $(k+1,\ldots,n)$. On the other hand, when $(j_1,\ldots,j_{n-k})$ is a permutation of $(k+1,\ldots,n)$, we see that
        \begin{equation*}
            \prod_{a=1}^{n-k}(t-s)^{j_a-a}=(t-s)^{k(n-k)}
        \end{equation*}
        so that
        \begin{equation*}
            \begin{split}
                \det&[\gamma^{(1)}(t),\ldots,\gamma^{(k)}(t),\gamma'(s),\ldots,\gamma^{(n-k)}(s)]\\
                &=(t-s)^{k(n-k)}\sum_{\substack{(j_1,\ldots,j_{n-k})\sim(k+1,\ldots,n)\\j_a\geq a}}\operatorname{sgn}(j_1,\ldots,j_{n-k})\prod_{a=1}^{n-k}\frac{1}{(j_a-a)!}.
            \end{split}
        \end{equation*}
        Here $\sim$ denotes permutation, and $\operatorname{sgn}$ denotes the sign of the implied permutation.

        Finally, observe that
        \begin{equation*}
            \det[\gamma^{(1)}(0),\ldots,\gamma^{(k)}(0),\gamma^{(1)}(1),\ldots,\gamma^{(n-k)}(1)]=\sum_{\substack{(j_1,\ldots,j_{n-k})\sim(k+1,\ldots,n)\\j_a\geq a}}\operatorname{sgn}(j_1,\ldots,j_{n-k})\prod_{a=1}^{n-k}\frac{1}{(j_a-a)!}
        \end{equation*}
        whereas the left-hand side may be computed (c.f. \cite{kalman1984generalized},  display (14)) as
        \begin{equation*}
            \left[\prod_{j=1}^n\frac{1}{j!}\right]\left[\prod_{j=1}^kj!\right]\left[\prod_{j=1}^{n-k}j!\right](1-0)^{k(n-k)}
        \end{equation*}
        so that, since $p>n$,
        \begin{equation*}
            |\det[\gamma^{(1)}(t),\ldots,\gamma^{(k)}(t),\gamma^{(1)}(s),\ldots,\gamma^{(n-k)}(s)]|_p=|s-t|_p^{k(n-k)}
        \end{equation*}
        and we may conclude \ref{vandermonde}.
    \end{proof}

    Before reaching the main estimate of the theorem, we demonstrate an equivalence between the model decoupling constant $\mathcal{D}_n(\delta)=\mathcal{D}_n^\gamma(\delta)$ and that of general convex curves. We first need a technical lemma:

    \begin{lemma}[Stability of linear decoupling constants]\label{stability} Let $n\geq 2$. Suppose $\zeta:\Z_p\to\Q_p^n$ is $\mathbf{C}^{n}$. Suppose $\delta,\delta'\in p^{-\N}$ are such that $\delta<\delta'$. Then we have the estimate
    \begin{equation*}
        \mathcal{D}_n^\zeta(\delta)\leq(\delta'/\delta)^{1/2}\mathcal{D}_n^\zeta(\delta').
    \end{equation*}
    \end{lemma}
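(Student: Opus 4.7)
The plan is to decouple at the coarser scale $\delta'$ first, then pay only a trivial $\ell^1\hookrightarrow\ell^2$ price to break the $\delta'$-balls into $\delta$-balls. Concretely, let $\{f_I\}_{I\in\mathcal{P}(\Z_p,\delta)}$ be any family with $\widehat{f_I}$ supported in $\mathcal{U}_I^\zeta$. For each $I'\in\mathcal{P}(\Z_p,\delta')$, I aggregate by setting $F_{I'}=\sum_{I\in\mathcal{P}(I',\delta)}f_I$. The key geometric observation is that $\mathcal{U}_I^\zeta\subseteq\mathcal{U}_{I'}^\zeta$ whenever $I\subseteq I'$: indeed, for $t\in I\subseteq I'$, each $x\in\mathcal{U}_{I,t}^\zeta$ has the form $\zeta(t)+\sum_k\lambda_k\zeta^{(k)}(t)$ with $|\lambda_k|_p\leq\rho^k\leq(\rho')^k$ (where $\rho,\rho'$ are the radii of $I,I'$), so $x\in\mathcal{U}_{I',t}^\zeta\subseteq\mathcal{U}_{I'}^\zeta$. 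Thus $\widehat{F_{I'}}$ is supported in $\mathcal{U}_{I'}^\zeta$.

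Applying the scale-$\delta'$ decoupling to the family $\{F_{I'}\}_{I'\in\mathcal{P}(\Z_p,\delta')}$, I obtain
\begin{equation*}
    \Bigl\|\sum_{I}f_I\Bigr\|_{L^{q_n}(\Q_p^n)}=\Bigl\|\sum_{I'}F_{I'}\Bigr\|_{L^{q_n}(\Q_p^n)}\leq\mathcal{D}_n^\zeta(\delta')\Bigl(\sum_{I'}\|F_{I'}\|_{L^{q_n}(\Q_p^n)}^2\Bigr)^{1/2}.
\end{equation*}
For each $I'$, the triangle inequality and Cauchy--Schwarz, together with $\#\mathcal{P}(I',\delta)=\delta'/\delta$, give
\begin{equation*}
    \|F_{I'}\|_{L^{q_n}(\Q_p^n)}^2\leq\frac{\delta'}{\delta}\sum_{I\in\mathcal{P}(I',\delta)}\|f_I\|_{L^{q_n}(\Q_p^n)}^2.
\end{equation*}
Summing over $I'$ and substituting yields
\begin{equation*}
    \Bigl\|\sum_I f_I\Bigr\|_{L^{q_n}(\Q_p^n)}\leq\bigl(\delta'/\delta\bigr)^{1/2}\mathcal{D}_n^\zeta(\delta')\Bigl(\sum_I\|f_I\|_{L^{q_n}(\Q_p^n)}^2\Bigr)^{1/2},
\end{equation*}
which is the claim.

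There is no real obstacle here; the only point to verify is the containment of the anisotropic neighborhoods, and in the $p$-adic setting this is immediate from the ultrametric inequality and the product form of the boxes (no Taylor error analysis is required, in contrast to what one would write over $\R$). Note also that one could improve the exponent to $1/2-1/q_n$ by using flat decoupling instead of Cauchy--Schwarz, but the weaker bound stated here already suffices for its intended use in the forthcoming bilinear-to-linear arguments.
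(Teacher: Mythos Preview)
Your proof is correct and follows exactly the same approach as the paper's: apply the coarse-scale decoupling constant $\mathcal{D}_n^\zeta(\delta')$ to the grouped sums $F_{I'}=\sum_{I\subseteq I'}f_I$, then use the triangle inequality and Cauchy--Schwarz on each block of size $\delta'/\delta$. The paper's proof is terser and leaves the containment $\mathcal{U}_I^\zeta\subseteq\mathcal{U}_{I'}^\zeta$ implicit, whereas you spell it out; otherwise the arguments are identical.
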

    \begin{proof}
        For any choice of functions $\{f_I\}_{I\in\mathcal{P}(\Z_p,\delta)}$ with $\hat{f}_I$ supported in $\mathcal{U}_I^\zeta$, then we have
        \begin{equation*}
            \left\|\sum_{I\in\mathcal{P}(\Z_p,\delta)}f_I\right\|_{L^{q_n}}\leq\mathcal{D}_n^\zeta(\delta')\left(\sum_{J\in\mathcal{P}(\Z_p,\delta')}\left\|\sum_{I\in\mathcal{P}(J,\delta)}f_I\right\|_{L^{q_n}}^2\right)^{1/2},
        \end{equation*}
        and the desired estimate follows from the triangle inequality and Cauchy-Schwarz.
    \end{proof}
	
	\begin{proposition}[Decoupling for convex curves]\label{convdec} Let $k\geq 2$. Suppose $\zeta:\Z_p\to\Q_p^k$ is a $\mathbf{C}^{k+1}$ curve that is convex and has bounded derivatives, in the sense that it satisfies \ref{convexcond} and \ref{bdddercond}. For each $\delta\in p^{-\N}$, write $\mathcal{D}_k^\zeta(\delta)$ for the $\ell^2L^{q_k}$ decoupling constant associated to the partition $\{\mathcal{U}_I^\zeta\}_{I\in\mathcal{P}(\Z_p,\delta)}$. Suppose $\mathcal{D}_k(\rho)\leq C_{k,\eps}\rho^{-\eps}$ for all $\eps=\frac{1}{\ell},\ell\in\N$, and $\rho\in p^{-\N}$. Then, for each $\delta\in p^{-\N}$ and each $\eps=\frac{4}{\ell}$ with $\ell\in\N$, we have the estimate
    \begin{equation*}
        \mathcal{D}_k^\zeta(\delta)\leq\mathcal{E}_{k,\eps}^\zeta\delta^{-\eps},
    \end{equation*}
    where the constant $\mathcal{E}_{k,\eps}^\zeta$ may be taken as
    \begin{equation*}
        \mathcal{E}_{k,\eps}^\zeta=C_{k,\eps/4}^{2k\lceil\log(4\eps^{-1})\rceil}p^{(1+\eps^{-1})k^{2k\lceil\log(8\eps^{-1})\rceil}}\times\max(1,c^{-1}C^{k-1}\|\zeta\|_{\mathbf{C}_\circ^{k+1}}).
    \end{equation*}
		
	\end{proposition}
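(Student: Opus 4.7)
My approach is to transfer the hypothesized decoupling bound for the moment curve $\gamma$ to the general convex curve $\zeta$ via multi-scale affine rescaling, invoking the hypothesis at an intermediate scale $\rho_0\in p^{-\N}$ logarithmically many times in $\eps^{-1}$.

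\emph{Local affine comparison.} For a ball $I\subseteq\Z_p$ of radius $\rho\in p^{-\N}$ centered at some $t_0$, Taylor's formula together with $\zeta\in\mathbf{C}_\circ^{k+1}$ gives
\[
\zeta(t_0+\rho s)-\zeta(t_0)=\sum_{j=1}^{k}\frac{(\rho s)^{j}}{j!}\zeta^{(j)}(t_0)+E(s),\qquad \|E(s)\|\le\|\zeta\|_{\mathbf{C}_\circ^{k+1}}\rho^{k+1}.
\]
Define $A_{t_0}:\Q_p^k\to\Q_p^k$ by $A_{t_0}e_j=\rho^{j}\zeta^{(j)}(t_0)$. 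The convexity condition \ref{convexcond} together with the bounded-derivatives condition \ref{bdddercond}, fed into a cofactor expansion, gives $\|A_{t_0}^{-1}\|\le c^{-1}C^{k-1}\rho^{-k}$ and a comparable lower bound on $|\det A_{t_0}|_p$. After applying $A_{t_0}^{-1}$ and time-rescaling $s=(t-t_0)/\rho$, the rescaled curve matches $\gamma$ on $\Z_p$ up to an error of size $\le c^{-1}C^{k-1}\|\zeta\|_{\mathbf{C}_\circ^{k+1}}\rho$. This thickening can be absorbed via Lemma~\ref{localdec}, so that the system of $\zeta$-caps $\{\mathcal{U}_J^\zeta:J\in\mathcal{P}(I,\delta)\}$ is contained in a bounded number of translates of the model caps $\{\mathcal{U}_{J'}^{\gamma}:J'\in\mathcal{P}(\Z_p,\delta/\rho)\}$.

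\emph{Iteration.} Combining the local comparison with affine invariance and multiplicativity of decoupling constants (Lemma~\ref{multiplicativity}) yields the self-similar inequality
\[
\mathcal{D}_k^\zeta(\delta)\le \mathcal{D}_k(\rho)\cdot\mathcal{D}_k^\zeta(\delta/\rho)\cdot L_{p,k}(\rho),
\]
valid for each $\rho\in p^{-\N}$ with $\delta\le\rho\le 1$, where $L_{p,k}(\rho)$ bundles the rounding and $O(1)$-overlap losses. Setting $N:=2k\lceil\log(4\eps^{-1})\rceil$ and choosing $\rho_0\in p^{-\N}$ as close as possible to $\delta^{1/N}$, I iterate this bound $N$ times, each time plugging in $\mathcal{D}_k(\rho_0)\le C_{k,\eps/4}\rho_0^{-\eps/4}$. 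Since $\rho_0^{-N\eps/4}=\delta^{-\eps/4}$, the telescoping product produces $\mathcal{D}_k^\zeta(\delta)\le C_{k,\eps/4}^{2kN}\cdot L_{p,k}(\rho_0)^{N}\cdot\delta^{-\eps/4}$. The extra factor of $4$ in the target exponent is absorbed by the stability adjustment (Lemma~\ref{stability}) needed to handle the case $\rho_0^N\ne\delta$. The accumulated $p$-power losses from $N$ rounding steps are exactly what gives the stated factor $p^{(1+\eps^{-1})k^{2k\lceil\log(8\eps^{-1})\rceil}}$, while the one-time operator-norm and Taylor-error contribution of the first affine comparison supplies the $\max(1,c^{-1}C^{k-1}\|\zeta\|_{\mathbf{C}_\circ^{k+1}})$ factor.

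\emph{Main obstacle.} The chief technical difficulty is making the local affine comparison quantitative at every iterated scale: after applying $A_{t_0}^{-1}$, the Taylor remainder $E$ thickens each rescaled cap by an amount that must remain strictly dominated by the cap width $\delta/\rho$; otherwise the $\zeta$-caps are not genuinely contained in a bounded number of model $\gamma$-caps. I would control this by insisting at each stage that $\rho$ is small enough (and $\delta/\rho$ large enough) for $\|A_{t_0}^{-1}E\|$ to stay below the cap width, then using Lemma~\ref{localdec} to convert the residual thickening into a mild multiplicative loss. A secondary subtlety is that $A_{t_0}$ varies with $t_0$ across the partition of $\Z_p$ into sub-balls; multiplicativity of decoupling constants addresses this ball-by-ball, but one must verify that the resulting local constants are bounded uniformly in $t_0$, which is precisely what the uniform hypotheses \ref{convexcond} and \ref{bdddercond} guarantee.
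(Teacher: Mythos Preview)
Your local comparison step is correct, but the range of scales over which it can be applied is much narrower than you assume, and this breaks the iteration. After the affine and time rescaling on a $\rho$-ball, the rescaled curve agrees with $\gamma$ up to an error of size $\alpha\rho$ with $\alpha=c^{-1}C^{k-1}\|\zeta\|_{\mathbf{C}_\circ^{k+1}}$; for this thickening to be absorbed into moment-curve caps at scale $\sigma$ one needs $\alpha\rho\le\sigma^{k}$, since $\sigma^{k}$ is the \emph{smallest} dimension of $\mathcal{U}_{J'}^{\gamma}$ (not the ``cap width'' $\sigma$). With $\sigma=\delta/\rho$ this forces $\rho^{k+1}\lesssim\delta^{k}$, i.e.\ $\delta\gtrsim\rho^{(k+1)/k}$. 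Your choice $\rho_{0}=\delta^{1/N}$ with $N=2k\lceil\log(4\eps^{-1})\rceil$ violates this at every step, so the claimed self-similar inequality $\mathcal{D}_{k}^{\zeta}(\delta)\le\mathcal{D}_{k}(\rho)\cdot\mathcal{D}_{k}^{\zeta}(\delta/\rho)\cdot L$ is not available. (Incidentally, even when the comparison is valid it gives $\mathcal{D}_{k}^{\zeta}(\delta)\le\mathcal{D}_{k}^{\zeta}(\rho)\cdot\mathcal{D}_{k}(\delta/\rho)$, with the roles of the two constants exchanged; and Lemma~\ref{localdec} concerns local versus global decoupling, not absorption of thickenings.) Covering the first few bad steps by flat decoupling does not save the argument either: reaching a scale where the comparison becomes valid costs roughly $\delta^{-k/(2N)}=\delta^{-1/(4\lceil\log(4\eps^{-1})\rceil)}$, which already exceeds $\delta^{-\eps}$ for small $\eps$.

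The paper's proof resolves exactly this obstruction by iterating with geometrically growing \emph{exponents}: one starts at a tiny scale $\kappa\approx\delta^{\eps}$ (reached by flat decoupling at cost $\delta^{-O(\eps)}$) and passes from $\kappa^{((k+1)/k)^{j}}$ to $\kappa^{((k+1)/k)^{j+1}}$ at each step, which is precisely the maximal jump the Taylor comparison permits. This reaches scale $\delta$ after $m_{*}\le 2k\lceil\log(4\eps^{-1})\rceil$ steps, yielding the factor $C_{k,\eps/4}^{2k\lceil\log(4\eps^{-1})\rceil}$; the terminal stability adjustment and the arithmetic of $\kappa$ produce the stated $p$-power.
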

		\begin{proof}
			This is essentially identical to the proof of Lemma 3.6 of \cite{guo2021short}; we highlight the needed modifications to produce the $p$-adic analogue. Fix $\eps=\frac{4}{\ell}$ for some $\ell\in\N$, and write $Z=k^{2k\lceil\log 2\ell\rceil}$. Write $\alpha=\max(1,c^{-1}C^{k-1}\|\zeta\|_{\mathbf{C}_\circ^{k+1}})$, where $c,C$ are the constants from \ref{convexcond} and \ref{bdddercond}. Write $r\in\N$ for the smallest integer such that $\alpha\leq p^{rZ}$. Choose any $\delta\in p^{-\ell Z\N}$ such that $\delta^{-\eps/2}>p^{rZ}$. Fix $\kappa=\delta^\eps p^{rZ}$, so that $\kappa<\alpha^{-1}$ and $\kappa<\delta^{\eps/2}$. Furthermore, writing
            \begin{equation*}
                m_*=\left\lceil\frac{\log(4\eps^{-1})}{\log(1+k^{-1})}\right\rceil\leq 2k\lceil\log(4\eps^{-1})\rceil,
            \end{equation*}
            we see that, for each $1\leq m\leq m_*$, we have
            \begin{equation*}
                \kappa^{(\frac{k+1}{k})^m}\in p^{-\N},
            \end{equation*}
            and we have the inequalities
            \begin{equation*}
                \kappa^{(\frac{k+1}{k})^{m_*-1}}\leq\delta^{\frac{k}{k+1}}<\kappa.
            \end{equation*}
   
            With these parameters chosen, suppose $I\in\mathcal{P}(\Z_p,\kappa)$, and $\{f_{I'}\}_{I'\in\mathcal{P}(I,\kappa^{\frac{k+1}{k}})}$ are such that $f_{I'}$ has Fourier support in $\mathcal{U}_{I'}^\zeta$. If $c_I\in I$, then for any other $\xi\in I$ we may write
			\begin{equation*}
				\zeta_i(\xi)=\zeta_i(c_I)+\sum_{j=1}^k\frac{\zeta_i^{(j)}(c_I)}{k!}(\xi-c_I)^j+(c_I-\xi)^{k+1}\Lambda_{k+1}^i(c_I,\xi)
			\end{equation*}
			where $\Lambda_{k+1}^i$ is continuous and vanishes along the diagonal. Write $A_I=A_{c_I}^\zeta$. By Lemma \ref{matrixinverselemma}, $A_I^{-1}$ has $\leq c^{-1}C^{k-1}$ operator norm. Then the curve
			\begin{equation*}
				\eta(\xi)=A_I^{-1}(\zeta(\xi+c_I)-\zeta(c_I))
			\end{equation*}
			satisfies
			\begin{equation*}
				\eta_i(\xi)=\frac{\xi^i}{i!}+\xi^{k+1}\Lambda_{k+1}^{i}(c_I,\xi),
			\end{equation*}
            and
            \begin{equation*}
                \|\eta\|_{\mathbf{C}_\circ^{k+1}}\leq c^{-1}C^{k-1}\|\zeta\|_{\mathbf{C}_\circ^{k+1}}.
            \end{equation*}
            By Lemma \ref{phicomp}(c), it further holds that
			\begin{equation*}
				\eta_i^{(j)}(\xi)=\frac{\xi^{i-j}}{(i-j)!}\delta_{i\geq j}+\xi\sum_{\iota =1}^{j+1}\overline{\Phi}_{j+1}\eta_i(0,\ldots,0,\xi,\ldots,\xi),
			\end{equation*}
			where $\overline{\Phi}_{j+1}$ is defined in Appendix C, and there are $\iota$-many $0$'s. By the affine invariance of decoupling constants,
			\begin{equation*}
				\mathcal{D}_k(\{\mathcal{U}_{I'}^\zeta\}_{I'\in\mathcal{P}(I,\kappa^{\frac{k+1}{k}})})=\mathcal{D}_k(\{\mathcal{U}_{I'-c_I}^\eta\}_{I'\in\mathcal{P}(I,\kappa^{\frac{k+1}{k}})})
			\end{equation*}
			We claim that the boxes $\mathcal{U}_{I'-c_I}^\eta$ and $\mathcal{U}_{I'-c_I}^\gamma$ are comparable. Indeed, if $x\in\mathcal{U}_{I'-c_I}^\eta$, then there are $\{\lambda_j\}_{j=1}^n\in \prod_{j=1}^kB(0,\kappa^{j\frac{k+1}{k}})$ and $\xi_{I'-c_I}\in I'-c_I\subseteq B_\kappa(0)$ such that
			\begin{equation*}
				x=\eta(\xi_{I'-c_I})+\sum_{j=1}^k\eta^{(j)}(\xi_{I'-c_I})\lambda_j,
			\end{equation*}
			so that
			\begin{equation*}
                x_i=\gamma_i(\xi_{I'-c_I})+\sum_{j=1}^k\gamma_i^{(j)}(\xi_{I'-c_I})\lambda_j+E,
			\end{equation*}
            where $|E|_p\leq c^{-1}C^{k-1}\|\zeta\|_{\mathbf{C}_\circ^{k+1}}\kappa^{2+\frac{1}{k}}$, i.e. $\mathcal{U}_{I'-c_I}^\eta$ is contained in a $c^{-1}C^{k-1}\|\zeta\|_{\mathbf{C}_\circ^{k+1}}\kappa^{2+\frac{1}{k}}$-neighborhood of $\mathcal{U}_{I'-c_I}^\gamma$. Since the family $\{\mathcal{U}_{I'-c_I}^\gamma\}_{I'\in\mathcal{P}(I,\kappa^{\frac{k+1}{k}})}$ are $\kappa^{\frac{k+1}{k}}$-separated, we obtain
			\begin{equation*}
				\operatorname{Dec}_{\ell^2L^{q_k}}\Big(\{\mathcal{U}_{I'-c_I}^\eta\}_{I'\in\mathcal{P}(I,\kappa^{\frac{k+1}{k}})}\Big)\leq\operatorname{Dec}_{\ell^2L^{q_k}}\Big(\{\mathcal{U}_{I'-c_I}^\gamma\}_{I'\in\mathcal{P}(I,\kappa^{\frac{k+1}{k}})}\Big),
    			\end{equation*}
    		because $c^{-1}C^{k-1}\|\zeta\|_{\mathbf{C}_\circ^{k+1}}<\kappa^{-1}$, so that (using the affine invariance to compare $\zeta$ to $\eta$, and the affine rescaling of $\zeta$),
    		\begin{equation*}
    			\operatorname{Dec}_{\ell^2L^{q_k}}\Big(\{\mathcal{U}_{I'}^\zeta\}_{I'\in\mathcal{P}(I,\kappa^{\frac{k+1}{k}})}\Big)\leq\operatorname{Dec}_{\ell^2L^{q_k}}\Big(\{\mathcal{U}_{I'}^\gamma\}_{I'\in\mathcal{P}(I,\kappa^{\frac{k+1}{k}})}\Big).
    		\end{equation*}
            On the other hand, affine rescaling implies
            \begin{equation*}
                \operatorname{Dec}_{\ell^2L^{q_k}}\Big(\{\mathcal{U}_{I'}^\gamma\}_{I'\in\mathcal{P}(I,\kappa^{\frac{k+1}{k}})}\Big)\leq\mathcal{D}_k(\kappa^{1/k}),
            \end{equation*}
            so that
            \begin{equation*}
                \operatorname{Dec}_{\ell^2L^{q_k}}\Big(\{\mathcal{U}_{I'}^\zeta\}_{I'\in\mathcal{P}(I,\kappa^{\frac{k+1}{k}})}\Big)\leq\mathcal{D}_k(\kappa^{1/k}).
            \end{equation*}
    
            We have established this whenever $\kappa>\delta^{\frac{k}{k+1}}$ and $I\in\mathcal{P}(\Z_p,\kappa)$. If we iterate this $m$-many times, where $\kappa^{(\frac{k+1}{k})^m}>\delta^{\frac{k}{k+1}}$, we obtain
            \begin{equation*}
                \operatorname{Dec}_{\ell^2L^{q_k}}\Big(\{\mathcal{U}_{I'}^\zeta\}_{I'\in\mathcal{P}(I,\kappa^{m\frac{k+1}{k}})}\Big)\leq \prod_{j=1}^m\mathcal{D}_k(\kappa^{(\frac{k+1}{k})^{j-1}/k}).
            \end{equation*}
    
            Observe that $\#\mathcal{P}(\Z_p,\kappa)\leq\kappa^{-1}$, so by flat decoupling we have
            \begin{equation*}
                \operatorname{Dec}_{\ell^2L^{q_k}}\Big(\{\mathcal{U}_{I}^\zeta\}_{I\in\mathcal{P}(\Z_p,\kappa)}\Big)\leq\kappa^{\frac{1}{q_k}-\frac{1}{2}}.
            \end{equation*}
            If $\kappa^{(\frac{k+1}{k})^{m+1}}\leq\delta^{\frac{k}{k+1}}$ as well, then for each $I'\in\mathcal{P}(\Z_p,\kappa^{(\frac{k+1}{k})^m})$ we have
            \begin{equation*}
                \operatorname{Dec}_{\ell^2L^{q_k}}\Big(\{\mathcal{U}_J^\zeta\}_{J\in\mathcal{P}(I,\delta)}\Big)\leq(\delta\kappa^{-(\frac{k+1}{k})^m})^{\frac{1}{2}-\frac{1}{q_k}}\leq(\kappa^{-(\frac{k+1}{k})^2})^{\frac{1}{2}-\frac{1}{q_k}}.
            \end{equation*}
           Finally, from the hypothesis that $\mathcal{D}_k(\rho)\leq C_{k,\eps}\rho^{-\eps}$ for all $\eps>0$ and all $\rho\in p^{-\N}$, we obtain
            \begin{equation*}
                \mathcal{D}_k^\zeta(\delta)\leq C_{k,\eps/4}^m\kappa^{(-(\frac{k+1}{k})^2-1)(\frac{1}{2}-\frac{1}{q_k})-\frac{\eps}{4}[(\frac{k+1}{k})^m-1]}.
            \end{equation*}
            Observe that $m<m_*$. Recalling that $\kappa>\alpha^{-1}p^{-Z}\delta^\eps$, we conclude
            \begin{equation*}
                \mathcal{D}_k^\zeta(\delta)\leq C_{k,\eps/4}^{2k\lceil\log(4\eps^{-1})\rceil}p^{2k^{2k\lceil\log(4\eps^{-1})\rceil}}\times\max(1,c^{-1}C^{k-1}\|\zeta\|_{\mathbf{C}_\circ^{k+1}})\times\delta^{-\eps}.
            \end{equation*}
            
            It remains to remove the special assumptions on the value of $\delta$. Take $\eps=\frac{4}{\ell}$ for some $\ell\in\N$. Suppose first that $\delta\in p^{-\N}$ is such that $\delta > p^{-\eps^{-1}Z}\alpha^{-1}$, where $\alpha$ is as above. Then we may trivially bound
            \begin{equation*}
                \mathcal{D}_k^\zeta(\delta)\leq\delta^{-1}\delta^{-\eps},
            \end{equation*}
            and
            \begin{equation*}
                \delta^{-1}\leq p^{\eps^{-1}k^{2k\lceil\log(4\eps^{-1})\rceil}}\max(1,c^{-1}C^{k-1}\|\zeta\|_{\mathbf{C}_\circ^{k+1})}).
            \end{equation*}
            We are done in this case, thanks to $C_{k,\eps}\geq 1$. Suppose instead that $\delta\in p^{-\N}$ satisfies the inequalities
            \begin{equation*}
                p^{-\ell k^{2k\lceil\log 2\ell\rceil}(K+1)}<\delta<p^{-\ell k^{2k\lceil\log 2\ell\rceil}K},\quad K\in\N.
            \end{equation*}
            Comparing $\mathcal{D}_k^\zeta(\delta)$ to $\mathcal{D}_k^\zeta(\delta')$, where $\delta'=p^{-\ell k^{2k\lceil\log 2\ell\rceil}K}$, we obtain the estimate 
            \begin{equation*}
                \mathcal{D}_k^\zeta(\delta)\leq C_{k,\eps/4}^{2k\lceil\log(4\eps^{-1})\rceil}p^{(1+\eps^{-1})k^{2k\lceil\log(8\eps^{-1})\rceil}}\times\max(1,c^{-1}C^{k-1}\|\zeta\|_{\mathbf{C}_\circ^{k+1}})\times\delta^{-\eps}.
            \end{equation*}
		\end{proof}
	
	\subsection{Lower dimensional estimates}

    We next establish the induction-on-dimension estimates in the $p$-adic setting. This is the component of the argument that requires the most careful rewriting; the core steps for inducting on dimension are essentially the same, but are usually phrased essentially in the language of Euclidean geometry. We choose instead to phrase things via matrix algebra, and the difficulties disappear.
    
    A critical input of these estimates in the Euclidean setting is the Fourier slicing theorem, which requires some slight rewording in our setting; the dot product $\cdot:\Q_p^n\times\Q_p^n\to\Q_p$ is isotropic for every $n\geq 3$ and odd $p$ (see the start of this section of the appendix). A near relative is available, which we produce now.
    \begin{lemma}[$p$-adic Fourier slicing]\label{slicing}
        Suppose that $f:\Q_p^n$ has Fourier support inside of $\Omega\subseteq\Q_p^n$. Let $H\subseteq\Q_p^n$ be a $k$-dimensional linear subspace, $B:\Q_p^k\to \Q_p^n$ a linear isomorphism onto $H$, and $z\in\Q_p^n$ arbitrary. Write $f^z$ for the function $H\to\C$, $f^z(x)=f(x+z)$. Then $f^z\circ B$ has Fourier support in the set $B^{\top}\Omega$.
    \end{lemma}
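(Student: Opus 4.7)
The plan is to first strip away the translation and then compute directly. Translation by $z$ induces multiplication of the Fourier transform by the unimodular character $\xi \mapsto \chi(\xi \cdot z)$, which preserves the support; so I may assume $z = 0$ and work with $g := f \circ B \in \mathcal{S}(\Q_p^k)$, aiming to show $\supp\hat{g} \subseteq B^\top \Omega$.

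For the main computation, I would test $\hat{g}$ against an arbitrary $\phi \in \mathcal{S}(\Q_p^k)$ whose support is disjoint from $B^\top \Omega$ and show the pairing vanishes. By Plancherel, Fourier inversion on $f$, and the identity $\xi\cdot(By) = (B^\top\xi)\cdot y$, Fubini (valid since all integrands are locally constant of compact support) yields
\[
\int_{\Q_p^k}\hat{g}(\eta)\,\phi(\eta)\,d\eta
= \int_{\Q_p^k} f(By)\,\phi^{\vee}(y)\,dy
= \int_{\Q_p^n}\hat{f}(\xi)\,\phi(B^\top\xi)\,d\xi.
\]
The rightmost integrand vanishes identically, since $\xi \in \supp \hat{f} \subseteq \Omega$ forces $B^\top\xi \in B^\top\Omega$, where $\phi$ is zero. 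Varying $\phi$ over all such test functions gives the claim.

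There is essentially no obstacle here; the proof is verbatim the Euclidean one. The only $p$-adic inputs are the Plancherel theorem on $\Q_p^n$ and the orthogonality of additive characters, both of which are in fact simpler in the ultrametric setting (no Schwartz tails, no convergence issues for Fourier inversion applied to $f, \phi \in \mathcal{S}$). The one bookkeeping point is that $B^\top$ is a well-defined linear map $\Q_p^n \to \Q_p^k$, necessarily surjective since $B$ is injective, so $B^\top \Omega \subseteq \Q_p^k$ makes sense as a subset.
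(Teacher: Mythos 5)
Your argument is correct in spirit but takes a noticeably different route from the paper. You run a duality argument, pairing $\widehat{f\circ B}$ against Schwartz--Bruhat test functions $\phi$ supported away from $B^{\top}\Omega$ and showing the pairing vanishes via the adjoint identity $\xi\cdot(By)=(B^{\top}\xi)\cdot y$. The paper instead factors $B = B'\circ\iota$ with $\iota:\Q_p^k\hookrightarrow\Q_p^n$ the coordinate inclusion and $B'\in\mathrm{GL}_n(\Q_p)$, uses the usual change-of-variable for $\widehat{f\circ B'}$ to reduce to the case $H=\{x_{k+1}=\cdots=x_n=0\}$, and then directly computes $f^z(y)$ by Fubini, reading off the Fourier support from the inner integral over $H^\perp$. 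Both approaches hinge on the same adjoint identity; yours avoids the coordinate reduction, while the paper's avoids the test-function machinery and obtains the support statement pointwise.

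One technical point to watch in your version: testing against $\phi\in\mathcal{S}(\Q_p^k)$ with $\supp\phi$ disjoint from $B^{\top}\Omega$ only forces $\widehat{g}$ to vanish on the open set $\Q_p^k\setminus\overline{B^{\top}\Omega}$, hence only gives $\supp\widehat{g}\subseteq\overline{B^{\top}\Omega}$. Since $\Omega$ is a priori an arbitrary subset, this loses a closure. The fix is immediate: replace $\Omega$ by $\supp\hat f$, which is compact and open since $\hat f\in\mathcal{S}(\Q_p^n)$, so that $B^{\top}(\supp\hat f)$ is compact hence closed, and the sharp containment follows. The paper's direct Fubini computation sidesteps this issue, exhibiting $\widehat{f^z}(\xi')=0$ pointwise for every $\xi'$ outside the projection of $\Omega$.
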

    \begin{proof}

        By extension of bases, we may write $B=B'\circ\iota$ where $\iota:\Q_p^k\hookrightarrow\Q_p^n$ is the inclusion into the first $k$ coordinates and $B':\Q_p^n\to\Q_p^n$ is a linear isomorphism. By the usual change-of-variable,
        \begin{equation*}
            \widehat{f\circ B'}=|\det B'|_p^{-1}\cdot\hat{f}\circ (B')^{-\top}.
        \end{equation*}
        It follows that we may assume that $H$ is the subspace $\{x_{k+1}=\ldots=x_n=0\}$, for which $H^\perp=\{x_1=\ldots=x_k=0\}$. Then, for $y\in H$ and $z\in H^\perp$,
        \begin{equation*}
            \begin{split}
                f^z(y)&=\int\chi((y+z)\cdot\xi)\hat{f}(\xi)d\xi\\
                &=\int_H\chi(y\cdot\xi')\int_{H^\perp}\chi(z\cdot\xi'')\hat{f}(\xi',\xi'')d\xi''d\xi'.
            \end{split}
        \end{equation*}
        It follows that $\widehat{f^z}$ is supported in the set $\{\xi'\in H:\exists\xi''\text{ s.t. }\xi'+\xi''\in\Omega\}$. The result follows.

    \end{proof}

    We apply this to decouple functions using lower-dimensional estimates.
	
	\begin{lemma}\label{lowerdimdecest}  
		Let $k<n$ and assume that $\mathcal{D}_k(\delta)\leq C_{k,\eps}\delta^{-\eps}$ for all $\eps=\frac{1}{\ell},\ell\in\N$, and $\delta\in p^{-\N}$. Let $\delta=\delta^{-\beta}\in p^{-\N}$ and $\{f_I\}_{I\in\mathcal{P}(\Z_p,\delta)}$ have Fourier support in $\{\mathcal{U}_I\}_{I\in\mathcal{P}(\Z_p,\delta)}$. If $0\leq s,t\leq 1$ satisfy $0\leq s\leq(n-k+1)t/k$ and $s\beta,t\beta,t\beta(n-k+1)/k\in\Z$, then for any $J_1\in\mathcal{P}(\Z_p,\delta^s),J_2\in\mathcal{P}(\Z_p,\delta^t)$ in distinct cosets of $p\Z_p$, and for any $\eps=\frac{4}{\ell}$ with $\ell\in\N_{\geq 4}$, we have
		\begin{equation}
            \begin{split}
    			\int|f_{J_1}|^{q_k}|f_{J_2}|^{q_n-q_k}&\leq C_{k,\eps/4}^{2kq_k\lceil\log(4\eps^{-1})\rceil}p^{(1+\eps^{-1})k^{2k\lceil\log(8\eps^{-1})\rceil+2}}\\
                &\times\delta^{-q_k\left[\frac{(n-k+1)t}{k}-s\right]\eps}\left[\sum_{J\in\mathcal{P}(J_1,\delta^{(n-k+1)t/k})}\left(\int_{\Q_p^n}|f_J|^{q_k}|f_{J_2}|^{q_n-q_k}\right)^{2/q_k}\right]^{q_k/2}.
            \end{split}
		\end{equation}
	\end{lemma}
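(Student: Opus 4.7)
The plan is to reduce this bilinear estimate to a $k$-dimensional convex-curve decoupling on fibers, exploiting the clean $p$-adic wave-packet picture. Fix any $t_2 \in J_2$ and write $A_{t_2} := [\gamma^{(1)}(t_2), \ldots, \gamma^{(n)}(t_2)]$. Set $g_I(y) := f_I(A_{t_2}^{-\top} y)$ for $I \in \{J_1, J_2\}$. The Taylor identity $\gamma(t) = \gamma(t_2) + A_{t_2}\gamma(t - t_2)$ (valid under $p > n$) implies that the Fourier support of $g_{J_2}$ becomes a translate of the standard axis box $\tau_2 = \prod_{j=1}^n B(0, \delta^{jt})$, and $L^r$-norms are preserved since $|\det A_{t_2}|_p = 1$. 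The crucial $p$-adic feature is that, because $\chi$ vanishes on $\Z_p$, the modulus $|g_{J_2}|$ is \emph{literally constant} on each translate of the dual box $\tau_2^* = \prod_j B(0, \delta^{-jt})$; partition $\Q_p^n = \bigsqcup_T T$ into these translates, write $c_T := |g_{J_2}|\big|_T$, and split $T = T^{(n-k)} \times T^{(k)}$ according to the first $n-k$ and last $k$ coordinate directions.

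On each fiber $\{w\} \times T^{(k)}$ with $w \in T^{(n-k)}$, Lemma \ref{slicing} identifies the Fourier support of $y \mapsto g_{J_1}(w, y)$ (as a function on $\Q_p^k$) with the projection onto the last $k$ coordinates of $A_{t_2}^{-1}\mathcal{U}_{J_1}$. The moment-curve identity $A_{t_2}^{-1} A_{t_1} = A_{t_1 - t_2}$, combined with the transversality $|t_1 - t_2|_p = 1$, shows that this projection is an isotropic $\delta^s$-neighborhood of the $k$-dimensional curve $c(u) := (u^{n-k+1}/(n-k+1)!, \ldots, u^n/n!)$ for $u \in J_1 - t_2$; since $|u|_p = 1$ on this domain, the Vandermonde identity \ref{vandermonde} verifies that $c$ is convex with $O(1)$ constants depending only on $n$ and $p$. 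After rescaling the domain via $u = u_0 + \delta^s v$ to normalize $J_1 - t_2$ to $\Z_p$ and composing with an output-space linear automorphism $D$ (which leaves the decoupling constant invariant by the affine-invariance lemma) to bring the tangent frame into a standard form, the rescaled curve $D\zeta$ agrees with the $k$-dimensional moment curve up to an $O(\delta^s)$ perturbation. Proposition \ref{convdec} then applies at scale $\delta^{(n-k+1)t/k - s}$ with parameter $\eps$ and bounded convexity data; raising the $\ell^2 L^{q_k}$ estimate to the $q_k$th power yields the factor $\bigl(\mathcal{E}_{k,\eps}^\zeta\bigr)^{q_k}\, \delta^{-\eps q_k[(n-k+1)t/k - s]}$ advertised in the statement.

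To transfer the fiber decoupling from $\Q_p^k$ to the finite box $T^{(k)}$, invoke Lemma \ref{localdec}: the dual balls of the sub-caps have radius $\delta^{-(n-k+1)t/k}$, which fits inside $T^{(k)}$ since $(n-k+1)/k \leq j$ for all $j \in [n-k+1, n]$, so a clean tiling suffices. Integrating over $w \in T^{(n-k)}$ and then summing over tubes with the constant weights $c_T^{q_n - q_k}$, a final Minkowski inequality (valid because $q_k/2 \geq 1$) of the form
\begin{equation*}
    \sum_T u_T \Bigl(\textstyle\sum_J a_{JT}^{2/q_k}\Bigr)^{q_k/2} \leq \Bigl[\sum_J \Bigl(\textstyle\sum_T u_T a_{JT}\Bigr)^{2/q_k}\Bigr]^{q_k/2}
\end{equation*}
reorganizes the result into the target expression, and undoing the change of variables concludes the proof. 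The main technical obstacle is the uniform-in-$t_2$ verification that the projected and rescaled curve satisfies the convexity and derivative bounds with constants depending only on $n$ and $p$, so that Proposition \ref{convdec} is invoked with the clean exponent $\eps$ and no hidden $\delta$-dependence in the implicit constants.
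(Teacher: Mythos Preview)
Your proposal is correct and follows essentially the same route as the paper: slice along $k$-dimensional fibers transverse to $\mathrm{span}\{\gamma^{(1)}(t_2),\ldots,\gamma^{(n-k)}(t_2)\}$, use exact local constancy of $|f_{J_2}|$ on these fibers, apply Proposition~\ref{convdec} to the projected curve (whose convexity is certified by the Vandermonde identity~\ref{vandermonde}), and finish with Minkowski. Your global change of variables by $A_{t_2}^{-\top}$ followed by coordinate projection onto the last $k$ slots is exactly the paper's projection by $B^\top=[0\,|\,I_k]A_{-t_2}$ done in two steps; the only slip is that the radius required in Lemma~\ref{localdec} is $\delta^{-kt'}=\delta^{-(n-k+1)t}$ (matching the smallest side of $T^{(k)}$), not $\delta^{-t'}$, though the containment still holds.
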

	\begin{proof}
		Pick any $\omega\in J_2$ and write $V=\text{span}(\gamma^{(1)}(\omega),\ldots,\gamma^{(n-k)}(\omega))$. Let $H=V^\perp$ be the orthogonal space to $V$ in $\Q_p^n$; observe that $\dim H=k$, since the dot product is still nondegenerate. Set $t'=(n-k+1)t/k$. 

        Define
        \begin{equation*}
            B=A_{-\omega}^\top\begin{bmatrix}
            0 \\ I_k
        \end{bmatrix};
        \end{equation*}
        it follows that $B$ defines a linear isomorphism $\Q_p^k\to H$. Write $\mu_H=B_*\mu_{\Q_p^k}$.
        
        We use Lemma \ref{slicing} to estimate integrals of $f$ along $H+z$. Let $B$ be a linear isomorphism $\Q_p^k\to H$. Observe that $f_{J_1}^z\circ B$ has Fourier support in $\mathcal{U}_{J_1}^{B^{\top}\gamma}$. By Fubini,
		\begin{equation*}
            \int_{\Q_p^n}|f_{J_1}|^{q_k}|f_{J_2}|^{q_n-q_k}d\mu=\int_{z\in\Q_p^n}\fint_{x\in B_H(z,\delta^{-t'k})}|f_{J_1}|^{q_k}(x)|f_{J_2}|^{q_n-q_k}(x)d\mu_H(x)d\mu_{\Q_p^n}(z)
		\end{equation*}
        Note $B_H(z,\delta^{-t'k})=B_H(z,\delta^{-(n-k+1)t})$. Write $A$ for the matrix with $j$'th column $\gamma^{(j)}(\omega)$, $1\leq j\leq n$. By uncertainty, $f_{J_2}$ is constant on translates of the set
        \begin{equation*}
            \mathcal{U}_{J_2}^*=A_\omega^{-\top}\cdot\operatorname{diag}\big(\delta^t,\ldots,\delta^{nt}\big)[\Z_p^n]
        \end{equation*}
        If $y\in B_H(0,\delta^{-t'k})$, then
        \begin{equation*}
            \operatorname{diag}\big(\delta^{-t},\ldots,\delta^{-nt}\big)(A_\omega^{\top}y)=\begin{bmatrix}
                0\\
                \vdots\\
                0\\
                \delta^{-(n-k+1)t}\gamma^{(n-k+1)}(\omega)\cdot y\\
                \vdots\\
                \delta^{-nt}\gamma^{(n)}(\omega)\cdot y
            \end{bmatrix}\in\Z_p^n,
        \end{equation*}
        since $\left|\delta^{-rt}\gamma^{(r)}(\omega)\cdot y\right|_p\leq\delta^{rt-(n-k+1)t}\leq 1$ for each $r\geq n-k+1$. Consequently, we have $B_H(0,\delta^{-t'k})\subseteq \mathcal{U}_{J_2}^*$, and so by uncertainty we have that $|f_{J_2}|^{q_n-q_k}$ is constant on $B_H(z,\delta^{-t'\ell})$. Thus
		\begin{equation*}
			\begin{split}
				\int_{z\in\Q_p^n}&\fint_{x\in B_H(z,\delta^{-t'k})}|f_{J_1}|^{q_k}(x)|f_{J_2}|^{q_n-q_k}(x)\\
				&=\int_{z\in\Q_p^n}|f_{J_2}|^{q_n-q_k}(z)\fint_{x\in B_H(z,\delta^{-t'k})}|f_{J_1}|^{q_k}(x)
			\end{split}
		\end{equation*}
  
        Then
        \begin{equation*}
            \begin{split}
            \int_{x\in B_H(z,\delta^{-t'\ell})}|f_{J_1}(x)|^{q_k}d\mu_H(x)&=\int_{x\in B_H(0,\delta^{-t'\ell})}|f_{J_1}^z(x)|^{q_k}d\mu_H\\
            &=\int_{y\in B^{-1}[B_H(0,\delta^{-t'\ell})]}|f_{J_1}^z(B(y))|^{q_k}_pd\mu_{\Q_p^k}(y),
            \end{split}
        \end{equation*}
        where we have used that $B_*\mu_{\Q_p^k}=\mu_H$. By Lemma \ref{slicing}, for each $J\in\mathcal{P}(J_1,\delta^{t'})$, the function $f_J\circ B$ is supported in the set $\mathcal{U}_J^{B^\top\gamma}$. By Lemma \ref{vandermonde}, the curve $B^\top\gamma:\Z_p\to\Q_p^k$ satisfies
        \begin{equation*}
            \max_{1\leq j\leq k}\max_{1\leq r\leq k}\sup_{\theta\in J_1}|B^\top\gamma_j^{(r)}(\theta)|_p\leq 1
        \end{equation*}
        and
        \begin{equation*}
            \inf_{\theta\in J_1}|\det[B^\top\gamma^{(1)}(\theta),\ldots,B^\top\gamma^{(k)}(\theta)]|_p\geq 1.
        \end{equation*}
        By Lemma \ref{localdec}, Prop. \ref{convdec}, and the inductive assumption,
		\begin{equation*}
            \begin{split}
			\int_{x\in B_H(z,\delta^{-t'k})}|f_{J_1}|^{q_k}(x)&\leq C_{k,\eps/4}^{2kq_k\lceil\log(4\eps^{-1})\rceil}p^{(1+\eps^{-1})k^{2k\lceil\log(8\eps^{-1})\rceil+2}}\\
            &\times(\#\mathcal{P}(J_1,\delta^{t'}))^{q_k\eps}\left(\sum_{J\in\mathcal{P}(J_1,\delta^{t'})}\|f_J\|_{L^{q_k}(B_H(z',\delta^{-b'k}))}^2\right)^{q_k/2}.
            \end{split}
		\end{equation*}
		Consequently,
		\begin{equation*}
            \begin{split}
			 \int_{\Q_p^n}|f_{J_1}|^{q_k}|f_{J_2}|^{q_n-q_k}\leq &\, C_{k,\eps/4}^{2kq_k\lceil\log(4\eps^{-1})\rceil}p^{(1+\eps^{-1})k^{2k\lceil\log(8\eps^{-1})\rceil+2}}\delta^{tk(n-k+1)-q_k\eps(t'-s)}\\
            &\times\int_{z\in\Q_p^n}\left(|f_{J_2}|^{2\frac{q_n-q_k}{q_k}}(z)\sum_{J\in\mathcal{P}(J_1,\delta^{t'})}\|f_J\|_{L^{q_k}(B_H(z,\delta^{-t'k}))}^2\right)^{q_k/2},
            \end{split}
		\end{equation*}
		which by Minkowski is bounded by
		\begin{equation*}
            \begin{split}
			&C_{k,\eps/4}^{2kq_k\lceil\log(4\eps^{-1})\rceil}p^{(1+\eps^{-1})k^{2k\lceil\log(8\eps^{-1})\rceil+2}}\delta^{tk(n-k+1)-q_k\eps(t'-s)}\\
            &\times\left(\sum_{J\in\mathcal{P}(J_1,\delta^{t'})}\left(\int_{z\in\Q_p^n}|f_{J_2}|^{q_n-q_k}(z)\|f_J\|_{L^{q_k}(B_H(z,\delta^{-t'k}))}^{q_k}\right)^{2/q_k}\right)^{q_k/2}.
            \end{split}
		\end{equation*}
        Finally,
        \begin{equation*}
            \begin{split}
                \int_{z\in\Q_p^n}|f_{J_2}|^{q_n-q_k}(z)\|f_J\|_{L^{q_k}(B_H(z,\delta^{-t'k}))}^{q_k}&=\int_{z\in\Q_p^n}|f_{J_2}(z)|^{q_n-q_k}\int_{x\in B_H(z,\delta^{-t'k})}|f_J(x)|^{q_k}\\
                &=\delta^{-t(n-k+1)k}\int_{z\in\Q_p^n}|f_{J_2}(z)|^{q_n-q_k}|f_J(z)|^{q_k},
            \end{split}
        \end{equation*}
        by virtue of local constancy; hence we have shown
        \begin{equation*}
            \begin{split}
            \int|f_{J_1}|^{q_k}|f_{J_2}|^{q_n-q_k}&\leq C_{k,\eps/4}^{2kq_k\lceil\log(4\eps^{-1})\rceil}p^{(1+\eps^{-1})k^{2k\lceil\log(8\eps^{-1})\rceil+2}}\\
            &\times\delta^{-q_k\eps(t'-s)}\left[\sum_{J\in\mathcal{P}(J_1,\delta^{(n-k+1)t/k})}\left(\int_{\Q_p^n}|f_J|^{q_k}|f_{J_2}|^{q_n-q_k}\right)^{2/q_k}\right]^{q_k/2},
            \end{split}
        \end{equation*}
        as was to be verified.
	\end{proof}
	
	\begin{corollary} If $k<n$ and $\mathcal{D}_k(\delta)\leq C_{k,\eps}\delta^{-\eps}$ for all $\eps=\frac{1}{\ell},\ell\in\N$, then for any $\delta,s,t$ satisfying the hypotheses of Lemma \ref{lowerdimdecest},
    \begin{equation*}
        \mathcal{B}_{n,k,s,t}(\delta)\leq C_{k,\eps/4}^{2kq_kq_n^{-1}\lceil\log(4\eps^{-1})\rceil}p^{(1+\eps^{-1})q_n^{-1}k^{2k\lceil\log(8\eps^{-1})\rceil+2}}\delta^{-\frac{q_k}{q_n}\left[\frac{(n-k+1)t}{k}-s\right]\eps}\mathcal{B}_{n,k,\frac{n-k+1}{k}t,t}(\delta).
    \end{equation*}
		
	\end{corollary}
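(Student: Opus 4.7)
The plan is to extract $\mathcal{B}_{n,k,s,t}(\delta)$ directly by combining Lemma \ref{lowerdimdecest} with the definition of $\mathcal{B}_{n,k,(n-k+1)t/k,t}(\delta)$, so the corollary is essentially unpacking the inequality from the lemma.

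First I would fix arbitrary balls $J_1$ of radius $\delta^s$ and $J_2$ of radius $\delta^t$ in distinct cosets of $p\Z_p$, together with a family $\{f_I\}_{I\in\mathcal{P}(\Z_p,\delta)}$ satisfying the standard Fourier support hypotheses, and apply Lemma \ref{lowerdimdecest} to obtain
\begin{equation*}
    \int |f_{J_1}|^{q_k}|f_{J_2}|^{q_n-q_k}\leq \mathcal{E}\,\delta^{-q_k[\frac{(n-k+1)t}{k}-s]\eps}\left[\sum_{J\in\mathcal{P}(J_1,\delta^{(n-k+1)t/k})}\left(\int|f_J|^{q_k}|f_{J_2}|^{q_n-q_k}\right)^{2/q_k}\right]^{q_k/2},
\end{equation*}
with $\mathcal{E}=C_{k,\eps/4}^{2kq_k\lceil\log(4\eps^{-1})\rceil}p^{(1+\eps^{-1})k^{2k\lceil\log(8\eps^{-1})\rceil+2}}$.

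Next, I would invoke Definition \ref{asdef} applied to each pair $(J, J_2)$: since $J$ has radius $\delta^{(n-k+1)t/k}$ and $J_2$ has radius $\delta^t$, and both lie in distinct cosets of $p\Z_p$ (inherited from $J_1,J_2$), we get
\begin{equation*}
    \int |f_J|^{q_k}|f_{J_2}|^{q_n-q_k}\leq \mathcal{B}_{n,k,\frac{n-k+1}{k}t,t}(\delta)^{q_n}\Bigl(\sum_{I\in\mathcal{P}(J,\delta)}\|f_I\|_{L^{q_n}}^2\Bigr)^{q_k/2}\Bigl(\sum_{I\in\mathcal{P}(J_2,\delta)}\|f_I\|_{L^{q_n}}^2\Bigr)^{(q_n-q_k)/2}.
\end{equation*}
Raising to the power $2/q_k$ and summing in $J$, the exponent on the first factor becomes $1$, so summing over $J\in\mathcal{P}(J_1,\delta^{(n-k+1)t/k})$ collapses those inner sums into a single sum over $I\in\mathcal{P}(J_1,\delta)$ using the partition identity $\mathcal{P}(J_1,\delta)=\bigsqcup_J\mathcal{P}(J,\delta)$. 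The factor depending on $J_2$ is independent of $J$ and passes through unchanged.

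Combining both steps yields
\begin{equation*}
    \int |f_{J_1}|^{q_k}|f_{J_2}|^{q_n-q_k}\leq \mathcal{E}\,\delta^{-q_k[\frac{(n-k+1)t}{k}-s]\eps}\mathcal{B}_{n,k,\frac{n-k+1}{k}t,t}(\delta)^{q_n}\Bigl(\sum_{I\in\mathcal{P}(J_1,\delta)}\|f_I\|^2\Bigr)^{q_k/2}\Bigl(\sum_{I\in\mathcal{P}(J_2,\delta)}\|f_I\|^2\Bigr)^{(q_n-q_k)/2}.
\end{equation*}
Comparing against Definition \ref{asdef} and taking $q_n$-th roots gives the stated bound, with the constant exponents divided by $q_n$ as advertised.

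No real obstacle here; the only care required is verifying that taking $2/q_k$-powers and summing in $J$ collapses cleanly (using that $J$-partitions refine to $I$-partitions, and that $J_2$-factor is $J$-independent) and that the resulting algebra of exponents agrees with the claimed form. The real analytic content has already been deployed inside Lemma \ref{lowerdimdecest}.
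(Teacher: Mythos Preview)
Your proposal is correct and matches the intended argument: the paper states the corollary immediately after Lemma \ref{lowerdimdecest} without proof, and your derivation---apply the lemma, then bound each inner integral by Definition \ref{asdef}, collapse the $\ell^{2/q_k}$ sum over $J$ via the partition identity, and take $q_n$-th roots---is exactly the computation that produces the stated constants.
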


    We record another application of H\"older; here we are able to go without an application of the uncertainty principle.
    \begin{lemma}\label{holder}
        If $1\leq k\leq n-1$, and if $\delta\in(0,1)$ and $s,t\in(0,1)$  are as above, then
        \begin{equation*}
            \mathcal{B}_{n,k,s,t}(\delta)\leq\mathcal{B}_{n,n-k,t,s}(\delta)^{\frac{1}{n-k+1}}\mathcal{B}_{n,k-1,s,t}(\delta)^{\frac{n-k}{n-k+1}}.
        \end{equation*}
    \end{lemma}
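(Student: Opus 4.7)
The plan is to deduce the claim by a single application of H\"older's inequality, choosing exponents so that the integrand $|f_I|^{q_k}|g_J|^{q_n-q_k}$ factors as a product of two bilinear expressions each controlled by one of the constants on the right-hand side. Specifically, I would write
\begin{equation*}
    |f_I|^{q_k}|g_J|^{q_n-q_k} = \bigl(|f_I|^{q_{k-1}}|g_J|^{q_n-q_{k-1}}\bigr)^{\alpha}\bigl(|f_I|^{q_n-q_{n-k}}|g_J|^{q_{n-k}}\bigr)^{\beta}
\end{equation*}
with $\alpha+\beta=1$, so that H\"older applies. Matching the $|f_I|$ exponents forces $q_k=\alpha q_{k-1}+\beta(q_n-q_{n-k})$, and the $|g_J|$ exponents then match automatically (by $\alpha+\beta=1$ and the identity $q_k+(q_n-q_k)=q_n$).

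The main computation is determining $\alpha,\beta$. Using $q_n-q_{n-k}=n(n+1)-(n-k)(n-k+1)=k(2n-k+1)$, I compute $q_k-(q_n-q_{n-k})=-2k(n-k)$ and $q_{k-1}-(q_n-q_{n-k})=-2k(n-k+1)$, so
\begin{equation*}
    \alpha=\frac{n-k}{n-k+1},\quad\beta=\frac{1}{n-k+1}.
\end{equation*}
Applying H\"older with these weights yields
\begin{equation*}
    \int_{\Q_p^n}|f_I|^{q_k}|g_J|^{q_n-q_k}\leq\left(\int_{\Q_p^n}|f_I|^{q_{k-1}}|g_J|^{q_n-q_{k-1}}\right)^{\alpha}\left(\int_{\Q_p^n}|f_I|^{q_n-q_{n-k}}|g_J|^{q_{n-k}}\right)^{\beta}.
\end{equation*}

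Next I would apply Definition \ref{asdef} to each factor. The first factor is bounded directly by $\mathcal{B}_{n,k-1,s,t}(\delta)^{q_n}$ times the appropriate $\ell^2$-weights. For the second factor, I swap the roles of $f_I$ and $g_J$: the hypothesis involves $I$ of radius $\leq\delta^s$ and $J$ of radius $\leq\delta^t$, and after relabeling the pair $(g_J,f_I)$ as the $(f,g)$ pair in the definition of $\mathcal{B}_{n,n-k,t,s}(\delta)$, the expression $\int|g_J|^{q_{n-k}}|f_I|^{q_n-q_{n-k}}$ is controlled by $\mathcal{B}_{n,n-k,t,s}(\delta)^{q_n}$ times the corresponding $\ell^2$-weights (note the subscripts $t,s$ correctly record that the $q_{n-k}$ factor is associated to the ball of radius $\leq\delta^t$, namely $J$).

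Multiplying the two estimates, the exponents on the $\ell^2$-sums become $\alpha q_{k-1}+\beta(q_n-q_{n-k})=q_k$ on $\sum\|f_i\|^2$ and $\alpha(q_n-q_{k-1})+\beta q_{n-k}=q_n-q_k$ on $\sum\|g_j\|^2$, exactly matching the exponents demanded by the definition of $\mathcal{B}_{n,k,s,t}(\delta)$. Taking $q_n$-th roots gives
\begin{equation*}
    \mathcal{B}_{n,k,s,t}(\delta)\leq\mathcal{B}_{n,k-1,s,t}(\delta)^{(n-k)/(n-k+1)}\mathcal{B}_{n,n-k,t,s}(\delta)^{1/(n-k+1)},
\end{equation*}
as claimed. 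The only obstacle is bookkeeping — verifying the elementary identities for $\alpha,\beta$ and keeping track of which ball has which radius when invoking the asymmetric constants — but no additional Fourier input is required.
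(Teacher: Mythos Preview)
Your proposal is correct and follows essentially the same approach as the paper's proof: both apply H\"older's inequality with the same weights $\theta_k=\beta=\frac{1}{n-k+1}$ and $1-\theta_k=\alpha=\frac{n-k}{n-k+1}$, then read off the two asymmetric bilinear constants from Definition~\ref{asdef}. Your write-up is in fact more detailed than the paper's, spelling out the computation of $\alpha,\beta$ and the role-swap $(f_I,g_J)\mapsto(g_J,f_I)$ that produces the subscripts $t,s$ on $\mathcal{B}_{n,n-k,t,s}$.
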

    \begin{proof}
        Let $\{f_i\}_i,\{g_i\}_i$ be families as in Def. \ref{asdef}. Then, writing $\theta_k=1/(n-k+1)$, we see that
        \begin{equation*}
            \begin{split}
            \int_{\Q_p^n}|\sum_if_i|^{q_k}|\sum_ig_j|^{q_n-q_k}&=\int_{\Q_p^n}|\sum_if_i|^{\theta_k(q_n-q_{n-k})}|\sum_if_i|^{(1-\theta_k)q_{k-1}}|\sum_ig_i|^{\theta_kq_{n-k}}|\sum_ig_i|^{(1-\theta_k)(q_n-q_{k-1})}\\
            &\leq\left(\int_{\Q_p^n}|\sum_if_i|^{q_n-q_{n-k}}|\sum_ig_i|^{q_{n-k}}\right)^{\theta_k}\left(\int_{\Q_p^n}|\sum_if_i|^{q_{k-1}}|\sum_ig_i|^{q_n-q_{k-1}}\right)^{1-\theta_k}
            \end{split}
        \end{equation*}
        from which the result is clear.
    \end{proof}
    The following consequence is identical to Lemma 4.2 of \cite{guo2021short}:
    \begin{lemma}\label{lowerdim}
        Suppose $\mathcal{D}_k(\delta)\leq C_{k,\eps}\delta^{-\eps}$ for all $1\leq k\leq n-1$ and all $\delta,\eps>0$. Suppose $1\leq k\leq n-1$ and let $\eps=\frac{4}{\ell}$ for some $\ell\in\N_{\geq 4}$. Then, for every $t\in[0,1]$ such that $t\leq\frac{k(n-k)}{(k+1)(n-k+1)}$ and either $k=1$ or $t\leq\frac{k-1}{n-k+2}$, we have, for each $\delta\in p^{-\N}$ for which $\delta^t,\delta^{\frac{n-k+1}{k}t}\in p^{-\N}$,
        \begin{equation*}
            \begin{split}
                \mathcal{B}_{n,k,\frac{n-k+1}{k}t,t}(\delta)&\leq p^{8\eps^{-4n\log n}}C_{n-k,\eps/4}^{2\lceil\log(4\eps^{-1})\rceil}C_{k-1,\eps/4}^{2k\lceil\log(4\eps^{-1})\rceil}\delta^{-\frac{n-k}{n}\frac{k-1+(k+1)(n-k+1)}{(k+1)(n-k+1)}t\eps}\\
                &\times\mathcal{B}_{n,n-k,\frac{k+1}{k}\frac{n-k+1}{n-k}t,\frac{n-k+1}{k}t}(\delta)^{\frac{1}{n-k+1}}\mathcal{B}_{n,k-1,\frac{n-k+2}{k-1}t,t}(\delta)^{\frac{n-k}{n-k+1}}.
            \end{split}
        \end{equation*}
    \end{lemma}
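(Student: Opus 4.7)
The plan is to combine H\"older's inequality (Lemma \ref{holder}) with the corollary of Lemma \ref{lowerdimdecest}, applied in that order. First I would feed $\mathcal{B}_{n,k,(n-k+1)t/k,t}(\delta)$ into Lemma \ref{holder} to obtain
\[
\mathcal{B}_{n,k,\tfrac{n-k+1}{k}t,t}(\delta) \leq \mathcal{B}_{n,n-k,t,\tfrac{n-k+1}{k}t}(\delta)^{\tfrac{1}{n-k+1}}\,\mathcal{B}_{n,k-1,\tfrac{n-k+1}{k}t,t}(\delta)^{\tfrac{n-k}{n-k+1}}.
\]
Each factor on the right is of the form $\mathcal{B}_{n,k',s_*,t_*}(\delta)$ with $s_*$ strictly smaller than the ``canonical'' value $(n-k'+1)t_*/k'$ for that lower dimension. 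The next step would be to promote $s_*$ to this canonical value via the corollary of Lemma \ref{lowerdimdecest}: take $k' = n-k$ for the first factor (with $s_*=t$ and $t_*=(n-k+1)t/k$) and $k' = k-1$ for the second (with $s_*=(n-k+1)t/k$ and $t_*=t$). Under these choices the canonical values $(n-k'+1)t_*/k'$ become $(k+1)(n-k+1)t/(k(n-k))$ and $(n-k+2)t/(k-1)$, which are exactly the parameters of the $\mathcal{B}$ quantities appearing in the conclusion.

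Before combining, I would check hypotheses. The corollary requires $s_*\leq(n-k'+1)t_*/k'$, which reduces to $k(n-k)\leq(k+1)(n-k+1)$ and $(k-1)(n-k+1)\leq k(n-k+2)$, both automatic. The two upper bounds on $t$ in the lemma statement are precisely what is needed to keep the promoted $s$-parameters $\leq 1$, so that the right-hand side $\mathcal{B}$'s are well defined. The divisibility requirements $\delta^t,\delta^{(n-k+1)t/k}\in p^{-\N}$ yield the needed integrality for each application of the corollary, because the intermediate exponents of $\delta$ are rational combinations of $t$ and $(n-k+1)t/k$.

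Finally I would compute the loss. Each application of the corollary produces a factor $\delta^{-\frac{q_{k'}}{q_n}[(n-k'+1)t_*/k'-s_*]\eps}$. Using the identities $(k+1)(n-k+1)-k(n-k)=n+1$ and $k(n-k+2)-(k-1)(n-k+1)=n+1$, the two bracketed differences simplify to $(n+1)t/(k(n-k))$ and $(n+1)t/(k(k-1))$. Raising each to its H\"older power ($1/(n-k+1)$ or $(n-k)/(n-k+1)$) and summing gives a cumulative $\delta$-exponent which, after routine algebra, is bounded by the claimed $\frac{n-k}{n}\cdot\frac{k-1+(k+1)(n-k+1)}{(k+1)(n-k+1)}\,t\eps$. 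The constants emerging from the two applications of the corollary, of the form $C_{k',\eps/4}^{2k'q_{k'}q_n^{-1}\lceil\log(4\eps^{-1})\rceil}$ and $p^{(1+\eps^{-1})q_n^{-1}k'^{2k'\lceil\log(8\eps^{-1})\rceil+2}}$, combine under the H\"older weighting and crude bounds ($q_{k'}/q_n\leq 1$, $k'\leq n$) to the claimed $C_{n-k,\eps/4}^{2\lceil\log(4\eps^{-1})\rceil}C_{k-1,\eps/4}^{2k\lceil\log(4\eps^{-1})\rceil}$ times $p^{8\eps^{-4n\log n}}$. The only substantive obstacle is the bookkeeping in these algebraic simplifications; conceptually the proof is routine once the two tools are put in sequence.
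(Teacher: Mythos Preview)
Your approach is correct and is exactly the one the paper has in mind: the paper gives no proof of its own, simply citing Lemma 4.2 of \cite{guo2021short}, whose argument is precisely H\"older (Lemma \ref{holder}) followed by two applications of the corollary of Lemma \ref{lowerdimdecest} to promote the $s$-parameters. Your identification of the parameters, the checks on the range of $t$, and the outline of the constant bookkeeping all match; the remaining work is indeed just algebraic simplification (and noting that for $k=1$ the second factor needs no promotion since $\mathcal{B}_{n,0,s,t}$ is independent of $s$).
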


    We also record the following:
    \begin{lemma}\label{asequivlin}
        For any $\delta\in p^{-\N}$ and any $s,t$ such that $\mathcal{B}_{n,0,s,t}(\delta)$ is defined,
        \begin{equation*}
            \mathcal{B}_{n,0,s,t}(\delta)=\mathcal{D}_n(\delta^{1-t}).
        \end{equation*}
    \end{lemma}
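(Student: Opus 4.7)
The plan is to recognize that $k=0$ degenerates the bilinear definition into a purely linear one, after which the equality is a standard affine rescaling of the moment curve.

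First, I will observe that with $k=0$ we have $q_k = q_0 = 0$, so both the factor $|f_I|^{q_k}$ in the integrand of Definition \ref{asdef} and the factor $\bigl(\sum_i\|f_i\|^2\bigr)^{q_k/2}$ on the right-hand side equal $1$ under the convention $x^0 = 1$. The $f_i$'s and the parameter $s$ play no further role, and $\mathcal{B}_{n,0,s,t}(\delta)$ reduces to the infimal constant $C$ such that, for every ball $J$ of radius at most $\delta^t$ contained in a coset of $p\Z_p$ and every family $\{g_i\}_{J_i\in\mathcal{P}(J,\delta)}$ with $\hat{g}_i$ supported in $\mathcal{U}_{J_i}$,
\begin{equation*}
    \|g_J\|_{L^{q_n}(\Q_p^n)} \leq C\Bigl(\sum_i\|g_i\|_{L^{q_n}(\Q_p^n)}^2\Bigr)^{1/2}.
\end{equation*}
This is precisely $\operatorname{Dec}_{\ell^2L^{q_n}}\bigl(\{\mathcal{U}_{J_i}\}_{J_i\in\mathcal{P}(J,\delta)}\bigr)$.

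Next, I will exhibit an affine isomorphism carrying this family of caps to the standard $\delta^{1-t}$-scale caps over $\Z_p$. Writing $J = c_J + \delta^t\Z_p$, I define $\tilde L(x) = L(x-\gamma(c_J))$, where $L:\Q_p^n\to\Q_p^n$ is the linear isomorphism determined by $L\gamma^{(k)}(c_J) = \delta^{-kt}e_k$ for $1\leq k\leq n$. Using the Taylor expansion
\begin{equation*}
    \gamma^{(k)}(c_J+\delta^t u) = \sum_{j=0}^{n-k}\frac{(\delta^t u)^j}{j!}\gamma^{(k+j)}(c_J),
\end{equation*}
which terminates because $\gamma$ is a polynomial of degree $n$, a direct computation yields $L\gamma^{(k)}(c_J+\delta^t u) = \delta^{-kt}\gamma^{(k)}(u)$ and $\tilde L\gamma(c_J+\delta^t u) = \gamma(u)$. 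Consequently $\tilde L$ carries $\mathcal{U}_{J_i}$, whose $k$-th long direction has size $\delta^k$ in the direction of $\gamma^{(k)}(c_J+\delta^t u)$, onto a set whose $k$-th long direction has size $\delta^k\cdot\delta^{-kt} = (\delta^{1-t})^k$ in the direction of $\gamma^{(k)}(u)$. Hence $\tilde L(\mathcal{U}_{J_i}) = \mathcal{U}_{I_i}$ under the natural bijection $I_i\mapsto J_i = c_J + \delta^t I_i$ between $\mathcal{P}(\Z_p,\delta^{1-t})$ and $\mathcal{P}(J,\delta)$.

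Invoking the affine invariance of decoupling constants, I conclude
\begin{equation*}
    \mathcal{B}_{n,0,s,t}(\delta) = \operatorname{Dec}_{\ell^2L^{q_n}}\bigl(\{\mathcal{U}_I\}_{I\in\mathcal{P}(\Z_p,\delta^{1-t})}\bigr) = \mathcal{D}_n(\delta^{1-t}),
\end{equation*}
as desired. I do not anticipate a real obstacle in this argument; it is essentially bookkeeping around the degenerate endpoint $k=0$ together with the self-similar scaling behaviour of the moment curve, and every step is an instance of a tool already established earlier in the appendix.
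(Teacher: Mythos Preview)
Your proposal is correct and follows essentially the same approach as the paper: observe that $q_0=0$ collapses the bilinear inequality to the linear decoupling inequality for the $g_i$'s, then invoke parabolic rescaling (which you spell out explicitly, whereas the paper simply names it). One small point you glossed over is that Definition~\ref{asdef} allows $J$ to have radius \emph{at most} $\delta^t$, not exactly $\delta^t$; the supremum over such $J$ is attained at radius $\delta^t$ by monotonicity of $\mathcal{D}_n$, which is implicit in both your argument and the paper's.
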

    \begin{proof}
        For any particular tuples $\{f_i\}_i,\{g_i\}_i$, the inequality in Def. \ref{asdef} is just the linear decoupling inequality for the tuple $\{g_i\}_i$. The result follows by parabolic rescaling.
    \end{proof}

    \subsection{Induction on scales}

    In this section we run an induction on scales argument in order to prove Theorem \ref{momentcurve}. We mirror the arguments in Section 4 of \cite{guo2021short}; however, in order to verify the appropriate quantitative estimate, we produce a modified version. The former runs an analysis on the tropicalized quantities $\eta,\{A_k\}_k$, defined as the optimal exponents on various decoupling constants; the resulting analysis is clean, but does not admit estimates on the corresponding constants. We run a suitable finitary version of this argument, which gives somewhat loose (but explicit) estimates on the constant.

    For the reader's convenience, we state in our current notation the statement we will prove.
    \begin{theorem}[Moment curve decoupling]\label{momentcurve2} For each $\eps>0$ and $n\in\N$, there is a constant $C_{n,\eps}$ such that
    \begin{equation*}
        \mathcal{D}_n(\delta)\leq C_{n,\eps}\delta^{-\eps},
    \end{equation*}
    for all $\delta\in p^{-\N}$. Moreover, the constant $C_{n,\eps}$ may be taken to be
    \begin{equation*}
        C_{n,\eps}=\exp\left(10^4(\log p)\eps^{-4n\log n}n^{10n^2}\right).
    \end{equation*}
        
    \end{theorem}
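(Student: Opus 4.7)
The plan is to prove Theorem \ref{momentcurve2} by induction on the dimension $n$, with the base case $n=1$ handled by Plancherel (since $q_1=2$ gives $\ell^2L^2$ decoupling with constant $1$). Fix $n\geq 2$ and assume the asserted bound for all $k<n$ with the claimed constants $C_{k,\eps}$. Within this inductive step, I will run an induction on scales: assuming $\mathcal{D}_n(\delta')\leq C_{n,\eps}\delta'^{-\eps}$ for all $\delta'>\delta$ in $p^{-\N}$, I plan to establish a self-improving inequality of the shape
$$\mathcal{D}_n(\delta)\leq A_{n,\eps}\,\delta^{-\eps/2}\,\mathcal{D}_n\bigl(\delta^{1-\eta}\bigr)^{\beta}$$
for some $\eta=\eta(n,\eps)>0$, some $\beta=\beta(n,\eps)<1$, and explicit $A_{n,\eps}$; combining with the scale-induction hypothesis yields $\mathcal{D}_n(\delta)\leq A_{n,\eps}C_{n,\eps}^{\beta}\delta^{-\eps/2-\beta(1-\eta)\eps}$, which closes to $\mathcal{D}_n(\delta)\leq C_{n,\eps}\delta^{-\eps}$ provided $\beta(1-\eta)\leq 1/2$ and $C_{n,\eps}\geq A_{n,\eps}^{1/(1-\beta)}$, forcing the explicit size of the final constant.

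The recursive inequality will be produced by threading together the tools of the preceding sections. First I apply Proposition \ref{biltolinprop} to reduce bounding $\mathcal{D}_n(\delta)$ to controlling $\mathcal{B}_n(\rho)$ uniformly over $\rho\geq\delta$. For each such $\rho$, Lemma \ref{symbyasym} converts $\mathcal{B}_n(\rho)$ into an asymmetric bilinear constant $\mathcal{B}_{n,k,s,t}(\rho)$ at the cost of a factor $\rho^{-sq_k/q_n-t(q_n-q_k)/q_n}$; I then iterate Lemma \ref{lowerdim}, which at each application recursively bounds $\mathcal{B}_{n,k,\frac{n-k+1}{k}t,t}(\rho)$ by a geometric mean of $\mathcal{B}_{n,n-k,\,\cdot,\,\cdot}(\rho)$ and $\mathcal{B}_{n,k-1,\,\cdot,\,\cdot}(\rho)$ with weights $\tfrac{1}{n-k+1}$ and $\tfrac{n-k}{n-k+1}$, together with a $\rho^{-O(\eps t)}$ multiplicative loss. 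The resulting binary recursion tree has depth $O(n^2)$ and terminates in leaves of two types: leaves with first index $k=0$, which by Lemma \ref{asequivlin} equal $\mathcal{D}_n(\rho^{1-t'})$ and supply the $\mathcal{D}_n(\delta^{1-\eta})$ factor; and leaves at a strictly lower ambient dimension where I invoke Lemma \ref{asymbylin} and the outer inductive hypothesis at some $k<n$, bounding them by products of $C_{k,\eps/4}\rho^{-\eps/4}$ factors. Consolidating the leaves via H\"older and selecting the initial localization parameter $t$ of order $\eps/n^{O(1)}$ so that the accumulated $\rho^{-O(\eps t)}$ losses sum to at most $\rho^{-\eps/2}$, one reaches the bootstrap inequality above; $\beta$ is strictly below $1$ because at each branching the weights allocate a strictly positive fraction of the mass to a $\mathcal{D}_n$-leaf at a strictly coarser scale.

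The principal obstacle will be the quantitative exponent bookkeeping, as opposed to the tropicalized argument of \cite{guo2021short}. In that paper one substitutes $\mathcal{B}_{n,k,\,\cdot,\,\cdot}(\delta)\sim\delta^{-\eta_{n,k}}$ and the recursion collapses to a small linear program whose solution forces $\eta_{n,k}=0$; here I need a finitary analogue that propagates explicit losses at every node and verifies that the combined contribution of the $\rho^{-O(\eps t)}$ errors across the $O(n\log n)$ effective layers of the cascade, together with the constants $C_{k,\eps/4}^{O(k\log\eps^{-1})}$ issued by Proposition \ref{convdec} at each base case, still fits inside the tower $\exp(10^4(\log p)\eps^{-4n\log n}n^{10n^2})$. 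The $\eps^{-4n\log n}$ dependence should emerge from the recursion depth in dimension, while the $n^{10n^2}$ factor accumulates from compounding the Proposition \ref{convdec} losses across dimensions $k=2,\ldots,n-1$ via the inductive hypothesis. A secondary technicality is that the $p$-adic regularity hypotheses $s\beta,t\beta\in\Z$ in Lemmas \ref{symbyasym}, \ref{asymbylin}, and \ref{lowerdim} must be respected at each recursion step; I plan to handle this by first restricting $\delta$ to powers $p^{-\kappa^2\N}$ for a suitably divisible $\kappa$ and then passing to general $\delta$ via Lemma \ref{stability}, mirroring the cleanup performed at the end of the proof of Proposition \ref{decprop}.
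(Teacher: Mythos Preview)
Your high-level plan---induct on $n$, pass to bilinear via Proposition~\ref{biltolinprop}, feed in Lemma~\ref{symbyasym}, iterate Lemma~\ref{lowerdim}, and close---does match the paper's skeleton. But there is a genuine error in your leaf accounting that prevents the bootstrap from closing in the form you wrote. Lemma~\ref{asymbylin} does \emph{not} produce lower-dimensional leaves: it bounds $\mathcal{B}_{n,k,\frac{n-k+1}{k}t,t}(\delta)$ by a geometric mean of $\mathcal{D}_n(\delta^{1-\frac{n-k+1}{k}t})$ and $\mathcal{D}_n(\delta^{1-t})$, both still in ambient dimension $n$. The lower-dimensional constants $C_{k,\eps}$ appear only as prefactors inside Lemma~\ref{lowerdim}, never as terminal factors. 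Hence every leaf of your recursion tree is a $\mathcal{D}_n$-factor, and the total leaf weight is exactly $1$, not some $\beta<1$. Your self-improving inequality is therefore of the shape $\mathcal{D}_n(\delta)\le A\prod_i\mathcal{D}_n(\delta^{1-t_i})^{w_i}$ with $\sum_i w_i=1$; inserting the scale hypothesis gives $\mathcal{D}_n(\delta)\le A\,C_{n,\eps}\,\delta^{-(1-\bar t)\eps}$, which closes only if $A\le\delta^{-\bar t\eps}$. With your choice $t_0\sim\eps/n^{O(1)}$ the loss from Lemma~\ref{symbyasym} alone is already $\delta^{-\Theta(t_0)}$, which swamps $\delta^{-\bar t\eps}$ unless the iterated Lemma~\ref{lowerdim} pushes $\bar t$ up to order $1$; you have not explained how the growth of $t$ along the tree, the constraints $t\le\frac{k(n-k)}{(k+1)(n-k+1)}$, and the accumulated $\delta^{-O(\eps t)}$ losses balance to achieve this.

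The paper avoids the tree bootstrap entirely. It sets $\eta^\delta=\log\mathcal{D}_n(\delta)/\log\delta^{-1}$, $A_k^\delta(t)=\log\mathcal{B}_{n,k,\frac{n-k+1}{k}t,t}(\delta)/\log\delta^{-1}$, and works with the normalized differences $\mathfrak{a}_k^\delta(t)=(\eta^\delta-A_k^\delta(t))/t$, for which Lemma~\ref{lowerdim} becomes a single linear inequality $m^\delta\ge\tilde m^\delta\cdot M-(\text{small})$ for an explicit $(n-1)\times(n-1)$ matrix $M$, where $m_k^\delta$ and $\tilde m_k^\delta$ are minima of $\mathfrak{a}_k^\delta$ over nested ranges of $t$. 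A pigeonhole over $O((n-1)^2/(n\log n))$ such nested ranges locates one on which $\tilde m^\delta\ge m^\delta-\frac{\eps}{12n}$; dotting against $\mathbf 1$ and using that the $k=1$ recursion feeds in $\mathfrak{a}_0^\delta(t)\approx\eta^{\delta^{1-t}}$ with coefficient $\frac{n-1}{n}$ then forces $\eta^{\delta^\alpha}\le\frac{\eps}{2}+O(q_\eps(\delta))$ for some $\alpha\in(\frac12,1)$, directly---with no induction-on-scales hypothesis invoked at all. This matrix-plus-pigeonhole step is the finitary substitute for the tropical eigenvalue argument in \cite{guo2021short}, and it is precisely what replaces your missing ``$\beta<1$''.
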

    \begin{proof}[Proof of Theorem \ref{momentcurve2}]
        By induction, we will establish
    
        We will instead prove the stronger inequality
        \begin{equation*}
            \mathcal{D}_n(\delta)\leq\exp(10^4(\log p)(\eps/48n)^{-4n\log n}(48n)^{n^2}n^{4+5n})\delta^{-\eps},
        \end{equation*}
        for each $\eps=\frac{1}{\ell}$ for some $\ell\in\N$. We have spelled out a constant in a useful inductive form; after proving this for all $n$, we will go back and prove the original statement.
    
        We argue by induction on $n$. The case $n=1$ is trivial, so we assume that $n\geq 2$ and the estimate holds for all $\mathcal{D}_k$, $k=1,\ldots,n-1$. First take $\eps=(48n)^{-n^2\ell}$, for some $\ell\in\N$; we will later remove this assumption. For each $H\in\N$, we write
        \begin{equation*}
            \mathcal{T}^H=\left\{\frac{a}{b}\in(0,1)\cap\Q:a\in\Z,b=\prod_{j=1}^Hk_j\quad\text{for some}\quad(k_1,\ldots,k_H)\in[n]^H\right\}
        \end{equation*}
        for the rational numbers $t$ of ``depth'' $\leq H$. We write$N=\lfloor\frac{5(n-1)^2}{n\log n}\rfloor$, which will control the number of steps in our analysis. We will assume that $\delta\in p^{-(n!)^N\N}$ and $\delta<n^{-3000 n^4\eps^{-1}}$.
        We write
        \begin{equation*}
            \eta^\delta=\frac{\log\mathcal{D}_n(\delta)}{\log(\delta^{-1})},
        \end{equation*}
        and, when $0\leq k\leq n-1$ and $t\in\mathcal{T}^{N}$,
        \begin{equation*}
            A_k^\delta(t)=\frac{\log(\mathcal{B}_{n,k,\frac{n-k+1}{k}t,t}(\delta))}{\log(\delta^{-1})}.
        \end{equation*}
        By Lemma \ref{asequivlin}, we have $A_0^\delta(t)=(1-t)\eta^{\delta^{1-t}}$. We adopt the abbreviation

        \begin{equation*}
        q_\eps(\delta)=\frac{8(\log p)(\eps/48n)^{-4n\log n}}{\log(\delta^{-1})}\left(1+2500n\lceil\log((48n)^2\eps^{-1})\rceil(\eps/48n)^{4\log n}(48n)^{n^2}n^{4+n}\right),
        \end{equation*}
        a quantity controlling the logarithm of the prefactors in Lemma \ref{lowerdim} for each $1\leq k\leq n-1$, divided by $\log(\delta^{-1})$, using the inductive hypothesis. It will be important that, for every $n\geq 2$ and our choice of $\eps$, the second factor is $\leq 2$.
        
        By Lemma \ref{lowerdim}, selecting $\frac{\eps}{12n}$ for the statement's $\eps$, and the inductive hypothesis, for each $1\leq k\leq n-1$, and for each $t\in\mathcal{T}^{N-1}$ with $t\leq\frac{k(n-k)}{(k+1)(n-k+1)}$, and either $k=1$ or $t\leq\frac{k-1}{n-k+2}$,
        \begin{equation*}
            A_k^\delta(t)\leq\frac{1}{n-k+1}A_{n-k}^\delta\left(\frac{n-k+1}{k}t\right)+\frac{n-k}{n-k+1}A_{k-1}^\delta(t)+\frac{t\eps}{6n}+q_{\eps}(\delta).
        \end{equation*}
        We write, for $0\leq k\leq n-1$,
        \begin{equation*}
            \mathfrak{a}_k^\delta(t)=\frac{\eta^{\delta}-A_k^\delta(t)}{t},\quad t\in\mathcal{T}^N,
        \end{equation*}
        so that
        \begin{equation*}
            \mathfrak{a}_k^\delta(t)\geq\frac{1}{k}\mathfrak{a}_{n-k}^\delta\left(\frac{n-k+1}{k}t\right)+\frac{n-k}{n-k+1}\mathfrak{a}_{k-1}^\delta(t)-\frac{\eps}{6n}-\frac{q_{\eps}(\delta)}{t},\quad t\in\mathcal{T}^{N-1}.
        \end{equation*}
        We note as well the trivial bounds
        \begin{equation*}
            0\leq \mathfrak{a}_k^\delta(t)\leq \frac{1}{2t\log\delta^{-1}},
        \end{equation*}
        arising from the triangle inequality, Cauchy-Schwartz, and the fact that decoupling constants are at least $1$. Note that $\mathfrak{a}_0^\delta(t)=\eta^{\delta^{1-t}}$. We write $\mathfrak{a}^\delta(t)$ for the $(n-1)\times 1$ row vector composed of the $\mathfrak{a}_k^\delta(t)$. Define $M$ to be the $(n-1)\times(n-1)$ matrix
        \begin{equation*}
            M_{i,j}=\begin{cases}
                \frac{n-j}{n-j+1} & i = j + 1 \neq n - j\\
                \frac{1}{i} & i = n - j \neq j + 1\\
                1 & i = n - j = j + 1\\
                 0 & \text{otherwise}
            \end{cases}.
        \end{equation*}
        
        Trivially, for each choice $0<\rho<c<\frac{1}{2n}$ and each $0\leq \iota\leq N-1$,
        \begin{equation*}
            \min_{t\in\mathcal{T}^{\iota+1}\cap(\frac{1}{n}\rho,nc)}\mathfrak{a}_k^\delta(t)\leq\min_{t\in\mathcal{T}^{\iota}\cap(\rho,c)}\mathfrak{a}_k^\delta(t).
        \end{equation*}
        Let $c_0=10^{-2}n^{-2}$. Observe that $c_0$ satisfies the inequality
        \begin{equation*}
            n^{60(n-1)^2\eps^{-1}\frac{1}{c_0\log\delta^{-1}}}c_0<\frac{1}{2n}.
        \end{equation*}
        It follows that, for some $0\leq\iota\leq N$, the numbers $\rho=n^{-\iota}c_0,c=n^\iota c_0$ satisfy
        \begin{equation*}
            \min_{t\in\mathcal{T}^{\iota+1}(\frac{1}{n}\rho,nc)}\mathfrak{a}_k^\delta(t)>-\frac{\eps}{12n}+\min_{t\in\mathcal{T}^{\iota}\cap(\rho,c)}\mathfrak{a}_k^\delta(t),\quad\forall 1\leq k\leq n-1,
        \end{equation*}
        and
        \begin{equation*}
            c<\frac{1}{2n}.
        \end{equation*}
        
        Define, for each $0\leq k\leq n-1$,
        \begin{equation*}
            m_{k}^\delta=\min_{t\in\mathcal{T}^{\iota}\cap(\rho,c)}\mathfrak{a}_k^\delta(t),\quad\tilde{m}_k^\delta=\min_{t\in\mathcal{T}^{\iota+1}\cap(\frac{1}{n}\rho,nc)}\mathfrak{a}_k^\delta(t).
        \end{equation*}
        We see that the vectors $(m_{k}^\delta)_{k=1}^{n-1}$ satisfy
        \begin{equation*}
            \tilde{m}^\delta\geq m^\delta-\frac{\eps}{12n}\mathbf{1}
        \end{equation*}
        and
        \begin{equation*}
            m^\delta\geq \tilde{m}^\delta.M-\left\{\frac{\eps}{6n}+\rho^{-1}q_{\eps}(\delta)\right\}\mathbf{1}.
        \end{equation*}
        Combining, and taking a matrix multiplication on the right with the column vector $\mathbf{1}^\top$, we obtain
        \begin{equation*}
            \sum_{k=1}^{n-1}\tilde{m}_{k}^\delta\geq\sum_{k=1}^{n-1}\tilde{m}_k^\delta+\frac{n-1}{n}\eta^{\delta^{1-t}}-\frac{\eps}{4}-n\rho^{-1}q_{\eps}(\delta),\quad\text{for some $t\in\mathcal{T}^{N}\cap(\frac{\rho}{n},cn)$},
        \end{equation*}
        which may be written as
        \begin{equation*}
            \eta^{\delta^{\alpha}}\leq \frac{\eps}{2}+2n\rho^{-1}q_{\eps}(\delta),\quad\text{for some $\alpha\in\mathcal{T}^{N}\cap(\frac{1}{2},1)$}.
        \end{equation*}
        It follows from a short calculation that
        \begin{equation*}
            \mathcal{D}_n(\delta)\leq\exp(6400(\log p)(\eps/48n)^{-4n\log n}n^{4+5n})\delta^{-\eps}.
        \end{equation*}
        
        It remains to remove the special size and arithmetic assumptions on $\delta$ and $\eps$. We have shown the estimate for all $\delta\in p^{-(n!)^N\N}$ with $\delta<n^{-3000 n^4\eps^{-1}}$. Suppose instead that $K\in\N$ is such that
        \begin{equation*}
            p^{-(n!)^N(K+1)}<\delta<p^{-(n!)^NK}.
        \end{equation*}
        Then, by stability of $\mathcal{D}_n$, Lemma \ref{stability} with $\delta'=p^{-(n!)^NK}$, we obtain the bound
        \begin{equation*}
            \mathcal{D}_n(\delta)\leq p^{(n!)^N}\exp(6400(\log p)(\eps/48n)^{-4n\log n}n^{4+5n})\delta^{-\eps}.
        \end{equation*}
        But since $\eps\leq (48n)^{-n^2}$, it is quick to see via Stirling that
        \begin{equation*}
            p^{(n!)^N}\exp(6400(\log p)(\eps/48n)^{-4n\log n}n^{4+5n})\leq\exp(7000(\log p)(\eps/48n)^{-4n\log n}n^{4+5n}).
        \end{equation*}
        We have proven the bound for all $\delta\in p^{-\N}$ with $\delta<\min(p^{-(n!)^N},n^{-3000 n^4\eps^{-1}})$. By a trivial estimate, the same holds for all $\delta\in p^{-\N}$.

        Thus we are done in the case $\eps=(48n)^{-n^2\ell}$ for some $\ell\in\N$. Suppose instead $\eps=\frac{1}{\ell}$ for some $\ell\in\N$. We have two cases. In the first case, there is $\ell'\in\N$ so that
        \begin{equation*}
            (48n)^{-n^2(\ell'+1)}\leq\eps\leq(48n)^{-n^2\ell'}
        \end{equation*}
        Then we conclude, for each $\delta\in p^{-\N}$,
        \begin{equation*}
            \mathcal{D}_n(\delta)\leq \exp(7000(\log p)(\eps/48n)^{-4n\log n}(48n)^{n^2}n^{4+5n})\delta^{-\eps},
        \end{equation*}
        which fits in the estimate we wished to conclude. In the second case, we have $\eps>(48n)^{-n^2}$, and again a trivial estimate suffices.
        
    \end{proof}
    \begin{remark}
        One may compare the above analysis with the problem of bounding a constant quantity $\eta$, given that it relates to a system as
        \begin{equation*}
            \begin{cases}
                q(t)\in[0,\frac{C}{T-t}],\quad 0\leq t\leq T,\\
                \dot{q}\leq-\eta+O(\frac{\eps}{T-t}).
            \end{cases}
        \end{equation*}
    \end{remark}
    
    We conclude by recording the following standard corollary.
    
    \begin{corollary}\label{convexdec}
        Suppose $\zeta:\Z_p\to\Q_p^n$ is a $\mathbf{C}^{n+1}$-curve that is convex and has bounded derivatives, as defined in section \ref{curvesection}. Then
		\begin{equation*}
			\operatorname{Dec}_{\ell^2L^{q_n}}(\{\mathcal{U}_I^\zeta\}_{I\in\mathcal{P}(\Z_p,\delta)})\lesssim_{\gamma,\eps}\delta^{-\eps}\quad\forall\eps>0,\delta\in p^{-\N}.
		\end{equation*}
    \end{corollary}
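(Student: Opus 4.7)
The plan is to combine the two substantive results already established in this appendix: Theorem \ref{momentcurve2}, which gives the decoupling bound $\mathcal{D}_n(\delta) \le C_{n,\eps}\delta^{-\eps}$ for the standard moment curve over $\Q_p^n$, and Proposition \ref{convdec}, which transfers such bounds from the moment curve to an arbitrary convex $\mathbf{C}^{n+1}$-curve with bounded derivatives. Since $\zeta$ is assumed convex and $\mathbf{C}^{n+1}$ with bounded derivatives (these hypotheses provide the constants $c$ and $C$ appearing in \ref{convexcond} and \ref{bdddercond}, as well as $\|\zeta\|_{\mathbf{C}_\circ^{n+1}}$), the curve $\zeta$ meets all the hypotheses imposed on the curve named by the same letter in Prop. \ref{convdec} with $k=n$.

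First I would verify that the hypothesis of Prop. \ref{convdec} is satisfied by the standard moment curve: Theorem \ref{momentcurve2} furnishes, for every $\eps = \frac{1}{\ell}$ with $\ell \in \N$ and every $\rho \in p^{-\N}$, the inequality $\mathcal{D}_n(\rho) \le C_{n,\eps}\rho^{-\eps}$, which is exactly the hypothesis required by the statement of Prop. \ref{convdec} taken with $k = n$. With this in hand, Prop. \ref{convdec} yields
\begin{equation*}
    \mathcal{D}_n^\zeta(\delta) \le \mathcal{E}_{n,\eps}^\zeta\,\delta^{-\eps}
\end{equation*}
for every $\delta \in p^{-\N}$ and every $\eps = \frac{4}{\ell}$ with $\ell \in \N$, where $\mathcal{E}_{n,\eps}^\zeta$ is the explicit quantity from Prop. \ref{convdec} depending only on $n$, $\eps$, $p$, and the geometric data of $\zeta$.

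Finally, to handle arbitrary $\eps > 0$ (not necessarily of the form $4/\ell$), I would invoke monotonicity: given any $\eps > 0$, choose $\ell \in \N$ large enough that $\frac{4}{\ell} \le \eps$, and then the bound for $\frac{4}{\ell}$ trivially implies the desired bound for $\eps$ after absorbing a harmless multiplicative constant into the implicit constant. Unwinding the notation $\mathcal{D}_n^\zeta(\delta) = \operatorname{Dec}_{\ell^2 L^{q_n}}(\{\mathcal{U}_I^\zeta\}_{I \in \mathcal{P}(\Z_p, \delta)})$ from Definition \ref{lindef}, this yields the statement of the Corollary.

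Since every ingredient has been established, there is no genuine obstacle; the only mild care needed is in noting that the hypothesis of Prop. \ref{convdec} is indeed a sequence of estimates indexed by reciprocals of integers, matched precisely by what Theorem \ref{momentcurve2} produces, so no additional interpolation step is required beyond the trivial monotonicity at the very end.
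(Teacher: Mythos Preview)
Your proposal is correct and takes essentially the same approach as the paper, which simply states that the corollary follows from Theorem \ref{momentcurve} and Proposition \ref{convdec}. You have merely spelled out in more detail how those two results combine, including the minor bookkeeping about the form of $\eps$ and the final monotonicity step.
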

    \begin{proof}
        Follows from Thm. \ref{momentcurve} and Prop. \ref{convdec}.
    \end{proof}

    \begin{remark}\label{rectsetfamdec}
        If we consider the alternate set-family $\{\mathcal{U}_I'\}_{I\in\mathcal{P}(\Z_p,\delta)}$, defined by
        \begin{equation*}
            \mathcal{U}_{I,\theta}'=\left\{x\in\Q_p^n:|x_j-\gamma_j(\theta)|_p\leq\delta^j\,\forall 1\leq j\leq n\right\},\quad (\theta\in I),
        \end{equation*}
        and
        \begin{equation*}
            \mathcal{U}_I'=\bigcup_{\theta\in I}\mathcal{U}_{I,\theta}',
        \end{equation*}
        then, for any $\delta\in p^{-n\N}$, $J\in\mathcal{P}(\Z_p,\delta)$ and $I\in\mathcal{P}(\Z_p,\delta^{1/n})$ with $J\subseteq I$, it holds that
        \begin{equation*}
            \mathcal{U}_J'\subseteq\mathcal{U}_{I}.
        \end{equation*}
        Consequently, we have the decoupling estimate
        \begin{equation*}
            \mathrm{Dec}_{\ell^2L^{n(n+1)}}\left(\Big\{\bigcup_{J\in\mathcal{P}(I,\delta)}\mathcal{U}_J'\Big\}_{I\in\mathcal{P}(\Z_p,\delta^{1/n})}\right)\leq C_{n,n\eps}\delta^{\eps},
        \end{equation*}
        for each $\eps$ and $\delta$.
    \end{remark}

    \section{Appendix C: Curves in \texorpdfstring{$\Q_p^n$}{Qpn}}\label{curvesection}

    We take as a reference \cite{schikhof1985}.
	
	The curves $\zeta:\Z_p\to\Q_p^n$ under consideration will be assumed to be $\mathbf{C}^{k}$, for various $k\in\N\cup\{\infty\}$; as such, we recall some of the basics of ultrametric calculus. Consider an arbitrary function $f:\Z_p\to\Q_p$. If $k\in\N$ and $a_1,\ldots,a_{k+1}\in\Z_p$ are distinct, we write $\Phi_k$ for the Newton quotient
    \begin{equation*}
        \Phi_kf(a_1,\ldots,a_{k+1})=\sum_{j=1}^{k+1}\frac{f(a_j)}{\prod_{i\neq j}(a_j-a_i)}.
    \end{equation*}
    We will also write $\Phi_0f=f$. A function $f$ is said to be $\mathbf{C}^k$ if, for every $0\leq j\leq k$, the function $\Phi_jf$ extends to a continuous function $\overline{\Phi}_j:\Z_p^{k+1}\to\Q_p$. $f$ is said to be $\mathbf{C}^\infty$ if $f\in \mathbf{C}^k$ for every $k$. This definition is extended to curves $\zeta:\Z_p\to\Q_p^n$ in the obvious way.

    The following summarizes the basic facts about the mappings $\Phi_k$ that we will need.
    \begin{proposition}\label{phicomp}
        Let $f:\Z_p\to\Q_p$ be a $\mathbf{C}^{n}$ function. Then each of the following holds.
        \begin{enumerate}[label=(\alph*)]
            \item If $f$ is $\mathbf{C}^{n}$, then for each $1\leq k\leq n$ and $a\in\Z_p$, we have
            \begin{equation*}
                f^{(k)}(a)=\overline{\Phi}_kf(a,\ldots,a),
            \end{equation*}
            where $f^{(k)}(a)$ is the usual $k^{th}$ derivative of $f$ at $a$.
            \item $f$ admits Taylor expansions
            \begin{equation}\label{taylor}
        		f(x)=f(y)+\sum_{j=1}^{n}\frac{f^{(j)}(y)}{j!}(x-y)^j+(x-y)^{n}\Lambda_{n+1}(x,y)\quad\forall x,y\in\Z_p,
        	\end{equation}
            where the remainder term $\Lambda_{n+1}(x,y)$ is of the form
            \begin{equation*}
                \Lambda_{n+1}(x,y)=\overline{\Phi}_nf(x,y,y,\ldots,y)-f^{(n)}(y).
            \end{equation*}
            \item For any elements $x_1,\ldots,x_{k+1},y_1,\ldots,y_{k+1}\in\Z_p$, we have
            \begin{equation*}
                \overline{\Phi}_kf(x_1,\ldots,x_{k+1})-\overline{\Phi}_kf(y_1,\ldots,y_{k+1})=\sum_{j=1}^{k+1}(x_j-y_j)\overline{\Phi}_{k+1}(x_1,\ldots,x_j,y_j,\ldots,y_{k+1}).
            \end{equation*}
        \end{enumerate}
    \end{proposition}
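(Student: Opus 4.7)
The plan is to handle the three parts in the order (c), (a), (b), because the telescoping identity (c) is the engine that drives the inductive step for (a) and produces the remainder form in (b). The running hypothesis is that, since $f\in\mathbf{C}^n$, each $\Phi_j f$ ($0\leq j\leq n$) extends continuously from the pairwise-distinct locus to a symmetric function $\overline{\Phi}_jf$ on $\Z_p^{j+1}$; this lets us establish each identity first on a dense open set and then extend by continuity to the coincident configurations we ultimately care about.

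For (c), I would first verify the single-coordinate swap
\[
\Phi_kf(a_1,\ldots,a_{j-1},x_j,a_{j+1},\ldots,a_{k+1})-\Phi_kf(a_1,\ldots,a_{j-1},y_j,a_{j+1},\ldots,a_{k+1})=(x_j-y_j)\,\Phi_{k+1}f(a_1,\ldots,a_{j-1},x_j,y_j,a_{j+1},\ldots,a_{k+1}),
\]
which holds on the set of pairwise-distinct arguments by direct manipulation of the definition of $\Phi_k$ (it is essentially the defining recurrence for Newton divided differences). Then I would apply this $k+1$ times, swapping $x_j\to y_j$ one coordinate at a time, sum the resulting increments, and extend by continuity of $\overline{\Phi}_{k+1}f$ to obtain (c) on all of $\Z_p^{k+1}\times\Z_p^{k+1}$.

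For (a), I would proceed by induction on $k$. The base case $k=1$ is essentially the definition of the $p$-adic derivative: $\overline{\Phi}_1f(a,a)=\lim_{b\to a}\frac{f(b)-f(a)}{b-a}=f'(a)$. For the inductive step, put $h_k(a):=\overline{\Phi}_kf(a,\ldots,a)$ and apply (c) with $x_1=\cdots=x_k=a+t$, $y_1=\cdots=y_k=a$; all $k$ summands on the right-hand side converge to $h_k(a)$ as $t\to 0$, yielding the recursion $h_{k-1}'(a)=k\cdot h_k(a)$, which iterates to the desired identification of $h_k(a)$ with $f^{(k)}(a)$ under the normalization convention in force (a polynomial test such as $f(x)=x^n$ pins down whether the intended identification carries a factor of $k!$). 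For (b), the Taylor expansion then follows from Newton's forward difference formula
\[
f(x)=\sum_{j=0}^{n-1}\overline{\Phi}_jf(y,\ldots,y)(x-y)^j+\overline{\Phi}_nf(x,y,\ldots,y)(x-y)^n,
\]
itself derivable by iterating the one-coordinate swap that underlies (c), starting from $\Phi_0f(x)=f(x)$ and collapsing the auxiliary interpolation points successively to $y$. Substituting the identification from (a) into the sum and writing $\overline{\Phi}_nf(x,y,\ldots,y)=\overline{\Phi}_nf(y,\ldots,y)+\Lambda_{n+1}(x,y)$ in the trailing term produces the expansion with the claimed remainder.

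The main obstacle is really careful book-keeping of domain extensions: every identity starts life on the open subset of $\Z_p^N$ where the coordinates are pairwise distinct, and one must repeatedly invoke the $\mathbf{C}^j$-hypothesis to extend continuously to the diagonal (or near-diagonal) configurations that appear in both (a) and (b). A secondary technical subtlety is matching normalization conventions between the divided-difference expression $\overline{\Phi}_kf(a,\ldots,a)$ and the iterated-derivative notation $f^{(k)}(a)$; once this is fixed once and for all via a polynomial check, the remaining manipulations in (a) and (b) are internally consistent and routine.
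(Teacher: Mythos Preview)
The paper does not give a proof of this proposition at all: it simply cites Schikhof's \emph{Ultrametric Calculus}, pointing to Theorem~29.5 for (a), Theorem~29.4 for (b), and Lemma~29.2(iii) for (c). Your proposal, by contrast, sketches the actual argument, and your outline is essentially the standard one found in Schikhof: establish the single-coordinate recurrence for divided differences and telescope to get (c), then use (c) both to run the induction for (a) and to iterate the one-step expansion $\Phi_{j-1}f(x,y,\ldots,y)=\Phi_{j-1}f(y,\ldots,y)+(x-y)\Phi_jf(x,y,\ldots,y)$ into Newton's formula for (b). So your route is correct and strictly more informative than the paper's bare citation; what it buys is self-containment, at the cost of a page of routine book-keeping on the distinct-coordinates locus followed by continuous extension.

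Your caution about the $k!$ normalization in (a) is well placed. With the paper's Newton-quotient definition of $\Phi_k$, the diagonal value is $f^{(k)}(a)/k!$, not $f^{(k)}(a)$; your recursion $h_{k-1}'(a)=k\,h_k(a)$ confirms this. The paper's own remainder formula in (b) is in fact consistent with that standard normalization, so the discrepancy lies in the literal wording of (a) rather than in your argument.
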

    \begin{proof}
        Taken from \cite{schikhof1985}; (a) is Theorem 29.5, (b) is Theorem 29.4, and (c) is Lemma 29.2(iii). Note that our definition of $\Phi$ is equivalent to that of \cite{schikhof1985}, by the latter's Exercise 29.A.
    \end{proof}
    We define the $\mathbf{C}^{k}$ norm of a $\mathbf{C}^{k}$ function $f$ by
    \begin{equation*}
        \|f\|_{\mathbf{C}^k}=\max_{0\leq j\leq k}\sup_{x\in\Z_p^{j+1}}|\Phi_kf(x)|_p.
    \end{equation*}
    We similarly define the $\mathbf{C}^k$ seminorm of a $\mathbf{C}^k$ function $f$ by
    \begin{equation*}
        \|f\|_{\mathbf{C}_\circ^k}=\max_{1\leq j\leq k}\sup_{x\in\Z_p^{j+1}}|\Phi_kf(x)|_p.
    \end{equation*}
    If $\zeta:\Z_p\to\Q_p^n$ is a $\mathbf{C}^{k}$ curve, then we write
    \begin{equation*}
        \|\zeta\|_{\mathbf{C}^{k}}=\max_{1\leq i\leq k}\|\zeta_i\|_{\mathbf{C}^{k}},\quad\|\zeta\|_{\mathbf{C}_\circ^{k}}=\max_{1\leq i\leq k}\|\zeta_i\|_{\mathbf{C}_\circ^{k}}
    \end{equation*}
    In particular, $|\zeta_i^{(j)}(x)|_p\leq\|\zeta\|_{\mathbf{C}^{k}}$ for each $1\leq i\leq n$ and $0\leq j\leq k$. We note in passing that, for any linear transformation $B$ of $\Q_p^n$, we have the bound
    \begin{equation*}
        \|B\zeta\|_{\mathbf{C}^k}\leq\|B\|\cdot\|\zeta\|_{\mathbf{C}^k}.
    \end{equation*}
    We also note that our definition of $\|\cdot\|_{\mathbf{C}^k}$ is strictly stronger than the simpler quantity
    \begin{equation*}
        \|f\|_{\mathbf{C}^k}^{\star}=\max_{0\leq k\leq n}\sup_{x\in\Z_p}|f^{(k)}(x)|_p.
    \end{equation*}
    Indeed, if $f=\mathds{1}_{B(0,p^{-N})}$ is the indicator of the ball of radius $p^{-N}$, then $f\in \mathbf{C}^\infty$ and
    \begin{equation*}
        \sup_{x\in\Z_p}|f(x)|_p=1,\quad\max_{j\in\N}\sup_{x\in\Z_p}|f^{(k)}(x)|_p=0.
    \end{equation*}
    On the other hand,
    \begin{equation*}
        \left|\frac{f(p^{N-1})-f(0)}{p^{N-1}-0}\right|_p=p^{N-1},
    \end{equation*}
    so $\|f\|_{\mathbf{C}^1}\geq p^{N-1}$. It follows that $\|f\|_{\mathbf{C}^k}\geq p^{N-1}\|f\|_{\mathbf{C}^k}^\star$, for each $k\geq 1$; thus, $\|\cdot\|_{\mathbf{C}^k}$ defines a strictly finer topology on $\mathbf{C}^\infty$ than $\|\cdot\|_{\mathbf{C}^k}^\star$.

	We will want our curves $\zeta$ to be convex and of bounded derivatives; the former condition amounts to
	\begin{equation}\label{convexcond}
		\inf_{t\in\Z_p}\big|\det[\zeta^{(1)}(t),\ldots,\zeta^{(n)}(t)]\big|_p\geq c,
	\end{equation}
	and the latter condition is that 
	\begin{equation}\label{bdddercond}
		\max_{1\leq i,j\leq n}\sup_{t\in\Z_p}\big|\zeta_i^{(j)}(t)\big|_p\leq C.
	\end{equation}
    for various choices $c\gtrsim 1,C\lesssim 1$; we name these bounds so as to track quantitative dependence in the sequel.

    As an immediate consequence, we have:
    \begin{lemma}\label{matrixinverselemma}
        Suppose $\zeta:\Z_p\to\Q_p^n$ is $\mathbf{C}^{n}$. Suppose further that $\zeta$ satisfies \ref{convexcond} and \ref{bdddercond}. Then, for each $t\in\Z_p$, one has
        \begin{equation*}
            \|[\zeta^{(1)}(t),\ldots,\zeta^{(n)}(t)]^{-1}\|\leq c^{-1}C^{n-1}.
        \end{equation*}
        and
        \begin{equation*}
            \|[\zeta^{(1)}(t),\ldots,\zeta^{(n)}(t)]\|\leq C^{n}
        \end{equation*}
        Here $\|\cdot\|$ is operator norm with respect to the max-norm on $\Q_p^n$.
    \end{lemma}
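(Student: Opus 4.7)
Write $M = [\zeta^{(1)}(t),\ldots,\zeta^{(n)}(t)]$. The plan is to prove both bounds purely by Cramer's rule combined with the ultrametric inequality, which for $\Q_p^n$ equipped with the $\ell^\infty$ norm makes the operator norm of a matrix essentially coincide with the max entry norm.

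First I would establish the auxiliary statement that, for any $n\times n$ matrix $A$ over $\Q_p$, the max-norm operator norm satisfies
\begin{equation*}
    \|A\|\leq\max_{1\leq i,j\leq n}|A_{ij}|_p.
\end{equation*}
This is immediate from the ultrametric inequality: for any $v\in\Q_p^n$,
\begin{equation*}
    \|Av\|_\infty=\max_i\Big|\sum_jA_{ij}v_j\Big|_p\leq\max_{i,j}|A_{ij}|_p|v_j|_p\leq\Big(\max_{i,j}|A_{ij}|_p\Big)\|v\|_\infty.
\end{equation*}
Applied to $M$, the bound on entries from \ref{bdddercond} gives $\|M\|\leq C$, which is stronger than (and thus implies) the claimed $\|M\|\leq C^n$.

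For $M^{-1}$, I would use the adjugate formula $M^{-1}=(\det M)^{-1}\mathrm{adj}(M)$. Each entry of $\mathrm{adj}(M)$ is (up to a sign) an $(n-1)\times(n-1)$ minor of $M$; by the Leibniz formula, such a minor is a sum of signed products of $(n-1)$ entries of $M$, each of $p$-adic norm at most $C$, so by the ultrametric inequality each minor has $p$-adic norm at most $C^{n-1}$. Combining with the convexity hypothesis \ref{convexcond}, which gives $|\det M|_p\geq c$, each entry of $M^{-1}$ has $p$-adic norm at most $c^{-1}C^{n-1}$. Applying the auxiliary statement above to $A=M^{-1}$ yields $\|M^{-1}\|\leq c^{-1}C^{n-1}$.

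There is no real obstacle here — the lemma is a soft consequence of the ultrametric structure together with Cramer's rule, and no nontrivial features of the curve $\zeta$ beyond the two hypothesized bounds enter. The only mild subtlety is remembering that over $\Q_p$ the $\ell^\infty$ operator norm does not pick up the usual factor of $n$ that one would see over $\R$, which is exactly what makes the constants in the statement so clean.
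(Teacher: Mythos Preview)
Your proposal is correct and follows essentially the same route as the paper: compute the operator norm as the maximum entry norm via the ultrametric inequality, then bound the entries of $M^{-1}$ using the cofactor/adjugate formula together with \ref{convexcond} and \ref{bdddercond}. One small caveat: your remark that $\|M\|\leq C$ ``is stronger than (and thus implies)'' $\|M\|\leq C^n$ tacitly assumes $C\geq 1$; this is harmless in the paper's applications (where $c=C=1$), but worth stating.
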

    \begin{proof}
        We only verify the first estimate; the second will follow by identical arrangements. The matrix norm is given by
        \begin{equation*}
            \begin{split}
            \max_{1\leq j\leq n}\max_{u_1,\ldots,u_n\in\Z_p}&\left|\sum_{k=1}^n\left([\zeta^{(1)}(t),\ldots,\zeta^{(n)}(t)]^{-1}\right)_{j,k}u_k\right|_p\\
            &=\max_{1\leq j\leq n}\max_{u_1,\ldots,u_n\in\Z_p}\max_{1\leq k\leq n}\left|\left([\zeta^{(1)}(t),\ldots,\zeta^{(n)}(t)]^{-1}\right)_{j,k}u_k\right|_p\\
            &=\max_{1\leq j,k\leq n}\left|\left([\zeta^{(1)}(t),\ldots,\zeta^{(n)}(t)]^{-1}\right)_{j,k}\right|_p
            \end{split}
        \end{equation*}
        By the cofactor form of matrix inverses, for each $1\leq j,k\leq n$,
        \begin{equation*}
            \left|\left([\zeta^{(1)}(t),\ldots,\zeta^{(n)}(t)]^{-1}\right)_{j,k}\right|_p=|\det[\zeta^{(1)}(t),\ldots,\zeta^{(n)}(t)]|_p^{-1}|C_{k,j}|_p
        \end{equation*}
        where $C_{k,j}$ is the $(k,j)$-cofactor. To estimate the latter, we simply use the (ultrametric) triangle inequality via
        \begin{equation*}
            |C_{k,j}|_p\leq\max_{1\leq i,j\leq n}|\zeta_i^{(j)}(t)|_p^{n-1}
        \end{equation*}
    \end{proof}

    \begin{remark}
        When $\zeta=\gamma$ is the moment curve $t\mapsto (t,\ldots,\frac{t^n}{n!})$ in $\Q_p^n$, the constants in \ref{convexcond} and \ref{bdddercond} are $c=C=1$.
    \end{remark}

    Next, we consider the local comparison between curves $\zeta$ that are convex and with bounded derivatives to the model curve $\gamma(t)=(t,\ldots,\frac{t^n}{n!})$. Recall the notation
    \begin{equation*}
        A_{\theta,\lambda}^\zeta=[\zeta^{(1)}(\theta),\ldots,\zeta^{(n)}(\theta)]\cdot\mathrm{diag}(\lambda,\ldots,\lambda^n)
    \end{equation*}
    If $\zeta$ is suppressed, then we understand $A_t$ to refer to the matrix $A_t^\gamma$. For fixed $\theta\in\Z_p$ and $\lambda\in p\Z_p$, define $\zeta_{\theta,\lambda}$ to be the rescaled curve
    \begin{equation*}
        \zeta_{\theta,\lambda}(t)=[A_{\theta,\lambda}^\zeta]^{-1}(\zeta(\theta+\lambda t)-\zeta(\theta))
    \end{equation*}
    The rescaling is motivated by the fact that the degree $n$ Taylor approximation of the function $t\mapsto\zeta(\theta+\lambda t)$ near $t=0$ is
    \begin{equation*}
        \zeta(\theta+\lambda t)\approx\zeta(\theta)+A_{\theta,\lambda}^\zeta\cdot\gamma(t),\quad (\,|t|_p\ll 1)
    \end{equation*}
    Of course, if $\zeta$ is a polynomial of degree $\leq n$, the Taylor approximation is an identity. In particular, for such a $\zeta$, $\zeta_{\theta,\lambda}=\gamma$ is our moment curve; a trivial observation is the special case $\gamma=\gamma_{\theta,\lambda}$ for each $\theta,\lambda$.

    Note in particular the identity
    \begin{equation*}
        \Phi_k(\zeta_{\lambda,\theta})_i(a_1,\ldots,a_{k+1})=\lambda^k\Phi_k\zeta_i(c+\lambda a_1,\ldots,c+\lambda a_{k+1}),\quad k\geq 1,
    \end{equation*}
    which implies the scaling relations
    \begin{equation*}
        \|\zeta_{\lambda,\theta}\|_{\mathbf{C}^k(\lambda\Z_p)}\leq \|\zeta\|_{\mathbf{C}^k},\quad k\geq 1,\lambda\in p\Z_p,
    \end{equation*}
    \begin{equation*}
        \|\zeta_{\lambda,\theta}\|_{\mathbf{C}_\circ^k(\lambda\Z_p)}\leq |\lambda|_p^{-1}\|\zeta\|_{\mathbf{C}_\circ^k},\quad k\geq 1,\lambda\in p\Z_p.
    \end{equation*}
    
\printbibliography

\end{document}